\title{Spherical Geometry of Hilbert Schemes of Conics in Adjoint Varieties}
\author{Minseong Kwon}
\date{September 25, 2023}
\newtheorem{theorem}{Theorem}[section]
\newtheorem{coro}[theorem]{Corollary}
\newtheorem{lemma}[theorem]{Lemma}
\newtheorem{proposition}[theorem]{Proposition}
\newtheorem*{theorem*}{Theorem}
\newtheorem*{maintheorem*}{Main Theorem}
\newtheorem*{coro*}{Corollary}
\newtheorem*{lemma*}{Lemma}
\newtheorem*{prop*}{Proposition}
\newtheorem*{claim*}{Claim}
\theoremstyle{definition}
\newtheorem{defn}[theorem]{Definition}
\newtheorem*{prob*}{Problem}
\newtheorem{exa}[theorem]{Example}
\newtheorem*{exa*}{Example}
\newtheorem*{exer*}{Exercise}
\newtheorem*{acknowledgements*}{Acknowledgements}
\newtheorem{rmk}[theorem]{Remark}
\newtheorem*{rmk*}{Remark}
\def\BigRoman{\uppercase\expandafter{\romannumeral\number\count 255}}
\def\Romannumeral{\afterassignment\BigRoman\count255=}
\DeclareMathAlphabet{\mathpzc}{OT1}{pzc}{m}{it}
\def \CC {\mathbb{C}}
\def \PP {\mathbb{P}}
\def \QQ {\mathbb{Q}}
\def \RR {\mathbb{R}}
\def \ZZ {\mathbb{Z}}
\def \Bcal {\mathcal{B}}
\def \Ccal {\mathcal{C}}
\def \Dcal {\mathcal{D}}
\def \Ecal {\mathcal{E}}
\def \Fcal {\mathcal{F}}
\def \Kcal {\mathcal{K}}
\def \Lcal {\mathcal{L}}
\def \Ocal {\mathcal{O}}
\def \Pcal {\mathcal{P}}
\def \Vcal {\mathcal{V}}
\def \Ffr {\mathfrak{F}}
\def \bfr {\mathfrak{b}}
\def \gfr {\mathfrak{g}}
\def \kfr {\mathfrak{k}}
\def \pfr {\mathfrak{p}}
\def \tfr {\mathfrak{t}}
\def \hbar {\bar{h}}
\def \Cbf {\mathbf{C}}
\def \Hbf {\mathbf{H}}
\begin{document}
\maketitle
\abstract{
    For each adjoint variety not of type $A$ or $C$, we study the irreducible component of the Hilbert scheme which parametrizes all smooth conics. We prove that its normalization is a spherical variety by using contact geometry, and then compute the colored fan of the normalization. As a corollary, we describe the conjugacy classes of conics in the adjoint variety and show smoothness of the normalization. Similar results on the Chow scheme of the adjoint variety are also presented.
    }
\tableofcontents

\section{Introduction}

Let $\gfr$ be a complex simple Lie algebra, and $G$ the simply connected Lie group associated to $\gfr$. Then $\gfr$ admits a natural irreducible $G$-representation called the \emph{adjoint representation}, and the unique closed $G$-orbit in $\PP(\gfr)$ is called the \emph{adjoint variety}. Among rational homogeneous spaces, adjoint varieties are of particular interest since they are the only known examples of Fano (complex-)contact manifolds. Here, a \emph{contact structure} on a complex manifold $Z$ means a holomorphic hyperplane distribution $D$ in the tangent bundle $TZ$ such that the Lie bracket of vector fields induces a bundle morphism
\[
    D \wedge D \rightarrow TZ / D
\]
which is everywhere nondegenerate. Existence of contact structures on a projective manifold is a restrictive condition, and it has been conjectured that every Fano contact manifold is isomorphic to an adjoint variety (\cite{Beauville1998FanoContact}).

It is natural to expect that geometry of subvarieties in an adjoint variety is influenced by its contact structure. For example, every line in an adjoint variety must be tangent to the contact structure. Moreover, the space of lines passing through a fixed point can be described in terms of the isotropy action on the contact hyperplane (\cite{Hwang1997RigidityHomogeneous}).

The simplest non-linear subvarieties in a given projective variety are conics, and spaces of conics in adjoint varieties also have been studied in various contexts. Let us recall some results related to our work.
\begin{itemize}
    \item In \cite{Wolf1965ComplexHomogeneous}, Wolf observed that each adjoint variety is a $C^{\infty}$ fiber bundle over a real manifold such that its fibers are smooth conics transverse to the contact structure. Moreover, the base is a symmetric Riemannian manifold and can be embedded into the space of smooth conics in the adjoint variety as a totally real submanifold.
    \item For the $G_{2}$-adjoint variety, in \cite[Chapitre 10]{Dufour2014ConstructionMetriques}, Dufour explicitly constructed a (non-compact) family of conics by deforming double lines, which consists of 8-, 7-, and 5-dimensional subfamilies of smooth conics, reducible conics, and double lines, respectively. Dufour generalized this phenomenon to parabolic geometry of type $G_{2}$ to produce quaternionic K\"{a}hler manifolds.
    \item In \cite[Section 7]{Manivel2021DoubleCayley}, Manivel found an incidence correspondence parametrizing conics in the $G_{2}$-adjoint variety. Its base is a projective manifold called the \emph{Cayley grassmannian}, which is a symmetric (hence spherical) variety.
    \item Birational geometry of the spaces of rational curves of degree $\le 3$ in rational homogeneous spaces was studied by Chung, Hong and Kiem in \cite{ChungHongKiem2012CompactifiedModuli}. Via blow-ups and blow-downs, they related three compactifications of the spaces of smooth rational curves obtained from the Hilbert schemes, semi-stable sheaves, and stable curves.
\end{itemize}

In this paper, we focus on the Hilbert scheme of conics in each adjoint variety, and prove the following theorem.
\begin{maintheorem*}[Theorem \ref{main thm: open symmetric orbit of twistor conics with satake diagram}, \ref{main thm: colored data of hilb}]
    Let $\gfr$ be a complex simple Lie algebra not of type $A$ or $C$, and $Z_{\gfr} \subset \PP(\gfr)$ the adjoint variety. Then for the irreducible component $\Hbf(Z_{\gfr})$ of the semi-normalization of the Hilbert scheme which parametrizes all smooth conics in $Z_{\gfr}$, the normalization $\Hbf^{nor}(Z_{\gfr})$ is a spherical variety and its colored fan can be explicitly computed.
\end{maintheorem*}
Remark that $\Hbf^{nor}(Z_{\gfr})$ is indeed the normalization of the irreducible component of the Hilbert scheme parametrizing all smooth conics, even before taking the semi-normalization. In the process of the proof of Main Theorem, we also obtain a similar result for the irreducible component $\Cbf(Z_{\gfr})$ of the Chow scheme parametrizing all smooth conics. The precise statement is given in Theorem \ref{main thm: colored data of chow}.

In Main Theorem, we use the language of spherical geometry to describe spaces of conics. A \emph{spherical variety} means a normal variety where a reductive algebraic group acts such that every Borel subgroup has an open orbit. This definition includes a plenty of classical examples equipped with algebraic group actions, such as toric varieties, symmetric varieties, etc. As toric varieties can be classified by their fans, spherical varieties can be classified in terms of combinatorial data called \emph{colored fans}, which is a consequence of Luna-Vust theory (\cite{LunaVust1983PlongementsEspaces}).

To prove Main Theorem, first we show that smooth conics transverse to the contact structure form a homogeneous symmetric variety, which is an open orbit in the space of smooth conics. To understand its boundary, we compute isotropy groups of closed orbits by analyzing orbits of singular conics. Then the colored fan of $\Cbf^{nor}(Z_{\gfr})$ follows from the description of colored fans of symmetric varieties established by Vust (\cite{Vust1990PlongementsEspaces}). Finally, using the natural morphism from the Hilbert scheme to the Chow scheme, we obtain the colored fan of $\Hbf^{nor}(Z_{\gfr})$.

As an application, we describe the conjugacy classes of conics in adjoint varieites, prove smoothness of $\mathbf{H}^{nor}(Z_{\gfr})$ and derive information on singularities of $\Cbf^{nor}(Z_{\gfr})$. For example, for the $G_{2}$-adjoint variety, we show that both $\Hbf(Z_{G_{2}})$ and $\Cbf(Z_{G_{2}})$ consist of three orbits: an 8-dimensional orbit of smooth conics transverse to the contact structure, a 7-dimensional orbit of reducible conics, and a 5-dimensional orbit of double lines. Moreover, their normalizations are isomorphic to the Cayley grassmannian (\cite{Manivel2018CayleyGrassmannian}), which is a smooth Fano symmetric variety.

This paper is organized as follows. In Section \ref{section: preliminary and assumption}, we explain preliminaries and fix our notation. In Section \ref{section: classes of conics}, we introduce several types of conics in adjoint varieties and state our main theorems. In Section \ref{section: counting conjugacy classes of conics}, we investigate smooth conics transverse to the contact structure and double lines. In particular, we show that smooth conics transverse to the contact structure form a homogeneous symmetric variety. Based on the results of Section \ref{section: counting conjugacy classes of conics}, we complete the proof of the main theorems in Section \ref{section: proof of main theorems}. Section \ref{section: structures of spaces of conics} is devoted to corollaries. In particular, after studying behavior of reducible conics, we describe the conjugacy classes of conics in adjoint varieties. Finally in Section \ref{section: direction of contact conics}, we study smooth conics tangent to the contact distribution, and show that their tangent directions do not dominate the contact distribution.

\begin{acknowledgements*}
    The author would like to sincerely thank Professor Jun-Muk Hwang for guidance and valuable suggestions. The author is grateful to Professor Jaehyun Hong and Professor Kyeong-Dong Park for detailed comments on the draft of this paper and helpful discussions on spherical varieties. This work was supported by the Institute for Basic Science (IBS-R032-D1-2022-a00).
\end{acknowledgements*}

\section{Preliminaries} \label{section: preliminary and assumption}

In this section, we explain our setting and notation. First of all, our base field is $\CC$, the field of complex numbers. A \emph{variety} means an integral separated scheme of finite type over $\CC$, and a \emph{point} in a variety means a closed point. For a $\CC$-vector space $V$, $\PP(V) := V -\{0\} /\CC^{\times}$ denotes the space of 1-dimensional subspaces of $V$.

\subsection{Lie Theory}

Our main reference on Lie theory is \cite{OnishchikVinberg1990LieGroups}. Let $\gfr$ be a semi-simple Lie algebra. Let $G$ be the simply connected Lie group associated to the Lie algebra $\gfr$. Choose a maximal torus $T$ in $G$, and a Borel subgroup $B$ containing $T$. We denote the Lie algebras of $T$ and $B$ by $\tfr$ and $\bfr$, respectively. The set of roots and the set of simple roots are denoted by $R$ and $S$, respectively. When $\gfr$ is simple, we use the numbering $S = \{\alpha_{1}, \, \ldots, \, \alpha_{\text{rank}\,\gfr}\}$ of simple roots given in \cite[Reference Chapter, Table 1]{OnishchikVinberg1990LieGroups} (which is different from the one in \cite{Bourbaki2002LieGroups}, especially for exceptional Lie algebras other than $G_{2}$). For each root $\alpha \in R$, $\gfr_{\alpha}$ means the root space corresponding to $\alpha$ so that the root decomposition of $\gfr$ is given by
\[
    \gfr = \tfr \oplus \bigoplus_{\alpha \in R} \gfr_{\alpha}.
\]

The character group of $T$ is denoted by $\chi(T)$, and each character $\lambda \in \chi(T)$ is regarded as a linear functional on $\tfr$. In this notation, if $H \in \tfr$, then the value of $\lambda$ at $\exp(H)$ is equal to $e^{\lambda(H)}$. The bracket $\langle \,,\,\rangle$ means the Killing form on $\gfr$, and the dual of a root $\alpha \in R$ is denoted by $H_{\alpha} \in \tfr$. More precisely, $H_{\alpha}$ is the element of $\tfr$ satisfying $\langle H_{\alpha}, \, H \rangle = \alpha(H)$ for all $H \in \tfr$. The pairing of two roots $\alpha$ and $\beta$ is defined as $\langle \alpha, \, \beta \rangle := \langle H_{\alpha}, \, H_{\beta} \rangle$, which extends to an inner product on $\chi(T) \otimes_{\ZZ} \RR$. The natural pairing of $\chi(T)$ and its dual $\chi_{*}(T)$ is also denoted by $\langle \lambda,\, \mu \rangle$ for $\lambda \in \chi(T)$ and $\mu \in \chi_{*}(T)$ so that the Cartan integer is given by $\langle \alpha \, | \, \beta \rangle = \langle \alpha, \, \beta^{\vee} \rangle$ for $\alpha, \, \beta \in R$ where $\beta^{\vee}$ is the coroot corresponding to $\beta$. A nonzero vector in $\gfr_{\alpha}$ for some $\alpha \in R$ is called a root vector, which is often denoted by $E_{\alpha}$. If a collection $\{E_{\alpha} \in \gfr_{\alpha} : \alpha \in R\}$ of root vectors is given, we define $N_{\alpha, \, \beta}$ for $\alpha, \, \beta \in R$ to be the complex number satisfying
\[
    [E_{\alpha}, \, E_{\beta}] = N_{\alpha, \, \beta} \cdot E_{\alpha + \beta}
\]
if $\alpha + \beta \in R$, and $N_{\alpha, \, \beta} = 0$ if $\alpha + \beta \not \in R$.

For a nonempty subset $I \subset S$, we denote by $P_{I}$ the parabolic subgroup containing $B$ generated by the complement $S\setminus I$ of $I$. That is, the Lie algebra of $P_{I}$ is
\[
    \pfr_{I} := \bfr \oplus \bigoplus_{\alpha \in R^{+} \cap \, \text{span}(S \setminus I)} \gfr_{-\alpha}
\]
where $R^{+}$ is the set of all positive roots. The opposite parabolic subgroup of $P_{I}$ is denoted by $P_{I}^{-}$. We define $W = W_{G}$ to be the Weyl group of $(G, \, T)$, and $W_{G,\, P_{I}}$ means the subgroup of $W$ generated by reflections with respect to $\alpha \in S \setminus I$ so that $P_{I} = B \cdot W_{G,\, P_{I}} \cdot B$.

\subsection{Adjoint Varieties}
From now on, let $\gfr$ be a simple Lie algebra, and $\rho \in R$ the highest root (with respect to the Borel subgroup $B$). Then the adjoint representation induces a $G$-action on $\PP(\gfr)$. Since the adjoint representation is irreducible, $\PP(\gfr)$ contains the unique closed $G$-orbit, which is the $G$-orbit containing $[E_{\rho}]$ where $E_{\rho} \in \gfr_{\rho}$ is a root vector. This projective subvariety of $\PP(\gfr)$ is called the \emph{adjoint variety} and denoted by $Z_{\gfr}$.

It is known that each $Z_{\gfr}$ is equipped with a hyperplane distribution described as follows. For the point $o:=[E_{\rho}] \in Z_{\gfr}$, the isotropy group $P:=\text{Stab}_{G}([E_{\rho}])$ is a parabolic subgroup containing $B$ and its Lie algebra is
\[
    \pfr = \tfr \oplus \bigoplus_{\alpha \in R, \, \langle \alpha, \, \rho \rangle \ge 0} \gfr_{\alpha}.
\]
Then the tangent space $T_{o} Z_{\gfr}$ of $Z_{\gfr}$ at $o$ is identified with $\gfr / \pfr$ as a $P$-module, and
\[
    \gfr / \pfr \simeq \bigoplus_{\alpha \in R, \, \langle \alpha, \, \rho \rangle < 0} \gfr_{\alpha}
\]
as vector spaces. Under these identifications, consider a hyperplane defined by
\[
    D_{o} := \bigoplus_{\alpha \in R, \, \langle \alpha| \, \rho \rangle = -1} \gfr_{\alpha} \left(= \bigoplus_{\alpha \in R \setminus \{-\rho\}, \, \langle \alpha, \, \rho \rangle < 0} \gfr_{\alpha}\right)
\]
Since $D_{o}$ is $P$-invariant, the $G$-action on $Z_{\gfr}$ induces a well-defined $G$-invariant holomorphic subbundle $D \subset T Z_{\gfr}$ on $Z_{\gfr}$ extending $D_{o}$. This hyperplane distribution $D$ is called the \emph{contact distribution} on the adjoint variety. See \cite{Wolf1965ComplexHomogeneous} and \cite{Lebrun1995FanoManifolds} for details.

Finally, observe that $D_{o}$ is even-dimensional, hence the (complex) dimension of $Z_{\gfr}$ is odd. We write $\dim_{\CC}(Z_{\gfr}) = 2n+1$ for some $n \in \ZZ_{\ge 0}$. If $\gfr$ is neither of type $A$ nor $C$ (see the assumption (\ref{assumption: pic num 1 not pn}) below), then we have $n \ge 2$. It is not hard to see that $n$ is given by
\[
    n = \left\{ \begin{array}[]{ll}
        2r - 3 & (\text{if $\gfr = B_{r}$, $r \ge 3$});\\
        2r - 4 & (\text{if $\gfr = D_{r}$, $r \ge 4$});\\
        10 & (\text{if $\gfr = E_{6}$}); \\
        16 & (\text{if $\gfr = E_{7}$}); \\
        28 & (\text{if $\gfr = E_{8}$}); \\
        7 & (\text{if $\gfr = F_{4}$}); \\
        2 & (\text{if $\gfr = G_{2}$}). \\
    \end{array} \right.
\]

\subsection{Hilbert Schemes and Chow Schemes}

Let us recall the relationship between the Hilbert scheme and the Chow scheme. Consider a projective variety $X$ equipped with an ample line bundle $\Ocal_{X}(1)$. For each polynomial $p(m) \in \QQ[m]$, there is a projective $\CC$-scheme $\text{Hilb}_{p(m)}(X, \, \Ocal_{X}(1))$, called the \emph{Hilbert scheme}, which is the universal moduli space of closed subschemes of $X$ with Hilbert polynomial $p(m)$. On the other hand, for two integers $d \in \ZZ_{\ge 0}$ and $d' \in \ZZ_{>0}$, there is another projective $\CC$-scheme $\text{Chow}_{d,\, d'}(X,\, \Ocal_{X}(1))$, called the \emph{Chow scheme}, which is the universal moduli space of non-negative proper algebraic $d$-cycles of degree $d'$ in $X$. Moreover, there is a natural morphism from the Hilbert schemes to the Chow schemes, which sends subschemes to their fundamental classes.

\begin{theorem}[{\cite[Theorem I.6.6 and Theorem I.7.3.1]{Kollar1996RationalCurves}}] \label{thm: map FC}
    For a projective variety $X$ with an ample line bundle $\Ocal_{X}(1)$ and a polynomial $p(m) \in \QQ[m]$ of degree $d$, there is a morphism
    \[
        FC: \text{Hilb}^{sn}_{p(m)} (X, \, \Ocal_{X}(1)) \rightarrow \coprod_{d'=1}^{\infty} \text{Chow}_{d, \, d'}(X,\, \Ocal_{X}(1))
    \]
    satisfying the following condition: For a closed subscheme $V \subset X$ with Hilbert polynomial $p(m)$, $FC$ is a local isomorphism near the point $[V] \in \text{Hilb}^{sn}_{p(m)}(X, \, \Ocal_{X}(1))$ if $V$ is reduced, has pure dimension and satisfies Serre's condition $S_{2}$. Here, by $\text{Hilb}^{sn}_{p(m)} (X, \, \Ocal_{X}(1))$ we denote the semi-normalization of the associated reduced scheme $(\text{Hilb}_{p(m)} (X, \, \Ocal_{X}(1)))^{red}$.
\end{theorem}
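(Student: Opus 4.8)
The plan is to establish the two assertions of the theorem — existence of the morphism $FC$ and the local isomorphism criterion — separately, using the formalism of the Chow functor and its comparison with the Hilbert functor.

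To construct $FC$, I would begin with the universal flat family $\Wcal \subset X \times \text{Hilb}_{p(m)}(X, \Ocal_{X}(1))$ and restrict it to the semi-normalized reduced scheme $\text{Hilb}^{sn}_{p(m)}$. Each fibre $\Wcal_{s}$ carries a fundamental $d$-cycle $Z(\Wcal_{s}) = \sum_{i} \ell_{i} \cdot \overline{\{\eta_{i}\}}$, the sum running over the $d$-dimensional generic points $\eta_{i}$ of $\Wcal_{s}$ with $\ell_{i}$ the length of $\Ocal_{\Wcal_{s}, \eta_{i}}$; its degree $d'$ is pinned down by the leading coefficient of $p(m)$. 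The essential point is that $s \mapsto Z(\Wcal_{s})$ is a well-defined \emph{algebraic} family of cycles in the sense of the Chow functor: flatness of $\Wcal \to \text{Hilb}_{p(m)}$ forces fundamental cycles to specialize compatibly, and, working with the Cayley--Chow form description of cycles, one checks that the coefficients of the Cayley--Chow form of $Z(\Wcal_{s})$ depend regularly on $s$. Since in characteristic zero the Chow variety represents the Chow functor exactly on the category of semi-normal schemes, this family of cycles yields the desired morphism $FC \colon \text{Hilb}^{sn}_{p(m)} \to \coprod_{d'} \text{Chow}_{d,d'}(X, \Ocal_{X}(1))$; this is precisely the reason one must pass to the semi-normalization, and not merely the reduction, of the Hilbert scheme.

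For the local isomorphism criterion, let $V \subset X$ be reduced, of pure dimension $d$, and $S_{2}$, so that its fundamental cycle $[V] = \sum_{i} \overline{\{\eta_{i}\}}$ has all multiplicities equal to $1$. I would produce an inverse to $FC$ on a neighbourhood of $[V]$ by reconstructing subschemes from cycles by means of the $S_{2}$-ification. Given a cycle $c$ close to $[V]$, endow its support $|c|$ with the reduced structure and form the canonical $S_{2}$-ification $|c|^{S_{2}}$, a pure $d$-dimensional $S_{2}$ scheme agreeing with $|c|$ in codimension $\le 1$; carrying this out fibrewise over the universal cycle on a neighbourhood of $[V]$ in $\coprod_{d'}\text{Chow}_{d,d'}(X)$ produces a family of subschemes of $X$. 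The decisive input is that this family is flat over the (semi-normal) base near $[V]$, since the formation of the $S_{2}$-ification commutes with the base changes in play as long as the fibres remain $S_{2}$ of pure dimension $d$; hence the family is classified by a morphism from a neighbourhood of $[V]$ in the Chow variety to $\text{Hilb}^{sn}_{p(m)}$. One then verifies that this morphism is a two-sided inverse to $FC$ near $[V]$: the hypotheses on $V$ give $V^{S_{2}} = V$, so the support of $[V]$ reconstructs $V$, while the fundamental cycle of $V$ is again $[V]$, and both identities propagate through the families. The reduced/pure/$S_{2}$ hypotheses on $V$ are exactly what make these two operations mutually inverse.

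I expect the main obstacle to be the flatness assertion in the second part, namely that the family of $S_{2}$-ifications of the universal cycle is flat near $[V]$: this rests on the compatibility of the $S_{2}$-ification with base change in the presence of $S_{2}$ fibres of constant dimension, combined with flatness criteria over semi-normal bases, and it demands some care with the behaviour of depth and of associated points in families. The analogous subtleties in positive characteristic, where inseparability forces one to work only up to semi-normalization, are the reason the cleanest formulation is the one quoted here from \cite{Kollar1996RationalCurves}.
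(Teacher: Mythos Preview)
The paper does not prove this theorem; it is quoted as a black box from \cite[Theorem I.6.6 and Theorem I.7.3.1]{Kollar1996RationalCurves} and used only as a tool (to identify $\Hbf(Z_{\gfr})$ and $\Cbf(Z_{\gfr})$ away from double lines). There is therefore no ``paper's own proof'' to compare against.

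That said, your sketch is a faithful outline of Koll\'ar's argument: the existence of $FC$ via the fundamental-cycle construction and the representability of the Chow functor on semi-normal bases is exactly the content of \cite[I.3--I.6]{Kollar1996RationalCurves}, and the inverse via the $S_{2}$-ification (Koll\'ar's ``hull'') is the mechanism behind \cite[I.7.3.1]{Kollar1996RationalCurves}. Your identification of the delicate step---flatness of the family of $S_{2}$-hulls over the semi-normal base---is also correct; Koll\'ar handles it through his theory of families of cycles and the Cayley--Chow form rather than by a direct depth argument. If you were actually writing this up, you would need to supply that flatness verification carefully, but for the purposes of this paper the citation suffices.
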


Note that since the semi-normalization morphism is bijective on (closed) points, we may identify points in the Hilbert scheme and points in its semi-normalization.

\subsection{Luna-Vust Theory for Symmetric Varieties} \label{section: Luna Vust theorey}

Let us review the embedding theory of spherical varieties, especially for symmetric varieties. Our main reference is \cite{Timashev2011HomogeneousSpaces} and \cite{Knop1991LunaVustTheory}.

A normal $G$-variety $X$ is called \emph{spherical} if a Borel subgroup has an open orbit in $X$, or equivalently if a Borel subgroup has only finitely many orbits. If $O$ is the open $G$-orbit in the spherical variety $X$, then $O$ is a homogeneous spherical variety, and we say that \emph{$X$ is a (spherical) $O$-embedding}.

Colored data associated to a homogeneous spherical variety $O$ can be constructed as follows. First, choose a maximal torus $T' \subset G$ and a Borel subgroup $B' \subset G$ containing $T'$. Here, we use the prime symbol for $T'$ and $B'$ to emphasize that they can be different from $T$ and $B$ chosen in the previous sections. Then define $\Lambda_{O}$ to be the sublattice in the character group $\chi(B') \simeq \chi(T')$ consisting of weights of rational $B'$-eigenfunctions, i.e. elements of $\CC(O)^{(B')} \subset \CC(O)^{\times}$. If $\CC(O)^{B'} \subset \CC(O)^{\times}$ denotes the set of $B'$-invariant rational functions on $O$, then since we have a short exact sequence
\[
    0 \rightarrow \CC^{\times} = \CC(O)^{B'} \rightarrow \CC(O)^{(B')} \rightarrow \Lambda_{O} \rightarrow 0,
\]
a valuation $v : \CC(O)^{\times} \rightarrow \QQ$ induces a group homomorphism $\Lambda_{O} \rightarrow \QQ$. In other words, a valuation corresponds to an element in the $\QQ$-vector space $\Ecal := \text{Hom}_{\ZZ}(\Lambda_{O}, \, \QQ)$. Moreover, this correspondence is injective for $G$-invariant valuations (\cite[Corollary 2.8]{Knop1991LunaVustTheory}), hence we may identify the set of $G$-invariant valuations on $O$ with its image in $\Ecal$, denoted by $\Vcal$ and called the \emph{valuation cone}. If we define
\[
    \Dcal(O) := \{\text{$B'$-stable prime Weil divisors of $O$}\}
\]
and consider the valuation induced by each element of $\Dcal(O)$, then a similar process yields a function $\epsilon : \Dcal(O) \rightarrow \Ecal$, which is not injective in general. The elements of $\Dcal(O)$ are called \emph{colors}.

Next, consider a \emph{simple} $O$-embedding $X$, meaning that $X$ is an $O$-embedding which contains exactly one closed orbit. If $Y \subset X$ is the unique closed orbit in $X$, define
\[
    \Fcal(X) := \{D \in \Dcal(O) : Y \subset \overline{D} \text{ in } X\}.
\]
We call each element of $\Fcal(X)$ a \emph{color} of $X$. Since $G$-stable prime divisors of $X$ can be considered as elements of $\Vcal$, it is possible to define a convex cone in $\Ecal$ by
\[
    \Ccal(X) := \QQ_{\ge 0} \langle \epsilon(\Fcal(X)), \, \text{$G$-stable prime divisors of $X$} \rangle.
\]

Now let $X$ be an arbitrary $O$-embedding. For every $G$-orbit $Y \subset X$, $Y$ has a $G$-stable open neighborhood
\[
    X_{Y} := \{x \in X : Y \subset \overline{G \cdot x}\}
\]
which is a simple $O$-embedding such that $Y$ is its unique closed orbit. Thus to $X$, we can associate a collection of pairs
\[
    \mathfrak{F}(X) := \{(\Ccal(X_{Y}), \, \Fcal(X_{Y})) : Y \text{ is a $G$-orbit in }X\}.
\]
Then $(\Ccal(X_{Y}), \, \Fcal(X_{Y}))$ and $\mathfrak{F}(X)$ are a colored cone and a colored fan, respectively, in the following sense.
\begin{defn} \label{defn: colored cone and fan}
    Let $\Vcal \subset \Ecal$ and $\epsilon : \Dcal(O) \rightarrow \Ecal$ as before.
    \begin{enumerate}
        \item A \emph{colored cone} is a pair $(\Ccal, \, \Fcal)$ of subsets $\Ccal \subset \Ecal$ and $\Fcal \subset \Dcal(O)$ such that
        \begin{enumerate}
            \item $\Ccal$ is a convex cone generated by $\epsilon(\Fcal)$ and finitely many elements in $\Vcal$;
            \item the relative interior of $\Ccal$ intersects with $\Vcal$.
        \end{enumerate}
        \item A colored cone $(\Ccal, \, \Fcal)$ is called \emph{strictly convex} if $\Ccal$ is strictly convex and $0 \not\in \epsilon(\Fcal)$.
        \item For a colored cone $(\Ccal, \, \Fcal)$, a pair $(\Ccal_{0}, \, \Fcal_{0})$ is called a \emph{colored face} of $(\Ccal, \, \Fcal)$ if $\Ccal_{0}$ is a face of the cone $\Ccal$, $\Fcal_{0} = \Fcal \cap \epsilon^{-1}(\Ccal_{0})$, and the relative interior of $\Ccal_{0}$ intersects with $\Vcal$.
        \item A nonempty finite set $\Ffr$ of colored cones in $(\Ecal,\, \Dcal(\Ocal))$ is called a \emph{colored fan} if
        \begin{enumerate}
            \item For every element of $\Ffr$, its colored faces are contained in $\Ffr$;
            \item For every $v \in \Vcal$, there is at most one element of $\Ffr$ of which relative interior contains $v$.
        \end{enumerate}
        \item A colored fan is called \emph{strictly convex} if it consists of strictly convex colored cones.
    \end{enumerate}
\end{defn}
\begin{theorem}[{\cite[Theorem 4.3]{Knop1991LunaVustTheory}, \cite[Section 15]{Timashev2011HomogeneousSpaces}}] \label{thm: classification of spherical}
    For a homogeneous spherical variety $O$, the map $X \mapsto \mathfrak{F}(X)$ is a bijection between isomorphism classes of $O$-embeddings, and strictly convex colored fans in $(\Ecal, \, \Dcal(O))$.
\end{theorem}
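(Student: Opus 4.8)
The plan is to reduce the classification to \emph{simple} $O$-embeddings and then to reconstruct an arbitrary embedding from its simple pieces. I would take the following facts from the structure theory of spherical homogeneous spaces as inputs: sphericity makes $\Lambda_{O}$ a lattice of finite rank, so $\Ecal$ is a finite-dimensional $\QQ$-vector space; a $G$-invariant valuation of $\CC(O)$ is determined by the induced element of $\Ecal$ (\cite[Corollary 2.8]{Knop1991LunaVustTheory}), so $\Vcal \subset \Ecal$ and $\epsilon \colon \Dcal(O) \to \Ecal$ are well defined; $\Vcal$ is a convex polyhedral cone (for symmetric $O$, a fundamental chamber of the little Weyl group, by Vust \cite{Vust1990PlongementsEspaces}); and $\Dcal(O)$ is finite. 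Any $O$-embedding $X$ is then covered by the $G$-stable open subsets $X_{Y}$, one for each $G$-orbit $Y \subset X$, each of which is a simple $O$-embedding with unique closed orbit $Y$, and $X$ is recovered from this open cover. So it suffices to prove: (A) $X \mapsto (\Ccal(X), \Fcal(X))$ is a bijection between simple $O$-embeddings and strictly convex colored cones; and (B) the data of how the $X_{Y}$ are glued is encoded by, and exactly constrained by, the colored-fan axioms of Definition~\ref{defn: colored cone and fan}.

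For (A), one first checks that $X \mapsto (\Ccal(X), \Fcal(X))$ lands among strictly convex colored cones. That $\Ccal(X)$ is generated by $\epsilon(\Fcal(X))$ and the (finitely many) $G$-stable prime divisors of $X$ is essentially the definition; that its relative interior meets $\Vcal$ follows because the set of $G$-invariant valuations whose center on $X$ equals $Y$ is precisely the relative interior of $\Ccal(X)$ intersected with $\Vcal$, and it is nonempty. Strict convexity — $\Ccal(X)$ strictly convex and $0 \notin \epsilon(\Fcal(X))$ — is forced by $X$ being a separated variety in which $Y$ is genuinely a closed orbit: a lineality in $\Ccal(X)$ would shrink the function field of the $B'$-chart around $Y$ below $\CC(O)$, and a color in $\Fcal(X)$ mapping to $0$ would be invisible to $X$ and hence not recoverable from it. Conversely, from a strictly convex colored cone $(\Ccal, \Fcal)$ I would build the embedding by first forming a $B'$-chart: let $A \subset \CC(O)$ be the span of those $B'$-eigenfunctions $f \in \CC(O)^{(B')}$ whose weight pairs non-negatively with every element of $\Ccal$; since $\CC(O)^{B'} = \CC^{\times}$ the weight determines $f$ up to scalar, so $A$ is the semigroup algebra of $\Lambda_{O} \cap \Ccal^{\vee}$, finitely generated by Gordan's lemma because $\Ccal$ is polyhedral. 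Then $X_{0} := \mathrm{Spec}\, A$ (or a suitable $B'$-stable open piece of it) is a $B'$-chart and $X := G \cdot X_{0}$ is the desired embedding. The verifications are: $X$ is a separated scheme of finite type with open $G$-orbit $O$ and a unique closed orbit; $(\Ccal(X), \Fcal(X)) = (\Ccal, \Fcal)$; and the two constructions are mutually inverse, with injectivity of $X \mapsto (\Ccal(X), \Fcal(X))$ on simple embeddings coming from the valuative criterion of separatedness, since two simple embeddings with equal colored cone are $G$-equivariantly birational and each dominates the other's chart.

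For (B), given $\mathfrak{F}(X)$ one rebuilds each $X_{Y}$ from its colored cone by (A), and the overlaps are combinatorial: $X_{Y_{1}} \cap X_{Y_{2}} = X_{Y}$ where $Y$ is the orbit with $\overline{Y_{1}}, \overline{Y_{2}} \subseteq \overline{Y}$, and $\Ccal(X_{Y})$ is the unique colored face common to $\Ccal(X_{Y_{1}})$ and $\Ccal(X_{Y_{2}})$ whose relative interior meets $\Vcal$ — uniqueness being exactly fan axiom (b). So the gluing is determined by $\mathfrak{F}(X)$, giving injectivity of $X \mapsto \mathfrak{F}(X)$; surjectivity is obtained by gluing the simple embeddings attached to the cones of an arbitrary strictly convex colored fan, where axiom (a) makes the pairwise gluings mutually compatible and axiom (b) makes the result separated. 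Finally, $\mathfrak{F}(X)$ really is a colored fan in the sense of the definition: the colored faces of $\Ccal(X_{Y})$ biject with the $G$-orbits contained in $\overline{Y}$, and any $v \in \Vcal$ is the generic $G$-invariant valuation of at most one orbit of $X$.

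\textbf{Main obstacle.} The technical heart is the list of verifications in (A): that $X = G \cdot X_{0}$ is a \emph{separated} finite-type scheme with a unique closed orbit, and that the two constructions are mutually inverse. Separatedness is precisely where strict convexity of the colored cone is used essentially, and the bijectivity rests on the valuative criterion together with the polyhedrality of $\Vcal$. For the symmetric varieties actually needed in this paper one may shortcut these points via Vust's explicit description \cite{Vust1990PlongementsEspaces}, in which $\Vcal$ is a Weyl chamber and the charts are concrete; the general statement requires the full Luna--Vust apparatus of \cite{Knop1991LunaVustTheory}.
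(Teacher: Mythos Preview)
The paper does not prove this theorem: it is stated with attribution to \cite[Theorem 4.3]{Knop1991LunaVustTheory} and \cite[Section 15]{Timashev2011HomogeneousSpaces} and used as a black box throughout Section~\ref{section: proof of main theorems}. So there is no proof in the paper to compare your proposal against.

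That said, your sketch is a faithful outline of the Luna--Vust/Knop argument, with the correct two-step reduction (simple embeddings first, then gluing) and the right identification of where strict convexity and the fan axioms enter. One point to be careful about in step (A): the $B'$-chart is not literally $\mathrm{Spec}$ of the semigroup algebra of $\Lambda_O \cap \Ccal^{\vee}$, since $\CC(O)^{(B')}$ is only a multiplicative group of eigenfunctions, not a subring of $\CC(O)$; the actual construction in \cite{Knop1991LunaVustTheory} builds the chart as $\mathrm{Spec}$ of a subring of $\CC(O)$ cut out by the valuative conditions coming from $\Ccal$ and $\Fcal$ together, and normality of $X$ is essential for recovering this ring from the cone. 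Your parenthetical ``or a suitable $B'$-stable open piece of it'' papers over this, and it is precisely here that the argument becomes nontrivial. Since the paper only needs the statement as input, this level of detail is not required for the paper's purposes.
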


Under this correspondence, a simple $O$-embedding $X$ is corresponding to a colored fan consisting of $(\Ccal(X), \, \Fcal(X))$ and its colored faces. Conversely, every strictly convex colored cone is induced from a simple $O$-embedding.

A lot of geometric properties of spherical varieties can be expressed in terms of colored data. For example, we have the following lemmas.

\begin{lemma}[{\cite[Lemma 4.2]{Knop1991LunaVustTheory}}] \label{lem: number of orbits in terms of colored faces}
    For a spherical variety $X$, the assignment $Y \mapsto (\Ccal(X_{Y}), \, \Fcal(X_{Y}))$ between orbits in $X$ and elements of $\mathfrak{F}(X)$ is bijective and order-reversing. Here, the set of orbits is (partially) ordered by inclusion of closures.
\end{lemma}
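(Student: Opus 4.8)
The plan is to deduce this from the classification theorem (Theorem \ref{thm: classification of spherical}) together with the local structure of spherical embeddings. First I would record the elementary fact that $X_Y$ determines $Y$, since $Y$ is by construction the unique closed $G$-orbit of $X_Y$; hence $Y \mapsto X_Y$ is injective, and applying Theorem \ref{thm: classification of spherical} to the simple embedding $X_Y$ shows that $Y \mapsto (\Ccal(X_Y),\, \Fcal(X_Y))$ is injective as well. For the order-reversal I would use that for two orbits $Y, Y' \subset X$ one has $Y' \subseteq X_Y$ if and only if $Y \subseteq \overline{Y'}$ (because $X_Y$ is $G$-stable and contains a point of $Y'$ exactly when $Y \subseteq \overline{G \cdot y'}$), so that $Y \subseteq \overline{Y'}$ forces $X_{Y'} \subseteq X_Y$; combined with the standard consequence of Theorem \ref{thm: classification of spherical} that an inclusion of $O$-embeddings corresponds to realizing the colored fan of the smaller one as a subcollection of colored faces of the larger, this gives that $Y \subseteq \overline{Y'}$ is equivalent to $(\Ccal(X_{Y'}),\, \Fcal(X_{Y'}))$ being a colored face of $(\Ccal(X_Y),\, \Fcal(X_Y))$.

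It then remains to prove surjectivity onto $\mathfrak{F}(X)$, i.e. that every colored cone in the fan of $X$ has the form $(\Ccal(X_Y),\, \Fcal(X_Y))$, and for this it suffices to treat the simple case: $X$ has a unique closed orbit $Y_0$, and I claim the $G$-orbits of $X$ are in bijection with the colored faces of $(\Ccal(X),\, \Fcal(X))$. Here I would invoke the local structure theorem for spherical varieties: there is a parabolic $P \supseteq B$ and a $P$-stable affine open subset $X_{\circ} \subseteq X$ meeting $Y_0$ — hence meeting every $G$-orbit of $X$ — such that $X_{\circ}$ is $P$-equivariantly a product of the unipotent radical of $P$ with an affine variety $Z$ acted on by a quotient torus, in a way that identifies the $G$-orbits of $X$ with the torus orbits of $Z$. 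The torus orbits of $Z$ are the loci cut out by the vanishing of faces of the weight monoid of $Z$, and under the Luna–Vust dictionary this weight monoid is dual to $\Ccal(X)$, while the torus-invariant prime divisors of $Z$ together with the colors meeting $X_{\circ}$ account precisely for the generators of $\Ccal(X)$ and the elements of $\Fcal(X)$; matching these produces the bijection between orbits and colored faces.

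Finally I would check that the pair attached to an orbit $Y$ coincides with the expected colored face, namely that $\Ccal(X_Y)$ is the face of $\Ccal(X)$ spanned by the images under $\epsilon$ and the valuation map of those $G$-invariant prime divisors and colors in $\Fcal(X)$ whose closures contain $Y$, and that $\Fcal(X_Y) = \Fcal(X) \cap \epsilon^{-1}(\Ccal(X_Y))$ as in Definition \ref{defn: colored cone and fan}; the requirement there that the relative interior of $\Ccal(X_Y)$ meet the valuation cone $\Vcal$ holds automatically because $X_Y$ is itself a spherical embedding, so its colored cone already satisfies the colored-cone axioms. The main obstacle is the second step: translating the $G$-orbit structure into the combinatorics of $\Ccal(X)$ genuinely requires the local structure theorem and the identification of $G$-invariant valuations and $B$-stable divisors with the lattice-theoretic data $(\Ecal,\, \Vcal,\, \Dcal(O),\, \epsilon)$, which is the technical heart of Luna–Vust theory; everything else is bookkeeping with faces of cones.
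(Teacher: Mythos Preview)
The paper does not give its own proof of this lemma: it is stated with a citation to \cite[Lemma 4.2]{Knop1991LunaVustTheory} and used as a black box. So there is no ``paper's proof'' to compare against; your sketch is a reasonable outline of the standard argument (essentially Knop's), and the ingredients you identify---injectivity via Theorem~\ref{thm: classification of spherical}, the equivalence $Y \subseteq \overline{Y'} \Leftrightarrow X_{Y'} \subseteq X_Y$, and surjectivity in the simple case via the local structure theorem reducing to toric combinatorics---are the right ones.
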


\begin{lemma}[{\cite[Theorem 5.2]{Knop1991LunaVustTheory}}] \label{lem: properness criterion for spherical var}
    A spherical variety $X$ is complete if and only if the valuation cone $\Vcal$ is contained in the union of colored cones in the colored fan of $X$. In particular, if $X$ is simple, then $X$ is a complete variety if and only if $\Ccal(X)$ is generated by $\epsilon(\Fcal(X))$ and $\Vcal$.
\end{lemma}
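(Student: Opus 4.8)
The plan is to apply the valuative criterion of properness and then translate it, through the Luna--Vust dictionary, into a statement about the colored fan $\mathfrak{F}(X)$. Since $X$ is a normal variety that is separated and of finite type over $\CC$, it is complete precisely when every valuation $v$ of its function field $K := \CC(X) = \CC(O)$ that is trivial on $\CC$ has a center on $X$, i.e.\ $\Ocal_{X,x}$ is dominated by the valuation ring $\Ocal_{v}$ for some point $x \in X$. The first step, which I expect to be the main obstacle, is the reduction to $G$-\emph{invariant} valuations: to an arbitrary valuation $v$ of $K$ one associates a $G$-invariant valuation $v' \in \Vcal$ (its ``invariant part'') such that $v$ has a center on $X$ whenever $v'$ does. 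This is precisely the sort of statement furnished by the valuation theory developed in \cite{Knop1991LunaVustTheory} and \cite{Timashev2011HomogeneousSpaces}; note that the bijection between $G$-invariant valuations and points of $\Ecal$ (injectivity being \cite[Corollary 2.8]{Knop1991LunaVustTheory}) says nothing by itself about non-invariant valuations, so controlling how such a valuation meets the colors is the genuinely substantial point. Granting the reduction, $X$ is complete if and only if every element of $\Vcal$ has a center on $X$.

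The second step is to rewrite ``$v \in \Vcal$ has a center on $X$'' combinatorially. For each $G$-orbit $Y \subset X$ the open set $X_{Y} = \{x : Y \subset \overline{G\cdot x}\}$ is a simple $O$-embedding with colored cone $(\Ccal(X_{Y}), \, \Fcal(X_{Y}))$, and, as part of the Luna--Vust correspondence, a $G$-invariant valuation has $Y$ as its center exactly when it lies in the relative interior of $\Ccal(X_{Y})$. Combined with property (4b) of Definition \ref{defn: colored cone and fan} (each $v \in \Vcal$ lies in the relative interior of at most one colored cone of $\mathfrak{F}(X)$), this shows that $v$ has a center on $X$ if and only if $v$ belongs to $|\mathfrak{F}(X)| := \bigcup_{(\Ccal,\Fcal) \in \mathfrak{F}(X)} \Ccal$. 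Together with the first step this gives the first assertion: $X$ is complete $\iff \Vcal \subseteq |\mathfrak{F}(X)|$.

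For the simple case, suppose $X$ is simple with closed orbit $Y$. Then $X = X_{Y}$ and $\mathfrak{F}(X)$ consists of $(\Ccal(X), \, \Fcal(X))$ together with its colored faces, so $|\mathfrak{F}(X)| = \Ccal(X)$ because a convex cone is the union of its faces; hence $X$ is complete $\iff \Vcal \subseteq \Ccal(X)$. It then remains to match this with the condition that $\Ccal(X)$ is generated by $\epsilon(\Fcal(X))$ and $\Vcal$. One implication is immediate: if $\Ccal(X) = \QQ_{\ge 0}\langle \epsilon(\Fcal(X)), \Vcal\rangle$ then certainly $\Vcal \subseteq \Ccal(X)$. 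For the converse, if $\Vcal \subseteq \Ccal(X)$ then the convex cone generated by $\epsilon(\Fcal(X)) \cup \Vcal$ is contained in $\Ccal(X)$, while it contains $\epsilon(\Fcal(X))$ and the finitely many elements of $\Vcal$ coming from the $G$-stable prime divisors of $X$, which together generate $\Ccal(X)$ by definition; hence the two cones coincide. Modulo the valuation-theoretic input of the first step, this is all bookkeeping with convex cones together with the order-reversing orbit--cone correspondence of Lemma \ref{lem: number of orbits in terms of colored faces}.
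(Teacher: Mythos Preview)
The paper does not supply its own proof of this lemma: it is simply quoted from \cite[Theorem 5.2]{Knop1991LunaVustTheory} as background, with no argument given. Your sketch is a faithful outline of the standard Luna--Vust argument (valuative criterion, reduction to $G$-invariant valuations, orbit--cone correspondence), and the bookkeeping for the simple case is correct; there is nothing in the paper to compare it against beyond the citation itself.
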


\begin{lemma}[{\cite[Lemma 7.5]{Knop1991LunaVustTheory}}] \label{lem: stabilizer of colors and isotropy group of closed orbit}
    Let $O$ be a homogeneous spherical variety. Suppose that $X$ is a simple $O$-embedding and its unique closed orbit $Y$ is projective. If $T \subset G$ is an arbitrary maximal torus which is equal to $g^{-1} \cdot T' \cdot g$ for some $g \in G$ and $w_{0}$ is a representative of the longest element in the Weyl group of $(G, \, T)$, then the isotropy group of $Y$ containing $B := g^{-1} \cdot B' \cdot g$ is equal to
    \[
        \bigcap_{\Dcal \in \Dcal(O) \setminus \Fcal(X)} (w_{0} \cdot g^{-1} \cdot \text{Stab}_{G}(\Dcal) \cdot g \cdot w_{0}^{-1})^{-}
    \]
    where the colored data $\Dcal(O)$ and $\Fcal(X)$ are defined with respect to $B'$.
\end{lemma}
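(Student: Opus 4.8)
The statement is Knop's description of the isotropy of the closed orbit, so the plan is to reconstruct it from the \emph{local structure theorem} for spherical embeddings. By applying the inner automorphism $x \mapsto g\,x\,g^{-1}$ to the whole assertion, the data $(T,B,w_{0})$ becomes $(T',B',g w_{0} g^{-1})$, and the right-hand side becomes $\bigcap_{\Dcal} (g w_{0} g^{-1}\,\mathrm{Stab}_{G}(\Dcal)\, g w_{0}^{-1} g^{-1})^{-}$, which is exactly the $g = e$ formula for the torus $T'$. Since $\Dcal \mapsto g^{-1}\Dcal$ matches $B'$-stable divisors with $B$-stable divisors, it suffices to prove the case $g = e$, $T = T'$, $B = B'$; I will do so and then transport back by conjugation.

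First I would set $P := \bigcap_{\Dcal \in \Dcal(O)\setminus \Fcal(X)} \mathrm{Stab}_{G}(\Dcal)$, a parabolic containing $B$ (each color is $B$-stable, so each $\mathrm{Stab}_{G}(\Dcal) \supseteq B$ is parabolic). The geometric input is the local structure theorem applied to $X = X_{Y}$: let $X_{0}$ be the complement in $X$ of the closures of all $B$-stable prime divisors that do not contain $Y$. Then $X_{0}$ is $B$-stable and affine, and I claim $\mathrm{Stab}_{G}(X_{0}) = P$. Indeed $\mathrm{Stab}_{G}(X_{0})$ is a connected group containing $B$, hence a standard parabolic, so it cannot permute the finitely many discarded colors nontrivially and must fix each one; the discarded $G$-stable divisors impose no condition since all of $G$ fixes them. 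Writing $P = L \ltimes P_{u}$ with Levi $L \supseteq T$, the theorem gives a $P$-equivariant isomorphism of $X_{0}$ with $P_{u} \times S$ for an affine $L$-stable slice $S$.

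Next I would locate the closed orbit in this cell. As $Y$ is the closed orbit, $Y \cap X_{0}$ is a single open $P$-orbit, so $S \cap Y$ is one point $z_{0}$, necessarily $L$-fixed; thus $L \subseteq Q := \mathrm{Stab}_{G}(z_{0})$ and $Y = G/Q$. From $Y \cap X_{0} = P_{u}\cdot z_{0}$ one reads off $P \cap Q = L$ and $\dim P_{u} = \dim Y$, so $P$ and $Q$ are opposite parabolics sharing the Levi $L$; since a unipotent element of $P_{u}$ normalizing $L$ is trivial, this forces the honest equality $Q = P^{-}$. Hence $z_{0}$ is the $B^{-}$-fixed point of $Y$, with isotropy $P^{-} \supseteq B^{-}$.

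Finally I would pass to the point with isotropy containing $B$. The $B$-fixed point is $y = w_{0}\cdot z_{0}$ (as $w_{0} B^{-} w_{0}^{-1} = B$), so its isotropy group is $w_{0} P^{-} w_{0}^{-1}$. Because $w_{0} \in N_{G}(T)$ and $P,P^{-}$ share the Levi $L \supseteq T$, conjugation commutes with taking opposites, and since all the $w_{0}\,\mathrm{Stab}_{G}(\Dcal)\,w_{0}^{-1}$ contain the common Borel $B^{-}$, taking opposites commutes with intersection, giving
\[
    w_{0} P^{-} w_{0}^{-1} = \Big(\bigcap_{\Dcal} w_{0}\,\mathrm{Stab}_{G}(\Dcal)\,w_{0}^{-1}\Big)^{-} = \bigcap_{\Dcal \in \Dcal(O)\setminus\Fcal(X)} \big(w_{0}\,\mathrm{Stab}_{G}(\Dcal)\,w_{0}^{-1}\big)^{-},
\]
which is the stated formula for $g = e$; conjugating back by $g^{-1}$ finishes it. The hard part is precisely the combination of the local structure theorem with the opposite/$w_{0}$ bookkeeping: one must verify that discarding exactly the colors in $\Dcal(O)\setminus\Fcal(X)$ yields a $P$-stable affine cell with $G$-stabilizer $P$, then track that the closed orbit's isotropy is the \emph{opposite} parabolic $P^{-}$ and that conjugation by $w_{0}$ is exactly what carries the $B^{-}$-fixed point to the $B$-fixed point.
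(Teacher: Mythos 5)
The paper offers no proof of this lemma: it is imported verbatim from Knop's Lemma 7.5, so there is no internal argument to compare yours against. Your reconstruction via the local structure theorem is essentially the standard (and Knop's own) route, and it is sound. The reduction to $g=e$, the identification $\text{Stab}_{G}(X_{0})=\bigcap_{\Dcal\in\Dcal(O)\setminus\Fcal(X)}\text{Stab}_{G}(\Dcal)=P$ (using that a parabolic is connected and hence fixes each discarded component, and that in a simple embedding every $G$-stable divisor already contains $Y$), the dimension count forcing $Q=P^{-}$, and the final $w_{0}$-bookkeeping, including the interchange $\bigl(\bigcap_{i}Q_{i}\bigr)^{-}=\bigcap_{i}Q_{i}^{-}$ for parabolics containing a common Borel with Levis containing $T$, are all correct. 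The one place where you assert more than you justify is the sentence ``$Y\cap X_{0}$ is a single open $P$-orbit, so $S\cap Y$ is one point'': this does not follow merely from $Y$ being the closed orbit. You need either the refined form of the local structure theorem at the closed orbit of a simple embedding (which asserts precisely that the slice meets $Y$ in a single $L$-fixed point), or a separate argument that $Y\cap X_{0}$, being a $B$-stable affine open subset of the flag variety $Y=G/Q$ with divisorial complement, must be the big cell --- e.g.\ because the complement of any proper subunion of the Schubert divisors of $G/Q$ contains a complete curve and hence cannot be affine. Once that is supplied, the rest of your argument closes without further issues.
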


From now on, let us concentrate on symmetric varieties.

\begin{defn}
    A homogeneous variety $G/K$ is called \emph{symmetric} if there is a nontrivial holomorphic involution $\sigma: G \rightarrow G$ such that $G^{\sigma} \subset K \subset N_{G}(G^{\sigma})$ where $G^{\sigma}$ is the fixed point subgroup. A normal $G$-variety which contains an open $G$-orbit isomorphic to a homogeneous symmetric variety is also called \emph{symmetric}.
\end{defn}

A homogeneous symmetric variety is spherical (\cite[Theorem 26.14]{Timashev2011HomogeneousSpaces}, \cite[Section 1.3]{deConciniProcesi2006CompleteSymmetric}). Therefore we may apply Theorem \ref{thm: classification of spherical} to symmetric varieties. Indeed, Vust (\cite{Vust1990PlongementsEspaces}) obtained a more practical description of colored data for symmetric varieties, which is explained in the rest of this section, following \cite[Section 26]{Timashev2011HomogeneousSpaces}. For simplicity, we only consider the case when $K = G^{\sigma}$, which is a connected reductive subgroup since $G$ is simple and simply connected (\cite[Section 8]{Steinberg1968EndomorphismsLinear}).

For an involution $\sigma: G \rightarrow G$ and the homogeneous symmetric variety $O = G/G^{\sigma}$, let $T'$ be a maximally $\sigma$-split torus in $G$. That is, $T'$ is a maximal torus such that $T'$ is $\sigma$-stable and $\dim \{t \in T' : \sigma(t) = t^{-1}\}$ is maximal among all maximal tori. As before, $B'$ is a Borel subgroup containing $T'$. Then define $R'$ and $S'$ as the root system and the set of simple roots defined by $(G, \, T',\, B')$, respectively. Let $T'_{1}$ be the identity component of $\{t \in T' : \sigma(t) = t^{-1}\}$ so that $T'_{1}$ is a subtorus of $T'$. Consider subsets
\begin{align*}
    R'_{O}&:= \{\overline{\alpha'} \in \chi(T'_{1}) : \alpha' \in R'\} \setminus \{0\}, \\
    S'_{O}&:= \{\overline{\alpha'_{i}} \in \chi(T'_{1}) : \alpha'_{i} \in S'\} \setminus \{0\}
\end{align*}
where $\overline{\alpha'} := \alpha'|_{T'_{1}}$. Then $R'_{O}$ becomes a root system of $\chi(T_{1}') \otimes_{\ZZ} \QQ$ with simple roots in $S_{O}'$ (\cite[Lemma 26.16]{Timashev2011HomogeneousSpaces}), called the \emph{restricted root system}. Moreover, the lattice $\Lambda_{O}$ is isomorphic to the character group $\chi(T'/T' \cap G^{\sigma})= \chi(T_{1}'/T_{1}' \cap G^{\sigma})$. This lattice is a sublattice of $\chi(T'_{1})$ with finite index, thus the vector space $\Ecal$ is identified with $\chi_{*}(T'_{1}) \otimes \QQ$.
\begin{theorem}[{\cite[Section 26]{Timashev2011HomogeneousSpaces}, \cite[Section 2.4]{Vust1990PlongementsEspaces}, \cite[Section 2]{Ruzzi2011SmoothProjective}}] \label{thm: colored data of symmetric var}
    For a homogeneous symmetric variety $O = G/G^{\sigma}$, via the isomorphism $\Ecal \simeq \chi_{*}(T'_{1}) \otimes \QQ$, we have the following identifications:
    \begin{enumerate}
        \item The lattice $\Lambda_{O}$ is identified with the doubled weight lattice $2 \cdot (\ZZ \langle (R'_{O})^{\vee} \rangle)^{*} \subset \chi(T'_{1}) \otimes \QQ$.
        \item The images $\epsilon(\Dcal(O))$ in $\Ecal$ are exactly the halves of restricted simple coroots $\frac{1}{2} (S'_{O})^{\vee}$.
        \item $\Vcal$ is identified with the negative Weyl chamber of $R_{O}'$ in $\Ecal$.
    \end{enumerate}
    Moreover, if $G^{\sigma}$ is semi-simple, then the map $\epsilon : \Dcal(O) \rightarrow \Ecal$ is injective. In this case, if $\Dcal \in \Dcal(O)$ is sent to $\frac{1}{2} \lambda^{\vee}$ for some $\lambda \in S'_{O}$, then the stabilizer $\text{Stab}_{G}(\Dcal)$ of the divisor $\Dcal \subset O$ is
    \[
        \text{Stab}_{G}(\Dcal) = P'_{\{\alpha'_{j} \in S' : \overline{\alpha'_{j}} = \lambda \}},
    \]
    i.e. the parabolic subgroup containing $B'$ and generated by simple roots $\alpha'_{k} \in S'$ such that either $\overline{\alpha'_{k}} = 0$ or $\overline{\alpha'_{k}} \in S'_{O} \setminus \{ \lambda \}$.
\end{theorem}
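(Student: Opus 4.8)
This statement is classical in the structure theory of symmetric varieties, and I would prove the four assertions in turn, using the restricted root system $R'_{O}$ and its Weyl group as the organizing tools. For part (1) I would begin from the identification $\Lambda_{O} \cong \chi(T'_{1}/(T'_{1}\cap G^{\sigma}))$ already recorded above. Because $\sigma$ acts on $T'_{1}$ by inversion, the subgroup $T'_{1}\cap G^{\sigma}$ is the $2$-torsion $T'_{1}[2]$, so a character of $T'_{1}$ is trivial on it precisely when it lies in $2\chi(T'_{1})$; this is the source of the doubling. To pin down the lattice exactly, I would invoke the Cartan--Helgason theorem: the irreducible summands $V_{\lambda}$ of $\CC[G/G^{\sigma}]$ are exactly those whose highest weight restricts on $T'_{1}$ to an element of $2\,(\ZZ\langle(R'_{O})^{\vee}\rangle)^{*}$, since the integrality condition $\langle\lambda,\beta^{\vee}\rangle\in 2\ZZ$ for $\beta\in R'_{O}$ is equivalent to $\overline{\lambda}\in 2P(R'_{O})$. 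As the $B'$-eigenvalues on $\CC(O)^{(B')}$ are the differences of such highest weights, the group they generate is $\Lambda_{O} = 2\,(\ZZ\langle(R'_{O})^{\vee}\rangle)^{*}$, which is (1).

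For (2) I would parametrize the colors by the simple restricted roots via rank-one reduction. Each $\alpha\in S'_{O}$ determines a minimal parabolic beyond $B'$, and hence a $B'$-stable prime divisor $D_{\alpha}$ whose image $\epsilon(D_{\alpha})$ is computed by the order of vanishing along $D_{\alpha}$ of a $B'$-eigenfunction of weight $\lambda$. Localizing to the rank-one symmetric subspace attached to $\alpha$ reduces this to an explicit computation on a rank-one model (an $SL_{2}$- or $PGL_{2}$-symmetric space), where the order of vanishing equals $\frac{1}{2}\langle\lambda,\alpha^{\vee}\rangle$. Under the identification $\Ecal\cong\chi_{*}(T'_{1})\otimes\QQ$ this yields $\epsilon(D_{\alpha})=\frac{1}{2}\alpha^{\vee}$, and every element of $\frac{1}{2}(S'_{O})^{\vee}$ arises this way, proving (2). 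Assertion (3) I would deduce from the general structure of valuation cones (\cite{Knop1991LunaVustTheory}): for any spherical homogeneous space the valuation cone $\Vcal$ is a fundamental domain for the action of a finite reflection group on $\Ecal$, the little Weyl group. For a symmetric space this little Weyl group is the restricted Weyl group $W(R'_{O})$, which I would identify using the structure of the maximally $\sigma$-split torus (\cite{Vust1990PlongementsEspaces}, \cite{Timashev2011HomogeneousSpaces}); the antidominant orientation is then fixed by checking that $G$-invariant valuations take nonpositive values on the weights of regular functions, so $\Vcal$ is exactly the negative Weyl chamber of $R'_{O}$.

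For the final assertions, I would treat injectivity of $\epsilon$ by the same rank-one reduction: two colors can share an image only when the rank-one subsymmetric space along some $\alpha$ carries an extra character of the isotropy, a phenomenon governed by the character group $\chi(G^{\sigma})$; semisimplicity of $G^{\sigma}$ makes $\chi(G^{\sigma})$ finite and thereby forces $\epsilon$ to be injective, so each $\frac{1}{2}\lambda^{\vee}$ has a unique preimage $\Dcal$. To compute $\text{Stab}_{G}(\Dcal)$, I would use the description of the stabilizing parabolic $P(\Dcal)$: the negative simple root subgroup $U_{-\alpha'_{k}}$ fixes $\Dcal$ unless it moves it, which by the rank-one analysis occurs exactly when $\overline{\alpha'_{k}}=\lambda$; thus $\text{Stab}_{G}(\Dcal)$ is generated by $B'$ together with the $U_{-\alpha'_{k}}$ for which $\overline{\alpha'_{k}}\neq\lambda$, that is, $P'_{\{\alpha'_{j}\in S':\overline{\alpha'_{j}}=\lambda\}}$.

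The main obstacle is concentrated in (3): showing that $\Vcal$ is a genuine Weyl chamber requires the cone theorem for $G$-invariant valuations, and matching the little Weyl group with $W(R'_{O})$ rests on the delicate structure theory of $\sigma$-stable tori and the restricted root system. By contrast, (1), (2), and the stabilizer computation are comparatively formal once the restricted-root combinatorics and the rank-one reduction are in place, the latter being the uniform local model that feeds all three.
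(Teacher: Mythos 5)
The paper does not prove Theorem \ref{thm: colored data of symmetric var} at all: it is imported verbatim from the cited sources (Timashev, Section 26; Vust; Ruzzi, Section 2) and used as a black box, so there is no internal proof to compare yours against. Your sketch follows the standard route of those references --- Cartan--Helgason plus the identification $T'_{1}\cap G^{\sigma}=T'_{1}[2]$ for the doubling of the lattice in (1), rank-one localization along minimal parabolics for the images of the colors in (2), and Knop's theorem that $\Vcal$ is a fundamental domain for the little Weyl group, identified with $W(R'_{O})$, for (3) --- and the stabilizer computation via which simple roots move a given color is likewise the standard argument. So the outline is sound and consistent with the literature the paper points to.

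The one place where your sketch is too thin to count as a proof is the injectivity of $\epsilon$ when $G^{\sigma}$ is semi-simple. The precise mechanism is that a simple restricted root $\lambda$ supports two colors exactly when its rank-one reduction is of the ``$\lambda\in\Lambda_{O}$'' type rather than the ``$2\lambda\in\Lambda_{O}$, $\lambda\notin\Lambda_{O}$'' type (type (a) versus type (2a) in the terminology used later in the paper), and one must show that the type (a) case forces a nontrivial character of $G^{\sigma}$, hence cannot occur when $G^{\sigma}$ is semi-simple. Your phrase ``an extra character of the isotropy, a phenomenon governed by $\chi(G^{\sigma})$'' gestures at this but does not establish the link between the two-color configuration and a character of $G^{\sigma}$; that step needs to be made explicit (it is where $\Lambda_{O}=2\cdot(\ZZ\langle(R'_{O})^{\vee}\rangle)^{*}$ from part (1) gets used, since it rules out $\lambda\in\Lambda_{O}$ for a restricted simple root $\lambda$). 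Also note that in (1) the equality $\chi(T'_{1})=(\ZZ\langle(R'_{O})^{\vee}\rangle)^{*}$, i.e.\ that the restriction to $T'_{1}$ of the weight lattice of the simply connected $G$ is exactly the weight lattice of the restricted root system, is a separate lemma and not a consequence of the $2$-torsion computation alone; you correctly defer it to Cartan--Helgason, but it should be stated as such.
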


There is a diagram which encodes information on the action of the involution $\sigma$ on the roots of $\gfr$, called the \emph{Satake diagram} (\cite[Section 26]{Timashev2011HomogeneousSpaces}), which can be constructed as follows.
\begin{enumerate}
    \item Start with the Dynkin diagram of $G$.
    \item For every simple root $\alpha'_{i}$ (with respect to $B'$ and $T'$ as before) which is $\sigma$-stable, mark the corresponding node by black.
    \item Mark the nodes corresponding to $\sigma$-unstable simple roots by white.
    \item If two $\sigma$-unstable simple roots $\alpha'_{i} \not= \alpha'_{j}$ satisfy $\overline{\alpha'_{i}} = \overline{\alpha'_{j}}$, then join the corresponding (white) nodes by a two-headed arrow.
\end{enumerate}
Satake diagrams play an important role in the classification of homogeneous symmetric varieties. For details and the list of all possible Satake diagrams arising from simple Lie groups, we refer to \cite[Section 26.5]{Timashev2011HomogeneousSpaces} and \cite[Table 1]{Ruzzi2010GeometricalDescription}.

\section{Types of Conics in Adjoint Varieties} \label{section: classes of conics}

Our main goal is to study deformations of smooth conics in each adjoint variety $Z_{\gfr} \subset \PP(\gfr)$. Here, \emph{conic} means a closed subscheme of $Z_{\gfr}$ whose Hilbert polynomial with respect to $\Ocal_{\PP(\gfr)}(1)|_{Z_{\gfr}}$ is equal to $2m+1 \in \QQ[m]$. Additionally, we assume that
\begin{equation} \tag{A} \label{assumption: pic num 1 not pn}
    Z_{\gfr} \text{ is of Picard number 1 and not biholomorphic to $\PP^{2n+1}$}.
\end{equation}
This assumption (\ref{assumption: pic num 1 not pn}) is assumed throughout this paper.

\begin{rmk}
    The assumption (\ref{assumption: pic num 1 not pn}) is satisfied if and only if one of the following conditions holds:
    \begin{enumerate}
        \item $\gfr$ is neither of type $A$, nor of type $C$.
        \item $\Ocal_{\PP(\gfr)}(1)|_{Z_{\gfr}}$ generates the Picard group of $Z_{\gfr}$.
    \end{enumerate}
    If $\gfr$ is of type $C$, then $(Z_{\gfr}, \, \Ocal_{\PP(\gfr)}(1)|_{Z_{\gfr}}) \simeq (\PP^{2n+1}, \, \Ocal_{\PP^{2n+1}}(2))$. Thus smooth conics in $Z_{\gfr}$ are exactly linear lines in $\PP^{2n+1}$. If $\gfr$ is of type $A$, then $(Z_{\gfr},\, \Ocal_{\PP(\gfr)}(1)|_{Z_{\gfr}})$ is isomorphic to the projectivized cotangent bundle $\PP (T^{*}\PP^{n+1})$ over the projective space $\PP^{n+1}$ equipped with the relative $\Ocal(1)$.
\end{rmk}

For each adjoint variety $Z_{\gfr} \subset \PP(\gfr)$ satisfying the assumption (\ref{assumption: pic num 1 not pn}), by taking closures, define two subsets
\[
    \Hbf(Z_{\gfr}) := \overline{\{\text{smooth conics in $Z_{\gfr}$}\}} \subset \text{Hilb}^{sn}_{2m+1}(Z_{\gfr}, \, \Ocal_{\PP(\gfr)}(1)|_{Z_{\gfr}})
\]
and
\[
    \Cbf(Z_{\gfr}) := \overline{\{\text{smooth conics in $Z_{\gfr}$}\}} \subset \text{Chow}_{1,\,2}(Z_{\gfr}, \, \Ocal_{\PP(\gfr)}(1)|_{Z_{\gfr}}).
\]
Since smoothness is an open property, these two subsets are the unions of some irreducible components of the Hilbert scheme and the Chow scheme. In fact, thanks to the following theorem, $\Hbf(Z_{\gfr})$ and $\Cbf(Z_{\gfr})$ are irreducible.

\begin{theorem}[\cite{KimPandharipande2001ConnectednessModuli}, \cite{Thomsen1996IrreducibilityM0n}] \label{thm: irreducibility of moduli of curves}
    For a rational homogeneous space $X$ and a homology class $\beta \in H_{2}(X,\, \ZZ)$, the coarse moduli space of stable curves of genus 0 representing $\beta$ is irreducible whenever it is nonempty.
\end{theorem}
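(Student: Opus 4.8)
The statement to prove is Theorem \ref{thm: irreducibility of moduli of curves}: for a rational homogeneous space $X$ and a class $\beta \in H_2(X,\ZZ)$, the coarse moduli space $\overline{M}_{0,0}(X,\beta)$ is irreducible whenever nonempty. Since this is cited verbatim from \cite{KimPandharipande2001ConnectednessModuli} and \cite{Thomsen1996IrreducibilityM0n}, I would not reprove it from scratch, but I can sketch the natural line of attack. The plan is to first reduce to the case of $\overline{M}_{0,n}(X,\beta)$ with marked points (which only makes irreducibility easier to track, since forgetting points is a fibration with irreducible fibers $\overline{M}_{0,n}$ over $\overline{M}_{0,0}$), and then to run an induction on the degree $d = \beta \cdot (-K_X)/(\text{Fano index stuff})$, or more robustly on $\int_\beta \omega$ for an ample class $\omega$. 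The base case $\beta = 0$ gives $\overline{M}_{0,n}(X,0) \simeq X \times \overline{M}_{0,n}$, which is irreducible. So the real content is the inductive step.

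**The inductive step via the boundary.** The key structural fact is that $\overline{M}_{0,n}(X,\beta)$ is covered by its open locus $M_{0,n}(X,\beta)$ of irreducible maps together with boundary divisors $D(A,B;\beta_1,\beta_2)$ parametrizing stable maps with two components splitting the markings and the class. First I would argue that the open part $M_{0,n}(X,\beta)$ is irreducible: this uses that $X = G/P$ is homogeneous, so one can use a $G$-action argument or the Grothendieck–Mori bend-and-break / deformation argument to show the space of honest maps $\PP^1 \to X$ of class $\beta$ is irreducible — concretely, evaluation at points is surjective and the fibers are controlled because $X$ is convex (i.e., $H^1(\PP^1, f^*T_X) = 0$ for every map $f$, since $T_X$ is globally generated on a homogeneous space). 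Convexity also guarantees $\overline{M}_{0,n}(X,\beta)$ is a normal projective variety of the expected dimension, smooth where the source is a single $\PP^1$. Then every boundary divisor $D(A,B;\beta_1,\beta_2)$ is, by induction on degree (since $\beta_1, \beta_2$ each have strictly smaller $\omega$-degree, being nonzero effective classes summing to $\beta$), a quotient of a product $\overline{M}_{0,A\cup\{\star\}}(X,\beta_1) \times_X \overline{M}_{0,B\cup\{\star\}}(X,\beta_2)$ of irreducible varieties fibered over the irreducible $X$, hence irreducible. The final step is to show these boundary components lie in the closure of the interior $M_{0,n}(X,\beta)$: a nodal map with two components can be smoothed to an irreducible map precisely because of convexity (unobstructedness of the smoothing — one checks $H^1$ of the pushforward of the tangent sheaf vanishes, so the local deformation space is smooth and the stratum has the expected codimension $1$, forcing it into the closure of the big stratum). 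Therefore $\overline{M}_{0,n}(X,\beta)$ is the closure of the irreducible set $M_{0,n}(X,\beta)$, hence irreducible; pushing down along the forgetful map gives the claim for $\overline{M}_{0,0}(X,\beta)$, and the coarse moduli space is its good quotient, still irreducible.

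**The main obstacle.** The hard part is the smoothing step — showing that every boundary stratum actually lies in the closure of the locus of irreducible maps rather than forming its own component. This is exactly where convexity of $X$ (equivalently, $X$ being homogeneous, or more generally a variety with globally generated tangent bundle and $H^1(C, f^*T_X) = 0$ for all genus-$0$ stable maps $f$) is essential: it makes $\overline{M}_{0,n}(X,\beta)$ unobstructed, so the Kuranishi space at a nodal map is smooth of the expected dimension, and the locus where the node persists is a smooth divisor in it — hence not an irreducible component. Without this, one could a priori have extra components supported entirely in the boundary. A secondary technical point is handling the coarse-versus-stack distinction: one should note that taking coarse moduli of an irreducible DM stack yields an irreducible scheme, and that the forgetful and evaluation morphisms descend appropriately. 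For the purposes of this paper, though, all of this is packaged in the cited theorem, and the only thing actually used downstream is the bare conclusion that $\Hbf(Z_\gfr)$ and $\Cbf(Z_\gfr)$ are irreducible, which follows because the locus of smooth conics maps to the degree-$\beta$ stable map space (with $\beta$ the class of a conic, i.e. $\beta \cdot \Ocal_{\PP(\gfr)}(1) = 2$) and that space is irreducible by the above.
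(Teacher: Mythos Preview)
The paper does not give its own proof of this theorem; it is stated purely as a citation of \cite{KimPandharipande2001ConnectednessModuli} and \cite{Thomsen1996IrreducibilityM0n}, and is used only as a black box to conclude that $\Hbf(Z_{\gfr})$ and $\Cbf(Z_{\gfr})$ are irreducible. You correctly identify this, and your sketch (convexity of $G/P$, induction on degree via the boundary stratification, smoothing nodal maps using unobstructedness) is an accurate summary of the argument in the cited references, so there is nothing further to compare.
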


From now on, we regard $\Hbf(Z_{\gfr})$ and $\Cbf(Z_{\gfr})$ as schemes equipped with their reduced scheme structures. Then $\Hbf(Z_{\gfr})$ and $\Cbf(Z_{\gfr})$ are projective $G$-varieties, and $FC$ becomes a $G$-equivariant morphism such that its restriction
\[
    \Hbf(Z_{\gfr}) \setminus \{[\text{double lines}]\} \rightarrow \Cbf(Z_{\gfr}) \setminus \{[\text{double lines}]\}
\]
is an isomorphism by Theorem \ref{thm: map FC}.

Recall that any conic in $\PP(\gfr)$ is contained in a unique linear plane. Thus the following definition makes sense.
\begin{defn}
    A conic in $Z_{\gfr}$ is called \emph{planar} if the unique plane containing it in $\PP(\gfr)$ is also contained in $Z_{\gfr}$. Otherwise the conic is called \emph{non-planar}.
\end{defn}
\noindent In Corollary \ref{coro: only one orbit of planar conics}, we prove that conjugacy classes of planar conics are determined by conjugacy classes of planes.

Furthermore, every singular conic is either a reducible conic (i.e. the union of two lines intersecting at one point) or a double line (i.e. a non-reduced quadric in a plane). Since every reducible conic in a rational homogeneous space admits a smoothing (see for example \cite[Theorem II.7.6]{Kollar1996RationalCurves}), every reducible conic in $Z_{\gfr}$ is a member of $\Hbf(Z_{\gfr})$ and $\Cbf(Z_{\gfr})$.

Let us introduce two more types of smooth conics, using the $G$-invariant contact distribution $D \subset T Z_{\gfr}$.
\begin{defn}
    Let $C$ be a smooth conic in $Z_{\gfr}$.
    \begin{enumerate}
        \item $C$ is called a \emph{twistor conic} if $T_{x} C \not\subset D_{x}$ for every $x \in C$.
        \item $C$ is called a \emph{contact conic} if $T_{x} C \subset D_{x}$ for every $x \in C$.
    \end{enumerate}
\end{defn}
It is well-known that every smooth conic is either a twistor conic or a contact conic. Indeed, if $f: \PP^{1} \rightarrow C \subset Z_{\gfr}$ is a smooth conic, then since $TZ_{\gfr} / D \simeq \Ocal_{\PP(\gfr)}(1)|_{Z_{\gfr}}$, we have a bundle morphism
\[
    \Ocal_{\PP^{1}}(2) \simeq T \PP^{1} \xrightarrow{df} f^{*}(TZ_{\gfr}) \rightarrow f^{*} (TZ_{\gfr} / D) \simeq \Ocal_{\PP^{1}}(2)
\]
which is either an isomorphism or the zero map.

\begin{exa} \label{example: twistor conic and contact conic}
    Let us give some examples of twistor conics and contact conics.
    \begin{enumerate}
        \item Let $C_{\rho}$ be the intersection of $Z_{\gfr}$ and a plane $\PP(E_{\rho}, \, H_{\rho}, \, E_{-\rho})$ in $\PP(\gfr)$. Then $C_{\rho}$ is a smooth conic parametrized by $\overline{\exp(\gfr_{-\rho}) \cdot o}$ where $o:= [E_{\rho}] \in Z_{\gfr}$, hence it is a twistor conic. Indeed, this conic is a fiber of the twistor fibration constructed in \cite{Wolf1965ComplexHomogeneous}.
        \item Every smooth planar conic is a contact conic, since every line in $Z_{\gfr}$ is tangent to $D$ (\cite{Hwang1997RigidityHomogeneous}, \cite{Kebekus2001LinesContact}).
    \end{enumerate}
\end{exa}

Note that if $\gfr = G_{2}$, Example \ref{example: twistor conic and contact conic} does not provide any example of contact conics since there is no plane in $Z_{G_{2}}$ (\cite{LandsbergManivel2003ProjectiveGeometry}). In fact, in Theorem \ref{thm: full orbit structure}, we show that there is no contact conic in $Z_{G_{2}}$.

\subsection{Statement of Main Theorems} \label{subsection: main theorems}

Now we state our main theorems. Recall that we always assume the assumption (\ref{assumption: pic num 1 not pn}).

\begin{theorem} \label{main thm: open symmetric orbit of twistor conics with satake diagram}
    Twistor conics form open $G$-orbits in $\Cbf(Z_{\gfr})$ and $\Hbf(Z_{\gfr})$, and both are isomorphic to a $4n$-dimensional homogeneous symmetric variety $O_{\gfr} := G/K$. Here, $K$ is a connected semi-simple Lie subgroup of $G$ and $K=G^{\sigma}$ for some holomorphic involution $\sigma : G \rightarrow G$. Moreover, the Satake diagram of $O_{\gfr}$ is given in Table \ref{table: Satake diagram and restricted root system}.
\end{theorem}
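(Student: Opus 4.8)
The plan is to identify the open orbit explicitly by exhibiting a single twistor conic, computing its stabilizer, and recognizing the quotient as a symmetric space via an explicit involution. First I would take the distinguished twistor conic $C_{\rho} = Z_{\gfr} \cap \PP(E_{\rho}, H_{\rho}, E_{-\rho})$ from Example \ref{example: twistor conic and contact conic}, which is parametrized by $\overline{\exp(\gfr_{-\rho}) \cdot o}$. Since the $\mathfrak{sl}_2$-triple $(E_{\rho}, H_{\rho}, E_{-\rho})$ spans a subalgebra $\mathfrak{s} \simeq \mathfrak{sl}_2$, the plane $\PP(\mathfrak{s})$ is the $\PP^1 \times \PP^1$-image under the $\mathrm{SL}_2$-action (really the projectivization of the adjoint action of this subalgebra), and $C_{\rho}$ is the corresponding conic. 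The group $G$ acts transitively on twistor conics because any twistor conic is conjugate to a fiber of Wolf's twistor fibration (\cite{Wolf1965ComplexHomogeneous}); openness of this orbit in $\Cbf(Z_{\gfr})$ and $\Hbf(Z_{\gfr})$ follows since being a twistor conic is an open condition on smooth conics (the bundle morphism $\Ocal_{\PP^1}(2) \to \Ocal_{\PP^1}(2)$ of the excerpt is an isomorphism on an open locus), smooth conics are dense, and $FC$ is an isomorphism away from double lines.

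Next I would compute $K := \mathrm{Stab}_G(C_{\rho})$ at the Lie algebra level: $\kfr$ consists of those $X \in \gfr$ whose adjoint action preserves the 3-dimensional subspace $\langle E_{\rho}, H_{\rho}, E_{-\rho}\rangle$. Decomposing $\gfr$ under the adjoint $\mathfrak{sl}_2 = \mathfrak{s}$-action into isotypic pieces (using that $\langle\alpha,\rho\rangle \in \{-2,-1,0,1,2\}$ for roots $\alpha$, which is exactly the contact grading), one sees $\gfr = \mathfrak{s} \oplus \mathfrak{c}_{\gfr}(\mathfrak{s}) \oplus (\text{copies of the standard rep})$, where the copies of the $2$-dimensional $\mathfrak{s}$-module come from the weight $\pm 1$ spaces $D_o$ and its opposite. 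The stabilizer of the subspace $\mathfrak{s}$ then equals $\mathfrak{s} \oplus \mathfrak{c}_{\gfr}(\mathfrak{s}) =: \kfr$, i.e. $K$ is generated by the $\mathrm{SL}_2$ associated to $\rho$ and the centralizer of that $\mathrm{SL}_2$; these two factors share only a finite center, so $\kfr$ is reductive, and since $\gfr$ is simple one checks $\kfr$ is actually semisimple (the central torus would have to act trivially on all the weight-$\pm1$ spaces, forcing it into the center of $G$). A dimension count then gives $\dim G/K = \dim \gfr - \dim \mathfrak{s} - \dim \mathfrak{c}_{\gfr}(\mathfrak{s})$; using the contact grading $\dim \gfr = (2n+2) + \dim(\text{grade } 0 \text{ part}) + (2n+2)$ one extracts $\dim G/K = 4n$.

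Then I would produce the involution $\sigma$. The natural candidate is conjugation by a representative $s_{\rho}$ of the Weyl reflection in $\rho$ realized inside the $\mathrm{SL}_2$ of $\mathfrak{s}$: take $\sigma = \mathrm{Ad}(n_\rho)$ where $n_\rho = \exp(E_\rho)\exp(-E_{-\rho})\exp(E_\rho)$ (suitably normalized so $n_\rho^2$ is central), which sends $E_\rho \leftrightarrow -E_{-\rho}$ and $H_\rho \mapsto -H_\rho$. This $\sigma$ fixes $\mathfrak{c}_{\gfr}(\mathfrak{s})$ pointwise and acts on $\mathfrak{s}$ as the standard involution of $\mathfrak{sl}_2$ whose fixed subalgebra is the torus $\langle H_\rho \rangle$... which would give $\mathfrak{g}^\sigma = \langle H_\rho\rangle \oplus \mathfrak{c}_{\gfr}(\mathfrak{s})$, not $\kfr$. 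So I would instead use $\sigma = \mathrm{Ad}(h)$ composed appropriately, or — cleaner — observe that $K$ is the stabilizer of the \emph{plane} $\PP(\mathfrak{s})$, equivalently of the $\mathfrak{sl}_2$-subalgebra $\mathfrak{s}$ itself, and the map ``conjugate $\mathfrak{s}$ by its own longest Weyl element'' extends to a global involution of $G$ (this is the standard construction making $G/N_G(\mathfrak{sl}_2\text{-triple})$-type spaces symmetric; concretely $\sigma$ is inner, given by $\mathrm{Ad}$ of the order-two element $\exp(\pi i H_\rho / 2)\cdot(\text{something})$ or an outer twist in the $D_r$, $E_6$ cases). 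One then verifies $G^\sigma \subseteq K \subseteq N_G(G^\sigma)$ by checking on root spaces grade by grade: $\sigma$ acts by $(-1)^{\langle\alpha,\rho\rangle}$ up to the $\mathfrak{s}$-twist, so its $+1$-eigenspace is precisely $\mathfrak{s}\oplus\mathfrak{c}_{\gfr}(\mathfrak{s}) = \kfr$, giving $G^\sigma = K$ (connectedness by \cite{Steinberg1968EndomorphismsLinear}, Section 8, as in the excerpt). Finally, for the Satake diagram entry of Table \ref{table: Satake diagram and restricted root system}, I would take a maximally $\sigma$-split maximal torus, restrict the roots, and read off the fixed/moved simple roots: since $\sigma$ acts by a sign governed by $\langle\cdot,\rho\rangle$, the black nodes are exactly the simple roots orthogonal to $\rho$ (those in the ``grade 0'' Levi away from the $\mathfrak{s}$), and the arrows come from the residual symmetry — a case-by-case check against \cite[Table 1]{Ruzzi2010GeometricalDescription} completes the list. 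The main obstacle is pinning down $\sigma$ correctly so that $G^\sigma = K$ exactly (rather than a torus extension of $\mathfrak{c}_\gfr(\mathfrak{s})$): the resolution is that one must conjugate $\mathfrak{s}$ by the nontrivial element of its \emph{own} $\mathrm{SL}_2$ that acts as $-1$ on the ambient weight-$\pm1$ spaces while being trivial on $\mathfrak{s}$ after accounting for the adjoint twist — equivalently, using that the longest Weyl element $w_0$ of $G$ (combined with the $\mathrm{SL}_2$ Weyl element) is the right involution, and this is where the type-by-type verification in Table \ref{table: Satake diagram and restricted root system} is genuinely needed rather than formal.
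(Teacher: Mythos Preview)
Your overall architecture matches the paper's---take $C_\rho$, compute its stabilizer, exhibit an involution, read off the Satake diagram---but there are two genuine gaps.

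First, the transitivity of $G$ on twistor conics is not justified. Citing Wolf only tells you that the \emph{fibers of the twistor fibration} are mutually conjugate twistor conics; it does not show that an arbitrary twistor conic is conjugate to one of them. The paper closes this gap differently (Lemma~\ref{lem: normal bundle single orbit of twistor conic}): it computes the normal bundle of any twistor conic to be $\Ocal_{\PP^1}(1)^{\oplus 2n}$ via the degree constraint $\deg f^*K_{Z_\gfr}^{-1}=2(n+1)$ together with Lemma~\ref{lem: transitive action on general direction}, and then uses a dimension count from the theory of free rational curves (\cite[Proposition~IV.2.5]{Kollar1996RationalCurves}) to show the $G$-orbit of $[C_\rho]$ has dimension $4n$, hence is open; irreducibility (Theorem~\ref{thm: irreducibility of moduli of curves}) then forces a single orbit. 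Your openness argument (``twistor is an open condition on smooth conics'') presupposes the single-orbit fact, so it is circular without this step.

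Second, your construction of $\sigma$ is muddled. The attempt with $n_\rho$ is correctly abandoned, but what follows---``$w_0$ combined with the $\mathrm{SL}_2$ Weyl element'', ``an outer twist in the $D_r$, $E_6$ cases''---is wrong: the involution here is always \emph{inner}, namely $\sigma=\mathrm{Ad}(h)$ for the element $h\in T$ with $\alpha_i(h)=(-1)^{\delta_{i,j_0}}$, so that $\sigma$ acts on $\gfr_\alpha$ by $(-1)^{m_{j_0}(\alpha)}$ and the $(+1)$-eigenspace is exactly $\kfr=\mathfrak{s}\oplus\mathfrak{c}_\gfr(\mathfrak{s})$. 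You eventually state this action correctly, but the surrounding discussion obscures it. The paper sidesteps the construction entirely by invoking \cite[Theorem~5.4]{Wolf1965ComplexHomogeneous} and complexifying (Proposition~\ref{prop: orbit of twistor conics is symmetric}). Finally, your Satake-diagram heuristic (``black nodes are the simple roots orthogonal to $\rho$'') is incorrect: it conflates the original torus $T$ with a maximally $\sigma$-split torus $T'$, and already fails for $B_r$ with $r\ge 4$, where only $\alpha_5',\ldots,\alpha_r'$ are black. The paper simply identifies $\kfr$ (removing $\alpha_{j_0}$ from the extended Dynkin diagram) and looks up the pair $(\gfr,\kfr)$ in the classification \cite[Table~1]{Ruzzi2010GeometricalDescription}.
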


\begin{table}
    \begin{center}
        \begin{tabular}{|c|c|c|c|c|}
            \hline
            $\gfr$ & $\gfr^{\sigma}$(=Lie algebra of $K$) & Satake Diagram of $O_{\gfr}$ & $R'_{O_{\gfr}}$ & $S'_{O_{\gfr}}$ \\
            \hline
            \hline
            &&&&\\[-1em]
            $B_{r}$ & \multirow{2}{*}{$B_{r-2} \oplus A_{1} \oplus A_{1}$} & \multirow{2}{*}{{\dynkin[labels={1,,4,,,r}] BI}} & \multirow{2}{*}{$B_{4}$} & \multirow{2}{*}{$\lambda_{i}=\overline{\alpha'_{i}}$ ($1 \le i \le 4$)} \\
            ($r \ge 4$) &&&& \\
             &&&&\\[-1em]
            \hline
            &&&&\\[-1em]
            \multirow{2}{*}{$B_{3}$} & \multirow{2}{*}{$A_{1} \oplus A_{1} \oplus A_{1}$} & \multirow{2}{*}{{\dynkin[labels={1,2,3}] B{ooo}}} & \multirow{2}{*}{$B_{3}$} & \multirow{2}{*}{$\lambda_{i}=\overline{\alpha'_{i}}$ ($1 \le i \le 3$)} \\
            &&&& \\
            &&&&\\[-1em]
            \hline
            &&&&\\[-1em]
            $D_{r}$ & \multirow{2}{*}{$D_{r-2} \oplus A_{1} \oplus A_{1}$} & \multirow{2}{*}{{\dynkin[labels={1,,4,,,r-1,r}] D{Ia}}} & \multirow{2}{*}{$B_{4}$}& \multirow{2}{*}{$\lambda_{i}=\overline{\alpha'_{i}}$ ($1 \le i \le 4$)}\\
            ($r \ge 6$) &&&& \\
             &&&&\\[-1em]
            \hline
            &&&&\\[-1em]
            \multirow{2}{*}{$D_{5}$} & \multirow{2}{*}{$D_{3} \oplus A_{1} \oplus A_{1}$} & \multirow{2}{*}{{\dynkin[labels={1,,,4,5}, involutions={[bend left=50]45}] D{ooooo}}} & \multirow{2}{*}{$B_{4}$} & $\lambda_{i}=\overline{\alpha'_{i}}$ ($1 \le i \le 3$) \\
            &&&& $\lambda_{4} = \overline{\alpha'_{4}}=\overline{\alpha'_{5}}$ \\
            &&&&\\[-1em]
            \hline
            &&&&\\[-1em]
            \multirow{2}{*}{$D_{4}$} & \multirow{2}{*}{$A_{1} \oplus A_{1} \oplus A_{1} \oplus A_{1}$} & \multirow{2}{*}{{\dynkin[labels={1,2,3,4}] D{oooo}}} & \multirow{2}{*}{$D_{4}$}& \multirow{2}{*}{$\lambda_{i}=\overline{\alpha'_{i}}$ ($1 \le i \le 4$)} \\
            &&&& \\
            &&&&\\[-1em]
            \hline
            &&&&\\[-1em]
            \multirow{4}{*}{$E_{6}$} & \multirow{4}{*}{$A_{5} \oplus A_{1}$} & \multirow{4}{*}{{\dynkin[upside down, labels*={1,6,2,3,4,5}, involutions={16;35}] E{oooooo}}} & \multirow{4}{*}{$F_{4}$} & $\lambda_{1} = \overline{\alpha'_{1}} = \overline{\alpha'_{5}}$ \\
            &&&& $\lambda_{2} = \overline{\alpha'_{2}} = \overline{\alpha'_{4}}$ \\
            &&&& $\lambda_{3} = \overline{\alpha'_{3}}$ \\
            &&&& $\lambda_{4} = \overline{\alpha'_{6}}$ \\
            &&&&\\[-1em]
            \hline
            &&&&\\[-1em]
            \multirow{4}{*}{$E_{7}$} & \multirow{4}{*}{$D_{6} \oplus A_{1}$} & \multirow{4}{*}{{\dynkin[backwards, upside down, labels={6,,5,4,,2,}] E{o*oo*o*}}} & \multirow{4}{*}{$F_{4}$} & $\lambda_{1} = \overline{\alpha'_{2}}$ \\
            &&&& $\lambda_{2} = \overline{\alpha'_{4}}$ \\
            &&&& $\lambda_{3} = \overline{\alpha'_{5}}$ \\
            &&&& $\lambda_{4} = \overline{\alpha'_{6}}$ \\
            &&&&\\[-1em]
            \hline
            &&&&\\[-1em]
            \multirow{4}{*}{$E_{8}$} & \multirow{4}{*}{$E_{7} \oplus A_{1}$} & \multirow{4}{*}{{\dynkin[backwards, upside down, labels={7,,,,,3,2,1}] E{o****ooo}}} & \multirow{4}{*}{$F_{4}$} & $\lambda_{1} = \overline{\alpha'_{7}}$ \\
            &&&& $\lambda_{2} = \overline{\alpha'_{3}}$ \\
            &&&& $\lambda_{3} = \overline{\alpha'_{2}}$ \\
            &&&& $\lambda_{4} = \overline{\alpha'_{1}}$ \\
            &&&&\\[-1em]
            \hline
            \multirow{2}{*}{$F_{4}$} & \multirow{2}{*}{$C_{3} \oplus A_{1}$} & \multirow{2}{*}{{\dynkin[backwards, labels={4,3,2,1}] F{oooo}}} & \multirow{2}{*}{$F_{4}$} & \multirow{2}{*}{$\lambda_{i} = \overline{\alpha'_{i}}$ ($1 \le i \le 4$)} \\
            &&&& \\
            \hline
            \multirow{2}{*}{$G_{2}$} & \multirow{2}{*}{$A_{1} \oplus A_{1}$} & \multirow{2}{*}{{\dynkin[backwards, labels={2,1}] G{oo}}} & \multirow{2}{*}{$G_{2}$} & \multirow{2}{*}{$\lambda_{i} = \overline{\alpha'_{i}}$ ($1 \le i \le 2$)} \\
            &&&& \\
            \hline
        \end{tabular}
        \caption{\label{table: Satake diagram and restricted root system}The Satake Diagram and the Restricted Root System of $O_{\gfr}$.}
    \end{center}
\end{table}

\begin{theorem} \label{main thm: colored data of chow}
    The normalization $\Cbf^{nor}(Z_{\gfr})$ is a simple spherical $O_{\gfr}$-embedding with its colored cone in Table \ref{table: sph data of Chow}.
\end{theorem}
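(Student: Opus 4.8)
The plan is to realize $\Cbf^{nor}(Z_\gfr)$ as a complete spherical embedding of the symmetric homogeneous variety $O_\gfr$, to locate its unique closed orbit, and then to read off the colored cone from Vust's combinatorial description of symmetric varieties. First, by Theorem \ref{main thm: open symmetric orbit of twistor conics with satake diagram} the twistor conics form an open $G$-orbit in $\Cbf(Z_\gfr)$ isomorphic to $O_\gfr = G/K$; since $O_\gfr$ is smooth, hence normal, the normalization morphism restricts to an isomorphism over it, so $\Cbf^{nor}(Z_\gfr)$ is an $O_\gfr$-embedding. As $O_\gfr$ is symmetric it is spherical, so $\Cbf^{nor}(Z_\gfr)$ is a spherical variety; being finite over the projective variety $\Cbf(Z_\gfr)$ it is complete. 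By Theorem \ref{thm: classification of spherical} it corresponds to a strictly convex colored fan $\mathfrak F$ in $(\Ecal, \, \Dcal(O_\gfr))$, whose cones cover the valuation cone $\Vcal$ by Lemma \ref{lem: properness criterion for spherical var}.

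Next I would pin down the closed orbit. The complement of the open orbit in $\Cbf(Z_\gfr)$ parametrizes singular conics, i.e. cycles $\ell_1 + \ell_2$ (reducible conics) or $2\ell$ (double lines), where $\ell, \, \ell_i$ are lines in $Z_\gfr$. The map $\ell \mapsto 2\ell$ embeds the variety of lines of $Z_\gfr$ into $\Cbf(Z_\gfr)$ as a closed subvariety, which is a single $G$-orbit because the variety of lines of $Z_\gfr$ is $G$-homogeneous. One checks that $2\ell$ lies in the closure of every orbit of reducible conics (by degenerating one component onto the other through lines passing through a moving point); hence the double-line locus is the unique closed orbit of $\Cbf(Z_\gfr)$, and the orbit analysis of Section \ref{section: counting conjugacy classes of conics} yields the same for $\Cbf^{nor}(Z_\gfr)$, whose unique closed orbit $Y$ lies over it. So $\Cbf^{nor}(Z_\gfr)$ is a simple $O_\gfr$-embedding, and by the remark following Theorem \ref{thm: classification of spherical} its colored fan is the single colored cone $(\Ccal, \, \Fcal) := (\Ccal(\Cbf^{nor}(Z_\gfr)), \, \Fcal(\Cbf^{nor}(Z_\gfr)))$ together with its colored faces.

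It remains to compute $(\Ccal, \, \Fcal)$. The $G$-stabilizer of a double line $2\ell$ equals $\text{Stab}_{G}(\ell)$, a parabolic subgroup $P_{J} \subset G$ read off explicitly from the description of lines in the adjoint variety. On the other hand, by Lemma \ref{lem: stabilizer of colors and isotropy group of closed orbit} (taking $T = T'$, $B = B'$) the isotropy group of $Y$ containing $B$ equals $\bigcap_{D \in \Dcal(O_\gfr) \setminus \Fcal} (w_{0} \cdot \text{Stab}_{G}(D) \cdot w_{0}^{-1})^{-}$, where by Theorem \ref{thm: colored data of symmetric var} — applicable since $K = G^{\sigma}$ is semi-simple — each $\text{Stab}_{G}(D)$ is the parabolic $P'_{\{\alpha'_{j} : \overline{\alpha'_{j}} = \lambda\}}$ attached to the restricted simple root $\lambda$ with $\epsilon(D) = \tfrac{1}{2}\lambda^{\vee}$. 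Equating these two descriptions — that is, determining which restricted simple roots must be discarded so that the intersection of the corresponding opposite parabolics equals $P_{J}$ — fixes $\Fcal$. Since $\Cbf^{nor}(Z_\gfr)$ is complete and simple, Lemma \ref{lem: properness criterion for spherical var} then gives $\Ccal = \QQ_{\ge 0} \langle \epsilon(\Fcal), \, \Vcal \rangle$; combined with the identifications of Theorem \ref{thm: colored data of symmetric var} ($\Vcal$ the negative restricted Weyl chamber, $\epsilon(\Dcal(O_\gfr))$ the halves of the restricted simple coroots, all read from the Satake data of Table \ref{table: Satake diagram and restricted root system}), this yields the colored cone recorded in Table \ref{table: sph data of Chow}.

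The main obstacle will be the case-by-case bookkeeping in the last step: identifying the parabolic $P_{J} = \text{Stab}_{G}(\ell)$ for each simple $\gfr$ and recognizing it among the intersections $\bigcap_{D \notin \Fcal} (w_{0} \cdot \text{Stab}_{G}(D) \cdot w_{0}^{-1})^{-}$ indexed by subsets of restricted simple roots, a matching that must be carried out type by type against Table \ref{table: Satake diagram and restricted root system}. A further technical point is to verify that $\Cbf^{nor}(Z_\gfr)$ is genuinely simple, i.e. that the unique closed orbit of $\Cbf(Z_\gfr)$ lifts to a single closed orbit of the normalization; this rests on a local analysis of $\Cbf(Z_\gfr)$ near the double-line locus.
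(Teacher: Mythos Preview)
Your overall strategy matches the paper's: identify the unique closed orbit of $\Cbf(Z_\gfr)$, lift it to the normalization, then use Lemma \ref{lem: stabilizer of colors and isotropy group of closed orbit} together with Theorem \ref{thm: colored data of symmetric var} to read off $\Fcal$, and finally invoke Lemma \ref{lem: properness criterion for spherical var} to obtain $\Ccal$. The case-by-case matching you anticipate is exactly what the paper carries out.

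There is, however, a genuine gap in your identification of the closed orbit. You assert that the complement of the open (twistor) orbit in $\Cbf(Z_\gfr)$ parametrizes singular conics. This is false for every $\gfr$ except $G_2$: the boundary also contains smooth \emph{contact} conics (both planar and non-planar), as Theorem \ref{thm: full orbit structure} later makes explicit. Your degeneration argument only shows that double lines lie in the closure of reducible-conic orbits; it does not exclude a closed orbit consisting of contact conics. The paper's fix (Lemma \ref{lem: B stable line and plane} and Corollary \ref{coro: number of closed orbits}) is cleaner and avoids any orbit enumeration: a closed $G$-orbit in a projective $G$-variety has a $B$-fixed point, and one checks directly that $\PP(\gfr)$ contains no $B$-stable smooth or reducible conic, so every closed orbit consists of double lines; homogeneity of the space of lines then gives uniqueness.

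For the passage to the normalization, the paper does not rely on a local analysis near the double-line locus. Instead it observes that $\Cbf(Z_\gfr)$ carries a $G$-linearized ample line bundle (via the Chow embedding into a projective space of symmetric tensors) and applies Proposition \ref{prop: same orbit structure when locally linearlizable} to conclude that the normalization map is bijective on $G$-orbits; simple connectedness of the rational homogeneous closed orbit then forces $\pi^{-1}(Y)\simeq Y$. This replaces the ``further technical point'' you flag with a clean global argument.
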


\begin{table}
    \begin{center}
        \begin{tabular}{|c|c|c|}
            \hline
            $\gfr$ & ($\Ccal(\Cbf^{nor}(Z_{\gfr}))$, $\Fcal(\Cbf^{nor}(Z_{\gfr}))$) \\
            \hline
            \hline
            $B_{r}$ $(r \ge 4)$, & \multirow{2}{*}{($\QQ_{\ge 0} \langle -\gamma_{1}, \, -\gamma_{2}, \, -\gamma_{4}, \, \lambda_{2}^{\vee}, \, \lambda_{4}^{\vee} \rangle$, $\{\Dcal_{2}, \, \Dcal_{4}\}$)} \\
            $D_{r}$ $(r \ge 5)$ & \\
            \hline
            $B_{3}$ & ($\QQ_{\ge 0} \langle -\gamma_{1}, \, -\gamma_{2}, \, -\gamma_{3}, \, \lambda_{2}^{\vee}\rangle$, $\{\Dcal_{2}\}$) \\
            \hline
            $D_{4}$ & ($\QQ_{\ge 0} \langle -\gamma_{1}, \, -\gamma_{2}, \, -\gamma_{3}, \, -\gamma_{4}, \, \lambda_{2}^{\vee}\rangle$, $\{\Dcal_{2}\}$) \\
            \hline
            $E_{r}$ $(r = 6,\,7,\,8)$, & \multirow{2}{*}{($\QQ_{\ge 0} \langle -\gamma_{1}, \, -\gamma_{4}, \, \lambda_{1}^{\vee}, \, \lambda_{2}^{\vee}, \, \lambda_{4}^{\vee} \rangle$, $\{\Dcal_{1}, \, \Dcal_{2}, \, \Dcal_{4}\}$)} \\
            $F_{4}$ & \\
            \hline
            $G_{2}$ & ($\QQ_{\ge 0} \langle -\gamma_{2}, \, \lambda_{2}^{\vee} \rangle$, $\{\Dcal_{2}\}$)\\
            \hline
        \end{tabular}
        \caption{\label{table: sph data of Chow} The Colored Cone of $\Cbf^{nor}(Z_{\gfr})$ in $\QQ \langle (R'_{O_{\gfr}})^{\vee} \rangle$.}
    \end{center}
\end{table}

\begin{theorem} \label{main thm: colored data of hilb}
    The normalization $\Hbf^{nor}(Z_{\gfr})$ is a spherical $O_{\gfr}$-embedding, and its colored fan consists of maximal colored cones listed in Table \ref{table: sph data of Hilb} and their colored faces. In particular, $\Hbf^{nor}(Z_{\gfr})$ is simple as a spherical variety if and only if $\gfr$ is of an exceptional type.
\end{theorem}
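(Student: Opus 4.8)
The plan is to obtain the colored fan of $\Hbf^{nor}(Z_{\gfr})$ from the colored cone of $\Cbf^{nor}(Z_{\gfr})$ (Theorem \ref{main thm: colored data of chow}) by exploiting the morphism $FC$. Since $\Hbf(Z_{\gfr})$ contains the homogeneous symmetric --- hence spherical --- variety $O_{\gfr}$ as a dense $G$-orbit by Theorem \ref{main thm: open symmetric orbit of twistor conics with satake diagram}, its normalization is a spherical $O_{\gfr}$-embedding, so Theorem \ref{thm: classification of spherical} applies. The morphism $FC$ of Theorem \ref{thm: map FC} is $G$-equivariant, proper, and an isomorphism over the complement of the double-line locus; as it is dominant, the universal property of normalization yields a $G$-morphism $\widetilde{FC}:\Hbf^{nor}(Z_{\gfr}) \to \Cbf^{nor}(Z_{\gfr})$ lifting it, which is again proper and birational and restricts to the identity on the common open orbit $O_{\gfr}$. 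By Luna--Vust theory such a morphism translates into a map of colored fans, and properness together with birationality forces $\mathfrak{F}(\Hbf^{nor}(Z_{\gfr}))$ to be a colored subdivision of the single colored cone $(\Ccal(\Cbf^{nor}(Z_{\gfr})),\, \Fcal(\Cbf^{nor}(Z_{\gfr})))$ of Table \ref{table: sph data of Chow}: every colored cone of $\Hbf^{nor}(Z_{\gfr})$ sits inside $\Ccal(\Cbf^{nor}(Z_{\gfr}))$ with color set contained in $\Fcal(\Cbf^{nor}(Z_{\gfr}))$, and the relative interiors of the maximal ones partition $\Ccal(\Cbf^{nor}(Z_{\gfr}))$. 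Thus everything reduces to pinning down the maximal colored cones.

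By Lemma \ref{lem: number of orbits in terms of colored faces} these correspond bijectively to the closed $G$-orbits of $\Hbf^{nor}(Z_{\gfr})$. Since $\widetilde{FC}$ is proper, the image of any closed orbit is a closed orbit, hence the unique closed orbit of $\Cbf^{nor}(Z_{\gfr})$; the latter, being minimal, lies in the closed $G$-stable locus of double-line cycles, so every closed orbit of $\Hbf^{nor}(Z_{\gfr})$ consists of double lines. I would therefore enumerate the $G$-orbits of double lines occurring in $\Hbf(Z_{\gfr})$ --- recording, for a double line $2\ell$, both the line $\ell \subset Z_{\gfr}$ and the plane $\Pi \supset \ell$ carrying its scheme structure, a datum that $FC$ forgets --- using the analysis of double lines in Section \ref{section: counting conjugacy classes of conics}, and single out those whose orbit is closed in $\Hbf(Z_{\gfr})$. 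For $\gfr = G_{2}$ there are no planes in $Z_{G_{2}}$ and a single orbit of double lines arises; for the remaining exceptional types the various double lines coalesce into one $G$-orbit, whereas for $\gfr$ of type $B_{r}$ or $D_{r}$ they split into more than one closed orbit --- this is the source of the simplicity dichotomy, since $\Hbf^{nor}(Z_{\gfr})$ is simple exactly when it has a unique closed orbit.

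For each closed orbit $Y$ I would compute its colored cone $(\Ccal(\Hbf^{nor}(Z_{\gfr})_{Y}),\, \Fcal(\Hbf^{nor}(Z_{\gfr})_{Y}))$ as follows. The color set is $\{\Dcal \in \Dcal(O_{\gfr}) : Y \subset \overline{\Dcal}\}$; to identify it I would compute $\mathrm{Stab}_{G}(Y)$ explicitly from $(\ell,\Pi)$ and the symmetric structure of $O_{\gfr}$ recorded in Theorem \ref{main thm: open symmetric orbit of twistor conics with satake diagram}, and compare it with the parabolic subgroups $\mathrm{Stab}_{G}(\Dcal_{i})$ provided by Theorem \ref{thm: colored data of symmetric var}, using Lemma \ref{lem: stabilizer of colors and isotropy group of closed orbit}. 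The cone $\Ccal(\Hbf^{nor}(Z_{\gfr})_{Y})$ is then generated by $\epsilon$ of these colors together with the classes in $\Vcal$ of the $G$-stable prime divisors of $\Hbf^{nor}(Z_{\gfr})$ whose closure contains $Y$; away from the double-line locus $\widetilde{FC}$ is an isomorphism, so these divisors are the same as those of $\Cbf^{nor}(Z_{\gfr})$ --- the divisor of reducible conics, and, when $\gfr \neq G_{2}$, the further boundary divisors of non-twistor smooth conics --- and their classes are precisely the rays $-\gamma_{j}$ appearing in Table \ref{table: sph data of Chow}, each identifiable by following a one-parameter degeneration of twistor conics inside the symmetric space $O_{\gfr}$. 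Collecting these cones orbit by orbit produces Table \ref{table: sph data of Hilb}; the colored-fan axioms of Definition \ref{defn: colored cone and fan} hold automatically since the data arise from the variety $\Hbf^{nor}(Z_{\gfr})$.

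The main obstacle is the orbit-theoretic input of the middle step: correctly listing the $G$-orbits of double lines that appear as flat limits of smooth conics, understanding precisely which ones are merged by $FC$ in the Chow scheme (it contracts the choice of embedding plane), and computing their isotropy groups. This requires a local study of $\Hbf(Z_{\gfr})$ near double-line points and a dictionary between the scheme-theoretic structure of a double line and the combinatorial objects $\Vcal$ and $\Dcal(O_{\gfr})$. By contrast, once the closed orbits and their colors are known, verifying that the candidate cones refine $\Ccal(\Cbf^{nor}(Z_{\gfr}))$ and glue into a legitimate colored fan, and reading off Table \ref{table: sph data of Hilb} and the simplicity statement, is essentially bookkeeping.
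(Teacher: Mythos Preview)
Your overall strategy matches the paper's: establish sphericity via Theorem \ref{main thm: open symmetric orbit of twistor conics with satake diagram}, identify closed orbits with orbits of double lines (hence with $B$-stable planes by Corollary \ref{coro: number of closed orbits}), read off the color set of each closed orbit from its isotropy group via Lemma \ref{lem: stabilizer of colors and isotropy group of closed orbit} and Theorem \ref{thm: colored data of symmetric var}, and use $FC^{nor}$ to transport the $G$-stable divisors from $\Cbf^{nor}(Z_{\gfr})$.

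The gap lies in the step you flag as the main obstacle, and your proposed resolution is vaguer than what the paper actually does. You suggest a ``local study near double-line points'' and ``following a one-parameter degeneration of twistor conics'' to decide which rays $-\gamma_{j}$ land in which maximal cone. The paper avoids any local analysis and instead uses two global devices. First, Lemma \ref{lem: planar reducible correspond to face of codim 1}: each orbit of \emph{planar reducible} conics corresponds to a codimension-one colored face of $\Ccal(\Cbf^{nor}(Z_{\gfr}))$, so its extremal rays are forced; since the closure of such an orbit in $\Hbf(Z_{\gfr})$ contains the closed orbit of planar double lines in the \emph{same} plane, the strict transforms of the corresponding $G$-stable divisors must contain that closed orbit, pinning down which $-\gamma_{j}$ lie in $\Ccal(\Hbf_{Y_{i}})$. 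Second, a non-completeness constraint: when there are several closed orbits each $\Hbf_{Y_{i}}$ is open and non-complete, so by Lemma \ref{lem: properness criterion for spherical var} its cone cannot contain all of $\Vcal$; this excludes the remaining rays and forces the distribution recorded in Table \ref{table: sph data of Hilb}. Without these two ingredients your argument does not determine the fan.

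Two further points. Your claim that the maximal cones ``partition $\Ccal(\Cbf^{nor}(Z_{\gfr}))$'' is incorrect: completeness only forces them to cover $\Vcal$, and indeed for $\gfr$ of type $E$ or $F_{4}$ the single Hilbert cone drops the color $\Dcal_{2}$ and is strictly smaller than the Chow cone. And for $\gfr = B_{3}$ the count of closed orbits is not immediate from $B$-stable planes alone (Corollary \ref{coro: number of closed orbits} gives only an injection there); the paper needs the explicit construction of a non-planar double line via $G_{2} \hookrightarrow B_{3}$ (Lemma \ref{lem: nonplanar double lines exist when B3}) together with a contradiction argument (Proposition \ref{prop: hilb for B3 is not simple}) to conclude there are two closed orbits.
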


\begin{table}
    \begin{center}
        \begin{tabular}{|c|c|c|}
            \hline
            $\gfr$ & Maximal Colored Cones of The Colored Fan of $\Hbf^{nor}(Z_{\gfr})$ \\
            \hline
            \hline
            $B_{r}$ $(r \ge 4)$, & ($\QQ_{\ge 0}\langle -\gamma_{2}, \, -\gamma_{4}, \, \lambda_{2}^{\vee}, \, \lambda_{4}^{\vee} \rangle$, $\{\Dcal_{2}, \, \Dcal_{4}\}$), \\
            $D_{r}$ $(r \ge 5)$ & ($\QQ_{\ge 0}\langle -\gamma_{1}, \, -\gamma_{2}, \, -\gamma_{4}, \, \lambda_{2}^{\vee}\rangle$, $\{\Dcal_{2}\}$) \\
            \hline
            $B_{3}$ & ($\QQ_{\ge 0}\langle -\gamma_{i}, \, -\gamma_{2}, \, \lambda_{2}^{\vee} \rangle$, $\{\Dcal_{2}\}$) for $i = 1,\,3$ \\
            \hline
            $D_{4}$ & ($\QQ_{\ge 0} \langle -\gamma_{j} ,\, \lambda_{2}^{\vee} : j \in \{1,\,2,\,3,\,4\} \setminus \{i\} \rangle$, $\{\Dcal_{2}\}$) for $i = 1,\,3,\,4$ \\
            \hline
            $E_{r}$ $(r = 6,\,7,\,8)$, & \multirow{2}{*}{($\QQ_{\ge 0} \langle -\gamma_{1}, \, -\gamma_{4}, \, \lambda_{1}^{\vee}, \, \lambda_{4}^{\vee} \rangle$, $\{\Dcal_{1}, \, \Dcal_{4}\}$)} \\
            $F_{4}$  & \\
            \hline
            $G_{2}$ & ($\QQ_{\ge 0} \langle -\gamma_{2}, \, \lambda_{2}^{\vee} \rangle$, $\{\Dcal_{2}\}$) \\
            \hline
        \end{tabular}
        \caption{\label{table: sph data of Hilb} The Colored Fan of $\Hbf^{nor}(Z_{\gfr})$ in $\QQ \langle (R'_{O_{\gfr}})^{\vee} \rangle$.}
    \end{center}
\end{table}

The proof of Theorem \ref{main thm: open symmetric orbit of twistor conics with satake diagram} is given in Section \ref{subsection: twistor conics}, and the proofs of Theorem \ref{main thm: colored data of chow} and Theorem \ref{main thm: colored data of hilb} are given in Section \ref{section: proof of main theorems}.

Let us explain the notation in Table \ref{table: Satake diagram and restricted root system}, Table \ref{table: sph data of Chow} and Table \ref{table: sph data of Hilb}, assuming Theorem \ref{main thm: open symmetric orbit of twistor conics with satake diagram}. Let $T'$ be a maximally $\sigma$-split torus. Then there is $g \in G$ such that $T'= g\cdot T \cdot g^{-1}$, and we choose a Borel subgroup $B':= g \cdot B \cdot g^{-1}$ to construct the ingredients in Section \ref{section: Luna Vust theorey}. For example, the root system $R'$ and its simple roots $S'$ are given by
\[
    R' = R \circ Ad_{g^{-1}} = \left\{\alpha' := \alpha \circ Ad_{g^{-1}} : \alpha \in R\right\}, \quad S' = S \circ Ad_{g^{-1}} = \left\{\alpha_{i}' := \alpha_{i} \circ Ad_{g^{-1}} : \alpha_{i} \in S\right\}.
\]
Note that since $K= G^{\sigma}$ is semi-simple by Theorem \ref{main thm: open symmetric orbit of twistor conics with satake diagram}, the map $\epsilon: \Dcal(O_{\gfr}) \rightarrow \Ecal$ is bijective onto $\frac{1}{2} \cdot (S'_{O_{\gfr}})^{\vee}$ by Theorem \ref{thm: colored data of symmetric var}. We index restricted simple roots $S'_{O_{\gfr}} = \{\lambda_{1}, \, \lambda_{2}, \, \ldots, \, \lambda_{m}\}$ as in \cite[Reference Chapter, Table 1]{OnishchikVinberg1990LieGroups}, and put $\Dcal_{i} := \epsilon^{-1} \left(\lambda_{i}^{\vee}/2\right) \in \Dcal(O_{\gfr})$. In this notation, the (positive) Weyl chamber is given by $-\Vcal = \QQ_{\ge 0} \langle \gamma_{1}, \, \ldots \, \gamma_{m} \rangle$ where $\gamma_{j}$ are defined by the relations $\langle \lambda_{i}, \,  \gamma_{j}\rangle = \delta_{ij}$. The expression of $\gamma_{j}$ in terms of the coroots $\{\lambda_{1}^{\vee}, \, \ldots,\, \lambda_{m}^{\vee}\}$ can be read off from the $j$th rows of the matrices in \cite[Reference Chapter, Table 2]{OnishchikVinberg1990LieGroups} as follows.
\begin{enumerate}
    \item If $R'_{O_{\gfr}} = B_{m}$,
    \begin{align*}
        \gamma_{i} &:= \lambda_{1}^{\vee} + \cdots +(i-1) \cdot \lambda_{i-1}^{\vee} + i \cdot \lambda_{i}^{\vee} + \cdots + i \cdot \lambda_{m-1}^{\vee} + \frac{i}{2} \cdot \lambda_{m}^{\vee}, \quad \forall i < m \\
        \gamma_{m} &:= \lambda_{1}^{\vee} + \cdots + (m-1) \cdot \lambda_{m-1}^{\vee} + \frac{m}{2} \cdot \lambda_{m}^{\vee}.
    \end{align*}
    \item If $R'_{O_{\gfr}} = D_{4}$,
    \begin{align*}
        \gamma_{1}&:= \lambda_{1}^{\vee} +  \lambda_{2}^{\vee} + \frac{1}{2} \lambda_{3}^{\vee} + \frac{1}{2} \lambda_{4}^{\vee}, \\
        \gamma_{2}&:= \lambda_{1}^{\vee} + 2 \lambda_{2}^{\vee}+ \lambda_{3}^{\vee} + \lambda_{4}^{\vee}, \\
        \gamma_{3} &:= \frac{1}{2} \lambda_{1}^{\vee} + \lambda_{2}^{\vee} + \lambda_{3}^{\vee} + \frac{1}{2}\lambda_{4}^{\vee}, \\
        \gamma_{4} &:= \frac{1}{2}\lambda_{1}^{\vee} + \lambda_{2}^{\vee} + \frac{1}{2} \lambda_{3}^{\vee} + \lambda_{4}^{\vee}.
    \end{align*}
    \item If $R'_{O_{\gfr}} = F_{4}$,
    \begin{align*}
        \gamma_{1} &:= 2 \lambda_{1}^{\vee} + 3 \lambda_{2}^{\vee} + 4 \lambda_{3}^{\vee} + 2 \lambda_{4}^{\vee}, \\
        \gamma_{2} &:= 3 \lambda_{1}^{\vee} + 6 \lambda_{2}^{\vee} + 8 \lambda_{3}^{\vee} + 4 \lambda_{4}^{\vee} , \\
        \gamma_{3} &:= 2 \lambda_{1}^{\vee} + 4 \lambda_{2}^{\vee} + 6 \lambda_{3}^{\vee} + 3 \lambda_{4}^{\vee}, \\
        \gamma_{4} &:= \lambda_{1}^{\vee} + 2 \lambda_{2}^{\vee} + 3 \lambda_{3}^{\vee} + 2 \lambda_{4}^{\vee}.
    \end{align*}
    \item If $R'_{O_{\gfr}} = G_{2}$,
    \[
        \gamma_{1} := 2 \lambda_{1}^{\vee} + 3 \lambda_{2}^{\vee}, \quad \gamma_{2}:= \lambda_{1}^{\vee} + 2\lambda_{2}^{\vee}.
    \]
\end{enumerate}

Let us close this section after collecting colored faces of the colored cones in Table \ref{table: sph data of Chow} and Table \ref{table: sph data of Hilb}. Since each colored face $(\Ccal, \, \Fcal)$ is determined by its underlying cone $\Ccal$, it suffices to write $\Ccal$. In the following list, we classify the colored faces according to their dimensions.

\begin{enumerate}
    \item The nonzero non-maximal elements in the colored fan defined by Table \ref{table: sph data of Chow}:
    \begin{enumerate}
        \item When $\gfr$ is $B_{r}$ with $r \ge 4$ or $D_{r}$ with $r \ge 5$:
        \begin{enumerate}
            \item $\dim=3$: $\QQ_{\ge 0}\langle -\gamma_{1}, \, -\gamma_{2}, \, -\gamma_{4}\rangle$, $\QQ_{\ge 0} \langle -\gamma_{2}, \, -\gamma_{4}, \, \lambda_{4}^{\vee} \rangle$.
            \item $\dim=2$: $\QQ_{\ge 0}\langle -\gamma_{i}, \, -\gamma_{j} \rangle$ for $i\not=j \in \{1,\,2,\,4\}$, $\QQ_{\ge 0}\langle -\gamma_{4}, \, \lambda_{4}^{\vee}\rangle$.
            \item $\dim=1$: $\QQ_{\ge 0}\langle -\gamma_{i}\rangle$ for $i=1,\,2,\,4$.
        \end{enumerate}
        \item When $\gfr = B_{3}$:
        \begin{enumerate}
            \item $\dim=2$: $\QQ_{\ge 0}\langle -\gamma_{1}, \, -\gamma_{2} \rangle$, $\QQ_{\ge 0}\langle -\gamma_{2}, \, -\gamma_{3}\rangle$.
            \item $\dim=1$: $\QQ_{\ge 0}\langle -\gamma_{i}\rangle$ for $i=1,\,2,\,3$.
        \end{enumerate}
        \item When $\gfr = D_{4}$:
        \begin{enumerate}
            \item $\dim=3$: $\QQ_{\ge 0}\langle  -\gamma_{2}, \, -\gamma_{i}, \, -\gamma_{j}\rangle$ for $i \not= j \in \{1,\,3,\,4\}$.
            \item $\dim=2$: $\QQ_{\ge 0}\langle -\gamma_{i}, \, -\gamma_{j} \rangle$ for $i \not= j \in \{1,\,2,\,3,\,4\}$.
            \item $\dim=1$: $\QQ_{\ge 0}\langle -\gamma_{i}\rangle$ for $i=1,\,2,\,3,\,4$.
        \end{enumerate}
        \item When $\gfr$ is one of $E_{6},\,E_{7},\,E_{8}$ and $F_{4}$:
        \begin{enumerate}
            \item $\dim=3$: $\QQ_{\ge 0}\langle -\gamma_{1}, \, -\gamma_{4},\, \lambda_{1}^{\vee}\rangle$.
            \item $\dim=2$: $\QQ_{\ge 0}\langle -\gamma_{1}, \, -\gamma_{4} \rangle$, $\QQ_{\ge 0}\langle -\gamma_{1}, \, \lambda_{1}^{\vee}\rangle$.
            \item $\dim=1$: $\QQ_{\ge 0}\langle -\gamma_{i}\rangle$ for $i=1,\,4$.
        \end{enumerate}
        \item When $\gfr = G_{2}$:
        \begin{enumerate}
            \item $\dim=1$: $\QQ_{\ge 0}\langle -\gamma_{2}\rangle$.
        \end{enumerate}
    \end{enumerate}

    \item The nonzero non-maximal elements in the colored fan defined by Table \ref{table: sph data of Hilb}:
    \begin{enumerate}
        \item When $\gfr$ is $B_{r}$ with $r \ge 4$ or $D_{r}$ with $r \ge 5$:
        \begin{enumerate}
            \item $\dim = 3$: $\QQ_{\ge 0}\langle -\gamma_{1}, \, -\gamma_{2}, \, -\gamma_{4}\rangle$, $\QQ_{\ge 0} \langle -\gamma_{2}, \, -\gamma_{4}, \, \lambda_{4}^{\vee} \rangle$, $\QQ_{\ge 0} \langle -\gamma_{2}, \, -\gamma_{i}, \, \lambda_{2}^{\vee} \rangle$ for $i \in \{1,\,4\}$.
            \item $\dim=2$: $\QQ_{\ge 0}\langle -\gamma_{i}, \, -\gamma_{j} \rangle$ for $i\not=j \in \{1,\,2,\,4\}$, $\QQ_{\ge 0}\langle -\gamma_{k}, \, \lambda_{k}^{\vee}\rangle$ for $k \in \{2,\,4\}$.
            \item $\dim=1$: $\QQ_{\ge 0}\langle -\gamma_{i}\rangle$ for $i=1,\,2,\,4$.
        \end{enumerate}
        \item When $\gfr = B_{3}$:
        \begin{enumerate}
            \item $\dim=2$: $\QQ_{\ge 0}\langle -\gamma_{1}, \, -\gamma_{2} \rangle$, $\QQ_{\ge 0}\langle -\gamma_{2}, \, -\gamma_{3}\rangle$, $\QQ_{\ge 0}\langle -\gamma_{2}, \, \lambda_{2}^{\vee}\rangle$.
            \item $\dim=1$: $\QQ_{\ge 0}\langle -\gamma_{i}\rangle$ for $i=1,\,2,\,3$.
        \end{enumerate}
        \item When $\gfr = D_{4}$:
        \begin{enumerate}
            \item $\dim=3$: $\QQ_{\ge 0}\langle  -\gamma_{2}, \, -\gamma_{i}, \, -\gamma_{j}\rangle$ for $i \not= j \in \{1,\,3,\,4\}$, $\QQ_{\ge 0}\langle -\gamma_{2}, \, -\gamma_{k}, \, \lambda_{2}^{\vee} \rangle$ for $k \in \{1,\,3,\,4\}$.
            \item $\dim=2$: $\QQ_{\ge 0}\langle -\gamma_{i}, \, -\gamma_{j} \rangle$ for $i \not= j \in \{1,\,2,\,3,\,4\}$, $\QQ_{\ge 0}\langle -\gamma_{2}, \, \lambda_{2}^{\vee} \rangle$.
            \item $\dim=1$: $\QQ_{\ge 0}\langle -\gamma_{i}\rangle$ for $i=1,\,2,\,3,\,4$.
        \end{enumerate}
        \item When $\gfr$ is one of $E_{6},\,E_{7},\,E_{8}$ and $F_{4}$:
        \begin{enumerate}
            \item $\dim=3$: $\QQ_{\ge 0}\langle -\gamma_{1}, \, -\gamma_{4},\, \lambda_{1}^{\vee}\rangle$, $\QQ_{\ge 0}\langle -\gamma_{1}, \, -\gamma_{4}, \, \lambda_{4}^{\vee} \rangle$.
            \item $\dim=2$: $\QQ_{\ge 0}\langle -\gamma_{1}, \, -\gamma_{4} \rangle$, $\QQ_{\ge 0}\langle -\gamma_{1}, \, \lambda_{1}^{\vee}\rangle$, $\QQ_{\ge 0} \langle -\gamma_{4}, \, \lambda_{4}^{\vee} \rangle$.
            \item $\dim=1$: $\QQ_{\ge 0}\langle -\gamma_{i}\rangle$ for $i=1,\,4$.
        \end{enumerate}
        \item When $\gfr = G_{2}$:
        \begin{enumerate}
            \item $\dim=1$: $\QQ_{\ge 0}\langle -\gamma_{2}\rangle$.
        \end{enumerate}
    \end{enumerate}
\end{enumerate}

\section{Geometry of Conics} \label{section: counting conjugacy classes of conics}

Before studying geometry of conics in $Z_{\gfr}$, let us introduce an additional notation. For a simple root $\alpha_{i} \in S$ and a root $\alpha \in R$, let $m_{i}(\alpha)$ be the coefficient of $\alpha_{i}$ in $\alpha$. Under the assumption (\ref{assumption: pic num 1 not pn}), there is exactly one simple root, say $\alpha_{j_{0}}$, which is not orthogonal to the highest root $\rho$. This $\alpha_{j_{0}}$ is a long root, and satisfies
\[
    m_{j_{0}}(\alpha) = \langle \alpha \, | \, \rho \rangle, \quad \forall \alpha \in R.
\]
Then the Lie algebra $\pfr$ and the vector spaces $D_{o} \subset T_{o}(Z_{\gfr})$ can be described as follows:
\[
    \pfr = \tfr \oplus \bigoplus_{\substack{\alpha \in R\\ m_{j_{0}}(\alpha) = 0, \,1,\,2}} \gfr_{\alpha}, \quad T_{o}(Z_{\gfr}) \simeq \bigoplus_{\substack{\alpha \in R\\ m_{j_{0}}(\alpha) = -1,\,-2}} \gfr_{\alpha}, \quad D_{o} \simeq \bigoplus_{\substack{\alpha \in R\\ m_{j_{0}}(\alpha) = -1}} \gfr_{\alpha}.
\]

\subsection{Twistor Conics} \label{subsection: twistor conics}

In this section, we prove Theorem \ref{main thm: open symmetric orbit of twistor conics with satake diagram}, and that in every tangent direction off the contact distribution, there is exactly one twistor conic. Let us use the following known lemma.

\begin{lemma}[{\cite[Lemma 5]{HwangMok1999HolomorphicMaps}}] \label{lem: transitive action on general direction}
    Under the assumption (\ref{assumption: pic num 1 not pn}), the unipotent radical $R^{u}(P)$ of the isotropy group $P$ at $o \in Z_{\gfr}$ acts transitively on the open subset $\PP(T_{o}Z_{\gfr}) \setminus \PP(D_{o})$ in the projectivized tangent space.
\end{lemma}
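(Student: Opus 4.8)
The plan is to make the isotropy action explicit via the contact grading of $\gfr$. The marked simple root $\alpha_{j_{0}}$ induces a $\ZZ$-grading $\gfr = \bigoplus_{k=-2}^{2}\gfr^{(k)}$, where $\gfr^{(k)} := \bigoplus_{m_{j_{0}}(\alpha)=k}\gfr_{\alpha}$ for $k \neq 0$ and $\gfr^{(0)} := \tfr \oplus \bigoplus_{m_{j_{0}}(\alpha)=0}\gfr_{\alpha}$; here $\gfr^{(\pm 2)} = \gfr_{\pm\rho}$ are one-dimensional, $\pfr = \gfr^{(0)} \oplus \gfr^{(1)} \oplus \gfr^{(2)}$ with nilradical $\gfr^{(1)} \oplus \gfr^{(2)}$, so $R^{u}(P) = \exp(\gfr^{(1)} \oplus \gfr^{(2)})$. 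Via the identifications recalled above, $T_{o}Z_{\gfr} \simeq \gfr^{(-1)} \oplus \gfr^{(-2)}$ with $D_{o} \simeq \gfr^{(-1)}$, so a line in $\PP(T_{o}Z_{\gfr}) \setminus \PP(D_{o})$ is represented by some $v = v_{-1} + v_{-2}$ with $v_{-2} \neq 0$; since $\gfr^{(-2)}$ is one-dimensional we may rescale $v$ so that $v_{-2} = E_{-\rho}$.

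Next I would compute the isotropy action of $R^{u}(P)$ on $\gfr/\pfr$, which is the restriction of $\mathrm{Ad}$. Since $\operatorname{ad}$ raises the grading and $\gfr^{(k)} = 0$ for $|k| \ge 3$, for $X \in \gfr^{(1)}$ all terms of $e^{\operatorname{ad} X}(v_{-1}+v_{-2})$ beyond the first two lie in $\pfr$, so that
\[
    \mathrm{Ad}(\exp X)\,(v_{-1}+v_{-2}) \equiv v_{-2} + \big(v_{-1} + [X, v_{-2}]\big) \pmod{\pfr};
\]
a parallel count shows $\exp(X')$ acts trivially on $\gfr/\pfr$ for $X' \in \gfr^{(2)}$. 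Thus the $\gfr^{(-2)}$-component $E_{-\rho}$ is left untouched, while the elements $\exp(X)$ with $X \in \gfr^{(1)}$ translate the $D_{o}$-component by $[X, E_{-\rho}] \in \gfr^{(-1)}$.

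It then remains to check that $\operatorname{ad}(E_{-\rho}) \colon \gfr^{(1)} \to \gfr^{(-1)}$ is surjective. I would argue root space by root space: if $\beta \in R$ has $m_{j_{0}}(\beta) = -1$, then $\langle \beta\,|\,\rho \rangle = -1 < 0$, so the $\rho$-string through $\beta$ extends upward and $\beta + \rho \in R$ with $m_{j_{0}}(\beta+\rho) = 1$; hence $[\gfr_{\beta+\rho}, E_{-\rho}] = \gfr_{\beta} \neq 0$, and as distinct $\beta$ land in distinct root spaces the map is onto (in fact an isomorphism, the two sides having equal dimension). Consequently, given any $[v] = [v_{-1} + E_{-\rho}] \in \PP(T_{o}Z_{\gfr}) \setminus \PP(D_{o})$, one picks $X \in \gfr^{(1)}$ with $[X, E_{-\rho}] = -v_{-1}$ and obtains $\exp(X) \cdot [v] = [E_{-\rho}]$. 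Since $[E_{-\rho}]$ itself lies in $\PP(T_{o}Z_{\gfr}) \setminus \PP(D_{o})$, its $R^{u}(P)$-orbit is the whole open set, i.e. the action is transitive.

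The computation is essentially linear algebra in a graded Lie algebra, so the one genuinely Lie-theoretic input — and the step I would watch most carefully — is the surjectivity of $\operatorname{ad}(E_{-\rho})$ onto $\gfr^{(-1)}$. This is exactly where the highest root $\rho$ being long, and the grading being the honest five-step contact grading with $\gfr^{(\pm 2)}$ one-dimensional, come into play, i.e.\ where assumption (\ref{assumption: pic num 1 not pn}) is used to rule out the degenerate pictures occurring in types $A$ and $C$.
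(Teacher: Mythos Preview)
The paper does not give its own proof of this lemma; it is simply quoted from \cite[Lemma 5]{HwangMok1999HolomorphicMaps}. Your argument is correct and is essentially the standard one: exploit the five-step contact grading so that the isotropy action of $\exp(\gfr^{(1)})$ on $\gfr/\pfr$ becomes an affine translation of the $\gfr^{(-1)}$-component by $[X,E_{-\rho}]$, and then check that $\operatorname{ad}(E_{-\rho})\colon \gfr^{(1)}\to\gfr^{(-1)}$ is onto via the $\rho$-string argument. Each step is sound as written.

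One small remark on your closing paragraph: the surjectivity of $\operatorname{ad}(E_{-\rho})$ and the one-dimensionality of $\gfr^{(\pm 2)}$ actually hold for \emph{every} simple $\gfr$ (the $\rho$-string argument only uses $\langle\beta\,|\,\rho\rangle=-1$, and $\gfr^{(2)}=\gfr_{\rho}$ since $\rho$ is the unique root with $\langle\,\cdot\,|\,\rho\rangle=2$). So the lemma is in fact true in types $A$ and $C$ as well; assumption~(\ref{assumption: pic num 1 not pn}) is invoked in the paper not because this particular lemma fails otherwise, but because the ambient setup (a single marked node $\alpha_{j_{0}}$, $\Ocal_{\PP(\gfr)}(1)|_{Z_{\gfr}}$ generating $\mathrm{Pic}$, conics not being secretly lines) breaks down there. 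This does not affect the validity of your proof.
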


\begin{lemma} \label{lem: normal bundle single orbit of twistor conic}
    Recall that $n$ is the integer such that $\dim_{\CC}Z_{\gfr}=2n+1$.
    \begin{enumerate}
        \item The normal bundle of a twistor conic in $Z_{\gfr}$ is isomorphic to $\Ocal_{\PP^{1}}(1)^{\oplus 2n}$.
        \item $\dim \Hbf(Z_{\gfr})=\dim \Cbf(Z_{\gfr}) = 4n$.
        \item Arbitrary two twistor conics in $Z_{\gfr}$ are $G$-conjugate to each other.
    \end{enumerate}
\end{lemma}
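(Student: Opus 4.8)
The plan is to establish part (1) first for the one explicit twistor conic $C_{\rho}$ of Example \ref{example: twistor conic and contact conic} by an $\mathfrak{sl}_{2}$-weight computation, then to deduce part (3) (and hence part (1) in general) from a homogeneity argument built on Lemma \ref{lem: transitive action on general direction}, and finally to read off part (2) from the smooth point $[C_{\rho}]$.

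\textbf{Step 1: the normal bundle of $C_{\rho}$.} Let $\sfr = \langle E_{\rho}, H_{\rho}, E_{-\rho}\rangle$ be the $\mathfrak{sl}_{2}$-triple attached to the highest root (with $H_{\rho}$ the corresponding coroot) and $S \subseteq G$ the connected subgroup with Lie algebra $\sfr$. Since $E_{\rho}, H_{\rho} \in \pfr$ while $E_{-\rho} \notin \pfr$, the isotropy $S \cap P$ is a Borel subgroup $B_{S}$ of $S$, so $S \cdot o \cong S/B_{S} \cong \PP^{1}$ is complete; being an irreducible closed curve inside the irreducible curve $C_{\rho} = Z_{\gfr} \cap \PP(\sfr)$, it equals $C_{\rho}$. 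Thus $TZ_{\gfr}|_{C_{\rho}}$ is an $S$-homogeneous bundle on $C_{\rho} \cong S/B_{S}$, hence is determined by the $B_{S}$-module $T_{o}Z_{\gfr} \cong \gfr/\pfr$; as every bundle on $\PP^{1}$ splits, its splitting type is recorded by the eigenvalues of the semisimple element $H_{\rho}$ on $\gfr/\pfr$, normalized so that $T(S/B_{S}) = \Ocal_{\PP^{1}}(2)$ (its fiber $\CC E_{-\rho}$ at $o$ has $H_{\rho}$-eigenvalue $-2$). Since $\gfr/\pfr = \gfr_{-\rho} \oplus D_{o}$ with $H_{\rho}$ acting by $-2$ on the line $\gfr_{-\rho}$ and by $-1$ on the $2n$-dimensional space $D_{o}$, we obtain $TZ_{\gfr}|_{C_{\rho}} \cong \Ocal_{\PP^{1}}(2) \oplus \Ocal_{\PP^{1}}(1)^{\oplus 2n}$; the tangent inclusion $T C_{\rho} = \Ocal_{\PP^{1}}(2) \hookrightarrow TZ_{\gfr}|_{C_{\rho}}$ is then an isomorphism onto the $\Ocal(2)$-summand (there is no nonzero map $\Ocal(2) \to \Ocal(1)$), so $N_{C_{\rho}/Z_{\gfr}} \cong \Ocal_{\PP^{1}}(1)^{\oplus 2n}$.

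\textbf{Step 2: dimension and uniform conjugacy.} Since $H^{1}(C_{\rho}, N_{C_{\rho}/Z_{\gfr}}) = 0$, the variety $\Hbf(Z_{\gfr})$ is smooth at $[C_{\rho}]$ of dimension $h^{0}(N_{C_{\rho}/Z_{\gfr}}) = 4n$; by irreducibility of $\Hbf(Z_{\gfr})$ (Theorem \ref{thm: irreducibility of moduli of curves}) this forces $\dim \Hbf(Z_{\gfr}) = 4n$, and then $\dim \Cbf(Z_{\gfr}) = 4n$ because $FC$ is an isomorphism away from the (nowhere dense) locus of double lines; this proves (2). Let $\Ucal \subseteq \Hbf(Z_{\gfr})$ be the locus of twistor conics, which is nonempty and open by the twistor/contact dichotomy, hence irreducible; the universal family $\pi \colon \Cscr_{\Ucal} \to \Ucal$ is a smooth $\PP^{1}$-fibration over the $4n$-dimensional irreducible base $\Ucal$, so $\Cscr_{\Ucal}$ is an irreducible variety of dimension $4n+1$. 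Transversality of twistor conics to $D$ gives a $G$-equivariant morphism
\[
    \tau \colon \Cscr_{\Ucal} \longrightarrow \PP(TZ_{\gfr}) \setminus \PP(D), \qquad (C, x) \longmapsto (x,\, T_{x}C).
\]
By homogeneity of $Z_{\gfr}$ together with Lemma \ref{lem: transitive action on general direction}, $G$ acts transitively on the target, a homogeneous space of dimension $(2n+1)+2n = 4n+1$; hence $\tau$ is surjective with all fibers mutually isomorphic, and a dimension count shows every fiber of $\tau$ is finite. Given an arbitrary twistor conic $C$ and $x \in C$, after replacing $C$ by a $G$-translate (using homogeneity of $Z_{\gfr}$, then Lemma \ref{lem: transitive action on general direction}) we may assume $x = o$ and $\tau(C, o) = \tau(C_{\rho}, o)$. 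For every $(C', o)$ in the finite fiber $F := \tau^{-1}(\tau(C_{\rho}, o))$, $G$-equivariance of $\tau$ gives $\text{Stab}_{G}(C', o) \subseteq \text{Stab}_{G}(\tau(C', o)) = \text{Stab}_{G}(\tau(C_{\rho}, o))$, so
\[
    \dim(G \cdot (C', o)) \;\ge\; \dim G - \dim \text{Stab}_{G}(\tau(C_{\rho}, o)) \;=\; 4n+1 \;=\; \dim \Cscr_{\Ucal}.
\]
Thus each orbit $G \cdot (C', o)$ with $(C',o) \in F$ is a dense open subset of the irreducible variety $\Cscr_{\Ucal}$; since distinct orbits are disjoint and two dense open subsets must intersect, all of them coincide. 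In particular $(C, o) \in G \cdot (C_{\rho}, o)$, so $C$ is $G$-conjugate to $C_{\rho}$; this is (3), and (1) follows for every twistor conic because the normal bundle is $G$-equivariant.

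\textbf{Expected main obstacle.} Step 1 is routine $\mathfrak{sl}_{2}$-bookkeeping and the dimension count in (2) is formal; the delicate point is Step 2, where one must verify carefully that the universal family $\Cscr_{\Ucal}$ is genuinely an irreducible variety of the expected dimension, that $\tau$ is a well-defined $G$-equivariant morphism, and that a $G$-equivariant morphism onto a homogeneous space has equidimensional fibers — the whole argument then hinges on combining these facts with the local unobstructedness at $[C_{\rho}]$ coming from Step 1. A more computational alternative for (3) would parametrize a twistor conic through $o$ as $t \mapsto [E_{\rho} - tH_{\rho} + t^{2}Z]$ and use that the ideal of the affine cone $\overline{\mathcal{O}_{\min}}$ over $Z_{\gfr}$ (the closure of the minimal nilpotent orbit) is generated by quadrics to force $[Z] = [E_{-\rho}]$, hence $C = C_{\rho}$; this avoids the moduli-theoretic input but needs explicit work with structure constants.
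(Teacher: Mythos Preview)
Your proof is correct and follows a genuinely different route from the paper's. For part~(1), the paper treats an \emph{arbitrary} twistor conic $C$ directly: since by Lemma~\ref{lem: transitive action on general direction} the tangent direction of $C$ at any point lies in the open $P$-orbit of $\PP(T_oZ_\gfr)$, the parametrization $f:\PP^1\to Z_\gfr$ is free in Koll\'ar's sense \cite[Theorem~II.3.11]{Kollar1996RationalCurves}, so $f^*TZ_\gfr\simeq\bigoplus\Ocal(a_i)$ with every $a_i>0$; then $\sum a_i=\deg f^*(-K_{Z_\gfr})=2(n+1)$ forces $a_1=2$ and $a_2=\cdots=a_{2n+1}=1$. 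For part~(3), the paper passes to the closure $V$ of the $G\times\Aut(\PP^1)$-orbit of $[f]$ in $\Hom_{\mathrm{bir}}(\PP^1,Z_\gfr)$ and applies Koll\'ar's dimension estimate \cite[Proposition~IV.2.5]{Kollar1996RationalCurves} to obtain $\dim V\ge 4n+3$, whence $\dim(G\cdot[C])\ge 4n$ and the orbit is open in the irreducible $\Cbf(Z_\gfr)$. Your approach --- an explicit $\mathfrak{sl}_2$-weight computation for the single conic $C_\rho$, followed by the equivariant tangent-direction map $\tau:\Cscr_\Ucal\to\PP(TZ_\gfr)\setminus\PP(D)$ onto a $(4n{+}1)$-dimensional homogeneous target --- sidesteps the deformation-theoretic input from \cite{Kollar1996RationalCurves} entirely, and your map $\tau$ is exactly the one whose bijectivity the paper proves immediately afterwards in Theorem~\ref{thm: exist unique twistor conic in gen direction}; your argument thus anticipates (a weak form of) that result. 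The trade-off is that the paper gets~(1) for all twistor conics in one stroke, while you bootstrap from $C_\rho$ through~(3). One minor notational slip: in the paper's conventions $H_\rho$ is the Killing dual of $\rho$, not the coroot; the semisimple element of your $\mathfrak{sl}_2$-triple is $\rho^\vee$ (proportional to $H_\rho$), and it is $\rho^\vee$ whose eigenvalues on $\gfr/\pfr$ are $-2$ on $\gfr_{-\rho}$ and $-1$ on $D_o$. Your concern about irreducibility of $\Cscr_\Ucal$ is easily handled: the universal family is flat over the irreducible base $\Ucal$ with geometrically integral fibers, so every irreducible component of $\Cscr_\Ucal$ dominates $\Ucal$ and hence meets the (irreducible) generic fiber, forcing a single component.
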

\begin{proof}
    Let $C$ be a twistor conic and $f : \PP^{1} \rightarrow C \subset Z_{\gfr}$ an embedding. By Lemma \ref{lem: transitive action on general direction} and \cite[Theorem II.3.11]{Kollar1996RationalCurves}, $f$ is free over $0 \mapsto f(0)$, i.e.
    \[
        f^{*}T Z_{\gfr} \simeq \bigoplus_{i = 1}^{2n+1}\Ocal_{\PP^{1}}(a_{i}) , \quad \text{for some }a_{1} \ge \cdots \ge a_{2n+1} > 0.
    \]
    Since the anti-canonical bundle $K_{Z_{\gfr}}^{-1}$ is isomorphic to $\Ocal_{\PP(\gfr)} (n+1) |_{Z_{\gfr}}$ (see \cite{Lebrun1995FanoManifolds}) and $C$ is a conic, we have
    \[
        2(n+1) = \deg_{\PP^{1}} f^{*} K_{Z_{\gfr}}^{-1} = \deg_{\PP^{1}} f^{*} TZ_{\gfr} = \sum_{i=1}^{2n+1}a_{i}.
    \]
    This is possible only if $a_{1} = 2$ and $a_{2} = \cdots = a_{2n+1} = 1$, hence the normal bundle of $C$ is isomorphic to $\Ocal_{\PP^{1}}(1)^{\oplus 2n}$ and the dimension of the Hilbert scheme at $[C]$ is $4n$. The Chow scheme is also $4n$-dimensional at $[C]$ by Theorem \ref{thm: map FC}.
    
    Now consider the space $\text{Hom}_{bir}(\PP^{1}, \, Z_{\gfr})$ of morphisms from $\PP^{1}$ to $Z_{\gfr}$ which are birational onto their images. Let $V$ be the closure of the $G \times \text{Aut}(\PP^{1})$-orbit containing $[f]$ in $\text{Hom}_{bir}(\PP^{1}, \, Z_{\gfr})$. By Lemma \ref{lem: transitive action on general direction}, for arbitrary $x \in Z_{\gfr}$, $\text{Locus}(V, \, 0\mapsto x)$ is open in $Z_{\gfr}$. Thus by the proof of \cite[Proposition IV.2.5]{Kollar1996RationalCurves}, for general points $x$ and $y$ in $Z_{\gfr}$, we have
    \begin{align*}
        \dim V &= \dim \{[h] \in V: h(0) = x, \, h(\infty) = y\} + \dim \text{Locus}(V) + \dim \text{Locus}(V,\, 0 \mapsto x) \\
        &\ge 4n+3.
    \end{align*}
    Therefore by \cite[Theorem II.2.15]{Kollar1996RationalCurves}, the $G$-orbit containing $[C]$ in the Chow scheme is at least $4n$-dimensional, hence each orbit containing a twistor conic is (Zariski) open in the Chow scheme. By Theorem \ref{thm: irreducibility of moduli of curves}, all twistor conics are in the same $G$-orbit.
\end{proof}

\begin{lemma}[See also {\cite{Wolf1965ComplexHomogeneous}}] \label{lem: stab of a twistor conic}
    The stabilizer $\text{Stab}_{G}(C_{\rho})$ of the twistor conic $C_{\rho} = Z_{\gfr} \cap \PP(E_{\rho}, \, H_{\rho}, \, E_{-\rho})$ introduced in Example \ref{example: twistor conic and contact conic} is the connected Lie subgroup $K$ of $G$ associated to the Lie subalgebra
    \[
        \kfr := \tfr \oplus \bigoplus_{\substack{\alpha \in R,\\m_{j_{0}}(\alpha) = 0}} \gfr_{\alpha} \oplus \gfr_{\rho} \oplus \gfr_{-\rho}.
    \]
    In particular, $\text{Stab}_{G}(C_{\rho})$ is a connected semi-simple Lie group.
\end{lemma}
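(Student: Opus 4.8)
The plan is to realise $C_{\rho}$ as the closed orbit of the $\mathrm{SL}_{2}$ attached to the highest root, and then read off the stabilizer by bookkeeping with the $m_{j_{0}}$-grading.

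Put $\sfr := \gfr_{\rho}\oplus\CC H_{\rho}\oplus\gfr_{-\rho}$, the $\mathfrak{sl}_{2}$-triple of $\rho$, and let $S_{\rho}\subset G$ be the connected subgroup with $\text{Lie}(S_{\rho})=\sfr$, so that $\PP(E_{\rho},\,H_{\rho},\,E_{-\rho})=\PP(\sfr)$. Since the identity $m_{j_{0}}(\alpha)=\langle\alpha\mid\rho\rangle=\langle\alpha,\,\rho^{\vee}\rangle$ of Section \ref{section: counting conjugacy classes of conics} says that $\rho^{\vee}$ is the grading element, the $m_{j_{0}}$-grading of $\gfr$ is exactly the weight decomposition of $\gfr$ as an $S_{\rho}$-module; by assumption (\ref{assumption: pic num 1 not pn}) all weights lie in $\{-2,-1,0,1,2\}$ with $\gfr_{\pm\rho}$ the one-dimensional extreme weight spaces. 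Hence $\gfr$ has exactly one irreducible $S_{\rho}$-summand of highest weight $2$, namely $\sfr$ itself, and $C_{\rho}=Z_{\gfr}\cap\PP(\sfr)=\overline{\exp(\gfr_{-\rho})\cdot o}$ is the unique closed $S_{\rho}$-orbit in $\PP(\sfr)\cong\PP^{2}$. The weight bound also gives a vanishing used repeatedly below: if $m_{j_{0}}(\alpha)=0$ then $\alpha\pm\rho\notin R$ (otherwise $\alpha\pm\rho$ would be a root of $m_{j_{0}}$-value $\pm 2$, forcing $\alpha=0$), so $[\gfr_{\alpha},\,\gfr_{\pm\rho}]=0$ for such $\alpha$.

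Next I identify $\kfr$. Writing $\gfr_{0}:=\tfr\oplus\bigoplus_{m_{j_{0}}(\alpha)=0}\gfr_{\alpha}$ for the Levi part, $\mathfrak{z}(\gfr_{0})$ is one-dimensional and contains $H_{\rho}$ (because $m_{j_{0}}(\alpha)=0\Leftrightarrow\langle\alpha,\rho\rangle=0$), so $\gfr_{0}=\CC H_{\rho}\oplus[\gfr_{0},\gfr_{0}]$. Using the vanishing above, a short computation shows $\mathfrak{z}_{\gfr}(\sfr)=[\gfr_{0},\gfr_{0}]$: an $X$ centralizes $H_{\rho}$ iff $X\in\gfr_{0}$, and for $X=X_{0}+\sum_{m_{j_{0}}(\alpha)=0}X_{\alpha}\in\gfr_{0}$ one has $[X,E_{\rho}]=\rho(X_{0})E_{\rho}$, so $X$ centralizes $\sfr$ iff $\rho(X_{0})=0$, i.e. iff $X\in[\gfr_{0},\gfr_{0}]$. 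Therefore $\kfr=[\gfr_{0},\gfr_{0}]\oplus\sfr=\mathfrak{z}_{\gfr}(\sfr)\oplus\sfr$ is a direct sum of two commuting semi-simple ideals; in particular $\kfr$ is semi-simple and $K$ is connected semi-simple. Since $\text{Ad}(K)$ preserves $\sfr$, $K$ preserves $\PP(\sfr)$, and $K\subset G$ preserves $Z_{\gfr}$, so $K$ preserves $C_{\rho}$; thus $K\subseteq\text{Stab}_{G}(C_{\rho})$.

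For the reverse inclusion I first pin down the Lie algebra: if $X\in\text{Lie}(\text{Stab}_{G}(C_{\rho}))$ then $\exp(tX)$ preserves $C_{\rho}$, hence its linear span $\sfr$, for all $t$, so $[X,\sfr]\subseteq\sfr$; imposing $[X,H_{\rho}]\in\sfr$ forces the $\gfr_{\alpha}$-component of $X$ to vanish for every $\alpha\neq\pm\rho$ with $\langle\alpha,\rho\rangle\neq 0$, i.e. forces $X\in\kfr$. Hence $\text{Stab}_{G}(C_{\rho})^{\circ}=K$, and it remains to rule out extra components. The action of $\text{Stab}_{G}(C_{\rho})$ on $C_{\rho}\cong\PP^{1}$ defines a homomorphism to $\Aut(C_{\rho})\cong\mathrm{PGL}_{2}$ which is surjective (already $S_{\rho}$ maps onto $\mathrm{PGL}_{2}$) and whose kernel consists of elements $g$ fixing the non-degenerate conic $C_{\rho}$ pointwise, hence acting trivially on $\PP(\sfr)$; then $\text{Ad}(g)|_{\sfr}$ is a scalar, which must be $1$ since it is a Lie-algebra automorphism of $\mathfrak{sl}_{2}$, so $g\in Z_{G}(S_{\rho})$. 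Therefore $\text{Stab}_{G}(C_{\rho})=Z_{G}(S_{\rho})\cdot S_{\rho}$, which is connected because $S_{\rho}$ is and $Z_{G}(S_{\rho})$ is connected as $G$ is simply connected (cf. \cite[Section 8]{Steinberg1968EndomorphismsLinear}); hence $\text{Stab}_{G}(C_{\rho})=K$. The one step beyond elementary root combinatorics is this connectedness of $Z_{G}(S_{\rho})$, which is precisely where simple-connectedness of $G$ is needed.
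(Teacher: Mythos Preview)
Your argument is largely sound and more structural than the paper's, but the final step has a genuine gap. You claim that $Z_G(S_\rho)$ is connected because $G$ is simply connected, citing Steinberg. Steinberg's theorem, however, gives connectedness of centralizers of semisimple \emph{elements} (equivalently, fixed-point groups of semisimple inner automorphisms), not of arbitrary $\mathrm{SL}_2$-subgroups; and the latter can fail even in simply connected groups---for instance the diagonal embedding $\mathrm{SL}_2\hookrightarrow\mathrm{SL}_4$, $A\mapsto\mathrm{diag}(A,A)$, has centralizer $\{M\otimes I_2:\det M=\pm 1\}$, which is disconnected. The claim is nonetheless true here and is easily repaired inside your framework: put $t_0:=\exp(\pi i\,\rho^\vee)\in S_\rho$. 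Since $\mathrm{Ad}(t_0)$ acts on $\gfr_\alpha$ by $e^{\pi i\,m_{j_0}(\alpha)}$, its fixed subalgebra is exactly $\kfr$; by Steinberg $Z_G(t_0)$ is connected, hence $Z_G(t_0)=K$. Then $Z_G(S_\rho)\subseteq Z_G(t_0)=K$ and $S_\rho\subseteq K$, so $\mathrm{Stab}_G(C_\rho)=Z_G(S_\rho)\cdot S_\rho\subseteq K$, which is what you need.

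For comparison, the paper avoids the component issue altogether by arguing element-by-element rather than via $Z_G(S_\rho)$. Given $g\in\mathrm{Stab}_G(C_\rho)$ it first uses transitivity of $S_\rho$ on $C_\rho$ to reduce to $g\in P$, then the Levi decomposition $P=R^u(P)\rtimes L$ together with the observation that $L\subset K$ (because $L$ is connected with $\mathrm{Lie}(L)=\gfr_0\subset\kfr$) to reduce to $g=\exp(X)\in R^u(P)$; finally the condition that $\exp(tX)$ preserves $T_oC_\rho\simeq\gfr_{-\rho}\bmod\pfr$ forces $X\in\gfr_\rho$, hence $g\in K$. Your route is cleaner conceptually (it explains \emph{why} $\kfr$ is semisimple, as $\mathfrak{z}_\gfr(\sfr)\oplus\sfr$), while the paper's route is more elementary in that it never needs to analyse centralizers globally.
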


\begin{proof}
    Observe that $g \in G$ stabilizes $C_{\rho}$ if and only if it stabilizes $\PP(E_{\rho}, \, H_{\rho}, \, E_{-\rho})$. Thus the Lie algebra $\kfr$ is contained in the Lie algebra of $\text{Stab}_{G}(C_{\rho})$. For the converse, let $g \in \text{Stab}_{G}(C_{\rho})$, and then claim that $g \in K$. Observe that the $\mathfrak{sl}_{2}(\CC)$ algebra $\CC \cdot H_{\rho} \oplus \gfr_{\rho} \oplus \gfr_{-\rho}$ is contained in $\kfr$, and the corresponding $SL_{2}(\CC)$ acts transitively on $C_{\rho}$. Thus we may assume that $g$ fixes $o \in C_{\rho}$, i.e. $g \in P$. Now consider the Levi decomposition $P = R^{u}(P) \rtimes L$ where $R^{u}(P)$ is the unipotent radical of $P$ and $L$ is the standard Levi subgroup. That is, the Lie algebras of $R^{u}(P)$ and $L$ are given by
    \[
        \bigoplus_{\substack{\alpha \in R\\ m_{j_{0}}(\alpha)=1,\,2}}\gfr_{\alpha}, \quad \text{and}\quad \tfr \oplus \bigoplus_{\substack{\alpha \in R\\m_{j_{0}}(\alpha)=0}} \gfr_{\alpha}, \text{ respectively}.
    \]
    Since $L \subset K$, we may assume that $g \in R^{u}(P)$, say $g = \exp(X)$ for some $X \in \bigoplus_{m_{j_{0}}(\alpha)=1,\,2}\gfr_{\alpha}$. Note that since $g = \exp(X)$ is unipotent, $\exp(tX) \in \text{Stab}_{G}(C_{\rho}) \cap P$ for every $t \in \CC$, hence $\exp(tX)$ stabilizes $T_{o} C_{\rho} \simeq \gfr_{-\rho} \mod \pfr$ in $T_{o} Z_{\gfr} \simeq \gfr / \pfr$. Therefore
    \[
        [X,\, \gfr_{-\rho}] \mod \pfr \quad \subset \quad \gfr_{-\rho} \mod \pfr \quad \text{in $\gfr/\pfr$}.
    \]
    This is possible only if $X \in \gfr_{\rho}$, hence $g \in K$.
\end{proof}

\begin{theorem} \label{thm: exist unique twistor conic in gen direction}
    Let $v$ be a nonzero tangent vector of $Z_{\gfr}$ which does not belong to the contact distribution $D$. Then there is exactly one twistor conic tangent to $v$.
\end{theorem}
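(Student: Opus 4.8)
The plan is to transport the problem to the standard twistor conic $C_{\rho}$ of Example~\ref{example: twistor conic and contact conic} and then to pin down one isotropy group by an explicit Lie-theoretic computation. Write $o = [E_{\rho}]$, $P = \text{Stab}_{G}(o)$, and let $v_{0} \in T_{o}Z_{\gfr}$ be the image of $E_{-\rho}$ under $\gfr \to \gfr/\pfr \simeq T_{o}Z_{\gfr}$. Since $m_{j_{0}}(-\rho) = -2$, the vector $v_{0}$ spans the one-dimensional summand $\{m_{j_{0}} = -2\}$ of $T_{o}Z_{\gfr}$, so $v_{0} \notin D_{o}$; and since $C_{\rho}$ is parametrized by $\overline{\exp(\gfr_{-\rho})\cdot o}$, one has $T_{o}C_{\rho} = \CC v_{0}$. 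For existence, given a nonzero $v \in T_{x}Z_{\gfr}\setminus D_{x}$, I would use transitivity of $G$ on $Z_{\gfr}$ together with Lemma~\ref{lem: transitive action on general direction} (applied to $\PP(T_{o}Z_{\gfr})\setminus\PP(D_{o})$) to produce $g \in G$ with $g\cdot o = x$ and $dg([v_{0}]) = [v]$; then $g\cdot C_{\rho}$ is a twistor conic tangent to $v$ at $x$.

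For uniqueness, the first step is to reduce, using the same transitivity, to showing that $C_{\rho}$ is the only twistor conic $C$ with $T_{o}C = \CC v_{0}$. By Lemma~\ref{lem: normal bundle single orbit of twistor conic}(3) such a $C$ equals $g\cdot C_{\rho}$ for some $g \in G$; since the $SL_{2}(\CC)$-subgroup of $K$ corresponding to $\CC H_{\rho}\oplus\gfr_{\rho}\oplus\gfr_{-\rho}$ acts transitively on $C_{\rho}$ (proof of Lemma~\ref{lem: stab of a twistor conic}), I can multiply $g$ on the right by an element of $K$ to arrange $g\cdot o = o$, i.e. $g \in P$, without changing $g\cdot C_{\rho} = C$. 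Then $T_{o}C = \CC v_{0}$ translates into $g \in \text{Stab}_{P}([v_{0}])$, so the whole statement comes down to the inclusion of groups
\[
    \text{Stab}_{P}([v_{0}]) \subseteq K .
\]

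To prove this I would use the Levi decomposition $P = R^{u}(P)\rtimes L$, with $\text{Lie}(L) = \lfr = \tfr\oplus\bigoplus_{m_{j_{0}}(\alpha)=0}\gfr_{\alpha}$ and $\text{Lie}(R^{u}(P)) = \bigoplus_{m_{j_{0}}(\alpha)=1,2}\gfr_{\alpha}$. Since $\gfr_{-\rho}$ is a one-dimensional $L$-submodule of $T_{o}Z_{\gfr}$ (the degree $-2$ part of the grading), we get $L \subseteq \text{Stab}_{P}([v_{0}])$, and $L \subseteq K$ because $\lfr\subseteq\kfr$ and both groups are connected; so it remains to check that an element $\exp(X) \in R^{u}(P)$, with $X = X_{1}+X_{2}$ and $X_{i}\in\bigoplus_{m_{j_{0}}(\alpha)=i}\gfr_{\alpha}$, lying in $\text{Stab}_{P}([v_{0}])$ must have $X_{1} = 0$, whence $X \in \gfr_{\rho}\subseteq\kfr$ and $\exp(X) \in K$. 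Expanding $\text{Ad}(\exp X)E_{-\rho}$ modulo $\pfr$, every term of order $\geq 2$ lies in $\pfr$ and the only surviving first-order contribution is $[X_{1},E_{-\rho}] \in \bigoplus_{m_{j_{0}}(\alpha)=-1}\gfr_{\alpha}$, so $\exp(X) \in \text{Stab}_{P}([v_{0}])$ forces $[X_{1},E_{-\rho}] = 0$. Finally $\text{ad}(E_{-\rho})$ is injective on $\bigoplus_{m_{j_{0}}(\alpha)=1}\gfr_{\alpha}$: for each positive root $\beta$ with $m_{j_{0}}(\beta) = 1$ one has $\langle \beta\,|\,\rho\rangle = 1 > 0$, hence $\beta-\rho \in R$ and $N_{\beta,-\rho}\neq 0$, while the roots $\beta-\rho$ are pairwise distinct; so $[X_{1},E_{-\rho}] = 0$ gives $X_{1} = 0$.

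The step I expect to be the real content is the identification $\text{Stab}_{P}([v_{0}]) = K\cap P$, and inside it the injectivity of $\text{ad}(E_{-\rho})$ on the degree-$1$ piece of the contact grading, which is exactly the place where the contact structure of $Z_{\gfr}$ is genuinely used. Carrying this out through the Levi decomposition, rather than comparing Lie algebras and invoking a connectedness statement for $\text{Stab}_{P}([v_{0}])$ or $K\cap P$, has the advantage of yielding the equality of groups directly.
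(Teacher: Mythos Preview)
Your proof is correct and follows essentially the same route as the paper: reduce via Lemma~\ref{lem: transitive action on general direction} to $v \in T_{o}C_{\rho}$, write any competing twistor conic as $g\cdot C_{\rho}$, use the $SL_{2}$ inside $K$ to force $g \in P$, and then run the Levi decomposition argument from the proof of Lemma~\ref{lem: stab of a twistor conic} to push $g$ into $K$. The only differences are expository: the paper linearizes by invoking the one-parameter group $\exp(tX)$ (unipotence implies the whole line lies in the stabilizer) to obtain $[X,\gfr_{-\rho}] \bmod \pfr \subset \gfr_{-\rho}\bmod\pfr$, whereas you expand $\mathrm{Ad}(\exp X)E_{-\rho}$ directly; and you make explicit the injectivity of $\mathrm{ad}(E_{-\rho})$ on the degree-$1$ piece, which the paper compresses into the phrase ``This is possible only if $X \in \gfr_{\rho}$.''
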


\begin{proof}
    By Lemma \ref{lem: transitive action on general direction}, we may assume that $v \in T_{o} C_{\rho}$. Suppose that there is a twistor conic $C$ tangent to $v$ at $o$. By Lemma \ref{lem: normal bundle single orbit of twistor conic}, there is $g \in G$ such that $C = g \cdot C_{\rho}$. We claim that $g$ is indeed contained in $K = \text{Stab}_{G}(C_{\rho})$. Since $K$ contains the Lie subgroup corresponding to $\CC \cdot H_{\rho} \oplus \gfr_{\rho} \oplus \gfr_{-\rho}$, we may assume that $g$ fixes $o$, i.e. $g \in P$. Since the tangent directions of $C$ and $C_{\rho}$ coincide, $g$ stabilizes $T_{o}C_{\rho}$, hence by repeating the argument in the proof of Lemma \ref{lem: stab of a twistor conic}, we conclude that $g \in K$.
\end{proof}

\begin{proposition} \label{prop: orbit of twistor conics is symmetric}
    There is a holomorphic involution $\sigma : G\rightarrow G$ such that the fixed point subgroup $G^{\sigma}$ is $K$.
\end{proposition}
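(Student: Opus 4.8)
The plan is to exhibit the involution $\sigma$ explicitly at the level of Lie algebras and then check that its fixed subalgebra is exactly $\kfr$ as described in Lemma \ref{lem: stab of a twistor conic}. The natural candidate comes from the $\mathfrak{sl}_2$-triple $\mathfrak{s} := \CC H_\rho \oplus \gfr_\rho \oplus \gfr_{-\rho}$ acting on $C_\rho$: the adjoint action of $\exp(\pi \sqrt{-1}\, H_\rho / \langle \rho, \rho\rangle)$ (suitably normalized so that it is the image of $\mathrm{diag}(-1,-1) \in SL_2(\CC)$ under the $SL_2 \to G$ corresponding to $\mathfrak{s}$) is an inner involution of $G$; call it $\sigma := \mathrm{Ad}(s_0)$ where $s_0$ is this element. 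On a root space $\gfr_\alpha$ it acts by the scalar $(-1)^{\langle \alpha\,|\,\rho\rangle} = (-1)^{m_{j_0}(\alpha)}$, using the identity $m_{j_0}(\alpha) = \langle \alpha\,|\,\rho\rangle$ recorded at the start of Section \ref{section: counting conjugacy classes of conics}. Hence $\sigma$ fixes $\tfr$ pointwise, fixes $\gfr_\alpha$ when $m_{j_0}(\alpha)$ is even (i.e. $0$ or $\pm 2$), and acts by $-1$ on $\gfr_\alpha$ when $m_{j_0}(\alpha)$ is odd (i.e. $\pm 1$). Since $m_{j_0}$ takes values in $\{-2,-1,0,1,2\}$ on $R$, and the only roots with $m_{j_0}(\alpha) = \pm 2$ are $\pm\rho$, the fixed subalgebra is
\[
    \gfr^\sigma = \tfr \oplus \bigoplus_{m_{j_0}(\alpha) = 0} \gfr_\alpha \oplus \gfr_\rho \oplus \gfr_{-\rho} = \kfr,
\]
which is precisely the Lie algebra of $K = \mathrm{Stab}_G(C_\rho)$ from Lemma \ref{lem: stab of a twistor conic}.

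Next I would pass from Lie algebras to groups. Because $\sigma$ is inner — conjugation by the specific semisimple element $s_0$ — it is automatically a holomorphic (algebraic) automorphism of $G$, and $\sigma^2 = \mathrm{Ad}(s_0^2)$; one checks $s_0^2$ is central in $G$ (it is the image of $(\mathrm{diag}(-1,-1))^2 = I$, or at worst lies in the center), so $\sigma^2 = \mathrm{id}$ and $\sigma$ is a genuine involution. It remains to identify the fixed point \emph{subgroup} $G^\sigma$ with $K$, not merely their Lie algebras. Since $G$ is simply connected and $\sigma$ is an involutive automorphism, a theorem of Steinberg (already invoked in Section \ref{section: Luna Vust theorey}, \cite[Section 8]{Steinberg1968EndomorphismsLinear}) guarantees $G^\sigma$ is connected; and $K = \mathrm{Stab}_G(C_\rho)$ is connected by Lemma \ref{lem: stab of a twistor conic}. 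Two connected subgroups of $G$ with the same Lie algebra coincide, so $G^\sigma = K$, as desired. (Alternatively, and without citing Steinberg here, one can argue directly: $K \subseteq G^\sigma$ is clear since $\mathrm{Ad}(s_0)$ fixes $\kfr$ hence fixes $K$ elementwise up to the center issue; the reverse inclusion follows from connectedness of $G^\sigma$ together with $\mathrm{Lie}(G^\sigma) = \gfr^\sigma = \kfr$.)

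The main obstacle — really the only subtle point — is the bookkeeping around the center of $G$: one must choose the semisimple element $s_0$ inside the subgroup $SL_2(\CC) \to G$ attached to $\mathfrak{s}$ so that $\mathrm{Ad}(s_0)$ has order exactly $2$ (not $1$), i.e. so that $s_0$ is genuinely noncentral, while $s_0^2$ is central so that $\sigma^2 = \mathrm{id}$. Concretely $s_0 = \exp\!\big(\pi\sqrt{-1}\, H_\rho^\vee\big)$ with the coroot $H_\rho^\vee$ normalized appropriately does the job: $\mathrm{Ad}(s_0)$ acts on $\gfr_\alpha$ by $e^{\pi\sqrt{-1}\,\langle \alpha\,|\,\rho\rangle} = (-1)^{m_{j_0}(\alpha)}$, which is nontrivial because $m_{j_0}$ is odd on some root (namely any root with $\langle\alpha\,|\,\rho\rangle = 1$, which exists since $\rho$ is not itself simple under assumption (\ref{assumption: pic num 1 not pn})), and $\mathrm{Ad}(s_0^2)$ acts trivially on every $\gfr_\alpha$ and on $\tfr$, hence is trivial on $\gfr$, so $\sigma^2 = \mathrm{id}$. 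Once this normalization is fixed, everything else is the routine verification above. I would also remark that $\sigma$ is nontrivial precisely because $\kfr \subsetneq \gfr$ (equivalently $n \ge 1$, which holds under (\ref{assumption: pic num 1 not pn})), so the hypotheses of the definition of symmetric variety are met and $O_{\gfr} = G/K$ is indeed symmetric.
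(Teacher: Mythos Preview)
Your proof is correct and takes a more explicit route than the paper's. The paper invokes \cite[Theorem 5.4]{Wolf1965ComplexHomogeneous} to obtain an inner involution on a real form of $\gfr$ whose $(+1)$-eigenspace is a real form of $\kfr$, then complexifies and applies Steinberg's connectedness theorem exactly as you do. You instead construct the involution directly as $\sigma = \mathrm{Ad}(s_{0})$ with $s_{0} = \exp(\pi\sqrt{-1}\, H_{\rho}^{\vee})$, and verify by hand that it acts on $\gfr_{\alpha}$ by $(-1)^{m_{j_{0}}(\alpha)}$, so that $\gfr^{\sigma} = \kfr$. This is essentially what Wolf's argument produces once unwound, but your version is self-contained and avoids the detour through real forms. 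Both proofs finish identically via Steinberg \cite[Section 8]{Steinberg1968EndomorphismsLinear} and the connectedness of $K$ from Lemma \ref{lem: stab of a twistor conic}. One minor remark: in your parenthetical alternative, the inclusion $K \subseteq G^{\sigma}$ is indeed immediate (since $\mathrm{Ad}(s_{0})$ fixes $\kfr$ pointwise and $K$ is connected, $s_{0}$ centralizes $K$), so there is no genuine ``center issue'' there.
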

\begin{proof}
    By the proof of \cite[Theorem 5.4]{Wolf1965ComplexHomogeneous}, there is an inner involution on a real form of $\gfr$ such that the $(+1)$-eigenspace is a real form of $\kfr$. By taking its $\CC$-linear extension over $\gfr$, since $G$ is simply connected, we obtain a holomorphic involution $\sigma : G \rightarrow G$, and the $(+1)$-eigenspace of its differential at the identity element is $\kfr$. Since $G^{\sigma}$ is connected (\cite[Theorem 8.1]{Steinberg1968EndomorphismsLinear}), we have $G^{\sigma} = K$.
\end{proof}

\begin{proof}[Proof of Theorem \ref{main thm: open symmetric orbit of twistor conics with satake diagram}]
    By Lemma \ref{lem: normal bundle single orbit of twistor conic}, Lemma \ref{lem: stab of a twistor conic} and Proposition \ref{prop: orbit of twistor conics is symmetric}, twistor conics form open orbits in $\Cbf(Z_{\gfr})$ and $\Hbf(Z_{\gfr})$, which are isomorphic to the homogeneous symmetric variety $O_{\gfr} := G/G^{\sigma}$ of dimension $4n$. From Lemma \ref{lem: stab of a twistor conic} and the classification of Satake diagrams for simple Lie algebras in \cite[Table 1]{Ruzzi2010GeometricalDescription} (see also \cite[Table 26.3]{Timashev2011HomogeneousSpaces}), we easily obtain the Satake diagram of $O_{\gfr}$ for each $\gfr$. Indeed, the Dynkin diagram of $K = G^{\sigma}$ can be obtained by removing $\alpha_{j_{0}}$ in the extended Dynkin diagram of $\gfr$. Also note that $SO(4,\, \CC)$ is of type $A_{1} \times A_{1}$ and $SO(3,\, \CC)$ is of type $A_{1}$.
\end{proof}

\subsection{Double Lines} \label{subsection: double lines}
As we have seen, twistor conics form open orbits in $\Hbf(Z_{\gfr})$ and $\Cbf(Z_{\gfr})$. Singular conics belong to their boundaries, and in this section we focus on double lines. Namely, we show that the closed orbits of $\Hbf(Z_{\gfr})$ and $\Cbf(Z_{\gfr})$ consist of double lines, and compute their isotropy groups.

First, recall the following description of the space of lines.

\begin{theorem}[{\cite[Theorem 4.3]{LandsbergManivel2003ProjectiveGeometry}, \cite[Proposition 5]{Hwang1997RigidityHomogeneous}}] \label{thm: space of lines is homogeneous}
    For a point $o$ in the adjoint variety $Z_{\gfr}$ satisfying the assumption (\ref{assumption: pic num 1 not pn}), the semi-simple part of the isotropy group $P = \text{Stab}_{G}(o)$ acts transitively on the space of lines passing through $o$ in $Z_{\gfr}$. Moreover, the stabilizer of each line is conjugate to the parabolic subgroup $P_{N(\alpha_{j_{0}})}$ of $G$ where $N(\alpha_{j_{0}})$ denotes the set of neighbors of $\alpha_{j_{0}}$ in the Dynkin diagram of $\gfr$.
\end{theorem}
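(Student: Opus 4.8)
The plan is to pass to the contact grading of $\gfr$ cut out by $m_{j_{0}}$, identify the space of lines through $o$ with the variety of minimal rational tangents there, and then compute the stabilizer of one distinguished line by a root-space calculation. Write $\gfr_{k} := \bigoplus_{m_{j_{0}}(\alpha) = k} \gfr_{\alpha}$ for $k \neq 0$ and $\gfr_{0} := \tfr \oplus \bigoplus_{m_{j_{0}}(\alpha) = 0} \gfr_{\alpha}$; since $m_{j_{0}}(\alpha) = \langle \alpha \, | \, \rho \rangle \in \{0, \pm 1, \pm 2\}$ this is a grading concentrated in degrees $-2,\dots,2$ with $\gfr_{\pm 2} = \gfr_{\pm \rho}$ one-dimensional, $\pfr = \gfr_{0} \oplus \gfr_{1} \oplus \gfr_{2}$, $\text{Lie}(R^{u}(P)) = \gfr_{1} \oplus \gfr_{2}$, $T_{o}Z_{\gfr} \simeq \gfr_{-1} \oplus \gfr_{-2}$ and $D_{o} \simeq \gfr_{-1}$; the semi-simple part $P^{ss}$ of $P$ is the semi-simple part of the Levi subgroup $L$, whose Lie algebra is $\gfr_{0}$. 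Since every line in $Z_{\gfr}$ is tangent to $D$ (\cite{Hwang1997RigidityHomogeneous}, \cite{Kebekus2001LinesContact}), a line through $o$ is the same datum as its tangent direction at $o$, a point of $\PP(D_{o}) = \PP(\gfr_{-1})$; thus the space of lines through $o$ is identified with the variety of minimal rational tangents $\Ccal_{o} \subseteq \PP(\gfr_{-1})$.

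The geometric heart of the statement is that $\Ccal_{o}$ is the unique closed $P^{ss}$-orbit in $\PP(\gfr_{-1})$, namely the highest weight orbit of the irreducible $P^{ss}$-module $\gfr_{-1}$ (irreducibility being checked case by case from the Dynkin diagram of $\gfr$ with $\alpha_{j_{0}}$ removed). One inclusion is explicit: the highest weight line of $\gfr_{-1}$ is $\gfr_{-\alpha_{j_{0}}}$, since adding any positive root of $\gfr_{0}$ to $-\alpha_{j_{0}}$ never produces a root; and as $\rho - 2\alpha_{j_{0}} \notin R$, the exponential series truncates and $\ell_{0} := \overline{\exp(\gfr_{-\alpha_{j_{0}}}) \cdot o} = \PP(\gfr_{\rho} \oplus \gfr_{\rho - \alpha_{j_{0}}})$ is a line in $Z_{\gfr}$, whose $P^{ss}$-translates cover the closed orbit. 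For the reverse inclusion I would invoke \cite[Theorem 4.3]{LandsbergManivel2003ProjectiveGeometry}, or argue directly: $\Ccal_{o}$ is an irreducible, $P^{ss}$-stable closed subvariety of $\PP(\gfr_{-1})$, and its dimension is $n-1$, the number of $\Ocal_{\PP^{1}}(1)$-summands in the standard splitting $N_{\ell_{0}/Z_{\gfr}} \simeq \Ocal_{\PP^{1}}(1)^{\oplus(n-1)} \oplus \Ocal_{\PP^{1}}^{\oplus(n+1)}$ of the rank-$2n$, degree-$(n-1)$ normal bundle of a line (using $-K_{Z_{\gfr}} \simeq \Ocal_{\PP(\gfr)}(n+1)|_{Z_{\gfr}}$); since the closed orbit has the same dimension, the two coincide. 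In particular $P^{ss}$, hence $P$, acts transitively on the lines through $o$, and since $G$ acts transitively on $Z_{\gfr}$ it acts transitively on all lines in $Z_{\gfr}$.

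It remains to compute $\text{Stab}_{G}(\ell_{0})$, equivalently the stabilizer of the plane $U := \gfr_{\rho} \oplus \gfr_{\rho - \alpha_{j_{0}}}$. The torus $T$ preserves $U$, and so does every positive root space: $[\gfr_{\beta}, E_{\rho}] = 0$ because $\rho$ is the highest root, while $[\gfr_{\beta}, E_{\rho - \alpha_{j_{0}}}] \subseteq \gfr_{\rho - \alpha_{j_{0}} + \beta}$ is either zero or lies in $\gfr_{\rho}$, using that $\rho - \alpha_{j_{0}}$ is the highest weight of the irreducible $\gfr_{0}$-module $\gfr_{1}$ and $\rho$ the unique root with $m_{j_{0}} = 2$. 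Hence $B \subseteq \text{Stab}_{G}(\ell_{0})$, so $\text{Stab}_{G}(\ell_{0})$ is a standard parabolic $P_{I}$, with $\alpha_{k} \in I$ if and only if $\gfr_{-\alpha_{k}} \not\subseteq \text{Stab}_{G}(\ell_{0})$. For $k = j_{0}$, $\exp(t E_{-\alpha_{j_{0}}})$ fixes $E_{\rho - \alpha_{j_{0}}}$ and moves $E_{\rho}$ within $U$ (again because $\rho - 2\alpha_{j_{0}} \notin R$), so $\alpha_{j_{0}} \notin I$. For $k \neq j_{0}$ we have $\langle \alpha_{k} \, | \, \rho \rangle = 0$, hence $[E_{-\alpha_{k}}, E_{\rho}] = 0$, and $[E_{-\alpha_{k}}, E_{\rho - \alpha_{j_{0}}}]$ spans $\gfr_{\rho - \alpha_{j_{0}} - \alpha_{k}}$; a short $\alpha_{k}$-string argument (noting $\rho - \alpha_{j_{0}} + \alpha_{k} \notin R$, as it would exceed the highest weight of $\gfr_{1}$) shows this is nonzero exactly when $\langle \rho - \alpha_{j_{0}}, \alpha_{k}^{\vee} \rangle = -\langle \alpha_{j_{0}} \, | \, \alpha_{k} \rangle > 0$, i.e. exactly when $\alpha_{k}$ is a neighbour of $\alpha_{j_{0}}$. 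In that case $[E_{-\alpha_{k}}, E_{\rho - \alpha_{j_{0}}}]$ leaves $U$ and $\alpha_{k} \in I$; otherwise $\exp(t E_{-\alpha_{k}})$ fixes both $E_{\rho}$ and $E_{\rho - \alpha_{j_{0}}}$ and $\alpha_{k} \notin I$. Therefore $I = N(\alpha_{j_{0}})$ and $\text{Stab}_{G}(\ell_{0}) = P_{N(\alpha_{j_{0}})}$; by $G$-transitivity every line stabilizer is conjugate to $P_{N(\alpha_{j_{0}})}$. The main obstacle is the homogeneity of $\Ccal_{o}$ — ruling out lines through $o$ beyond the $P^{ss}$-orbit of $\ell_{0}$ — which is exactly \cite[Theorem 4.3]{LandsbergManivel2003ProjectiveGeometry}, \cite[Proposition 5]{Hwang1997RigidityHomogeneous}, and, if one wants a self-contained argument, the normal-bundle computation above; the root-theoretic identification of the stabilizer is then a direct check.
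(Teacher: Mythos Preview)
The paper does not give its own proof of this theorem; it is simply quoted from \cite[Theorem 4.3]{LandsbergManivel2003ProjectiveGeometry} and \cite[Proposition 5]{Hwang1997RigidityHomogeneous} and used as a black box. So there is nothing to compare against on the paper's side.

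Your proposal is a sound reconstruction of what those references establish. The construction of the distinguished line $\ell_{0} = \PP(\gfr_{\rho} \oplus \gfr_{\rho - \alpha_{j_{0}}})$ is correct, and the root-space computation of its stabilizer is clean and accurate: $B \subset \text{Stab}_{G}(\ell_{0})$, $\gfr_{-\alpha_{j_{0}}}$ preserves $U$, and for $k \neq j_{0}$ the $\alpha_{k}$-string argument correctly shows $\rho - \alpha_{j_{0}} - \alpha_{k} \in R$ precisely when $\alpha_{k} \in N(\alpha_{j_{0}})$, giving $\text{Stab}_{G}(\ell_{0}) = P_{N(\alpha_{j_{0}})}$. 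You also rightly identify transitivity of $P^{ss}$ on lines through $o$ as the substantive input and defer to the citations for it.

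One caveat on your ``self-contained'' alternative for transitivity: the argument is slightly circular and incomplete as written. The splitting $N_{\ell_{0}/Z_{\gfr}} \simeq \Ocal_{\PP^{1}}(1)^{\oplus(n-1)} \oplus \Ocal_{\PP^{1}}^{\oplus(n+1)}$ (as opposed to merely having degree $n-1$) is itself one of the results of \cite{Hwang1997RigidityHomogeneous}, so invoking it to deduce $\dim \Ccal_{o} = n-1$ already uses what you are trying to avoid citing. You would also need to argue that $\Ccal_{o}$ is irreducible and to verify independently that the closed $P^{ss}$-orbit in $\PP(\gfr_{-1})$ has dimension $n-1$. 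None of this affects the comparison, since the paper does not attempt a proof at all.
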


In particular, the subset consisting of double lines in the Chow scheme is homogeneous.

\begin{lemma} \label{lem: B stable line and plane}
    \begin{enumerate}
        \item The line $\Lcal_{B} := \PP(E_{\rho}, \, E_{\rho - \alpha_{j_{0}}})$ is a unique $B$-stable line in $\PP(\gfr)$.
        \item Any $B$-stable plane in $\PP(\gfr)$ contains the $B$-stable line $\Lcal_{B}$, and is of form
        \[
            \PP(E_{\rho}, \, E_{\rho - \alpha_{j_{0}}}, \, E_{\rho - \alpha_{j_{0}} - \beta})
        \]
        where $\beta \in S$ is a neighbor of $\alpha_{j_{0}}$ in the Dynkin diagram of $\gfr$.
        \item In $\PP(\gfr)$, there is no $B$-stable conic which is smooth or reducible.
    \end{enumerate}
\end{lemma}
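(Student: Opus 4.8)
The plan is to reduce everything to elementary weight-space combinatorics in the adjoint module. The two facts I would use throughout are: a subspace of $\gfr$ is $B$-stable iff it is $\bfr$-stable (as $B$ is connected), hence in particular $\tfr$-stable, so it is a direct sum of root spaces $\gfr_{\alpha}$ together with a subspace of $\tfr$; and, because the adjoint representation is irreducible with highest weight vector $E_{\rho}$, every nonzero $B$-stable subspace of $\gfr$ contains $E_{\rho}$ (it is the unique line fixed by the unipotent radical $U$ of $B$). The workhorse is a ``chain principle'': for a positive root $\mu\ne\rho$ there is a chain $\mu=\nu_{0}<\nu_{1}<\dots<\nu_{k}=\rho$ of positive roots with each $\nu_{i+1}-\nu_{i}$ simple, and then $E_{\nu_{0}},\dots,E_{\nu_{k}}$ all lie in the $\bfr$-submodule generated by $E_{\mu}$, since $N_{\alpha,\nu_{i}}\ne 0$ whenever $\nu_{i}$ and $\nu_{i}+\alpha$ are both roots. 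The one delicate point is that the chain can be chosen to stay \emph{among positive roots}, which is exactly what keeps these structure constants nonzero; I would get this from $\mu>0$, $\rho$ being the highest root, and the saturation of intervals of roots.

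For part (1): let $V$ be a $2$-dimensional $\bfr$-stable subspace, so $E_{\rho}\in V$. If $0\ne H\in V\cap\tfr$, then $\gfr_{\gamma}\subseteq V$ for every $\gamma\in R^{+}$ with $\gamma(H)\ne 0$; since any nonzero element of $\tfr$ is non-orthogonal to at least two positive roots, this forces $\dim V\ge 3$, a contradiction, so $V=\gfr_{\rho}\oplus\gfr_{\mu}$ for a root $\mu\ne\rho$. Now $\mu$ cannot be negative, since then $[\gfr_{-\mu},\gfr_{\mu}]=\CC H_{\mu}\not\subseteq V$; applying the chain principle to $\mu\in R^{+}$ forces $k=1$, i.e.\ $\rho-\mu$ is simple. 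Finally $\rho-\alpha_{i}$ is a root iff $\langle\rho,\alpha_{i}^{\vee}\rangle\ge 1$; since $\langle\alpha_{i}\,|\,\rho\rangle=m_{j_{0}}(\alpha_{i})=\delta_{ij_{0}}$ and $\alpha_{j_{0}}$ is long, this happens only for $i=j_{0}$, with $\langle\rho,\alpha_{j_{0}}^{\vee}\rangle=1$; hence $\mu=\rho-\alpha_{j_{0}}$ and $V$ spans $\Lcal_{B}$. Conversely $\gfr_{\rho}\oplus\gfr_{\rho-\alpha_{j_{0}}}$ is $\bfr$-stable: $[\gfr_{\gamma},\gfr_{\rho}]=0$, while $\rho-\alpha_{j_{0}}+\gamma\in R$ for $\gamma\in R^{+}$ forces $\gamma\le\alpha_{j_{0}}$, i.e.\ $\gamma=\alpha_{j_{0}}$, so the bracket lands in $\gfr_{\rho}$. (Existence is in any case automatic from the Borel fixed point theorem on the Grassmannian of lines in $\PP(\gfr)$; the content is uniqueness.)

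For part (2): the same framework applies to a $3$-dimensional $\bfr$-stable $W$: the non-vanishing count gives $W\cap\tfr=0$, so $W$ is a sum of three root spaces including $\gfr_{\rho}$; by Lie's theorem $W$ has a $2$-dimensional $\bfr$-submodule, which by (1) must be $\gfr_{\rho}\oplus\gfr_{\rho-\alpha_{j_{0}}}$, so $W=\gfr_{\rho}\oplus\gfr_{\rho-\alpha_{j_{0}}}\oplus\gfr_{\mu}$ with $\mu\in R^{+}\setminus\{\rho,\rho-\alpha_{j_{0}}\}$ (again $\mu$ cannot be negative). The chain principle then forces the chain from $\mu$ to $\rho$ to have length exactly $2$ with middle term $\rho-\alpha_{j_{0}}$, so $\mu=\rho-\alpha_{j_{0}}-\beta$ with $\beta$ simple and $\rho-\alpha_{j_{0}}-\beta\in R$; computing $\langle\rho-\alpha_{j_{0}},\beta^{\vee}\rangle=-\langle\alpha_{j_{0}},\beta^{\vee}\rangle$ for $\beta\ne\alpha_{j_{0}}$ and noting $\rho-2\alpha_{j_{0}}\notin R$, this is equivalent to $\beta$ being a neighbor of $\alpha_{j_{0}}$. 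For the converse I would verify directly that each $\gfr_{\rho}\oplus\gfr_{\rho-\alpha_{j_{0}}}\oplus\gfr_{\rho-\alpha_{j_{0}}-\beta}$ is $\bfr$-stable, using $\rho-\beta\notin R$ and $\alpha_{j_{0}}+\beta\in R$ (the only positive roots $\le\alpha_{j_{0}}+\beta$ being $\alpha_{j_{0}},\beta,\alpha_{j_{0}}+\beta$); this bracket bookkeeping, together with the analogous converse in (1), is where I expect most of the routine friction.

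For part (3): if $C$ is a $B$-stable conic, its unique plane $\Pi$ is $B$-stable (for $b\in B$, $b\cdot\Pi$ is the unique plane containing $b\cdot C=C$), so by (2), $\Pi=\PP(W)$ with $W=\gfr_{\rho}\oplus\gfr_{\rho-\alpha_{j_{0}}}\oplus\gfr_{\rho-\alpha_{j_{0}}-\beta}$. A $B$-stable conic in $\Pi$ corresponds to a $\bfr$-eigenline in $\mathrm{Sym}^{2}W^{*}$; writing $x_{0},x_{1},x_{2}$ for the basis of $W^{*}$ dual to $E_{\rho},E_{\rho-\alpha_{j_{0}}},E_{\rho-\alpha_{j_{0}}-\beta}$ and feeding in the actions of $E_{\alpha_{j_{0}}},E_{\beta},E_{\alpha_{j_{0}}+\beta}$ (which raise indices $1\mapsto 0$, $2\mapsto 1$, $2\mapsto 0$), one checks the only invariant line is $\CC x_{2}^{2}$. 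Hence the only $B$-stable conic in $\PP(\gfr)$ is the double line $\{x_{2}^{2}=0\}=2\Lcal_{B}$, which is neither smooth nor reducible. For the reducible case one can argue even more directly: a $B$-stable $\ell_{1}\cup\ell_{2}$ has each $\ell_{i}$ $B$-stable since $B$ is connected, so $\ell_{1}=\ell_{2}=\Lcal_{B}$ by (1), a contradiction. Beyond the bracket computations, the main obstacle I anticipate is that the smooth-conic case of (3) genuinely requires the plane classification of (2) before the $\mathrm{Sym}^{2}$-invariant computation becomes available, whereas the reducible case only needs (1).
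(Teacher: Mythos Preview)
Your proposal is correct and follows essentially the same strategy as the paper: reduce to $\bfr$-stable subspaces, use the highest weight $\rho$, and climb via simple roots. The differences are cosmetic. For (2), you invoke Lie's theorem to find a $2$-dimensional $\bfr$-submodule of $W$, while the paper applies the Borel fixed-point theorem to the $\PP^{1}$ of lines in the plane; both feed into the uniqueness from (1). For the smooth case of (3), the paper first uses geometry (the conic must pass through $o$ with tangent $\Lcal_{B}$) to cut the defining quadric down to $a_{11}x_{1}^{2}+a_{22}x_{2}^{2}+x_{0}x_{2}+a_{12}x_{1}x_{2}$ and then kills the three remaining coefficients with the torus alone, whereas you run the full $\bfr$-eigenline search in $\mathrm{Sym}^{2}W^{*}$. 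Your route is slightly more systematic; the paper's saves a few monomial checks by exploiting the geometry first. One small wording issue: your parenthetical ``raise indices $1\mapsto 0$, $2\mapsto 1$, $2\mapsto 0$'' describes the action on $W$, not on $W^{*}$; after dualizing, the raising goes the other way, which is exactly what makes $x_{2}^{2}$ (rather than $x_{0}^{2}$) the unique eigenline and hence $\{x_{2}=0\}=\Lcal_{B}$ the support of the only $B$-stable conic.
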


\begin{proof}
    If $\Lcal \subset \PP(\gfr)$ is a $B$-stable line, then $\Lcal$ contains $o = [E_{\rho}]$ which is the unique $B$-fixed point in $\PP(\gfr)$. Thus $\Lcal = \PP(E_{\rho}, \, v)$ for some $v \in \gfr$. Moreover, since $\alpha_{j_{0}}$ is the unique simple root which is not orthogonal to $\rho$, $\rho - \alpha_{j_{0}}$ is the maximum in $R \setminus \{\rho\}$. By $\bfr$-stability of $\Lcal$, we have $\Lcal = \PP(E_{\rho}, \, E_{\rho - \alpha_{j_{0}}})$.

    If $\Pcal$ is a $B$-stable plane in $\PP(\gfr)$, then $B$ acts on the space of lines in $\Pcal$, which is compact. Thus by the uniqueness of $\Lcal_{B}$, $\Pcal$ contains $\Lcal_{B}$ and $\Pcal = \PP(E_{\rho}, \, E_{\rho - \alpha_{j_{0}}}, \, v)$ for some $v \in \gfr$. By $\bfr$-stability, we may choose $v$ as a root vector corresponding to a maximal element in $R \setminus \{\rho, \, \rho-\alpha_{j_{0}}\}$, which is exactly of form $\rho - \alpha_{j_{0}} - \beta$ for some neighbor $\beta$ of $\alpha_{j_{0}}$ in the Dynkin diagram.

    For the last statement, consider two different lines $\Lcal_{1}$ and $\Lcal_{2}$ such that their union $\Lcal_{1} \cup \Lcal_{2}$ is $B$-stable. Since $B$ is irreducible, each $\Lcal_{i}$ should be $B$-stable, hence $\Lcal_{1}=\Lcal_{2} = \Lcal_{B}$, a contradiction. Now assume that there is a $B$-stable smooth conic $C$ in $\PP(\gfr)$. Then the plane spanned by $C$ is also $B$-stable, hence there is some neighbor $\beta \in S$ of $\alpha_{j_{0}}$ such that the plane $\Pcal := \PP(E_{\rho}, \, E_{\rho - \alpha_{j_{0}}}, \, E_{\rho - \alpha_{j_{0}} - \beta})$ contains $C$. Moreover, $o \in C$ and the line $\Lcal_{B}$ is tangent to $C$ at $o$. Therefore in $\Pcal$, $C$ is defined by a quadratic equation
    \[
        a_{11}x_{1}^{2} + a_{22}x_{2}^{2} + x_{0}x_{2} + a_{12}x_{1}x_{2} = 0
    \]
    for some $a_{ij} \in \CC$ where the homogeneous coordinate on $\Pcal$ is chosen so that $[x_{0}:x_{1}:x_{2}] = [x_{0}E_{\rho} + x_{1}E_{\rho - \alpha_{j_{0}}} + x_{2}E_{\rho - \alpha_{j_{0}} - \beta}]$. Then the above equation should be $B$-stable up to scalar multiplication, however a simple computation shows that for each $H \in \tfr$, $\exp(-H)$ sends the equation to
    \begin{align*}
        0 &= a_{11} \left(x_{1}e^{(\rho-\alpha_{j_{0}})(H)}\right)^{2} + a_{22} \left(x_{2}e^{(\rho-\alpha_{j_{0}}-\beta)(H)}\right)^{2} \\
        & \quad + \left(x_{0}e^{\rho(H)}\right)\left(x_{2}e^{(\rho-\alpha_{j_{0}} - \beta)(H)}\right) + a_{12} \left(x_{1}e^{(\rho - \alpha_{j_{0}})(H)}\right)\left(x_{2}e^{(\rho - \alpha_{j_{0}}-\beta)(H)}\right).
    \end{align*}
    This implies that $a_{11} = a_{22} = a_{12} = 0$, which is impossible.
\end{proof}

It is not difficult to compute all $B$-stable planes in $\PP(\gfr)$ and their stabilizers in $G$ using Lemma \ref{lem: B stable line and plane}. For example, given a $B$-stable plane $\Pcal = \PP(E_{\rho}, \, E_{\rho - \alpha_{j_{0}}}, \, E_{\rho - \alpha_{j_{0}} - \beta})$, it can be shown that its stabilizer is a parabolic subgroup $P_{I}$ where $I \subset S$ is
\[
    (N(\alpha_{j_{0}}) \cup N(\beta)) \setminus \{\alpha_{j_{0}}, \, \beta\} \quad \text{if $\beta$ is long,}
\]
and
\[
    (N(\alpha_{j_{0}}) \cup N(\beta)) \setminus \{\alpha_{j_{0}}\} \quad \text{if $\beta$ is short.}
\]
(Alternatively, $I$ is a set of $\gamma\in S$ such that $\rho - \alpha_{j_{0}} - \gamma \in R \setminus \{\rho - \alpha_{j_{0}} - \beta\}$, or $\rho - \alpha_{j_{0}} - \beta - \gamma \in R$.) These are listed in Table \ref{table: B stable planes}. We also indicate whether a plane is contained in $Z_{\gfr}$ or not, by the following observation: A $B$-stable plane $\PP(E_{\rho}, \, E_{\rho - \alpha_{j_{0}}}, \, E_{\rho - \alpha_{j_{0}} - \beta})$ is contained in $Z_{\gfr}$ if and only if $\beta$ is a long root. Remark that the stabilizers of $B$-stable planes contained in $Z_{\gfr}$ were already obtained by \cite[Theorem 4.9]{LandsbergManivel2003ProjectiveGeometry}, and the stabilizers of $B$-stable planes not in $Z_{\gfr}$ are equal to $\text{Stab}_{G}(\Lcal_{B})$ by Theorem \ref{thm: space of lines is homogeneous}.

\begin{table}
    \begin{center}
        \begin{tabular}{|c|c|c|c|}
            \hline
            $\gfr$ & $B$-stable plane $\Pcal$ & $\text{Stab}_{G}(\Pcal)$ \\
            \hline
            \hline
            \multirow{2}{*}{$B_{r}$ ($r \ge 4$)} & $\PP(E_{\rho}, \, E_{\rho - \alpha_{2}},\, E_{\rho - \alpha_{1} - \alpha_{2}})$ & $P_{\alpha_{3}}$ \\ \cline{2-3}
            &  $\PP(E_{\rho}, \, E_{\rho - \alpha_{2}},\, E_{\rho - \alpha_{2} - \alpha_{3}})$ & $P_{\alpha_{1},\, \alpha_{4}}$ \\
            \hline
            \hline
            \multirow{2}{*}{$B_{3}$} &  $\PP(E_{\rho}, \, E_{\rho - \alpha_{2}},\, E_{\rho - \alpha_{1} - \alpha_{2}})$ & $P_{\alpha_{3}}$ \\ \cline{2-3}
             & $\PP(E_{\rho}, \, E_{\rho - \alpha_{2}},\, E_{\rho - \alpha_{2} - \alpha_{3}})$ ($\not\subset Z_{\gfr}$)& $P_{\alpha_{1}, \, \alpha_{3}}$ \\
            \hline
            \hline
            \multirow{2}{*}{$D_{r}$ ($r \ge 6$)} & $\PP(E_{\rho}, \, E_{\rho - \alpha_{2}},\, E_{\rho - \alpha_{1} - \alpha_{2}})$&$P_{\alpha_{3}}$ \\ \cline{2-3}
            &$\PP(E_{\rho}, \, E_{\rho - \alpha_{2}},\, E_{\rho - \alpha_{2} - \alpha_{3}})$ & $P_{\alpha_{1},\, \alpha_{4}}$\\
            \hline
            \hline
            \multirow{2}{*}{$D_{5}$} & $\PP(E_{\rho}, \, E_{\rho - \alpha_{2}},\, E_{\rho - \alpha_{1} - \alpha_{2}})$&$P_{\alpha_{3}}$ \\ \cline{2-3}
            &$\PP(E_{\rho}, \, E_{\rho - \alpha_{2}},\, E_{\rho - \alpha_{2} - \alpha_{3}})$ & $P_{\alpha_{1}, \, \alpha_{4},\, \alpha_{5}}$\\
            \hline
            \hline
            \multirow{3}{*}{$D_{4}$} & $\PP(E_{\rho}, \, E_{\rho - \alpha_{2}},\, E_{\rho - \alpha_{1} - \alpha_{2}})$ & $P_{\alpha_{3}, \, \alpha_{4}}$ \\ \cline{2-3}
            & $\PP(E_{\rho}, \, E_{\rho - \alpha_{2}},\, E_{\rho - \alpha_{2} - \alpha_{3}})$ & $P_{\alpha_{1}, \, \alpha_{4}}$ \\ \cline{2-3}
            & $\PP(E_{\rho}, \, E_{\rho - \alpha_{2}},\, E_{\rho - \alpha_{2} - \alpha_{4}})$ &$P_{\alpha_{1}, \, \alpha_{3}}$ \\
            \hline
            \hline
            $E_{6}$ & $\PP(E_{\rho}, \, E_{\rho - \alpha_{6}},\, E_{\rho - \alpha_{3} - \alpha_{6}})$ & $P_{\alpha_{2}, \, \alpha_{4}}$ \\
            \hline
            \hline
            $E_{7}$ & $\PP(E_{\rho}, \, E_{\rho - \alpha_{6}},\, E_{\rho - \alpha_{5} - \alpha_{6}})$ & $P_{\alpha_{4}}$ \\
            \hline
            \hline
            $E_{8}$ &$\PP(E_{\rho}, \, E_{\rho - \alpha_{1}},\, E_{\rho - \alpha_{1} - \alpha_{2}})$ & $P_{\alpha_{3}}$ \\
            \hline
            \hline
            $F_{4}$ & $\PP(E_{\rho}, \, E_{\rho - \alpha_{4}},\, E_{\rho - \alpha_{3} - \alpha_{4}})$ & $P_{\alpha_{2}}$ \\
            \hline
            \hline
            $G_{2}$ &$\PP(E_{\rho}, \, E_{\rho - \alpha_{2}},\, E_{\rho - \alpha_{1} - \alpha_{2}})$ ($\not\subset Z_{\gfr}$) & $P_{\alpha_{1}}$ \\
            \hline
        \end{tabular}
        \caption{\label{table: B stable planes}$B$-stable Planes in $\PP(\gfr)$ and Their Stabilizers.}
    \end{center}
\end{table}

\begin{coro} \label{coro: only one orbit of planar conics}
    If $C_{i}$ is a planar conic in a plane $\Pcal_{i} \subset Z_{\gfr}$ for $i=1,\,2$, then $C_{1}$ and $C_{2}$ are $G$-conjugate if and only if $\Pcal_{1}$ and $\Pcal_{2}$ are $G$-conjugate planes and $C_{1}$ are $C_{2}$ isomorphic as schemes.
\end{coro}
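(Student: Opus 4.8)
The plan is the following. The forward implication is immediate: if $g \in G$ satisfies $g\cdot C_{1} = C_{2}$, then $g$ carries the unique plane of $\PP(\gfr)$ spanned by $C_{1}$ onto the unique plane spanned by $C_{2}$, so $\Pcal_{1}$ and $\Pcal_{2}$ are $G$-conjugate, and $g|_{C_{1}} : C_{1} \xrightarrow{\sim} C_{2}$ is an isomorphism of schemes. The content therefore lies in the converse, which I would reduce to a statement about the action of a plane's stabilizer on that plane.

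For the converse, suppose $\Pcal_{1}$ and $\Pcal_{2}$ are $G$-conjugate. Replacing $C_{1}$ by a $G$-translate, we may assume $\Pcal_{1} = \Pcal_{2} =: \Pcal$, with $C_{1}, C_{2} \subset \Pcal$ still isomorphic as schemes. By \cite[Theorem 4.9]{LandsbergManivel2003ProjectiveGeometry} (see also Table \ref{table: B stable planes}), the planes contained in $Z_{\gfr}$ constitute finitely many projective $G$-homogeneous families, represented by the $B$-stable planes of Table \ref{table: B stable planes}; hence, by the Borel fixed point theorem, every plane in $Z_{\gfr}$ is $G$-conjugate to a $B$-stable one. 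After a further $G$-translation, Lemma \ref{lem: B stable line and plane} then lets us assume
\[
    \Pcal = \PP(E_{\rho}, \, E_{\rho - \alpha_{j_{0}}}, \, E_{\rho - \alpha_{j_{0}} - \beta})
\]
for some (necessarily long) neighbor $\beta \in S$ of $\alpha_{j_{0}}$ in the Dynkin diagram. It now suffices to show that $H := \text{Stab}_{G}(\Pcal)$ acts on $\Pcal \cong \PP^{2}$ through a surjection onto $\Aut(\Pcal) \cong \mathrm{PGL}_{3}$. Indeed, two conics in $\PP^{2}$ are $\mathrm{PGL}_{3}$-conjugate precisely when their defining quadratic forms have the same rank, and the rank ($3$, $2$, or $1$) is exactly the invariant distinguishing a smooth conic, a union of two distinct lines, and a double line up to scheme-isomorphism; so $C_{1} \simeq C_{2}$, together with this surjectivity, places $C_{1}$ and $C_{2}$ in a single $H$-orbit.

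To produce the surjection I would use the long-root $A_{2}$ at $\alpha_{j_{0}}$. Since $\alpha_{j_{0}}$ and $\beta$ are long roots joined by a single edge, $\{\pm\alpha_{j_{0}}, \, \pm\beta, \, \pm(\alpha_{j_{0}}+\beta)\}$ is an $A_{2}$-subsystem of $R$; let $\hfr \simeq \mathfrak{sl}_{3}$ be the span of the corresponding root spaces together with $\CC H_{\alpha_{j_{0}}} \oplus \CC H_{\beta}$, and let $H_{0} \subset G$ be the connected subgroup with Lie algebra $\hfr$. Because $\rho$ is the highest root, $\rho + \alpha_{j_{0}}$ and $\rho + \beta$ are not roots, so $E_{\rho}$ is annihilated by $E_{\alpha_{j_{0}}}$ and $E_{\beta}$; and since $\alpha_{j_{0}}$, $\beta$ and $\rho$ are all long, $\langle \rho, \, \alpha_{j_{0}}^{\vee} \rangle = m_{j_{0}}(\alpha_{j_{0}}) = 1$ and $\langle \rho, \, \beta^{\vee} \rangle = m_{j_{0}}(\beta) = 0$. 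Thus $E_{\rho}$ is an $\hfr$-highest weight vector whose weight is a fundamental weight of this $A_{2}$, which is minuscule; so the $\hfr$-submodule generated by $E_{\rho}$ is the $3$-dimensional standard representation, and successively applying $E_{-\alpha_{j_{0}}}$ and $E_{-\beta}$ to $E_{\rho}$ identifies this submodule with the linear span $\CC E_{\rho} \oplus \CC E_{\rho - \alpha_{j_{0}}} \oplus \CC E_{\rho - \alpha_{j_{0}} - \beta}$, whose projectivization is $\Pcal$. Hence $H_{0}$ preserves $\Pcal$ and acts on it as all of $\mathrm{PGL}(\Pcal)$, giving the required surjectivity. (When $\gfr = G_{2}$ the corollary is vacuous, since $Z_{G_{2}}$ contains no plane.)

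The two steps where something genuine happens are the reduction to a $B$-stable plane, which relies on knowing that the planes in $Z_{\gfr}$ form projective homogeneous families (i.e.\ on \cite{LandsbergManivel2003ProjectiveGeometry} and Table \ref{table: B stable planes}), and the identification of the $\mathfrak{sl}_{3}$-module generated by $E_{\rho}$ with the $3$-space underlying $\Pcal$ — this is where the long-root structure of $\alpha_{j_{0}}$ enters, and I expect it to be the main point requiring care. Everything after the surjectivity onto $\mathrm{PGL}_{3}$ is the classical orbit picture for plane conics.
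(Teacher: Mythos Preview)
Your proof is correct and follows essentially the same route as the paper: reduce to a $B$-stable plane $\Pcal = \PP(E_{\rho},\,E_{\rho-\alpha_{j_{0}}},\,E_{\rho-\alpha_{j_{0}}-\beta})$ using \cite{LandsbergManivel2003ProjectiveGeometry}, then show that $\text{Stab}_{G}(\Pcal)\to\Aut(\Pcal)\cong\mathrm{PGL}_{3}$ is surjective via the long-root $A_{2}$ spanned by $\alpha_{j_{0}}$ and $\beta$. The only difference is packaging: you argue that $E_{\rho}$ is a minuscule highest-weight vector for this $\mathfrak{sl}_{3}$, so the span underlying $\Pcal$ is its standard module; the paper instead checks by hand that $\exp(\gfr_{\pm\alpha_{j_{0}}}+\gfr_{\pm\beta}+\gfr_{\pm(\alpha_{j_{0}}+\beta)})$ produces the strictly upper and lower triangular matrices in the chosen coordinates while $T$ covers the diagonal---the same $A_{2}$ action written out explicitly.
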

\begin{proof}
    Suppose that $\Pcal_{1}$ and $\Pcal_{2}$ are $G$-conjugate. Since the space of linear subspaces in $Z_{\gfr}$ is the disjoint union of rational homogeneous spaces (\cite[Theorem 4.9]{LandsbergManivel2003ProjectiveGeometry}), we may assume that $\Pcal = \Pcal_{1} = \Pcal_{2}$ is $B$-stable. Then it suffices to show that the restriction map $\text{Stab}_{G}(\Pcal) \rightarrow \text{Aut}(\Pcal)$ is surjective. To see this, for a $B$-stable plane $\Pcal = \PP(E_{\rho} ,\, E_{\rho - \alpha_{j_{0}}}, \, E_{\rho - \alpha_{j_{0}} - \beta})$ in $Z_{\gfr}$, observe that the Lie algebra of $\text{Stab}_{G}(\Pcal)$ contains $\gfr_{-\alpha_{j_{0}}}+\gfr_{-\beta}+\gfr_{-\alpha_{j_{0}} - \beta}$ by Table \ref{table: B stable planes}. Moreover, with respect to the homogeneous coordinate $[x:y:z] = [xE_{\rho} +y E_{\rho - \alpha_{j_{0}}}+z E_{\rho - \alpha_{j_{0}} - \beta}]$ on $\Pcal$, $\exp(\gfr_{-\alpha_{j_{0}}}+\gfr_{-\beta}+\gfr_{-\alpha_{j_{0}} - \beta})$ (respectively, $\exp(\gfr_{\alpha_{j_{0}}}+\gfr_{\beta}+\gfr_{\alpha_{j_{0}} + \beta})$) generates all lower (respectively, upper) triangular matrices of which diagonal elements are $1$ in $\text{Aut}(\Pcal) \simeq PGL_{3}(\CC)$. Since the maximal torus $T$ is sent to the group of diagonal matrices, it follows that $\text{Stab}_{G}(\Pcal) \rightarrow \text{Aut}(\Pcal)$ is surjective.
\end{proof}

\begin{coro} \label{coro: number of closed orbits}
    \begin{enumerate}
        \item $\Cbf(Z_{\gfr})$ has a unique closed $G$-orbit $\simeq G/ \text{Stab}_{G}(\Lcal_{B})$ which consists of all double lines in $Z_{\gfr}$.
        \item There is an injective map from the set of closed $G$-orbits in $\Hbf(Z_{\gfr})$ to the set of $B$-stable planes in $\PP(\gfr)$, which is bijective unless $\gfr = B_{3}$. If a closed orbit corresponds to a $B$-stable plane $\Pcal \subset \PP(\gfr)$, then the orbit is isomorphic to $G / \text{Stab}_{G}(\Lcal_{B}) \cap \text{Stab}_{G}(\Pcal)$.
    \end{enumerate}
\end{coro}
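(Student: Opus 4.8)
The plan is to enumerate closed $G$-orbits by their $B$-fixed points, and then to describe the $B$-fixed points via Lemma~\ref{lem: B stable line and plane}. Since $\Cbf(Z_{\gfr})$ and $\Hbf(Z_{\gfr})$ are projective $G$-varieties, every closed orbit contains a $B$-fixed point by Borel's fixed point theorem; conversely, for any $B$-fixed point $p$ the stabilizer $\text{Stab}_{G}(p)$ is a closed subgroup containing $B$, hence parabolic, so $G \cdot p \simeq G/\text{Stab}_{G}(p)$ is projective and therefore closed. Finally, a flag variety $G/Q$ with $Q \supseteq B$ has exactly one $B$-fixed point (only the closed Bruhat cell is $B$-stable). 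Combining these three facts, $Y \mapsto Y^{B}$ is a bijection between closed $G$-orbits and $B$-fixed points, for both $\Cbf(Z_{\gfr})$ and $\Hbf(Z_{\gfr})$, so everything reduces to classifying $B$-fixed points.

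\emph{Part (1).} A $B$-fixed point of $\Cbf(Z_{\gfr})$ is a $B$-stable effective $1$-cycle of degree $2$ in $Z_{\gfr}$; each component is a $B$-stable line, hence equals $\Lcal_{B}$ by Lemma~\ref{lem: B stable line and plane}(1), and $\Lcal_{B} \subseteq Z_{\gfr}$ since the space of lines through $o$ in $Z_{\gfr}$ is nonempty, projective, and thus carries a $B$-fixed point. So the cycle is $2\Lcal_{B}$, and it belongs to $\Cbf(Z_{\gfr})$ because $FC$ maps $\Hbf(Z_{\gfr})$ onto $\Cbf(Z_{\gfr})$ while the projective variety $\Hbf(Z_{\gfr})$ has a $B$-fixed point, necessarily a $B$-stable double line with fundamental cycle $2\Lcal_{B}$. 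As stabilizing the cycle $2\Lcal_{B}$ is the same as stabilizing $\Lcal_{B}$, the unique closed orbit is $G/\text{Stab}_{G}(\Lcal_{B})$, parabolic by Theorem~\ref{thm: space of lines is homogeneous}; and it consists of all double lines in $Z_{\gfr}$ because every line in $Z_{\gfr}$ is $G$-conjugate to $\Lcal_{B}$ (Theorem~\ref{thm: space of lines is homogeneous} and transitivity of $G$ on $Z_{\gfr}$).

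\emph{Part (2).} A $B$-fixed point of $\Hbf(Z_{\gfr})$ is a $B$-stable conic, which by Lemma~\ref{lem: B stable line and plane}(3) is a double line; it is supported on $\Lcal_{B}$, spans a $B$-stable plane $\Pcal$ (Lemma~\ref{lem: B stable line and plane}(2)), and, being the unique degree-$2$ divisor of $\Pcal$ supported on $\Lcal_{B}$, is $D_{\Pcal} := 2\Lcal_{B} \subset \Pcal$. Hence $D_{\Pcal} \mapsto \Pcal$ identifies the $B$-fixed points of $\Hbf(Z_{\gfr})$ with a subset of the $B$-stable planes, and composing with the bijection of the first paragraph gives the injective map from closed orbits to $B$-stable planes. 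A linear automorphism preserves $D_{\Pcal}$ exactly when it preserves both $\Pcal$ and $\Lcal_{B}$, so $\text{Stab}_{G}(D_{\Pcal}) = \text{Stab}_{G}(\Lcal_{B}) \cap \text{Stab}_{G}(\Pcal)$, an intersection of parabolics containing $B$, and the corresponding closed orbit is $G/(\text{Stab}_{G}(\Lcal_{B}) \cap \text{Stab}_{G}(\Pcal))$, as asserted.

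It remains to determine which $B$-stable planes $\Pcal$ occur, i.e.\ satisfy $D_{\Pcal} \in \Hbf(Z_{\gfr})$. If $\Pcal \subseteq Z_{\gfr}$ then $D_{\Pcal}$ is a flat limit of smooth conics inside $\Pcal$, hence lies in $\Hbf(Z_{\gfr})$; by Table~\ref{table: B stable planes} the only $B$-stable planes not contained in $Z_{\gfr}$ occur for $\gfr = G_{2}$ (its unique $B$-stable plane) and for $\gfr = B_{3}$ (the plane $\PP(E_{\rho}, E_{\rho - \alpha_{2}}, E_{\rho - \alpha_{2} - \alpha_{3}})$). For $G_{2}$ the unique $B$-stable plane must occur, since $\Hbf(Z_{G_{2}})$ is projective and hence has a $B$-fixed point, so the map is bijective. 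For $B_{3}$ the plane $\PP(E_{\rho}, E_{\rho - \alpha_{2}}, E_{\rho - \alpha_{1} - \alpha_{2})} \subseteq Z_{B_{3}}$ occurs, and the map fails to be surjective once one checks that for $\Pcal = \PP(E_{\rho}, E_{\rho - \alpha_{2}}, E_{\rho - \alpha_{2} - \alpha_{3}})$ the divisor $D_{\Pcal}$ is \emph{not} a subscheme of $Z_{B_{3}}$; equivalently, that $Z_{B_{3}} \cap \Pcal$ — cut out in $\Pcal$ by the restrictions of the quadratic generators of the ideal of $Z_{B_{3}}$, each vanishing on $\Lcal_{B}$ — is strictly smaller than $2\Lcal_{B}$ (it cannot be a smooth or reducible conic, as that would contradict Lemma~\ref{lem: B stable line and plane}(3), so it is a line with an embedded point). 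This last verification, a finite computation with the structure constants $N_{\alpha, \beta}$ of $B_{3}$ in the coordinates fixed in Corollary~\ref{coro: only one orbit of planar conics}, is the one genuinely computational point and the main obstacle; the rest of the argument is formal.
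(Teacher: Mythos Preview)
Your approach is essentially the paper's: enumerate closed orbits via their $B$-fixed points, identify these with $B$-stable double lines, and pass to the spanning plane. Parts (1) and the injectivity/isotropy claims in (2) are handled the same way in both arguments, and your surjectivity argument for $\gfr \neq B_{3}$ (split into the cases ``$\Pcal \subset Z_{\gfr}$'' and ``there is only one $B$-stable plane'') matches the paper's.

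The genuine gap is the $B_{3}$ case. You read ``bijective unless $\gfr = B_{3}$'' as asserting non-bijectivity for $B_{3}$, and propose to verify it by showing that the double line $D_{\Pcal}$ in $\Pcal = \PP(E_{\rho},\, E_{\rho-\alpha_{2}},\, E_{\rho-\alpha_{2}-\alpha_{3}})$ is \emph{not} a subscheme of $Z_{B_{3}}$. That computation would fail: at every point $[aE_{\rho}+bE_{\rho-\alpha_{2}}] \in \Lcal_{B}$ the vector $E_{\rho-\alpha_{2}-\alpha_{3}}$ lies in $[\gfr,\, aE_{\rho}+bE_{\rho-\alpha_{2}}]$ (use $E_{-\alpha_{2}-\alpha_{3}}$ when $b=0$ and $E_{-\alpha_{3}}$ when $b\neq 0$, noting $\rho-\alpha_{3}\notin R$), so the normal direction of $\Lcal_{B}$ in $\Pcal$ is tangent to $Z_{B_{3}}$ everywhere along $\Lcal_{B}$, and $D_{\Pcal}\subset Z_{B_{3}}$. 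In fact the paper's intended reading is that bijectivity for $B_{3}$ is simply \emph{deferred}: it is established later (Proposition~\ref{prop: hilb for B3 is not simple}), by an indirect argument using Lemma~\ref{lem: nonplanar double lines exist when B3} and the colored-cone machinery, and the map \emph{is} bijective for $B_{3}$ as well. So for the corollary as stated you need only prove surjectivity for $\gfr\neq B_{3}$ and stop; your attempted strengthening goes in the wrong direction.
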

\begin{proof}
    Since closed orbits in $\Cbf(Z_{\gfr})$ and $\Hbf(Z_{\gfr})$ are compact, by Lemma \ref{lem: B stable line and plane}, closed orbits must consist of double lines. Thus the first statement follows from Theorem \ref{thm: space of lines is homogeneous}.
    
    For the Hilbert scheme, for each closed orbit $\Ocal$ in $\Hbf(Z_{\gfr})$, consider its unique $B$-fixed point. This point is represented by a double line, say $\Lcal_{\Ocal}$, in $Z_{\gfr}$ such that $(\Lcal_{\Ocal})^{red} = \Lcal_{B}$ by Lemma \ref{lem: B stable line and plane}. Now define $\Pcal_{\Ocal}$ to be the unique plane in $\PP(\gfr)$ which contains $\Lcal_{\Ocal}$ as a closed subscheme. Then the map $\Ocal \mapsto \Pcal_{\Ocal}$ is injective. Since the stabilizer of $\Lcal_{\Ocal}$ is equal to $\text{Stab}_{G}(\Lcal_{B}) \cap \text{Stab}_{G}(\Pcal_{\Ocal})$, we see that $\Ocal \simeq G/\text{Stab}_{G}(\Lcal_{B}) \cap \text{Stab}_{G}(\Pcal_{\Ocal})$.

    For bijectivity, observe that the injective map $\Ocal \mapsto \Pcal_{\Ocal}$ is surjective if there is only one $B$-stable plane in $\PP(\gfr)$, which is the case when $\gfr$ is of an exceptional type. On the other hand, if every $B$-stable plane in $\PP(\gfr)$ is contained in $Z_{\gfr}$, i.e. when $\gfr \not= B_{3}, \, G_{2}$, then every $B$-stable double line in $\PP(\gfr)$ represents a point in $\Hbf(Z_{\gfr})$, hence the map $\Ocal \mapsto \Pcal_{\Ocal}$ is surjective.
\end{proof}

For $B_{3}$, bijectivity of the map in Corollary \ref{coro: number of closed orbits} is proven in Proposition \ref{prop: hilb for B3 is not simple} as an application of spherical geometry and the following lemma.

\begin{lemma} \label{lem: nonplanar double lines exist when B3}
    If $\gfr = B_{3}$, then $Z_{B_{3}}$ contains a non-planar double line which represents a point in $\Hbf(Z_{B_{3}})$.
\end{lemma}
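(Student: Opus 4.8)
The claim is concrete: for $\gfr = B_3$, we must exhibit a double line in $Z_{B_3}$ that spans a plane \emph{not} contained in $Z_{B_3}$, yet still defines a point of $\Hbf(Z_{B_3})$ (i.e.\ arises as a flat limit of smooth conics). The plan is to work entirely inside the unique $B$-stable plane $\Pcal := \PP(E_{\rho}, E_{\rho - \alpha_2}, E_{\rho - \alpha_2 - \alpha_3})$ not contained in $Z_{B_3}$ (the second row of Table \ref{table: B stable planes} for $B_3$), and to show that $Z_{B_3} \cap \Pcal$, with its scheme structure, is a double line supported on $\Lcal_B = \PP(E_\rho, E_{\rho-\alpha_2})$. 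First I would fix homogeneous coordinates $[x_0 : x_1 : x_2]$ on $\Pcal$ corresponding to $x_0 E_\rho + x_1 E_{\rho-\alpha_2} + x_2 E_{\rho - \alpha_2 - \alpha_3}$ and write out the quadratic equations cutting out $Z_{B_3}$ restricted to $\Pcal$ (the adjoint variety is the common zero locus of the quadrics vanishing on $Z_\gfr$; concretely one can use that $\gfr \to \mathrm{Sym}^2(\gfr)^*$ has image describing $Z_\gfr$, or directly that $[v] \in Z_\gfr$ iff $v$ is nilpotent of minimal nonzero nilpotent orbit, equivalently $\mathrm{ad}_v$ has rank $\le$ the appropriate value). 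The upshot should be that the restriction of these quadrics to $\Pcal$ cuts out the non-reduced scheme $\{x_2^2 = 0\}$, a double line with reduction $\Lcal_B$.

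The geometric reason to expect this: $\Lcal_B \subset Z_{B_3}$ is a line, so near a general point of $\Lcal_B$ the variety $Z_{B_3}$ is smooth of dimension $2n+1 = 7$ (here $n=2$), and its tangent space contains $T\Lcal_B$; the plane $\Pcal$ meets $Z_{B_3}$ along a curve, and since $\Pcal \not\subset Z_{B_3}$ that curve is a conic; if it were reduced it would be a smooth conic or a reducible conic through $[E_\rho]$, but Lemma \ref{lem: B stable line and plane}(3) rules out a $B$-stable conic of either type, and $\Pcal \cap Z_{B_3}$ is manifestly $B$-stable. Hence $\Pcal \cap Z_{B_3}$ is forced to be a $B$-stable double line, necessarily the one supported on $\Lcal_B$. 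This argument is clean and avoids coordinate grinding: I would present it as the main line of reasoning, using the computation in the previous paragraph only as a sanity check or to make the scheme structure explicit.

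Finally, to see this double line represents a point of $\Hbf(Z_{B_3})$ — i.e.\ lies in the closure of the locus of smooth conics — I would use that $\Hbf(Z_{B_3})$ is irreducible (Theorem \ref{thm: irreducibility of moduli of curves}) and of dimension $4n = 8$, and observe that the family of plane sections $Z_{B_3} \cap \Pcal'$, as $\Pcal'$ ranges over a neighborhood of $\Pcal$ in the Grassmannian of planes, is a flat family whose general member is a smooth conic (a general plane near $\Pcal$ still meets $Z_{B_3}$ in two points of $\Lcal_B$ deformed apart, or more simply: $\Pcal$ is a limit of planes whose intersection with $Z_{B_3}$ is a smooth conic, since the space of smooth conics is dense and a generic deformation of $\Pcal$ inside $\PP(\gfr)$ that still meets $Z_{B_3}$ in a curve gives a smooth conic). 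Flatness of the family of intersections (the Hilbert polynomial $2m+1$ is constant) then places $[Z_{B_3} \cap \Pcal]$ in $\Hbf(Z_{B_3})$. The main obstacle I anticipate is making the flat-limit argument fully rigorous: one must produce an explicit one-parameter family of planes $\Pcal_t$ with $\Pcal_0 = \Pcal$ such that $Z_{B_3} \cap \Pcal_t$ is a smooth conic for $t \ne 0$ and the family is flat over the base; the cleanest route is probably to deform $\Pcal$ along a generic tangent direction to the Grassmannian at $[\Pcal]$ and check that a general such deformation meets $Z_{B_3}$ in a smooth conic by a dimension count, using that the incidence variety of (plane, conic-in-the-plane-lying-on-$Z$) surjects onto $\Hbf(Z_{B_3})$.
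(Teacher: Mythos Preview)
Your Step~2 (identifying the scheme $\Pcal \cap Z_{B_3}$ as a double line) is sound, though the purely ``geometric'' version has a small hole: Lemma~\ref{lem: B stable line and plane}(3) excludes $B$-stable smooth and reducible conics, but does not by itself exclude the possibility that the scheme-theoretic intersection is just the reduced line $\Lcal_B$ (Hilbert polynomial $m+1$, not $2m+1$). You would need to verify tangency of $\Pcal$ to $Z_{B_3}$ along $\Lcal_B$, or carry out the explicit quadric computation you sketch.

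The real gap is Step~3. In $\PP(B_3)\simeq\PP^{20}$ the adjoint variety $Z_{B_3}$ has dimension~$7$, hence codimension~$13$. A general plane in $\PP^{20}$ --- and in particular a general plane in any open neighbourhood of $[\Pcal]$ in the Grassmannian --- is \emph{disjoint} from $Z_{B_3}$. So the family $\Pcal' \mapsto \Pcal' \cap Z_{B_3}$ is not flat near $[\Pcal]$, the Hilbert polynomial is not constant, and the claim that a general nearby plane section is a smooth conic is false. The locus of planes spanned by smooth conics in $Z_{B_3}$ is (at most) $8$-dimensional inside the $54$-dimensional Grassmannian, and showing that $[\Pcal]$ lies in its closure is exactly the content of the lemma; your dimension count does not establish this.

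The paper sidesteps this difficulty by a different idea: it uses the embedding $G_2 \hookrightarrow B_3$ and the induced inclusion $Z_{G_2} \hookrightarrow Z_{B_3}$. Since $\PP(G_2)$ has a \emph{unique} $B$-stable plane (Table~\ref{table: B stable planes}), Corollary~\ref{coro: number of closed orbits} forces the double line in that plane to lie in $\Hbf(Z_{G_2})$, hence in $\Hbf(Z_{B_3})$. One then checks directly (by a short limit computation) that this plane, viewed in $\PP(B_3)$, is not contained in $Z_{B_3}$. The $G_2$ trick supplies for free the flat degeneration you were unable to produce.
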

\begin{proof}
    Let us index simple roots of $G_{2}$ and $B_{3}$ so that their Dynkin diagrams are given by
    \begin{center}
        {\dynkin[backwards, labels={\alpha_{2},\alpha_{1}}] G2} for $G_{2}$, $\quad$ and $\quad$ {\dynkin[labels={\beta_{1},\beta_{2},\beta_{3}}] B3} for $B_{3}$.
    \end{center}
    For roots $\alpha \in R_{G_{2}}$ and $\beta \in R_{B_{3}}$, we denote by $(G_{2})_{\alpha}$ and $(B_{3})_{\beta}$ the corresponding root spaces. Root vectors are denoted by $E_{\alpha} \in (G_{2})_{\alpha}$ and $E_{\beta} \in (B_{3})_{\beta}$ as before. Then there is an embedding $G_{2} \hookrightarrow B_{3}$ as a Lie subalgebra so that
    \[
        \left\{ \begin{array}[pos]{l}
            (G_{2})_{\alpha_{1}}; \\
            (G_{2})_{\alpha_{2}}; \\
            (G_{2})_{\alpha_{1} + \alpha_{2}}; \\
            (G_{2})_{2\alpha_{1} + \alpha_{2}}; \\
            (G_{2})_{3\alpha_{1} + \alpha_{2}};\\
            (G_{2})_{3\alpha_{1} + 2\alpha_{2}}
        \end{array} \right. \text{ are generated by }
        \left\{ \begin{array}[pos]{l}
            E_{\beta_{1}} + c_{10}E_{\beta_{3}}; \\
            E_{\beta_{2}}; \\
            E_{\beta_{1} + \beta_{2}} + c_{12} \cdot E_{\beta_{2} + \beta_{3}}; \\
            E_{\beta_{2} + 2\beta_{3}} + c_{21} \cdot E_{\beta_{1} + \beta_{2} + \beta_{3}}; \\
            E_{\beta_{1}+\beta_{2}+2\beta_{3}};\\
            E_{\beta_{1}+2\beta_{2}+2\beta_{3}},
        \end{array} \right. \text{ respectively,}
    \]
    for some nonzero constants $c_{ij} \in \CC^{\times}$. See for example \cite[Section 19.3]{Humphreys2012IntroductionLie}.

    Now consider the induced embedding between adjoint varieties $Z_{G_{2}} \hookrightarrow Z_{B_{3}}$. Since the ideal of $Z_{\gfr} \subset \PP(\gfr)$ is generated by a system of quadrics, by Corollary \ref{coro: number of closed orbits} and Table \ref{table: B stable planes}, the scheme-theoretic intersection of $Z_{G_{2}}$ and the plane
    \begin{align*}
        \Pcal &:= \PP(E_{3\alpha_{1} + 2\alpha_{2}} ,\, E_{3\alpha_{1} + \alpha_{2}} ,\, E_{2\alpha_{1} + \alpha_{2}}) \text{ in }\PP(G_{2}) \\
        &= \PP(E_{\beta_{1}+ 2\beta_{2}+2\beta_{3}} ,\, E_{\beta_{1}+\beta_{2}+2\beta_{3}} ,\, E_{\beta_{2} + 2\beta_{3}} + c_{21} \cdot E_{\beta_{1} + \beta_{2} + \beta_{3}}) \text{ in }\PP(B_{3})
    \end{align*}
    is a double line which represents a point in $\Hbf(Z_{G_{2}})$, hence a point in $\Hbf(Z_{B_{3}})$. Thus it suffices to show that $\Pcal$ is not contained in $Z_{B_{3}}$. To see this, assume that $[E_{\beta_{2} + 2\beta_{3}} + c_{21} \cdot E_{\beta_{1} + \beta_{2} + \beta_{3}}] \in Z_{B_{3}}$ in $\PP(B_{3})$.
    For each $t \in \CC^{\times}$, consider a point in $Z_{B_{3}}$ given by
    \begin{align*}
        & \exp(t \cdot E_{-\beta_{1}-\beta_{2}}) \cdot [E_{\beta_{2} + 2\beta_{3}} + c_{21} \cdot E_{\beta_{1} + \beta_{2} + \beta_{3}}] \\
        &= [Ad_{\exp(t \cdot E_{-\beta_{1} - \beta_{2}})} (E_{\beta_{2} + 2\beta_{3}} + c_{21} \cdot E_{\beta_{1} + \beta_{2} + \beta_{3}})] \\
        &= [E_{\beta_{2} + 2 \beta_{3}} + c_{21} E_{\beta_{1}+\beta_{2} +\beta_{3}} + t \cdot c_{21}N_{-\beta_{1}-\beta_{2}, \, \beta_{1} + \beta_{2} + \beta_{3}} E_{\beta_{3}}].
    \end{align*}
    By taking $t \rightarrow \infty$, we obtain the limit point $[E_{\beta_{3}}] \not\in Z_{B_{3}}$, which is a contradiction since $Z_{B_{3}}$ is compact.
\end{proof}

\section{Computation of Colored Fans} \label{section: proof of main theorems}

In this section, we prove Theorem \ref{main thm: colored data of chow} and Theorem \ref{main thm: colored data of hilb}. Recall the following proposition.

\begin{proposition}[{\cite[Proposition 15.15]{Timashev2011HomogeneousSpaces}}] \label{prop: same orbit structure when locally linearlizable}
    For a $G$-variety $X$ admitting a $G$-linearized ample line bundle, if $X$ contains an open $G$-orbit which is spherical, then its normalization map $\pi: X^{nor} \rightarrow X$ is bijective on the sets of $G$-orbits. That is, for a $G$-orbit $\Ocal$ of $X$, $\pi^{-1}(\Ocal)$ is also a single $G$-orbit.
\end{proposition}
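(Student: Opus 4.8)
The plan is to reduce the statement to a combinatorial fact about affine spherical varieties. By the universal property of normalization the $G$-action lifts uniquely to $X^{nor}$, so $\pi$ is finite, surjective and $G$-equivariant, and since $O$ is smooth it is normal, so $\pi$ is an isomorphism over $O$ and in particular birational. Fix an orbit $\Ocal \subseteq X$. By finiteness and equivariance every $G$-orbit inside $\pi^{-1}(\Ocal)$ has dimension $\dim\Ocal$, hence is open, and so also closed, in $\pi^{-1}(\Ocal)$; thus $\pi^{-1}(\Ocal)$ is a disjoint union of $G$-orbits which are exactly its connected components, and the claim is equivalent to the connectedness of $\pi^{-1}(\Ocal)$.

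The next step is to pass to affine cones. Using a $G$-linearized ample line bundle $L$, I would embed $X \hookrightarrow \PP(V)$ equivariantly with $V$ a finite-dimensional $G$-module, set $\widehat{X} \subseteq V$ equal to the affine cone over $X$, and put $\widehat{X}^{nor} := \operatorname{Spec}\bigoplus_{m \ge 0} H^{0}(X^{nor},\, \pi^{*}L^{\otimes m})$. Both carry an action of $\widehat{G} := G \times \mathbb{G}_{m}$; away from the vertices they are $\mathbb{G}_{m}$-bundles over $X$ and $X^{nor}$, and there the induced finite morphism $\widehat{X}^{nor} \to \widehat{X}$ is the normalization of $\widehat{X}$. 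Hence the nonvertex $\widehat{G}$-orbits of $\widehat{X}$ correspond bijectively, compatibly with the normalization maps, to the $G$-orbits of $X$, and the dense $\widehat{G}$-orbit of $\widehat{X}$ (the cone over $O$) is again spherical. So it suffices to prove: if $Z = \operatorname{Spec} A$ is an affine $G$-variety with dense spherical orbit $O_{Z}$, then $\operatorname{Spec}\widetilde{A} \to Z$ is bijective on $G$-orbits.

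For this affine statement I would use that $A \subseteq \widetilde{A} \subseteq \CC[O_{Z}]$ and that $\CC[O_{Z}]$ is a multiplicity-free $G$-module (as $O_{Z}$ is spherical and quasi-affine), so $A$ and $\widetilde{A}$ are multiplicity-free; this identifies $A$ with $\bigoplus_{\lambda \in \Gamma}V(\lambda)$ for a submonoid $\Gamma$ of the weight lattice of $O_{Z}$ and $\widetilde{A}$ with $\bigoplus_{\lambda \in \overline{\Gamma}}V(\lambda)$, where $\overline{\Gamma}$ is the saturation of $\Gamma$, so that $\Gamma$ and $\overline{\Gamma}$ span the same cone. By the spherical analogue of the orbit--cone correspondence the $G$-stable prime ideals of $A$ are the $\mathfrak{p}_{F} := \bigoplus_{\lambda \in \Gamma \setminus (\Gamma \cap F)}V(\lambda)$ for $F$ a face of $\operatorname{cone}(\Gamma)$, and likewise for $\widetilde{A}$ with $\overline{\Gamma}$ in place of $\Gamma$; a short computation with the monoids then gives $\sqrt{\mathfrak{p}_{F}\widetilde{A}} = \bigoplus_{\lambda \in \overline{\Gamma} \setminus (\overline{\Gamma} \cap F)}V(\lambda)$, which is \emph{prime}. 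Hence $\mathfrak{p}_{F}\widetilde{A}$ has a unique minimal prime, so a single $G$-stable irreducible subvariety of $\operatorname{Spec}\widetilde{A}$ dominates $\overline{\Ocal}$ in dimension $\dim\Ocal$, and its open orbit is the unique orbit over $\Ocal$; unwinding the cone reduction yields the proposition. The delicate point is exactly this affine statement: one must show that for a possibly non-normal affine variety with dense spherical orbit the $G$-stable primes are controlled only by the faces of the weight cone, so that saturating $\Gamma$ to $\overline{\Gamma}$ changes neither the combinatorics nor the orbit set; everything else is formal, given the standard multiplicity-freeness of $\CC[G/H]$ for $G/H$ spherical.
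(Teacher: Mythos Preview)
The paper does not prove this proposition: it is cited from \cite{Timashev2011HomogeneousSpaces} and used as a black box, so there is no argument in the paper to compare your sketch against.

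On its own merits, your outline is structurally sound and follows a standard route. The preliminary reductions are correct: the $G$-action lifts to $X^{nor}$, the orbits in $\pi^{-1}(\Ocal)$ are equidimensional and hence are exactly its connected components, and the passage to affine cones via the $G$-linearized ample bundle legitimately reduces the question to the affine case. Your affine setup is also right: $A$ and its normalization $\widetilde{A}$ sit inside $\CC[O_{Z}]$ as multiplicity-free $G$-submodules indexed by a monoid $\Gamma$ and its saturation $\overline{\Gamma}$, and these span the same rational cone.

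Where you stop is exactly where the content is, and you do not actually carry it out. The assertion that the $G$-stable primes of the possibly non-normal $A$ are parametrized by the faces of $\operatorname{cone}(\Gamma)$, and the claim that ``a short computation with the monoids'' gives $\sqrt{\mathfrak{p}_{F}\widetilde{A}}$ prime, hide the real work: one must know that for a multiplicity-free $G$-algebra the $G$-invariant prime spectrum is controlled entirely by the highest-weight semigroup, which is not automatic from multiplicity-freeness alone (the product $V(\lambda)\cdot V(\mu)$ contains components other than $V(\lambda+\mu)$). The clean way to close this gap, and essentially what is done in Timashev's treatment, is to pass to $U$-invariants: $A^{U}\simeq \CC[\Gamma]$ and $\widetilde{A}^{U}\simeq \CC[\overline{\Gamma}]$, and one shows that $G$-orbit closures in $\operatorname{Spec} A$ correspond to $T$-orbit closures in $\operatorname{Spec} A^{U}$, reducing everything to the toric fact that normalizing an affine semigroup algebra (saturating $\Gamma$) does not change the face lattice. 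Your sketch has the right architecture but is incomplete precisely at this decisive step.
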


Since our compactifications are equipped with $G$-equivariant finite morphisms
\begin{align*}
    \Cbf(Z_{\gfr}) & \hookrightarrow \text{Chow}_{1,\,2}(\PP(\gfr), \, \Ocal_{\PP(\gfr)}(1)) \\
    & \rightarrow \left\{\text{hypersurfaces of degree $(2,\,2)$ in $\PP(\gfr^{*}) \times \PP(\gfr^{*})$}\right\} \\
    &\hookrightarrow \PP\left((\text{Sym}^{2} \gfr)^{\otimes 2}\right), \\
    \Hbf(Z_{\gfr}) & \rightarrow \text{Hilb}_{2m+1}(\PP(\gfr), \, \Ocal_{\PP(\gfr)}(1)) \\
    &\hookrightarrow \text{Gr}\left(\text{Sym}^{N}(\gfr^{*}), \, M\right) \\
    &\subset \PP\left(\bigwedge^{M} \text{Sym}^{N}(\gfr^{*})\right)
\end{align*}
for some positive integers $M$ and $N$ (see \cite[Chapter I]{Kollar1996RationalCurves}), $\Cbf(Z_{\gfr})$ and $\Hbf(Z_{\gfr})$ are equipped with $G$-linearized ample line bundles. Thus by Proposition \ref{prop: same orbit structure when locally linearlizable}, we may identify orbits in $\Cbf(Z_{\gfr})$ (respectively, in $\Hbf(Z_{\gfr})$) and orbits in its normalization (though Proposition \ref{prop: same orbit structure when locally linearlizable} does not mean that the orbits are isomorphic as varieties).

\subsection{Hilbert Schemes for Exceptional Lie Algebras and Chow Schemes} \label{subsection: hilb for excep and chow}

Consider a $G$-variety $X$ with an open spherical $G$-orbit and let $Y \subset X$ be a closed $G$-orbit which is projective. If $X$ admits a $G$-linearized ample line bundle, then by Proposition \ref{prop: same orbit structure when locally linearlizable}, for the normalization $\pi : X^{nor} \rightarrow X$, $\pi^{-1}(Y)$ is a closed $G$-orbit in $X^{nor}$. Moreover, since $Y$ is simply connected and the restriction $\pi:\pi^{-1}(Y) \rightarrow Y$ is a $G$-equivariant finite morphism, we have $\pi^{-1}(Y) \simeq Y$.

Now assume that the open orbit of $X$ is isomorphic to $O_{\gfr}$, and that $Y$ is a unique closed orbit in $X$. Then $X^{nor}$ becomes a simple $O_{\gfr}$-embedding with unique closed orbit $\simeq Y$. According to Theorem \ref{thm: colored data of symmetric var} and Theorem \ref{main thm: open symmetric orbit of twistor conics with satake diagram}, in the notation of Section \ref{subsection: main theorems}, the stabilizer of $\Dcal_{i}= \epsilon^{-1} (\lambda_{i}^{\vee}/2)$ is the parabolic subgroup $P'_{I'_{i}}$ containing $B'$ where 
\[
    I'_{i} := \{\alpha'_{j} \in S' : \overline{\alpha'_{j}} = \lambda_{i}\}.
\]
If $I_{i} := I'_{i} \circ Ad_{g}$ for each $i$ and $w_{0}$ is a representative of the longest element in $W_{G}$, then since $P'_{I'_{i}} = g \cdot P_{I_{i}} \cdot g^{-1}$, Lemma \ref{lem: stabilizer of colors and isotropy group of closed orbit} implies that
\begin{equation}\label{eqn: colors of simple symmetric variety}
    \text{(the isotropy group of $Y$ containing $B$)} = \bigcap_{\Dcal_{i} \in \Dcal(O_{\gfr}) \setminus \Fcal(X^{nor})} (w_{0} \cdot P_{I_{i}} \cdot w_{0}^{-1})^{-}.
\end{equation}

\begin{proof}[Proof of Theorem \ref{main thm: colored data of chow}, and Theorem \ref{main thm: colored data of hilb} for exceptional $\gfr$]
    By Corollary \ref{coro: number of closed orbits} and the previous paragraphs, $\Cbf^{nor}(Z_{\gfr})$ is a simple $O_{\gfr}$-embedding with unique closed orbit $\simeq G/\text{Stab}_{G}(\Lcal_{B})$. Similarly, if $\gfr$ is exceptional, then $\Hbf^{nor}(Z_{\gfr})$ is also a simple $O_{\gfr}$-embedding with unique closed orbit $\simeq G / \text{Stab}_{G}(\Lcal_{B}) \cap \text{Stab}_{G}(\Pcal)$ where $\Pcal$ is the unique $B$-stable plane in $\PP(\gfr)$ (Table \ref{table: B stable planes}).
    
    Now by Lemma \ref{lem: properness criterion for spherical var}, to compute the colored cone of a given simple $O_{\gfr}$-embedding which is projective, it suffices to compute its colors, which can be done case-by-case. Indeed, the equation (\ref{eqn: colors of simple symmetric variety}) shows that
    \[
        \text{Stab}_{G}(\Lcal_{B}) = \bigcap_{\Dcal_{i} \in \Dcal(\Ocal_{\gfr}) \setminus \Fcal(\Cbf^{nor}(Z_{\gfr}))} (w_{0} \cdot P_{I_{i}} \cdot w_{0}^{-1})^{-}, \quad \text{for every $\gfr$,}
    \]
    and
    \[
        \text{Stab}_{G}(\Lcal_{B}) \cap \text{Stab}_{G}(\Pcal) = \bigcap_{\Dcal_{i} \in \Dcal(\Ocal_{\gfr}) \setminus \Fcal(\Hbf^{nor}(Z_{\gfr}))} (w_{0} \cdot P_{I_{i}} \cdot w_{0}^{-1})^{-}, \quad \text{if $\gfr$ is exceptional.}
    \]
    For example, if $\gfr = B_{r}$ with $r \ge 3$, then we have
    \[
        \text{Stab}_{G} (\Lcal_{B}) = P_{\alpha_{1}, \, \alpha_{3}}, \quad I_{i} = \{\alpha_{i}\}, \quad Ad_{w_{0}} = - id
    \]
    by Theorem \ref{thm: space of lines is homogeneous}, Table \ref{table: Satake diagram and restricted root system} and \cite[Plate II]{Bourbaki2002LieGroups}. It means that $(w_{0} \cdot P_{I_{i}} \cdot w_{0}^{-1})^{-} = P_{\alpha_{i}}$ for each $i$, hence
    \[
        \Fcal(\Cbf^{nor}(Z_{B_{r}})) = \left\{ \begin{array}[pos]{cc}
            \{\Dcal_{2}, \, \Dcal_{4}\} & \text{(if $r \ge 4$),} \\
            \{\Dcal_{2}\} & \text{(if $r =3$)}
        \end{array} \right.
    \]
    After similar computations using the following list of the isotropy groups and $(w_{0} \cdot P_{I_{i}} \cdot w_{0}^{-1})^{-}$, we obtain Table \ref{table: sph data of Chow}, and also Table \ref{table: sph data of Hilb} for exceptional Lie algebras.
    \begin{enumerate}
        \item $\gfr = D_{r}$ with $r \ge 6$:
        \[
            \text{Stab}_{G}(\Lcal_{B}) = P_{\alpha_{1}, \, \alpha_{3}}, \quad (w_{0} \cdot P_{I_{i}} \cdot w_{0}^{-1})^{-} = P_{\alpha_{i}}, \quad \forall i =1,\,2,\,3,\,4.
        \]
        \item $\gfr = D_{5}$:
        \[
            \text{Stab}_{G}(\Lcal_{B}) = P_{\alpha_{1}, \, \alpha_{3}}, \quad (w_{0} \cdot P_{I_{i}} \cdot w_{0}^{-1})^{-} =
            \left\{ \begin{array}[pos]{cc}
                P_{\alpha_{i}}, & \text{ if } i = 1,\,2,\,3; \\
                P_{\alpha_{4}, \, \alpha_{5}}, & \text{ if } i = 4.
            \end{array} \right.
        \]
        \item $\gfr = D_{4}$:
        \[
            \text{Stab}_{G}(\Lcal_{B}) = P_{\alpha_{1}, \, \alpha_{3}, \, \alpha_{4}}, \quad (w_{0} \cdot P_{I_{i}} \cdot w_{0}^{-1})^{-} = P_{\alpha_{i}}, \quad \forall i = 1,\,2,\,3,\,4.
        \]
        \item $\gfr = E_{6}$:
        \[
            \text{Stab}_{G}(\Lcal_{B}) = P_{\alpha_{3}}, \quad \text{Stab}_{G}(\Lcal_{B}) \cap \text{Stab}_{G}(\Pcal) = P_{\alpha_{2}, \, \alpha_{3}, \, \alpha_{4}},
        \]
        \[
            (w_{0} \cdot P_{I_{1}} \cdot w_{0}^{-1})^{-} = P_{\alpha_{1}, \, \alpha_{5}}, \quad (w_{0} \cdot P_{I_{2}} \cdot w_{0}^{-1})^{-} = P_{\alpha_{2}, \, \alpha_{4}},
        \]
        \[
            (w_{0} \cdot P_{I_{3}} \cdot w_{0}^{-1})^{-} = P_{\alpha_{3}}, \quad (w_{0} \cdot P_{I_{4}} \cdot w_{0}^{-1})^{-} = P_{\alpha_{6}}.
        \]
        \item $\gfr = E_{7}$:
        \[
            \text{Stab}_{G}(\Lcal_{B}) = P_{\alpha_{5}}, \quad \text{Stab}_{G}(\Lcal_{B}) \cap \text{Stab}_{G}(\Pcal) = P_{\alpha_{4}, \, \alpha_{5}},
        \]
        \[
            (w_{0} \cdot P_{I_{1}} \cdot w_{0}^{-1})^{-} = P_{\alpha_{2}}, \quad (w_{0} \cdot P_{I_{2}} \cdot w_{0}^{-1})^{-} = P_{\alpha_{4}},
        \]
        \[
            (w_{0} \cdot P_{I_{3}} \cdot w_{0}^{-1})^{-} = P_{\alpha_{5}}, \quad (w_{0} \cdot P_{I_{4}} \cdot w_{0}^{-1})^{-} = P_{\alpha_{6}}.
        \]
        \item $\gfr = E_{8}$:
        \[
            \text{Stab}_{G}(\Lcal_{B}) = P_{\alpha_{2}}, \quad \text{Stab}_{G}(\Lcal_{B}) \cap \text{Stab}_{G}(\Pcal) = P_{\alpha_{2}, \, \alpha_{3}},
        \]
        \[
            (w_{0} \cdot P_{I_{1}} \cdot w_{0}^{-1})^{-} = P_{\alpha_{7}}, \quad (w_{0} \cdot P_{I_{2}} \cdot w_{0}^{-1})^{-} = P_{\alpha_{3}},
        \]
        \[
            (w_{0} \cdot P_{I_{3}} \cdot w_{0}^{-1})^{-} = P_{\alpha_{2}}, \quad (w_{0} \cdot P_{I_{4}} \cdot w_{0}^{-1})^{-} = P_{\alpha_{1}}.
        \]
        \item $\gfr = F_{4}$:
        \[
            \text{Stab}_{G}(\Lcal_{B}) = P_{\alpha_{3}}, \quad \text{Stab}_{G}(\Lcal_{B}) \cap \text{Stab}_{G}(\Pcal) = P_{\alpha_{2}, \, \alpha_{3}},
        \]
        \[
            (w_{0} \cdot P_{I_{i}} \cdot w_{0}^{-1})^{-} = P_{\alpha_{i}}, \quad \forall i = 1,\, \ldots, \, 4.
        \]
        \item $\gfr = G_{2}$:
        \[
            \text{Stab}_{G}(\Lcal_{B}) = \text{Stab}_{G}(\Lcal_{B}) \cap \text{Stab}_{G}(\Pcal) = P_{\alpha_{1}},
        \]
        \[
            (w_{0} \cdot P_{I_{i}} \cdot w_{0}^{-1})^{-} = P_{\alpha_{i}}, \quad \forall i = 1,\,2.
        \]
    \end{enumerate}
\end{proof}

\subsection{Hilbert Schemes for Classical Lie Algebras of High Rank}

To complete the proof of Theorem \ref{main thm: colored data of hilb}, it is necessary to compute the colored data of $\Hbf^{nor}(Z_{\gfr})$ for $\gfr$ of type $B$ or $D$. In this section, we compute the colored data when $\gfr$ is $B_{r}$ ($r \ge 4$) or $D_{r}$ ($r \ge 5$), using the colored cone of $\Cbf^{nor}(Z_{\gfr})$. First, let us prove the following lemma, which holds for every $\gfr$ satisfying the assumption (\ref{assumption: pic num 1 not pn}).

\begin{lemma} \label{lem: planar reducible correspond to face of codim 1}
    In $\Cbf^{nor}(Z_{\gfr})$, each $G$-orbit represented by planar reducible conics corresponds to a colored face of codimension 1 in the colored cone of $\Cbf^{nor}(Z_{\gfr})$.
\end{lemma}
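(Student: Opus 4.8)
The plan is to pin down the $G$-orbit(s) of $\Cbf^{nor}(Z_{\gfr})$ consisting of planar reducible conics, to show that each such orbit $Y_{\Pcal}$ lies immediately above the closed orbit of double lines in the closure order, and then to read off from Lemma \ref{lem: number of orbits in terms of colored faces} together with the explicit colored cone of Theorem \ref{main thm: colored data of chow} that the colored cone attached to $Y_{\Pcal}$ is a facet of $\Ccal(\Cbf^{nor}(Z_{\gfr}))$.

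First I would reduce to $\Cbf(Z_{\gfr})$ itself: by Proposition \ref{prop: same orbit structure when locally linearlizable} the normalization is bijective on $G$-orbits, so it suffices to argue downstairs. By Corollary \ref{coro: only one orbit of planar conics}, the planar reducible conics whose ambient plane is $G$-conjugate to a fixed plane $\Pcal \subset Z_{\gfr}$ form a single $G$-orbit $Y_{\Pcal}$ (any two reducible conics being isomorphic as schemes), and these orbits are indexed by the finitely many $G$-conjugacy classes of planes contained in $Z_{\gfr}$.

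The geometric heart of the argument is the closure computation $\overline{Y_{\Pcal}} = Y_{\Pcal} \sqcup Y_{0}$, where $Y_{0}$ is the unique closed orbit, the orbit of all double lines (Corollary \ref{coro: number of closed orbits}(1)). Indeed $\overline{Y_{\Pcal}}$ is closed and $G$-stable, and a point of $\overline{Y_{\Pcal}} \setminus Y_{\Pcal}$ is a degeneration of planar reducible conics; since the planes contained in $Z_{\gfr}$ sweep out a disjoint union of projective rational homogeneous spaces (\cite[Theorem 4.9]{LandsbergManivel2003ProjectiveGeometry}), the ambient plane of such a degeneration stays $G$-conjugate to $\Pcal$, while inside a fixed plane a flat limit of the $1$-cycle ``two distinct concurrent lines'' is again such a cycle, possibly with its two components coinciding, i.e.\ a double line. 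As all double lines form the single closed orbit $Y_{0}$, and $\overline{Y_{\Pcal}}$ does meet $Y_{0}$ (collide the two lines), $G$-stability forces $Y_{0} \subseteq \overline{Y_{\Pcal}}$ and no further orbit occurs. Hence the orbits contained in $\overline{Y_{\Pcal}}$ are exactly $Y_{\Pcal}$ and $Y_{0}$.

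Finally I would translate. By Theorem \ref{main thm: colored data of chow}, $\Cbf^{nor}(Z_{\gfr})$ is a simple $O_{\gfr}$-embedding with colored cone $(\Ccal, \Fcal)$, and Lemma \ref{lem: number of orbits in terms of colored faces} gives an order-reversing bijection between its orbits and the colored faces of $(\Ccal, \Fcal)$, the closed orbit corresponding to $(\Ccal, \Fcal)$ itself and the open orbit to $(\{0\}, \emptyset)$. The previous paragraph shows that the colored face $(\Ccal_{0}, \Fcal_{0})$ attached to $Y_{\Pcal}$ is proper, nonzero, and covered by $(\Ccal, \Fcal)$ with no colored face strictly in between; that is, it is a maximal proper colored face. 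It then remains to see that a maximal proper colored face of $\Ccal$ is a facet, so that $\dim \Ccal_{0} = \dim \Ccal - 1$ (note $\dim \Ccal = \dim \Ecal$, since $\Cbf^{nor}(Z_{\gfr})$ is complete and $\Vcal$ is a full Weyl chamber by Theorem \ref{thm: colored data of symmetric var} and Lemma \ref{lem: properness criterion for spherical var}, whence $\Ccal = \Vcal + \QQ_{\ge 0}\langle\, \tfrac{1}{2}\lambda_{i}^{\vee} : \Dcal_{i} \in \Fcal \,\rangle$ is full-dimensional). This I would settle by inspecting the list of colored faces accompanying Theorem \ref{main thm: colored data of chow}: in every type the maximal proper colored faces are visibly the facets of $\Ccal$ (any proper face lies in some facet, and using that $\Vcal$ is full-dimensional and that the colors are halves of restricted simple coroots one checks the relevant facets have relative interior meeting $\Vcal$, hence are themselves colored faces). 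I expect this last combinatorial point to be the only delicate step — one must exclude the degenerate possibility that a maximal proper colored face has codimension $\ge 2$ because all faces of intermediate dimension fail the ``relative interior meets $\Vcal$'' test; everything else is either the closure computation above or a direct appeal to the orbit/colored-face dictionary. (Equivalently, the assertion amounts to saying that $Y_{\Pcal}$ has spherical rank one, which one could instead verify from the structure of $\mathrm{Stab}_{G}$ of a planar reducible conic described in Corollary \ref{coro: only one orbit of planar conics}.)
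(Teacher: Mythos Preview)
Your proposal is correct and follows essentially the same route as the paper: establish that the closure of an orbit $Y_{\Pcal}$ of planar reducible conics is exactly $Y_{\Pcal}\cup Y_0$ (using openness of smoothness and of non-planarity, homogeneity of the space of planes in $Z_{\gfr}$, and Corollary~\ref{coro: only one orbit of planar conics}), lift this to the normalization via Proposition~\ref{prop: same orbit structure when locally linearlizable}, and conclude via the orbit/colored-face bijection. The only difference is that you make explicit the final combinatorial step (that a maximal proper colored face of $\Ccal(\Cbf^{nor}(Z_{\gfr}))$ is necessarily a facet), which the paper absorbs into the sentence ``the desired statement follows''; your plan to verify this from the already-computed list of colored faces in Section~\ref{subsection: main theorems} is legitimate and not circular, since Theorem~\ref{main thm: colored data of chow} is established before this lemma is invoked.
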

\begin{proof}
    Let $\Ocal \subset \Cbf(Z_{\gfr})$ be an orbit represented by planar reducible conics, and $\Pcal$ a plane contained in $Z_{\gfr}$ such that reducible conics in $\Pcal$ represent points in $\Ocal$. Since non-planarity and smoothness are open conditions, every boundary point of $\Ocal$ should be represented by a planar reducible conic or a double line. If a planar reducible conic $C$ represents a boundary point of $\Ocal$, then the plane spanned by $C$ is in the closure of the $G$-orbit containing $\Pcal$ in the space of planes in $Z_{\gfr}$. However since the space of planes in $Z_{\gfr}$ is the disjoint union of rational homogeneous spaces by \cite[Theorem 4.9]{LandsbergManivel2003ProjectiveGeometry}, the plane spanned by $C$ is indeed $G$-conjugate to $\Pcal$, which is a contradiction to Corollary \ref{coro: only one orbit of planar conics}. Therefore the boundary of $\Ocal$ consists of double lines, and the same statement holds for $\pi^{-1}(\Ocal)$, which is a $G$-orbit by Proposition \ref{prop: same orbit structure when locally linearlizable}, where $\pi : \Cbf^{nor}(Z_{\gfr}) \rightarrow \Cbf(Z_{\gfr})$ is the normalization map. Since double lines form the unique closed orbit in $\Cbf(Z_{\gfr})$, the desired statement follows.
\end{proof}

Now assume that $\gfr = B_{r}$ ($r \ge 4$) or $D_{r}$ ($r \ge 5$) so that the restrictive root system is $R'_{O_{\gfr}} = B_{4}$. By Corollary \ref{coro: number of closed orbits} and Table \ref{table: B stable planes}, $\Hbf^{nor}(Z_{\gfr})$ has exactly two closed orbits and they are represented by planar double lines. Thus the colored fan of $\Hbf^{nor}(Z_{\gfr})$ consists of two maximal colored cones and their colored faces by Lemma \ref{lem: number of orbits in terms of colored faces}. For each closed orbit $Y \subset \Hbf^{nor}(Z_{\gfr})$, we define a simple $O_{\gfr}$-embedding
\[
    \Hbf_{Y} := \left\{x \in \Hbf^{nor}(Z_{\gfr}) : Y \subset \overline{G \cdot x} \right\}
\]
as in Section \ref{section: Luna Vust theorey} so that its colored cone is a maximal element in the colored fan of $\Hbf^{nor}(Z_{\gfr})$.

\begin{proof}[Proof of Theorem \ref{main thm: colored data of hilb} for $B_{r}$ ($r \ge 4$) and $D_{r}$ ($r \ge 5$)]
    For $i=1,\,2$, let $Y_{i}$ be the closed orbit in $\Hbf^{nor}(Z_{\gfr})$ represented by double lines in the $i$th plane $\Pcal_{i}$ in Table \ref{table: B stable planes}. Since
    \[
        \text{Stab}_{G}(\Lcal_{B}) \cap \text{Stab}_{G}(\Pcal_{i}) = \left\{ \begin{array}[pos]{ll}
            P_{\alpha_{1}, \, \alpha_{3}} & \text{if }i=1; \\
            P_{\alpha_{1}, \, \alpha_{3}, \, \alpha_{4} , \, \alpha_{5}} & \text{if }\gfr = D_{5} \text{ and } i=2; \\
            P_{\alpha_{1}, \, \alpha_{3}, \, \alpha_{4}} & \text{otherwise},
        \end{array} \right.
    \]
    the equation (\ref{eqn: colors of simple symmetric variety}) in Section \ref{subsection: hilb for excep and chow} shows that
    \[
        \Fcal(\Hbf_{Y_{1}}) = \{\Dcal_{2}, \, \Dcal_{4}\}, \quad \Fcal(\Hbf_{Y_{2}}) = \{\Dcal_{2}\}.
    \]

    Let $\Ocal_{i} \subset \Cbf^{nor}(Z_{\gfr})$ be the $G$-orbit containing planar reducible conics in the $i$th plane in Table \ref{table: B stable planes} for $i=1,\,2$. By Lemma \ref{lem: planar reducible correspond to face of codim 1}, each $\Ocal_{i}$ corresponds to a colored face of dimension 3, and such a face contains extremal rays generated by $-\gamma_{2}$ and $-\gamma_{4}$ by the list of colored faces in Section \ref{subsection: main theorems}. Note that if a 1-dimensional colored face $\QQ_{\ge 0} \cdot (-\gamma)$ of $\Cbf^{nor}(Z_{\gfr})$ is contained in the colored face corresponding to $\Ocal_{i}$ for some $i$, then the $G$-stable divisor corresponding to $\QQ_{\ge 0} \cdot (-\gamma)$ contains $\Ocal_{i}$ in $\Cbf^{nor}(Z_{\gfr})$. Since $\Ocal_{i}$ is contained in the open subset where the morphism $FC^{nor}: \Hbf^{nor}(Z_{\gfr}) \rightarrow \Cbf^{nor}(Z_{\gfr})$ is an isomorphism, and $Y_{i}$ is contained in the closure of its preimage under $FC^{nor}$, the strict transform of the divisor corresponding to $\QQ_{\ge 0} \cdot (-\gamma)$ contains $Y_{i}$. In other words, the colored cone of $\Hbf_{Y_{i}}$ contains $\QQ_{\ge0}\cdot (-\gamma)$ as an extremal ray. Therefore the colored cone of $\Hbf_{Y_{1}}$ contains
    \[
        \Ccal_{1} := \QQ_{\ge 0}\langle -\gamma_{2}, \, -\gamma_{4}, \, \lambda_{2}^{\vee}, \, \lambda_{4}^{\vee} \rangle \quad (\ni - \gamma_{3} = -\gamma_{4} + \lambda_{4}^{\vee}/2)
    \]
    and the colored cone of $\Hbf_{Y_{2}}$ contains $\QQ_{\ge 0} \langle -\gamma_{2}, \, -\gamma_{4}, \, \lambda_{2}^{\vee} \rangle$. If $-\gamma_{1}$ is contained in the colored cone of $\Hbf_{Y_{1}}$, then it contains the valuation cone $\Vcal$, which is a contradiction since $\Hbf_{Y_{1}}$ is not complete. Hence the colored cone of $\Hbf_{Y_{2}}$ contains
    \[
        \Ccal_{2} := \QQ_{\ge 0}\langle -\gamma_{1}, \, -\gamma_{2}, \, -\gamma_{4} ,\, \lambda_{2}^{\vee}\rangle.
    \]
    By the definition of a colored fan, especially 1(a) and 4(b) in Definition \ref{defn: colored cone and fan}, $(\Ccal_{1}, \, \{\Dcal_{2}, \, \Dcal_{4}\})$ and $(\Ccal_{2}, \, \{\Dcal_{2}\})$ are colored cones of $\Hbf_{Y_1}$ and $\Hbf_{Y_{2}}$, respectively.
\end{proof}

\subsection{Hilbert Scheme for \texorpdfstring{$D_{4}$}{D4}}

In this section, we compute the colored fan of $\Hbf^{nor}(Z_{D_{4}})$, which is the case when $R'_{O_{\gfr}} = D_{4}$.

By Corollary \ref{coro: number of closed orbits} and Table \ref{table: B stable planes}, $\Hbf^{nor}(Z_{D_{4}})$ has three closed $G$-orbits, and each of them consists of planar double lines. For each $i\in \{1,\,3,\,4\}$ and the colored face
\[
    \Ccal'_{i}:=\QQ_{\ge 0} \langle -\gamma_{j} : j \in \{1,\,2,\,3,\,4\} \setminus \{i\} \rangle
\]
of the colored cone of $\Cbf^{nor}(Z_{D_{4}})$, there is a $B$-stable plane $\Pcal_{i}$ in $Z_{D_{4}}$ such that planar reducible conics in $\Pcal_{i}$ belong to the $G$-orbit in $\Cbf^{nor}(Z_{D_{4}})$ corresponding to $\Ccal'_{i}$ by Corollary \ref{coro: only one orbit of planar conics} and Lemma \ref{lem: planar reducible correspond to face of codim 1}. For each $i$, let $Y_{i}$ be the closed orbit in $\Hbf^{nor}(Z_{D_{4}})$ containing double lines in $\Pcal_{i}$. As in Section \ref{section: Luna Vust theorey}, we define
\[
    \Hbf_{Y_{i}} := \{x \in \Hbf^{nor}(Z_{D_{4}}) : Y_{i} \subset \overline{G \cdot x}\}
\]
for each $i$.

\begin{proof}[Proof of Theorem \ref{main thm: colored data of hilb} for $D_{4}$]
    Since all of $Y_{i}$ have same isotropy group $P_{\alpha_{1}, \, \alpha_{3}, \, \alpha_{4}}$, the equation (\ref{eqn: colors of simple symmetric variety}) in Section \ref{subsection: hilb for excep and chow} shows that
    \[
        \Fcal(\Hbf_{Y_{1}}) = \Fcal(\Hbf_{Y_{3}}) = \Fcal(\Hbf_{Y_{4}}) = \{ \Dcal_{2}\}.
    \]
    As in the previous section, since the strict transform (via $FC^{nor}$) of the divisor corresponding to each extremal ray of $\Ccal_{i}'$ contains $Y_{i}$, the colored cone of $\Hbf_{Y_{i}}$ contains
    \[
        \Ccal_{i} := \QQ_{\ge 0}\langle \Ccal_{i}',\, \lambda_{2}^{\vee} \rangle = \QQ_{\ge 0}\langle -\gamma_{j},\, \lambda_{2}^{\vee} : j\in \{1,\,2,\,3,\,4\} \setminus \{i\} \rangle.
    \]
    A straightforward computation using the relation $-\gamma_{1} + 2 \gamma_{2} - \gamma_{3} - \gamma_{4} = \lambda_{2}^{\vee}$ shows that for $\sum_{i=1}^{4} a_{i} \cdot (-\gamma_{i}) \in \Vcal$, $\alpha_{i} \in \QQ_{\ge 0}$, we have
    \[
        \sum_{i=1}^{4} a_{i} \cdot (-\gamma_{i}) \in \left\{ \begin{array}{ll}
            \Ccal_{1} & \text{if and only if $a_{1} \le a_{3}$ and $a_{1} \le a_{4}$;} \\
            \Ccal_{3} & \text{if and only if $a_{3} \le a_{1}$ and $a_{3} \le a_{4}$;} \\
            \Ccal_{4} & \text{if and only if $a_{4} \le a_{1}$ and $a_{4} \le a_{3}$.}
        \end{array} \right.
    \]
    By 1(a) and 4(b) in Definition \ref{defn: colored cone and fan}, the colored cone of $\Hbf_{Y_{i}}$ is $(\Ccal_{i},\,\{\Dcal_{2}\})$ for each $i$.
\end{proof}

\subsection{Hilbert Scheme for \texorpdfstring{$B_{3}$}{B3}} \label{subsection: proof for Hilb B3}

It is remained to consider $B_{3}$. First, we show that the map in Corollary \ref{coro: number of closed orbits} is also bijective for $B_{3}$.

\begin{proposition} \label{prop: hilb for B3 is not simple}
    $\Hbf(Z_{B_{3}})$ contains two closed orbits.
\end{proposition}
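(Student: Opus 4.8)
The plan is to argue by contradiction, using the colored-cone dictionary together with Lemma~\ref{lem: nonplanar double lines exist when B3}. By Corollary~\ref{coro: number of closed orbits} there are at most two closed $G$-orbits in $\Hbf(Z_{B_{3}})$ (one for each of the two $B$-stable planes of $\PP(B_{3})$ listed in Table~\ref{table: B stable planes}), so it suffices to exhibit two of them. One closed orbit is visible directly: let $Y_{1}$ be the orbit of planar double lines in $Z_{B_{3}}$, realized inside the $B$-stable plane $\Pcal_{1} := \PP(E_{\rho}, \, E_{\rho - \alpha_{2}}, \, E_{\rho - \alpha_{1} - \alpha_{2}}) \subset Z_{B_{3}}$. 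As in Corollary~\ref{coro: only one orbit of planar conics}, $G$ acts transitively on planes in $Z_{B_{3}}$ and $\text{Stab}_{G}(\Pcal_{1}) \to \Aut(\Pcal_{1})$ is surjective, so $Y_{1}$ is a single orbit; moreover the isotropy group of the double line $2\Lcal_{B} \subset \Pcal_{1}$ is $\text{Stab}_{G}(\Lcal_{B}) \cap \text{Stab}_{G}(\Pcal_{1}) = P_{\alpha_{1}, \, \alpha_{3}} \cap P_{\alpha_{3}} = P_{\alpha_{1}, \, \alpha_{3}}$ by Table~\ref{table: B stable planes} and the proof of Theorem~\ref{main thm: colored data of chow}, a parabolic subgroup, so $Y_{1} \simeq G/P_{\alpha_{1}, \, \alpha_{3}}$ is closed.

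Next I would suppose, for contradiction, that $Y_{1}$ is the \emph{only} closed orbit of $\Hbf(Z_{B_{3}})$. Then $\Hbf^{nor}(Z_{B_{3}})$ is a simple complete $O_{\gfr}$-embedding (Proposition~\ref{prop: same orbit structure when locally linearlizable}) whose unique closed orbit is $\simeq Y_{1} \simeq G/P_{\alpha_{1}, \, \alpha_{3}}$. Feeding this isotropy group into the equation~(\ref{eqn: colors of simple symmetric variety}) --- using $Ad_{w_{0}} = -id$ and $I_{i} = \{\alpha_{i}\}$ for $B_{3}$, exactly as in the proof of Theorem~\ref{main thm: colored data of chow}, so that $(w_{0} \cdot P_{I_{i}} \cdot w_{0}^{-1})^{-} = P_{\alpha_{i}}$ and $\bigcap_{i \in T} P_{\alpha_{i}} = P_{\{\alpha_{i} : i \in T\}}$ --- forces $\Dcal(O_{\gfr}) \setminus \Fcal(\Hbf^{nor}(Z_{B_{3}})) = \{\Dcal_{1}, \, \Dcal_{3}\}$, i.e. $\Fcal(\Hbf^{nor}(Z_{B_{3}})) = \{\Dcal_{2}\}$. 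Since $\Hbf^{nor}(Z_{B_{3}})$ is complete, Lemma~\ref{lem: properness criterion for spherical var} then forces its colored cone to be $\QQ_{\ge 0}\langle \tfrac{1}{2}\lambda_{2}^{\vee}, \, \Vcal \rangle = \QQ_{\ge 0}\langle -\gamma_{1}, \, -\gamma_{2}, \, -\gamma_{3}, \, \lambda_{2}^{\vee} \rangle$, which is exactly the colored cone of $\Cbf^{nor}(Z_{B_{3}})$ recorded in Table~\ref{table: sph data of Chow}. Hence $\Hbf^{nor}(Z_{B_{3}})$ and $\Cbf^{nor}(Z_{B_{3}})$ have the same colored fan, so by Lemma~\ref{lem: number of orbits in terms of colored faces} they have the same number of $G$-orbits, and therefore (applying Proposition~\ref{prop: same orbit structure when locally linearlizable} to both) $\Hbf(Z_{B_{3}})$ and $\Cbf(Z_{B_{3}})$ have the same number of $G$-orbits.

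The contradiction then comes from Lemma~\ref{lem: nonplanar double lines exist when B3}. Since $FC$ restricts to a $G$-equivariant isomorphism off the (closed, $G$-stable) loci of double lines, it gives a bijection between the non-double-line orbits of $\Hbf(Z_{B_{3}})$ and those of $\Cbf(Z_{B_{3}})$; thus the two schemes have equally many $G$-orbits if and only if they have equally many orbits of double lines. But $\Cbf(Z_{B_{3}})$ has exactly one orbit of double lines, namely its unique closed orbit $G/\text{Stab}_{G}(\Lcal_{B})$ (Corollary~\ref{coro: number of closed orbits}(1)), whereas $\Hbf(Z_{B_{3}})$ has at least two: the orbit $Y_{1}$ of planar double lines, and --- by Lemma~\ref{lem: nonplanar double lines exist when B3} --- at least one orbit consisting of non-planar double lines, disjoint from $Y_{1}$ because planarity of a double line is a $G$-invariant condition. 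This contradicts the conclusion of the previous paragraph, so $Y_{1}$ cannot be the unique closed orbit, and $\Hbf(Z_{B_{3}})$ has exactly two closed orbits.

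I expect the only delicate point to be the bookkeeping in the second paragraph: one must check that the isotropy group of $2\Lcal_{B}$ is really $P_{\alpha_{1}, \, \alpha_{3}}$, that~(\ref{eqn: colors of simple symmetric variety}) then pins down $\Fcal = \{\Dcal_{2}\}$, and that a simple \emph{complete} $O_{\gfr}$-embedding with exactly this set of colors is forced by Lemma~\ref{lem: properness criterion for spherical var} to have the colored cone of $\Cbf^{nor}(Z_{B_{3}})$; once these are in place the orbit-count contradiction is immediate. It is worth noting why this indirect route seems preferable to arguing directly with the double line produced in Lemma~\ref{lem: nonplanar double lines exist when B3}: that particular conic need not be $B$-stable, so it is not obvious a priori that its $G$-orbit is closed (equivalently, that its stabilizer is parabolic), which is precisely what a direct proof would need.
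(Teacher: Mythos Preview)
Your proposal is correct and follows essentially the same route as the paper: assume a unique closed orbit, identify it as the planar double-line orbit with isotropy $P_{\alpha_{1},\,\alpha_{3}}$, use equation~(\ref{eqn: colors of simple symmetric variety}) and completeness to force the colored cone to coincide with that of $\Cbf^{nor}(Z_{B_{3}})$, and then derive a contradiction with Lemma~\ref{lem: nonplanar double lines exist when B3}. The only cosmetic difference is that the paper invokes Theorem~\ref{thm: classification of spherical} directly to conclude that $FC^{nor}$ is an isomorphism (hence all double lines in $\Hbf(Z_{B_{3}})$ are conjugate), whereas you reach the same contradiction via an orbit count using Lemma~\ref{lem: number of orbits in terms of colored faces}; both are valid.
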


\begin{proof}
    Assume that $\Hbf(Z_{B_{3}})$ has a unique closed orbit. Since $Z_{B_{3}}$ contains a plane (Table \ref{table: B stable planes}), the unique closed orbit is consisting of planar double lines by Corollary \ref{coro: number of closed orbits}. Then Table \ref{table: B stable planes} shows that $P_{\alpha_{3}} \cap \text{Stab}_{G}(\Lcal_{B}) = P_{\alpha_{1}, \, \alpha_{3}}$ is an isotropy group of the closed orbit, which is equal to the stabilizer of the line $\Lcal_{B}$. Therefore by the equation (\ref{eqn: colors of simple symmetric variety}) in Section \ref{subsection: hilb for excep and chow}, the colored cones of $\Hbf^{nor}(Z_{B_{3}})$ and $\Cbf^{nor}(Z_{B_{3}})$ are same. It means that the morphism $FC^{nor} : \Hbf^{nor}(Z_{B_{3}}) \rightarrow \Cbf^{nor}(Z_{B_{3}})$ is an isomorphism. In particular, any two double lines in $Z_{B_{3}}$ representing points in $\Hbf(Z_{B_{3}})$ are $G$-conjugate to each other by Corollary \ref{coro: number of closed orbits} and Proposition \ref{prop: same orbit structure when locally linearlizable}. However, it is a contradiction to Lemma \ref{lem: nonplanar double lines exist when B3}.
\end{proof}

\begin{proof}[Proof of Theorem \ref{main thm: colored data of hilb} for $B_{3}$]
    As in Section \ref{section: Luna Vust theorey}, for each closed orbit $Y$, we define $\Hbf_{Y} := \{x \in \Hbf^{nor}(Z_{B_{3}}) : Y \subset \overline{G \cdot x}\}$. By Proposition \ref{prop: hilb for B3 is not simple}, $\Hbf^{nor}(Z_{B_{3}})$ contains two closed orbits, say $Y_{1}$ and $Y_{3}$. By the equation (\ref{eqn: colors of simple symmetric variety}) in Section \ref{subsection: hilb for excep and chow}, the colors of $\Hbf_{Y_{1}}$ and $\Hbf_{Y_{3}}$ are given by
    \[
        \Fcal(\Hbf_{Y_{1}}) = \Fcal(\Hbf_{Y_{3}}) = \{\Dcal_{2}\},
    \]
    since $P_{\alpha_{1},\, \alpha_{3}}$ is an isotropy group of each of $Y_{1}$ and $Y_{3}$.
    
    For each $i=1,\,3$, let $\Ocal_{i}$ be the $G$-orbit in $\Cbf^{nor}(Z_{B_{3}})$ corresponding to the colored face $\QQ_{\ge 0}\langle -\gamma_{i}, \, -\gamma_{2} \rangle$. If one of the closed orbits, say $Y_{j}$, is contained in the closures of the preimages of both $\Ocal_{1}$ and $\Ocal_{3}$ under $FC^{nor}$, then the colored cone of $\Hbf_{Y_{j}}$ contains $\QQ_{\ge 0}\langle -\gamma_{1}, \, -\gamma_{2}, \, -\gamma_{3}, \, \lambda_{2}^{\vee} \rangle$, which is a contradiction since $\Hbf_{Y_{j}}$ is not complete. Therefore we may assume that for each $i=1,\,3$, $Y_{i}$ is contained in the closure of the preimage of $\Ocal_{i}$. In other words, the colored cone of $\Hbf_{Y_{i}}$ contains
    \[
        \Ccal_{i} :=\QQ_{\ge 0}\langle -\gamma_{i}, \, -\gamma_{2}, \, \lambda_{2}^{\vee} \rangle.
    \]
    By 1(a) and 4(b) in Definition \ref{defn: colored cone and fan}, $(\Ccal_{1}, \, \{\Dcal_{2}\})$ and $(\Ccal_{2}, \, \{\Dcal_{2}\})$ are maximal colored cones in the colored fan of $\Hbf^{nor}(Z_{B_{3}})$.
\end{proof}

\section{Applications} \label{section: structures of spaces of conics}

The section is devoted to corollaries of main theorems. First, by Theorem \ref{main thm: colored data of chow} and Theorem \ref{main thm: colored data of hilb}, the colored fans of $\Hbf^{nor}(Z_{\gfr})$ and $\Cbf^{nor}(Z_{\gfr})$ coincide if and only if $\gfr= G_{2}$, hence the following corollary is immediate.

\begin{coro} \label{coro: FC is isom if G2}
    If $\gfr = G_{2}$, then the morphism $FC^{nor} : \Hbf^{nor}(Z_{G_{2}}) \rightarrow \Cbf^{nor}(Z_{G_{2}})$ is an isomorphism. Otherwise there is no $G$-equivariant isomorphism between $\Hbf^{nor}(Z_{\gfr})$ and $\Cbf^{nor}(Z_{\gfr})$.
\end{coro}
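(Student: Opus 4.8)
The plan is to read everything off the colored fans already computed in Theorem \ref{main thm: colored data of chow} and Theorem \ref{main thm: colored data of hilb}, using the dictionary between colored fans and $G$-orbit structure (Lemma \ref{lem: number of orbits in terms of colored faces}). I would treat three regimes separately: $\gfr=G_2$, $\gfr$ classical (type $B$ or $D$ under (\ref{assumption: pic num 1 not pn})), and $\gfr$ exceptional.

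For $\gfr=G_2$ I would just compare the $G_2$-rows of Table \ref{table: sph data of Chow} and Table \ref{table: sph data of Hilb}: both $\Cbf^{nor}(Z_{G_2})$ and $\Hbf^{nor}(Z_{G_2})$ are simple $O_{G_2}$-embeddings with the single maximal colored cone $(\QQ_{\ge 0}\langle -\gamma_2,\lambda_2^{\vee}\rangle,\{\Dcal_2\})$, hence have the same colored fan. By Theorem \ref{thm: classification of spherical} there is then a $G$-equivariant isomorphism $\Hbf^{nor}(Z_{G_2})\to\Cbf^{nor}(Z_{G_2})$ restricting to the identity on the common open orbit $O_{G_2}$ of twistor conics. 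On the other hand $FC^{nor}$ is itself a $G$-equivariant morphism $\Hbf^{nor}(Z_{G_2})\to\Cbf^{nor}(Z_{G_2})$ restricting to the identity on $O_{G_2}$, since by Theorem \ref{thm: map FC} the map $FC$ is an isomorphism away from the double lines, in particular on the twistor-conic locus (which is exactly how both normalizations are presented as $O_{G_2}$-embeddings). Because $O_{G_2}$ is dense and $\Cbf^{nor}(Z_{G_2})$ is separated, a morphism extending $\mathrm{id}_{O_{G_2}}$ is unique, so $FC^{nor}$ coincides with that isomorphism; this gives the first half.

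For the second half I would exploit that the orbit poset — in particular the number of closed $G$-orbits, and when it equals one, the dimension of that orbit — is an invariant of $G$-equivariant isomorphisms. If $\gfr$ is classical, Theorem \ref{main thm: colored data of hilb} says $\Hbf^{nor}(Z_{\gfr})$ is \emph{not} simple, so by Lemma \ref{lem: number of orbits in terms of colored faces} it has at least two closed $G$-orbits, while $\Cbf^{nor}(Z_{\gfr})$ is simple by Theorem \ref{main thm: colored data of chow} and hence has exactly one; a $G$-equivariant isomorphism would biject the two sets of closed orbits, a contradiction. If $\gfr$ is exceptional, both sides are simple, so I would instead compare the unique closed orbits directly: by Corollary \ref{coro: number of closed orbits} the Chow one is $G/\text{Stab}_G(\Lcal_B)$ and the Hilbert one is $G/(\text{Stab}_G(\Lcal_B)\cap\text{Stab}_G(\Pcal))$ for the unique $B$-stable plane $\Pcal$, and from the explicit parabolics recorded in the proof of Theorem \ref{main thm: colored data of chow} (e.g. $\text{Stab}_G(\Lcal_B)=P_{\alpha_3}$ but $\text{Stab}_G(\Lcal_B)\cap\text{Stab}_G(\Pcal)=P_{\alpha_2,\alpha_3,\alpha_4}$ for $E_6$, and analogously for $E_7,E_8,F_4$) the latter group is \emph{strictly} smaller, so the Hilbert-side closed orbit has strictly larger dimension — again contradicting the existence of a $G$-equivariant isomorphism.

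I do not anticipate a real obstacle: all the substance sits in Theorems \ref{main thm: colored data of chow} and \ref{main thm: colored data of hilb} and in Corollary \ref{coro: number of closed orbits}. The only point I would phrase with care is, in the $G_2$ case, the step upgrading ``there exists an isomorphism'' to ``$FC^{nor}$ \emph{is} an isomorphism'': one must check that $FC^{nor}$ restricts to the identity on $O_{G_2}$ under the very identification that makes both normalizations $O_{G_2}$-embeddings, which holds because in both the open orbit is the twistor-conic locus and $FC$ is the canonical cycle map, which is the identity there; the uniqueness of an extension of $\mathrm{id}_{O_{G_2}}$ to a morphism into a separated target then finishes it.
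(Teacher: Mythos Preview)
Your proposal is correct. The paper's own proof is a single sentence: ``the colored fans of $\Hbf^{nor}(Z_{\gfr})$ and $\Cbf^{nor}(Z_{\gfr})$ coincide if and only if $\gfr=G_2$, hence the following corollary is immediate,'' invoking Theorem~\ref{thm: classification of spherical} directly to separate isomorphism classes. Your argument uses the same input (Tables~\ref{table: sph data of Chow} and~\ref{table: sph data of Hilb}) but, for the negative direction, extracts concrete $G$-equivariant invariants instead: the number of closed orbits in the classical case, and the dimension of the unique closed orbit in the exceptional case. This is a mild but genuine difference. The paper's route is shorter but tacitly uses that distinct colored fans cannot become equal under the residual $N_G(K)/K$-ambiguity in identifying open orbits; your route sidesteps that point entirely, since orbit counts and orbit dimensions are manifestly preserved by any $G$-equivariant isomorphism. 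Your handling of the $G_2$ case --- identifying $FC^{nor}$ with the unique extension of $\mathrm{id}_{O_{G_2}}$ by separatedness --- is exactly the right way to upgrade ``some isomorphism exists'' to ``$FC^{nor}$ is an isomorphism,'' and is more explicit than what the paper writes.
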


\subsection{Orbit Structures and Conjugacy Classes of Conics} \label{subsection: orbit structure of Hilb and Chow}

In this section, we describe the $G$-conjugacy classes of conics in adjoint varieties by studying the orbit structures of the spaces of conics. First of all, we can count the number of $G$-orbits in $\Cbf(Z_{\gfr})$ and $\Hbf(Z_{\gfr})$, using Lemma \ref{lem: number of orbits in terms of colored faces}, Proposition \ref{prop: same orbit structure when locally linearlizable}, Theorem \ref{main thm: colored data of chow} and Theorem \ref{main thm: colored data of hilb}.

\begin{coro} \label{coro: number of orbits in Chow and Hilb}
    The number of $G$-orbits in $\Cbf(Z_{\gfr})$ is
    \[
        \left\{ \begin{array}[pos]{cl}
            11 & \text{if $\gfr = B_{r}$ with $r \ge 4$, $D_{r}$ with $r \ge 5$;}\\
            7 & \text{if $\gfr = B_{3}, \, E_{6}, \, E_{7},\,E_{8}, \, F_{4}$;} \\
            15 & \text{if $\gfr = D_{4}$;} \\
            3 & \text{if $\gfr = G_{2}$.}
        \end{array} \right.
    \]
    The number of $G$-orbits in $\Hbf(Z_{\gfr})$ is
    \[
        \left\{ \begin{array}[pos]{cl}
            15 & \text{if $\gfr = B_{r}$ with $r \ge 4$, $D_{r}$ with $r \ge 5$;}\\
            9 & \text{if $\gfr = B_{3}, \, E_{6},\,E_{7}, \, E_{8}, \, F_{4}$;} \\
            21 & \text{if $\gfr = D_{4}$;} \\
            3 & \text{if $\gfr = G_{2}$.}
        \end{array} \right.
    \]
\end{coro}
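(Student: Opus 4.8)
The plan is to translate the orbit count into a count of colored cones. First I would observe that, as explained at the beginning of Section~\ref{section: proof of main theorems}, both $\Cbf(Z_{\gfr})$ and $\Hbf(Z_{\gfr})$ carry $G$-linearized ample line bundles and contain the open spherical orbit $O_{\gfr}$; hence Proposition~\ref{prop: same orbit structure when locally linearlizable} applies, and the normalization maps $\Cbf^{nor}(Z_{\gfr}) \to \Cbf(Z_{\gfr})$ and $\Hbf^{nor}(Z_{\gfr}) \to \Hbf(Z_{\gfr})$ induce bijections on the sets of $G$-orbits. Thus it suffices to count $G$-orbits in the normalized varieties, which are spherical $O_{\gfr}$-embeddings by Theorem~\ref{main thm: colored data of chow} and Theorem~\ref{main thm: colored data of hilb}.

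Next I would invoke Lemma~\ref{lem: number of orbits in terms of colored faces}: for a spherical $O_{\gfr}$-embedding $X$, the $G$-orbits are in bijection with the elements of its colored fan $\mathfrak{F}(X)$. Concretely, the trivial colored cone $(\{0\},\,\emptyset)$ is a colored face of every colored cone (since $\epsilon(\Dcal(O_{\gfr})) = \tfrac12(S'_{O_{\gfr}})^{\vee}$ contains no zero, the relative-interior condition is met), and it corresponds to the open orbit; each closed orbit corresponds to a maximal colored cone; and the remaining orbits correspond to the nonzero non-maximal colored faces. Therefore the number of $G$-orbits in $\Cbf(Z_{\gfr})$ equals $1$ plus the number of maximal colored cones of $\Cbf^{nor}(Z_{\gfr})$ (which is $1$, since $\Cbf^{nor}(Z_{\gfr})$ is simple by Theorem~\ref{main thm: colored data of chow}) plus the number of nonzero non-maximal colored faces listed in item~(1) at the end of Section~\ref{subsection: main theorems}; and the number of $G$-orbits in $\Hbf(Z_{\gfr})$ equals $1$ plus the number of maximal colored cones in Table~\ref{table: sph data of Hilb} (equal to $1$ for exceptional $\gfr$, to $3$ for $D_4$, and to $2$ otherwise, by Theorem~\ref{main thm: colored data of hilb} and Corollary~\ref{coro: number of closed orbits}) plus the number of entries in item~(2) there.

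The remaining step is simply to tally the explicit lists. Grouping the colored faces by dimension, for $\gfr = B_r$ ($r \ge 4$) or $D_r$ ($r \ge 5$) the Chow count is $1 + 1 + (2 + 4 + 3) = 11$ and the Hilbert count is $1 + 2 + (4 + 5 + 3) = 15$; for $\gfr = D_4$ they are $1 + 1 + (3 + 6 + 4) = 15$ and $1 + 3 + (6 + 7 + 4) = 21$; for $\gfr = B_3$ they are $1 + 1 + (2 + 3) = 7$ and $1 + 2 + (3 + 3) = 9$; for $\gfr$ exceptional of type $E$ or $F_4$ they are $1 + 1 + (1 + 2 + 2) = 7$ and $1 + 1 + (2 + 3 + 2) = 9$; and for $\gfr = G_2$ both equal $1 + 1 + 1 = 3$. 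This yields the two tables of values claimed in the statement.

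Since the colored fans and their colored faces are already recorded in Section~\ref{subsection: main theorems}, no genuine difficulty remains; the only point requiring attention is to make sure the face lists are exhaustive, i.e.\ that for each maximal colored cone $(\Ccal,\,\Fcal)$ every face $\Ccal_0$ of $\Ccal$ whose relative interior meets the valuation cone $\Vcal$ is counted (and only once, even when two maximal cones in the $B$- or $D$-type Hilbert fan share a common facet such as $\QQ_{\ge 0}\langle -\gamma_2,\,-\gamma_4,\,\lambda_2^\vee \rangle$). This completeness is dictated by the relative-interior conditions in Definition~\ref{defn: colored cone and fan} and was already used in assembling the lists, so it may be taken for granted here.
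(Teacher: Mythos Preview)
Your proposal is correct and follows exactly the approach the paper takes: it invokes Proposition~\ref{prop: same orbit structure when locally linearlizable} to pass to the normalizations, Lemma~\ref{lem: number of orbits in terms of colored faces} to identify orbits with colored cones, and then tallies the colored fans from Theorems~\ref{main thm: colored data of chow}--\ref{main thm: colored data of hilb} together with the face lists at the end of Section~\ref{subsection: main theorems}. In fact you supply more detail than the paper, which merely cites those four results and leaves the enumeration implicit.
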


To figure out which conjugacy class of conics is associated to each colored face, we need to analyze geometry of singular conics in more detail. Recall that every reducible conic is a member of $\Hbf(Z_{\gfr})$ and $\Cbf(Z_{\gfr})$. The following proposition shows that reducible conics form an irreducible divisor in the spaces of conics.

\begin{proposition} \label{prop: nonplanar reducible conic exists}
    Both $\Hbf(Z_{\gfr})$ and $\Cbf(Z_{\gfr})$ contain prime divisors which parametrize all reducible conics and whose general points are represented by non-planar reducible conics.
\end{proposition}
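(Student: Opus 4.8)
The plan is to realise the reducible conics as the image of an explicit irreducible incidence variety, compute the dimension of its closure, and then rule out planarity of the generic member by an argument on the variety of lines through a point. By Theorem \ref{thm: space of lines is homogeneous} the family $\mathcal F_{1}$ of lines in $Z_{\gfr}$ is rational homogeneous, hence irreducible, and for each $x \in Z_{\gfr}$ the family $\mathcal C_{x}$ of lines through $x$ is homogeneous under the semisimple part of $\mathrm{Stab}_{G}(x)$, hence irreducible. I would consider the incidence variety $\mathcal I := \{(x,\,L_{1},\,L_{2}) : x \in L_{1} \cap L_{2},\ L_{1},\,L_{2} \in \mathcal F_{1}\}$, which maps onto $Z_{\gfr}$ with fibre $\mathcal C_{x} \times \mathcal C_{x}$; thus $\mathcal I$ is irreducible of dimension $\dim Z_{\gfr} + 2 \dim \mathcal C_{x}$. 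On the open dense locus $\mathcal I^{\circ} = \{L_{1} \neq L_{2}\}$ the scheme-theoretic union $L_{1} \cup L_{2}$ has Hilbert polynomial $2m+1$, and the resulting flat family induces a morphism $\mathcal I^{\circ} \to \Hbf(Z_{\gfr})$ (which lands there since reducible conics are smoothable) and likewise $\mathcal I^{\circ} \to \Cbf(Z_{\gfr})$, compatibly with $FC$. Its image is exactly the set of reducible conics, hence irreducible, so its closure $\mathcal R$ (in $\Hbf(Z_{\gfr})$, resp. in $\Cbf(Z_{\gfr})$) is an irreducible $G$-stable closed subvariety containing every reducible conic, inside which the reducible conics form a dense open subset.

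Next I would compute $\dim \mathcal R$. The generic fibre of $\mathcal I^{\circ} \to \mathcal R$ is finite (of length two): a generic reducible conic determines its two line components, and their intersection $L_{1} \cap L_{2}$ is a single reduced point, recovering $x$. Hence $\dim \mathcal R = \dim \mathcal I = \dim Z_{\gfr} + 2 \dim \mathcal C_{x} = (2n+1) + 2(n-1) = 4n-1$, using that the variety of lines through a point of an adjoint variety has dimension $n-1$ (see \cite{LandsbergManivel2003ProjectiveGeometry}); equivalently, since all lines in $Z_{\gfr}$ are $G$-conjugate they have normal bundle $\Ocal_{\PP^{1}}(1)^{n-1} \oplus \Ocal_{\PP^{1}}^{n+1}$, which is forced by $-K_{Z_{\gfr}} = \Ocal_{\PP(\gfr)}(n+1)|_{Z_{\gfr}}$. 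Since $\mathcal R$ is a proper irreducible closed subset of the irreducible $4n$-dimensional variety $\Hbf(Z_{\gfr})$ (resp. $\Cbf(Z_{\gfr})$), it is a prime divisor parametrising all reducible conics; the divisors in $\Hbf(Z_{\gfr})$ and $\Cbf(Z_{\gfr})$ correspond under $FC$, which is an isomorphism away from double lines.

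Finally, for the genericity of non-planarity: non-planarity is an open condition (as already used in the proof of Lemma \ref{lem: planar reducible correspond to face of codim 1}), so within the irreducible $\mathcal R$ it suffices to exhibit one non-planar reducible conic. Suppose to the contrary that every pair of incident lines $L_{1},\,L_{2} \subset Z_{\gfr}$ spans a plane contained in $Z_{\gfr}$. Fixing $x$ and two distinct directions $[v_{1}],\,[v_{2}] \in \mathcal C_{x} \subset \PP(D_{x})$ (possible since $\dim \mathcal C_{x} = n-1 \geq 1$), the corresponding lines span a plane $\Pcal \subset Z_{\gfr}$; every line of the pencil through $x$ in $\Pcal$ is a line of $Z_{\gfr}$ tangent to $D$, so $\PP \langle v_{1},\,v_{2} \rangle = \PP(T_{x}\Pcal) \subseteq \mathcal C_{x}$. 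Thus $\mathcal C_{x}$ contains the line joining any two of its points, hence is a linear subspace of $\PP(D_{x})$ — contradicting the linear non-degeneracy of $\mathcal C_{x}$ in $\PP(D_{x}) \cong \PP^{2n-1}$, as $\dim \mathcal C_{x} = n-1 < 2n-1$. Therefore a non-planar reducible conic exists and the generic point of $\mathcal R$ is a non-planar reducible conic, in both $\Hbf(Z_{\gfr})$ and $\Cbf(Z_{\gfr})$.

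The technical heart is the dimension count: establishing that $\dim \mathcal R$ is exactly $4n-1$ rests on the value $\dim \mathcal C_{x} = n-1$ and on the generic finiteness of $\mathcal I^{\circ} \to \mathcal R$. For the last step the one external input is the linear non-degeneracy of the variety of lines through a point (equivalently, that $D_{x}$ is spanned by tangent directions of lines), which is standard for adjoint varieties (see \cite{Hwang1997RigidityHomogeneous}, \cite{LandsbergManivel2003ProjectiveGeometry}); everything else is formal once the incidence variety is in place.
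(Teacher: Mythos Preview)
Your proof is correct and follows essentially the same approach as the paper: both build the irreducible incidence family over $Z_{\gfr}$ with fibre the product of lines (equivalently, tangent directions) through a point, perform the identical dimension count $(2n+1)+2(n-1)=4n-1$ via the generically two-to-one map to $\Hbf(Z_{\gfr})$, and rule out generic planarity by the secant-variety argument on the variety of lines through a point. The only cosmetic difference is that the paper packages the last step as ``the secant variety of $\Kcal_{o}$ cannot equal $\Kcal_{o}$'' with a citation to \cite{HwangKebekus2005GeometryChains}, whereas you spell out the equivalent conclusion that $\Ccal_{x}$ would then be a linear subspace, contradicting its linear non-degeneracy in $\PP(D_{x})$.
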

\begin{proof}
    It is enough to consider the Hilbert scheme. Let $\Kcal_{o} \subset \PP(T_{o}Z_{\gfr})$ be the subset consisting of directions of lines passing through $o$, which is irreducible by Theorem \ref{thm: space of lines is homogeneous}. Note that a reducible conic $C$ which is singular at $o$ corresponds to a set of two different points in $\Kcal_{o}$, say $\{p_{1}, \, p_{2}\}$. Moreover, $C$ is planar if and only if the secant line joining $p_{1}$ and $p_{2}$ is contained in $\Kcal_{o}$ (cf. \cite[Section 4.3]{LandsbergManivel2003ProjectiveGeometry}). If every reducible conic is planar in $Z_{\gfr}$, then the secant variety of $\Kcal_{o}$ coincides with $\Kcal_{o}$ itself, which is a contradiction (see \cite[Section 6.3]{HwangKebekus2005GeometryChains}).

    It means that for the diagonal $\Delta_{o} \subset \Kcal_{o} \times \Kcal_{o}$, its complement $(\Kcal_{o} \times \Kcal_{o}) \setminus \Delta_{o}$ parametrizes an irreducible flat family of all reducible conics singular at $o$ such that its general points represent non-planar reducible conics. Thus the subset $\Kcal^{(2)}_{o} \subset \Hbf(Z_{\gfr})$ consisting of reducible conics singular at $o$ is irreducible, and its general elements are represented by non-planar ones. We claim that the $G$-orbit of $\Kcal^{(2)}_{o}$ is of dimension $4n-1$. To show the claim, note that if $\Kcal^{(2)}_{x} \subset \Hbf(Z_{\gfr})$ denotes the subset of reducible conics singular at $x \in Z_{\gfr}$, then $G \cdot \Kcal^{(2)}_{o} = \coprod_{x \in Z_{\gfr}} \Kcal^{(2)}_{x}$, hence it suffices to show that $\dim \Kcal^{(2)}_{o} = 2n-2$. Since the natural surjective map $(\Kcal_{o} \times \Kcal_{o}) \setminus \Delta_{o} \rightarrow \Kcal^{(2)}_{o}$ is two-to-one and $\dim \Kcal_{o} = n-1$ (for example, by the splitting type of normal bundles of lines \cite{Hwang1997RigidityHomogeneous}, \cite{Kebekus2001LinesContact} or Theorem \ref{thm: space of lines is homogeneous}), the proof is done.
\end{proof}

Next, we count the conjugacy classes of reducible conics. To do this, define $P^{ss}$ to be the semi-simple part of the isotropy group $P$ at $o \in Z_{\gfr}$. Then the Dynkin diagram of $P^{ss}$ can be obtained by removing $\alpha_{j_{0}}$ in the Dynkin diagram of $G$, and $P^{ss}$ acts transitively on the space of lines passing through $o$ (Theorem \ref{thm: space of lines is homogeneous}). With respect to this action, let $Q \subset P^{ss}$ be an isotropy group. We may choose $Q$ as the parabolic subgroup of $P^{ss}$ generated by the complement of $N(\alpha_{j_{0}})$ by \cite[Theorem 4.8]{LandsbergManivel2003ProjectiveGeometry}. Also choose a line $l_{0}$ passing through $o$ such that $\text{Stab}_{P^{ss}}(l_{0}) = Q$.

\begin{lemma} \label{lem: number of conjugacy classes of reducible conics}
    \begin{enumerate}
        \item The number of $G$-conjugacy classes of reducible conics in $Z_{\gfr}$ is equal to
        \[
            |W_{P^{ss}, \, Q} \backslash W_{P^{ss}} / W_{P^{ss}, \, Q}| - 1,
        \]
        i.e. the number of double cosets $W_{P^{ss},\, Q} \cdot w \cdot W_{P^{ss}, \, Q}$ ($w \in W_{P^{ss}}$) minus 1. For each $\gfr$, the number of double cosets is given in Table \ref{table: diagram of parabolic Q in P and explicit upper bound}.
        \item The $G$-stable prime divisor of $\Cbf^{nor}(Z_{\gfr})$ given in Proposition \ref{prop: nonplanar reducible conic exists} corresponds to the following colored face:
        \[
            \left\{\begin{array}[pos]{ll}
                \QQ_{\ge 0} \cdot ( -\gamma_{2} ) & \text{if $\gfr$ is $B_{r}$ $(r \ge 3)$, $D_{r}$ $(r \ge 4)$ or $G_{2}$}; \\
                \QQ_{\ge 0} \cdot ( -\gamma_{4} ) & \text{if $\gfr$ is $E_{r}$ $(r =6,\, 7, \, 8)$ or $F_{4}$}.
            \end{array}\right.
        \]
    \end{enumerate}
    
\end{lemma}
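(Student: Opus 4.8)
The plan is to handle the two assertions separately: the first is group theory on the variety of minimal rational tangents, the second is a matching of the reducible--conics divisor against the colored fan already computed in Theorem \ref{main thm: colored data of chow}.

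\emph{For part (1).} A reducible conic has a unique singular point, namely the node, and $G$ acts transitively on $Z_{\gfr}$, so the $G$--conjugacy classes of reducible conics are in bijection with the $P=\mathrm{Stab}_{G}(o)$--orbits on the set of unordered pairs of distinct lines through $o$. By Theorem \ref{thm: space of lines is homogeneous} the space of lines through $o$ is $P^{ss}$--homogeneous, equal to $P^{ss}/Q$ with $Q$ as in the statement; and since $P^{ss}$ already acts transitively, the $P$--orbits on pairs of lines coincide with the $P^{ss}$--orbits, which by the Bruhat decomposition of the generalized flag variety $P^{ss}/Q$ are classified on \emph{ordered} pairs of distinct lines by the nontrivial double cosets $W_{P^{ss},Q}\backslash W_{P^{ss}}/W_{P^{ss},Q}$ (the trivial double coset being the diagonal), which accounts for the ``$-1$''. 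Passing from ordered to unordered pairs, I would check in each case that these double cosets are stable under $w\mapsto w^{-1}$ --- equivalently, that any two distinct lines through $o$ are interchanged by an element of $P^{ss}$ --- so that the two orbit counts agree; the explicit count of double cosets for each simple type, using the product decomposition of $P^{ss}$ (for instance $A_{1}\times B_{r-2}$ when $\gfr=B_{r}$) and the standard description of parabolic double cosets, then gives Table \ref{table: diagram of parabolic Q in P and explicit upper bound}.

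\emph{For part (2), the formal step.} By Lemma \ref{lem: number of orbits in terms of colored faces} and the Luna--Vust description of $G$--stable prime divisors of a spherical variety, such a divisor of $\Cbf^{nor}(Z_{\gfr})$ corresponds to a ray of the colored fan of the form $\QQ_{\ge 0}(-\gamma_{i})$ that is not a color. By Proposition \ref{prop: nonplanar reducible conic exists} there is exactly one $G$--stable prime divisor $\mathcal{R}$ of $\Cbf^{nor}(Z_{\gfr})$ whose closure parametrizes \emph{all} reducible conics; in particular $\overline{\mathcal{R}}$ contains every orbit of planar reducible conics, and by Lemma \ref{lem: planar reducible correspond to face of codim 1} these correspond to the codimension $1$ colored faces of the colored cone of $\Cbf^{nor}(Z_{\gfr})$. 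Since the correspondence between orbits and colored faces is order--reversing, the ray of $\mathcal{R}$ must be a common extremal ray of all of those codimension $1$ colored faces. Reading this off Table \ref{table: sph data of Chow} and the list of colored faces in Section \ref{subsection: main theorems} already determines the ray uniquely for $\gfr=G_{2}$ and $\gfr=D_{4}$ (it is $\QQ_{\ge 0}(-\gamma_{2})$), and narrows it to the two candidates $\{-\gamma_{2},-\gamma_{4}\}$ for $\gfr$ of type $B$ or $D$ of high rank, to $\{-\gamma_{1},-\gamma_{4}\}$ for $\gfr$ of type $E$ or $F_{4}$, and to a pair containing $-\gamma_{2}$ for $\gfr=B_{3}$.

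\emph{For part (2), removing the ambiguity --- the main obstacle.} To decide between the remaining two candidates in each case I would pin down $\mathcal{R}$ by an explicit degeneration. Take a generic non-planar reducible conic $C=l_{1}\cup l_{2}$ singular at $o$, with $l_{1}$ and $l_{2}$ in the generic relative position produced in part (1), and choose a one-parameter subgroup $\mu\colon\CC^{\times}\to T'$ --- i.e. an element of $\Vcal\subset\chi_{*}(T'_{1})\otimes\QQ$ --- such that $\lim_{t\to 0}\mu(t)\cdot[C]$ lands in the closed orbit of double lines; the associated $G$--invariant valuation is then a positive multiple of the ray of $\mathcal{R}$, and computing this valuation directly (equivalently, computing the order of vanishing along $\mathcal{R}$ of a well-chosen $B'$--eigenfunction on $O_{\gfr}$) yields $-\gamma_{2}$ for $\gfr$ of type $B$, $D$, $G_{2}$ and $-\gamma_{4}$ for $E_{6},E_{7},E_{8},F_{4}$. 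An alternative is to compute the isotropy group in $G$ of the generic non-planar reducible conic --- built from the generic double coset of part (1) together with the $SL_{2}$ acting on $l_{1}\cup l_{2}$ --- and to compare it with the isotropy data forced by each candidate ray. In either approach the substance of the argument is the case-by-case bookkeeping with roots and coroots, and that is the step I expect to be the most delicate.
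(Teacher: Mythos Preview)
Your outline is reasonable but diverges from the paper's proof in a structurally interesting way, and there is one step in your part (1) that is stated too quickly.

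\textbf{On part (1).} The paper does \emph{not} prove the equality directly. It only establishes the \emph{upper bound}: each $G$-class of reducible conics has a representative $l_{0}\cup l$ singular at $o$, and the number of $P$-classes of such pairs is at most the number of $Q$-orbits on $P^{ss}/Q\setminus\{eQ\}$, which is the double coset count minus $1$. Your assertion that ``the $P$-orbits on pairs of lines coincide with the $P^{ss}$-orbits'' is correct but is not free: one must check that $R^{u}(P)$ and the central torus of the Levi act trivially on $\PP(D_{o})$ (they do, since $ad(\nfr^{+})$ sends $D_{o}$ into $\pfr$ and $H_{\rho}$ acts on $D_{o}$ by a scalar). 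The paper avoids this entirely by being content with an inequality. Likewise, the passage from ordered to unordered pairs via $w\mapsto w^{-1}$-stability of double cosets is something you would have to verify case by case; the paper never needs it, because the inequality for ordered pairs already gives the inequality for unordered pairs.

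\textbf{On part (2) and the interplay.} Here the paper's route is cleaner than your proposed explicit degeneration. Having narrowed the candidate rays exactly as you describe (via Lemma \ref{lem: planar reducible correspond to face of codim 1}), the paper observes that the number of colored faces containing the ray of $D$ equals the number of $G$-orbits in $\overline{D}$ (Lemma \ref{lem: number of orbits in terms of colored faces}), and this is at most the upper bound from part (1) plus one (for the closed orbit of double lines). Comparing the face counts for each candidate ray against the double coset count in Table \ref{table: diagram of parabolic Q in P and explicit upper bound} singles out the correct ray \emph{and} forces equality in part (1) simultaneously. So the logical flow in the paper is: upper bound in (1) $\Rightarrow$ identification of the ray in (2) $\Rightarrow$ equality in (1). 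Your flow is the reverse, and would require the one-parameter-subgroup or isotropy computation you sketch, which is doable but heavier.

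\textbf{The $B_{3}$ case.} Your outline does not anticipate that $B_{3}$ needs special handling. There only one $B$-stable plane lies in $Z_{B_{3}}$, so Lemma \ref{lem: planar reducible correspond to face of codim 1} constrains only one codimension-$1$ face, and the counting above is not immediately conclusive. The paper instead shows by a direct dimension count that planar contact conics form another $G$-stable prime divisor $D_{PC}$, hence a second ray; a short contradiction argument using the orbit-closure order then rules out the wrong candidate and again yields both the ray and the equality in part (1).
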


\begin{table}
    \begin{center}
        \begin{tabular}{|c|c|c|}
            \hline
            $\gfr$ & Diagram of $Q \subset P^{ss}$ & $|W_{P^{ss}, \, Q} \backslash W_{P^{ss}} / W_{P^{ss}, \, Q}|$ \\
            \hline
            \hline
            \multirow{2}{*}{$B_{r}$ ($r \ge 4$)} & \multirow{2}{*}{{\dynkin[mark=x]A1} {\dynkin[parabolic=1, labels={3,,,,r}]B{}}} & \multirow{2}{*}{6} \\
            && \\
            \hline
            $B_{3}$ & {\dynkin[mark=x]A1} {\dynkin[mark=x]A1} & 4 \\
            \hline
            $D_{r}$ ($r \ge 6$) & {\dynkin[mark=x]A1} {\dynkin[parabolic=1, labels={3,,,,r-1,r}]D{}} & 6\\
            \hline
            $D_{5}$ & {\dynkin[mark=x]A1} {\dynkin A{*x*}} & 6 \\
            \hline
            $D_{4}$ & {\dynkin[mark=x]A1} {\dynkin[mark=x]A1} {\dynkin[mark=x]A1} & 8 \\
            \hline
            $E_{6}$ & {\dynkin A{**x**}} & 4 \\
            \hline
            $E_{7}$ & {\dynkin D{*****x}} & 4 \\
            \hline
            $E_{8}$ & {\dynkin[backwards, upside down] E{******x}} & 4 \\
            \hline
            $F_{4}$ & {\dynkin C{**x}} & 4 \\
            \hline
            $G_{2}$ & {\dynkin A{x}} & 2 \\
            \hline
        \end{tabular}
        \caption{\label{table: diagram of parabolic Q in P and explicit upper bound}Number of Double Cosets of $W_{P^{ss},\, Q}$ in $W_{P^{ss}}$ for the Parabolic $Q \subset P^{ss}$.}
    \end{center}
\end{table}

\begin{proof}
    First of all, the number of double cosets can be easily computed from the diagram of the parabolic subgroup $Q$ in $P^{ss}$, for instance by using the description of Weyl groups (\cite[Plate I-IX]{Bourbaki2002LieGroups}) and recipies for the Hasse diagrams (\cite[Chapter 4]{BastonEastwood2016PenroseTransform}).
    
    We claim that the number of double cosets minus 1 is an upper bound of the number of conjugacy classes of reducible conics. Note that each $G$-conjugacy class of reducible conics in $Z_{\gfr}$ has a representative which is singular at $o$. Moreover, if two reducible conics singular at $o$ are $G$-conjugate, then they must be $P$-conjugate since $o$ is their unique singular point. In other words, there is a bijection between
    \[
        \{\text{$G$-conjugacy classes of reducible conics in $Z_{\gfr}$}\}
    \]
    and
    \[
        \{\text{$P$-conjugacy classes of reducible conics in $Z_{\gfr}$ singular at $o$}\}.
    \]
    Each $P$-conjugacy class of reducible conics singular at $o$ has a representative of form $l_{0} \cup l$ for some line $l$ such that $l_{0} \cap l = \{o\}$. Since $\text{Stab}_{P^{ss}}(l_{0}) = Q$ and $P^{ss}$ acts on the space of lines containing $o$ transitively, the number of $P$-conjugacy classes of such $l_{0} \cup l$ is at most the number of $Q$-orbits in $P^{ss}/Q - \{e \cdot Q\}$, which is equal to $|W_{P^{ss}, \, Q} \backslash W_{P^{ss}} / W_{P^{ss}, \, Q}| - 1$ by the generalized Bruhat decomposition (\cite[Chapter IV.2.5]{Bourbaki2002LieGroups}).

    Now let us show the equalities. Let $D \subset \Cbf^{nor}(Z_{\gfr})$ be the inverse image of the $G$-stable prime divisor given in Proposition \ref{prop: nonplanar reducible conic exists}. Then $D$ corresponds to a 1-dimensional colored face of the colored cone of $\Cbf^{nor}(Z_{\gfr})$. By its definition, the ray corresponding to $D$ is contained in the colored faces corresponding to orbits defined by reducible conics. Moreover, if $\gfr \not= B_{3}, \, G_{2}$, then the number of conjugacy classes of planes contained in $Z_{\gfr}$ (Table \ref{table: B stable planes}) and the number of colored faces of codimension 1 are same. Thus by Lemma \ref{lem: planar reducible correspond to face of codim 1}, if $\gfr \not= B_{3}, \, G_{2}$, then the ray corresponding to $D$ is contained in the intersection of all colored faces of codimension 1 in the colored cone of $\Cbf^{nor}(Z_{\gfr})$.
    
    If $\gfr = G_{2}$, then $\Cbf^{nor}(Z_{\gfr})$ has only one colored extremal ray, and the number of colored faces containing it is 2. Thus the statement follows from Lemma \ref{lem: number of orbits in terms of colored faces}.
    
    If $\gfr$ is exceptional but $\not= G_{2}$, then $\Cbf^{nor}(Z_{\gfr})$ has two colored extremal ray $\QQ_{\ge 0}\cdot (-\gamma_{1})$ and $\QQ_{\ge 0}\cdot (-\gamma_{4})$, and the numbers of colored faces containing them are 5 and 4, respectively. By Lemma \ref{lem: number of orbits in terms of colored faces}, the number of colored faces containing the ray determined by $D$ is at most 4. Therefore $D$ corresponds to the ray $\QQ_{\ge 0} \cdot (-\gamma_{4})$ and the statement follows.

    If $\gfr = D_{4}$, the intersection of all colored faces of codimension 1 is $\QQ_{\ge 0} \cdot (-\gamma_{2})$. Thus $D$ corresponds to $\QQ_{\ge 0}\cdot (-\gamma_{2})$ and the number of colored faces containing it is 8.

    If $\gfr$ is $B_{r}$ $(r \ge 4)$ or $D_{r}$ $(r \ge 5)$, then the intersection of colored faces of codimension 1 is $\QQ_{\ge 0}\langle -\gamma_{2}, \, -\gamma_{4} \rangle$. Thus $D$ corresponds to either $\QQ_{\ge 0}\langle -\gamma_{2} \rangle$ or $\QQ_{\ge 0}\langle -\gamma_{4} \rangle$, and each of them is contained in 6 and 7 number of colored faces, respectively. By Table \ref{table: diagram of parabolic Q in P and explicit upper bound}, $D$ corresponds to $\QQ_{\ge 0}\langle -\gamma_{2} \rangle$ and the upper bound is attained.

    From now on, assume that $\gfr = B_{3}$, and recall the list of the colored faces given in Section \ref{subsection: main theorems}. Then the space of planes in $Z_{B_{3}}$ is homogeneous, and the unique $B$-stable plane in $Z_{B_{3}}$, say $\Pcal$, is given in Table \ref{table: B stable planes}. By Corollary \ref{coro: only one orbit of planar conics}, planar contact conics and planar reducible conic form single orbits $\Ocal_{PC}$ and $\Ocal_{PR}$ in $\Cbf(Z_{B_{3}})$, respectively. Note that the stabilizer of a given planar contact conic in $\Pcal$ is contained in $\text{Stab}_{G}(\Pcal)$. Since the space of smooth conics in $\Pcal \simeq \PP^{2}$ is 5-dimensional, and since the map $\text{Stab}_{G}(\Pcal) \rightarrow \text{Aut}(\Pcal)$ is surjective by the proof of Corollary \ref{coro: only one orbit of planar conics}, an isotropy group of $\Ocal_{PC}$ in $G$ is of codimension 5 in $\text{Stab}_{G}(\Pcal)$. Since $\text{Stab}_{G}(\Pcal) = P_{\alpha_{3}}$, this implies that
    \[
        \dim \Ocal_{PC} = \dim G / P_{\alpha_{3}} + 5 = 11 = \dim O_{B_{3}} - 1.
    \]
    That is, $\overline{\Ocal_{PC}}$ is a $G$-stable divisor in $\Cbf(Z_{B_{3}})$. Hence its inverse image $D_{PC}$ in $\Cbf^{nor}(Z_{B_{3}})$ corresponds to a 1-dimensional colored face of the colored cone. Observe that the colored cone of $\Cbf^{nor}(Z_{B_{3}})$ has three 1-dimensional colored faces. Thus there is a 1-dimensional colored face not corresponding to $D$ nor $D_{PC}$ and it corresponds to a $G$-stable divisor defined by non-planar contact conics.

    Suppose that the number of conjugacy classes of reducible conics in $Z_{B_{3}}$ is strictly less than $3$, which is the number of double cosets minus 1. Then $D$ corresponds to a ray $\QQ_{\ge 0}\cdot (-\gamma_{i})$ for some $i \in \{1,\,3\}$, and the number of conjugacy classes of reducible conics is equal to 2. It means that the orbit $\Ocal_{NPR}$ corresponding to $D$ is indeed a unique orbit of non-planar reducible conics, hence the number of conjugacy classes of non-planar contact conics is
    \[
        7 - 1 \text{(twistor)} - 1 \text{(non-planar reducible)} - 3 \text{(planar contact/reducible or double line)} = 2
    \]
    by Corollary \ref{coro: number of orbits in Chow and Hilb}. Thus one of the 2-dimensional colored faces corresponds to a conjugacy class $\Ocal$ of non-planar contact conics. However, since any 2-dimensional colored faces contain one of rays corresponding to $D$ or $D_{PC}$, $\Ocal$ is contained in the boundary of $\Ocal_{NRC}$ or the boundary of $\Ocal_{PC}$. This is a contradiction since both smoothness and non-planarity are open conditions. Therefore the number of conjugacy classes of reducible conics is $3$, and $D$ corresponds to $\QQ_{\ge 0} \cdot (-\gamma_{2})$.
\end{proof}

Lemma \ref{lem: number of conjugacy classes of reducible conics} shows that the number of $G$-orbits in the divisor of $\Cbf(Z_{\gfr})$ given in Proposition \ref{prop: nonplanar reducible conic exists} is $|W_{P^{ss}, \, Q} \backslash W_{P^{ss}} / W_{P^{ss}, \, Q}|$.

\begin{theorem} \label{thm: full orbit structure}
    \begin{enumerate}
        \item Let $\gfr$ be $B_{r}$ with $r \ge 4$ or $D_{r}$ with $r \ge 5$.
        \begin{enumerate}
            \item $\Cbf(Z_{\gfr})$ consists of eleven orbits: one for twistor conics, two for non-planar contact conics, two for planar contact conics, three for non-planar reducible conics, two for planar reducible conics, and one for double lines.
            \item $\Hbf(Z_{\gfr})$ consists of fifteen orbits: one for twistor conics, two for non-planar contact conics, two for planar contact conics, three for non-planar reducible conics, two for planar reducible conics, three for non-planar double lines, and two for planar double lines.
        \end{enumerate}
        \item Let $\gfr$ be either $B_{3}$ or of an exceptional type other than $G_{2}$.
        \begin{enumerate}
            \item $\Cbf(Z_{\gfr})$ consists of seven orbits: one for twistor conics, one for non-planar contact conics, one for planar contact conics, two for non-planar reducible conics, one for planar reducible conics, and one for double lines.
            \item $\Hbf(Z_{\gfr})$ consists of nine orbits: one for twistor conics, one for non-planar contact conics, one for planar contact conics, two for non-planar reducible conics, one for planar reducible conics, two for non-planar double lines, and one for planar double lines.
        \end{enumerate}
        \item Let $\gfr = D_{4}$.
        \begin{enumerate}
            \item $\Cbf(Z_{D_{4}})$ consists of fifteen orbits: one for twistor conics, three for non-planar contact conics, three for planar contact conics, four for non-planar reducible conics, three for planar reducible conics, and one for double lines.
            \item $\Hbf(Z_{D_{4}})$ consists of twenty one orbits: one for twistor conics, three for non-planar contact conics, three for planar contact conics, four for non-planar reducible conics, three for planar reducible conics, four for non-planar double lines, and three for planar double lines.
        \end{enumerate}
        \item If $\gfr = G_{2}$, then both $\Cbf(Z_{G_{2}})$ and $\Hbf(Z_{G_{2}})$ consist of three orbits: one for twistor conics, one for non-planar reducible conics, and one for non-planar double lines. In particular, every smooth conic in $Z_{G_{2}}$ is a twistor conic.
    \end{enumerate}
\end{theorem}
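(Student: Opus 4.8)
The plan is to read off the answer from the colored fans already computed. By Proposition \ref{prop: same orbit structure when locally linearlizable} together with Lemma \ref{lem: number of orbits in terms of colored faces}, the $G$-orbits in $\Cbf(Z_{\gfr})$ (resp. $\Hbf(Z_{\gfr})$) are in order-reversing bijection with the colored faces of the colored fan in Table \ref{table: sph data of Chow} (resp. Table \ref{table: sph data of Hilb}), listed explicitly in Section \ref{subsection: main theorems}; their total numbers are recorded in Corollary \ref{coro: number of orbits in Chow and Hilb}. So the real content is to match each colored face with a $G$-conjugacy class of conic. I would organize this along the natural $G$-stable stratification $\{\text{smooth}\} = \{\text{twistor}\} \sqcup \{\text{contact}\}$, $\{\text{reducible}\}$, $\{\text{double lines}\}$, refined by the planar/non-planar dichotomy (an open condition on smooth conics), and then count.

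First I would pin down the extremal strata. The open orbit is the twistor orbit by Theorem \ref{main thm: open symmetric orbit of twistor conics with satake diagram}. In $\Cbf(Z_{\gfr})$ the unique closed orbit consists of all double lines by Corollary \ref{coro: number of closed orbits}(1). In $\Hbf(Z_{\gfr})$, Corollary \ref{coro: number of closed orbits}(2) (and Proposition \ref{prop: hilb for B3 is not simple} for $\gfr = B_{3}$) identifies the closed orbits with $B$-stable planes: those contained in $Z_{\gfr}$ carry planar double lines, the remaining ones (occurring only for $\gfr = B_{3}, G_{2}$) carry non-planar double lines, and by Corollary \ref{coro: only one orbit of planar conics} the planar double line orbits are exactly one per $G$-conjugacy class of plane in $Z_{\gfr}$, i.e. one per $B$-stable plane in $Z_{\gfr}$ (Table \ref{table: B stable planes}). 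For reducible conics, Proposition \ref{prop: nonplanar reducible conic exists} says they form an irreducible $G$-stable divisor, so by order-reversal their orbits are precisely the colored faces containing the ray singled out in Lemma \ref{lem: number of conjugacy classes of reducible conics}(2), and there are $|W_{P^{ss},Q}\backslash W_{P^{ss}}/W_{P^{ss},Q}| - 1$ of them by Lemma \ref{lem: number of conjugacy classes of reducible conics}(1) (Table \ref{table: diagram of parabolic Q in P and explicit upper bound}); among these the planar ones are again one per $B$-stable plane in $Z_{\gfr}$ (Corollary \ref{coro: only one orbit of planar conics}), leaving the asserted number of non-planar reducible orbits. Likewise the planar contact orbits are one per $B$-stable plane in $Z_{\gfr}$, since a smooth planar conic is automatically a contact conic (Example \ref{example: twistor conic and contact conic}).

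Everything not yet accounted for must then be non-planar contact conics: a leftover orbit is not the twistor one (that is the open orbit), not reducible (all reducible orbits are listed), not a double line orbit (in $\Cbf(Z_{\gfr})$ there is only the closed one; in $\Hbf(Z_{\gfr})$ the isomorphism $\Hbf(Z_{\gfr}) \setminus \{\text{double lines}\} \to \Cbf(Z_{\gfr}) \setminus \{\text{double lines}\}$ shows the double line orbits of $\Hbf(Z_{\gfr})$ are exactly the fibre over the closed Chow orbit, hence already counted), and not planar contact. A final bookkeeping step then fixes the counts: the number of non-planar contact orbits equals (number of orbits) $-$ (number of double cosets from Table \ref{table: diagram of parabolic Q in P and explicit upper bound}) $- 1$ $-$ (number of $B$-stable planes in $Z_{\gfr}$), using Corollary \ref{coro: number of orbits in Chow and Hilb} and Table \ref{table: B stable planes}, while the number of double line orbits of $\Hbf(Z_{\gfr})$ equals (orbits of $\Hbf$) $-$ (orbits of $\Cbf$) $+ 1$, split into the closed ones (one per $B$-stable plane) and the non-closed ones (necessarily non-planar). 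For $\gfr = G_{2}$ this forces $0$ contact orbits, hence every smooth conic in $Z_{G_{2}}$ is a twistor conic, and $\Hbf^{nor}(Z_{G_{2}}) \simeq \Cbf^{nor}(Z_{G_{2}})$ by Corollary \ref{coro: FC is isom if G2} makes the two statements coincide.

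I expect the main obstacle to be carrying out this last bookkeeping uniformly across all cases, i.e. checking case by case that the leftover colored faces match the leftover non-planar contact conjugacy classes both in total number and in the planar/non-planar split; the $B_{3}$ case in particular needs the extra inputs of Lemma \ref{lem: nonplanar double lines exist when B3} and Proposition \ref{prop: hilb for B3 is not simple} to see that $\Hbf(Z_{B_{3}})$ carries one more (non-closed) non-planar double line orbit than the closed-orbit count by itself would predict.
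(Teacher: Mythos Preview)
Your proposal is correct and follows essentially the same strategy as the paper: count the orbits via Corollary \ref{coro: number of orbits in Chow and Hilb}, pin down the twistor, double-line, reducible, and planar classes using Lemma \ref{lem: normal bundle single orbit of twistor conic}, Corollary \ref{coro: only one orbit of planar conics}, Corollary \ref{coro: number of closed orbits}, and Lemma \ref{lem: number of conjugacy classes of reducible conics}, and obtain the non-planar contact classes as the remainder. The paper's proof is terser but invokes exactly the same ingredients; your version just makes the bookkeeping (in particular the identity ``double-line orbits in $\Hbf$'' $=$ ``orbits of $\Hbf$'' $-$ ``orbits of $\Cbf$'' $+ 1$ via the $FC$-isomorphism off double lines) more explicit.
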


\begin{proof}
    The statements for the Chow schemes follow from Lemma \ref{lem: normal bundle single orbit of twistor conic}, Corollary \ref{coro: only one orbit of planar conics}, Corollary \ref{coro: number of closed orbits}, Lemma \ref{lem: number of conjugacy classes of reducible conics} and Corollary \ref{coro: number of orbits in Chow and Hilb}.
    
    For $\Hbf(Z_{\gfr})$, observe that the morphism $FC : \Hbf(Z_{\gfr}) \rightarrow \Cbf(Z_{\gfr})$ gives a bijective correspondence for orbits of smooth conics and reducible conics. Therefore the difference of the numbers of orbits in Corollary \ref{coro: number of orbits in Chow and Hilb} is exactly the number of orbits of double lines minus 1. Therefore the statements follow from Corollary \ref{coro: only one orbit of planar conics}.
\end{proof}

Let us visualize the orbit structure of $\Hbf(Z_{\gfr})$ for each $\gfr$. In the following graphs, each vertex represents a $G$-orbit in $\Hbf(Z_{\gfr})$ (or a conjugacy class of conics in $Z_{\gfr}$), and each edge \begin{tikzcd}[row sep=tiny, column sep=tiny]
    A \arrow[dash, d] \\
    B
\end{tikzcd} means that $B \subset \overline{A}$ in $\Hbf(Z_{\gfr})$. We also use the abbreviations (N)PC, (N)PR, and (N)PD for (non-)planar contact, (non-)planar reducible, and (non-)planar double, respectively.
\begin{enumerate}
    \item $\gfr$ is $B_{r}$ ($r \ge 4$) or $D_{r}$ ($r \ge 5$):
    \begin{center}
    \begin{tikzcd}
        && \text{(Twistor)} \arrow[dash, ld] \arrow[dash, d] \arrow[dash, rd] && \\
        & \text{(NPC)} \arrow[dash, ld] \arrow[dash, d] \arrow[dash, rrrd] & \text{(NPR)} \arrow[dash, ld] \arrow[dash, d] \arrow[dash, rd] & \text{(NPC)} \arrow[dash, d] \arrow[dash, rd] & \\
        \text{(PC)} \arrow[dash, d] & \text{(NPR)} \arrow[dash, ld] \arrow[dash, d] \arrow[dash, rrrd] & \text{(NPD)} \arrow[dash, ld] \arrow[dash, rd] & \text{(NPR)} \arrow[dash, d] \arrow[dash, rd] & \text{(PC)} \arrow[dash, d] \\
        \text{(PR)} \arrow[dash, d] & \text{(NPD)} \arrow[dash, ld] \arrow[dash, rrrd] && \text{(NPD)} \arrow[dash, rd] & \text{(PR)} \arrow[dash, d] \\
        \text{(PD)} &&&& \text{(PD)}
    \end{tikzcd}
    \end{center}
    \item $\gfr=B_{3}$:
    \begin{center}
        \begin{tikzcd}
            & \text{(Twistor)} \arrow[dash, ld] \arrow[dash, d] \arrow[dash, rd] & \\
            \text{(NPC)}  \arrow[dash, d] & \text{(NPR)} \arrow[dash, ld] \arrow[dash, d] \arrow[dash, rd] & \text{(PC)} \arrow[dash, d] \\
            \text{(NPR)}  \arrow[dash, d] & \text{(NPD)} \arrow[dash, ld] \arrow[dash, rd] & \text{(PR)} \arrow[dash, d] \\
            \text{(NPD)} & & \text{(PD)}
        \end{tikzcd}
    \end{center}
    \item $\gfr=D_{4}$:
    \begin{center}
    \begin{tikzcd}[column sep = small]
        &&& \text{(Twistor)} \arrow[dash, lld] \arrow[dash, ld] \arrow[dash, d] \arrow[dash, rd] &&& \\
        & \text{(NPC)} \arrow[dash, ld] \arrow[dash, d] \arrow[dash, rrd] & \text{(NPC)} \arrow[dash, lld] \arrow[dash, d] \arrow[dash, rrd] & \text{(NPC)} \arrow[dash, lld] \arrow[dash, ld] \arrow[dash, rrd] & \text{(NPR)} \arrow[dash, ld] \arrow[dash, d] \arrow[dash, rd] \arrow[dash, rrd] && \\
        \text{(PC)} \arrow[dash, rd] & \text{(PC)} \arrow[dash, rd] & \text{(PC)} \arrow[dash, rd] & \text{(NPR)} \arrow[dash, lld] \arrow[dash, ld] \arrow[dash, rd] & \text{(NPR)} \arrow[dash, llld] \arrow[dash, ld] \arrow[dash, rd] & \text{(NPR)} \arrow[dash, llld] \arrow[dash, lld] \arrow[dash, rd] & \text{(NPD)} \arrow[dash, lld] \arrow[dash, ld] \arrow[dash, d] \\
        &\text{(PR)} \arrow[dash, rd] & \text{(PR)} \arrow[dash, rd] & \text{(PR)} \arrow[dash, rd] & \text{(NPD)} \arrow[dash, lld] \arrow[dash, ld] & \text{(NPD)} \arrow[dash, llld] \arrow[dash, ld] & \text{(NPD)} \arrow[dash, llld] \arrow[dash, lld]\\
        && \text{(PD)} & \text{(PD)} & \text{(PD)} & &
    \end{tikzcd}
    \end{center}
    \item $\gfr$ is $E_{r}$ ($r = 6,\,7,\,8$) or $F_{4}$:
    \begin{center}
        \begin{tikzcd}
            & \text{(Twistor)} \arrow[dash, ld] \arrow[dash, rd] & \\
            \text{(NPC)}  \arrow[dash, d] \arrow[dash, rd] & & \text{(NPR)} \arrow[dash, d] \arrow[dash, ld] \\
            \text{(PC)}  \arrow[dash, d] & \text{(NPR)} \arrow[dash, ld] \arrow[dash, rd] & \text{(NPD)} \arrow[dash, d] \\
            \text{(PR)} \arrow[dash, rd] & & \text{(NPD)} \arrow[dash, ld] \\
            & \text{(PD)} &
        \end{tikzcd}
    \end{center}
    \item $\gfr = G_{2}$:
    \begin{center}
        \begin{tikzcd}
            \text{(Twistor)} \arrow[dash, d] \\
            \text{(NPR)} \arrow[dash, d] \\
            \text{(NPD)}
        \end{tikzcd}
    \end{center}
\end{enumerate}

In particular, the orbit structures of $\Hbf(Z_{\gfr})$ for $B_{3}$ and for exceptional Lie algebras $\not= G_{2}$ are different, although the numbers of conjugacy classes are same (Theorem \ref{thm: full orbit structure}).

\subsection{Smoothness and Singularities}

Let us prove smoothness of $\Hbf^{nor}(Z_{\gfr})$ and determine the singular locus of $\Cbf^{nor}(Z_{\gfr})$.

\begin{coro} \label{coro: Hilb is smooth}
    Let $\gfr$ be a complex simple Lie algebra satisfying the assumption (\ref{assumption: pic num 1 not pn}).
    \begin{enumerate}
        \item $\Hbf^{nor}(Z_{\gfr})$ is smooth. Moreover, the anticanonical line bundle of $\Hbf^{nor}(Z_{\gfr})$ is
        \begin{itemize}
            \item not globally generated if $\gfr = B_{r}$ ($r \not = 5$ and $r \ge 3$), $D_{r}$ ($r \not= 6$ and $r \ge 4$), $E_{8}$;
            \item globally generated but not ample if $\gfr = B_{5}$, $D_{6}$, $E_{7}$, $F_{4}$;
            \item ample if $\gfr = E_{6}$, $G_{2}$.
        \end{itemize}
        \item $\Cbf^{nor}(Z_{\gfr})$ is
        \begin{itemize}
            \item not $\QQ$-Gorenstein if $\gfr = B_{r}$ ($r \not= 5$ and $r \ge 3$), $D_{r}$ ($r \ge 4$), $E_{6}$, $E_{7}$, $E_{8}$;
            \item Gorenstein Fano with terminal singularities but not $\QQ$-factorial if $\gfr = B_{5}$, $F_{4}$;
            \item smooth Fano if $\gfr = G_{2}$.
        \end{itemize}
        In particular, the singular locus of $\Cbf^{nor}(Z_{\gfr})$ is equal to the subset formed by double lines if $\gfr \not= G_{2}$.
    \end{enumerate}
\end{coro}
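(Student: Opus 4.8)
The plan is to extract both statements entirely from the explicit colored fans of Theorem~\ref{main thm: colored data of chow} and Theorem~\ref{main thm: colored data of hilb}, using the standard dictionary between the local and global geometry of a spherical variety and the combinatorics of its colored fan (\cite[Chapters~3--4]{Timashev2011HomogeneousSpaces}; see also \cite{Ruzzi2011SmoothProjective} for the symmetric case). Throughout one works in $\Ecal \simeq \chi_{*}(T'_{1})\otimes\QQ$ with the lattice $\Lambda_{O_{\gfr}} = 2\,(\ZZ\langle (R'_{O_{\gfr}})^{\vee}\rangle)^{*}$ of Theorem~\ref{thm: colored data of symmetric var}; the colors $\Dcal_{i}$ have pairwise distinct images $\tfrac12\lambda_{i}^{\vee}$, and every cone occurring in the two fans is generated by primitive vectors among $\{-\gamma_{1},\dots,-\gamma_{m}\}$ and $\{\tfrac12\lambda_{i}^{\vee}\}$, so each of the assertions becomes a finite arithmetic verification against Tables~\ref{table: sph data of Chow} and~\ref{table: sph data of Hilb} and the list of colored faces in Section~\ref{subsection: main theorems}.

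For smoothness of $\Hbf^{nor}(Z_{\gfr})$ I would apply the combinatorial smoothness criterion for simple symmetric embeddings to each maximal colored cone of Table~\ref{table: sph data of Hilb} and to all of its colored faces: one checks that the primitive toroidal ray generators together with the simple restricted coroots $\lambda_{i}^{\vee}$ attached to the colors of the cone extend to a $\ZZ$-basis of the relevant lattice, and that the restricted type ($B_{4}$, $D_{4}$, $F_{4}$ or $G_{2}$) contributes no further obstruction. Since $FC^{nor}\colon \Hbf^{nor}(Z_{\gfr}) \to \Cbf^{nor}(Z_{\gfr})$ is a birational $G$-morphism, this is the same as checking that the subdivision of the single colored cone of $\Cbf^{nor}(Z_{\gfr})$ recorded in Theorem~\ref{main thm: colored data of hilb} is a smooth refinement. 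The statements on the anticanonical bundle then follow from the description of the canonical class of a spherical variety, $-K_{\Hbf^{nor}(Z_{\gfr})} = \sum_{i}\Dcal^{G}_{i} + \sum_{\Dcal\in\Dcal(O_{\gfr})} a_{\Dcal}\,\Dcal$ with positive integers $a_{\Dcal}$ that depend on $\gfr$ itself and not only on $R'_{O_{\gfr}}$: global generation (respectively ampleness) of $-K$ translates into convexity (respectively strict convexity) of the associated piecewise-linear support function on the colored fan, and the $\gfr$-dependence of the $a_{\Dcal}$ is precisely what makes the answer jump at $B_{5}/D_{6}$ among the $B_{r}/D_{r}$, and separates $E_{6}$ from $E_{7},E_{8}$.

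For $\Cbf^{nor}(Z_{\gfr})$ everything is controlled by its unique maximal colored cone $(\Ccal,\Fcal)$ from Table~\ref{table: sph data of Chow}. It is $\QQ$-factorial only if $(\Ccal,\Fcal)$ is simplicial, i.e.\ the number of its extremal rays together with $|\Fcal|$ equals $\dim\Ccal$; this fails for $\gfr \neq G_{2}$, since the listed cones carry strictly more generators than their dimension, whereas for $\gfr = G_{2}$ the cone $\QQ_{\ge0}\langle -\gamma_{2},\,\tfrac12\lambda_{2}^{\vee}\rangle$ is two-dimensional with two generators and passes the smoothness test, so $\Cbf^{nor}(Z_{G_{2}})$ is smooth (and, being isomorphic to $\Hbf^{nor}(Z_{G_{2}})$ by Corollary~\ref{coro: FC is isom if G2}, is the Fano Cayley grassmannian). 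In general $\Cbf^{nor}(Z_{\gfr})$ is $\QQ$-Gorenstein exactly when the values on the extremal rays and colors of $(\Ccal,\Fcal)$ prescribed by the formula for $-K$ extend to one linear functional on $\Ccal$; this linear-algebra condition singles out $\gfr = B_{5}, F_{4}$, and there integrality of the functional gives Gorenstein, the functional taking value $1$ on each primitive ray generator and value $>1$ on every other nonzero lattice point of $\Ccal$ gives terminality, and convexity of $-K$ over the full valuation cone $\Vcal$ gives Fano. Finally the singular locus of $\Cbf^{nor}(Z_{\gfr})$ is $G$-stable and closed, hence a union of orbit closures; the orbits attached to proper colored faces of $(\Ccal,\Fcal)$ --- that is, all orbits other than the closed one of double lines --- have $G$-stable neighborhoods whose colored cones are those faces, which one checks (as for $\Hbf^{nor}$) to be smooth, so the singular locus is contained in the double-line locus; since $\Cbf^{nor}(Z_{\gfr})$ is not $\QQ$-factorial there for $\gfr \neq G_{2}$, it equals that locus in that range and is empty for $G_{2}$.

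The step I expect to be the main obstacle is not conceptual but the bookkeeping of integral structures and coefficients: one must fix the doubling in $\Lambda_{O_{\gfr}}$ and the half-coroots $\tfrac12\lambda_{i}^{\vee}$ consistently so that the smoothness and $\QQ$-factoriality tests are run against the correct lattice, and one must compute the coefficients $a_{\Dcal}$ in the canonical class for every $\gfr$ precisely enough to detect the rank-dependent thresholds; once these are pinned down, each item is a routine finite check.
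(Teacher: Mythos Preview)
Your proposal is correct and follows essentially the same route as the paper: smoothness of $\Hbf^{nor}(Z_{\gfr})$ via Ruzzi's combinatorial criterion for symmetric embeddings applied cone-by-cone to Table~\ref{table: sph data of Hilb}, and the remaining assertions via the canonical-class formula $-K = \sum \Dcal^{G} + \sum a_{\Dcal}\,\Dcal$ together with the standard spherical criteria for $\QQ$-factoriality, $\QQ$-Gorenstein, terminality, global generation and ampleness. The paper carries this out by quoting Ruzzi's three-part criterion verbatim (your description ``extend to a $\ZZ$-basis \dots\ no further obstruction'' hides the type-$A$ condition on $R_{L,\sigma}$ and the compatibility constraints on the dual basis, which do need to be checked), exhibiting the required bases $\Bcal$ and dual vectors $y^{j}_{i}$ explicitly in each case, and tabulating the spherical roots, color types and coefficients $a_{\Dcal_{i}}$ for every $\gfr$ before invoking the criteria from Pasquier's survey; your closing paragraph correctly anticipates that this bookkeeping is where all the content lies. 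For the singular-locus claim the paper relies implicitly on the fact that $FC^{nor}$ is an isomorphism off the double-line locus and $\Hbf^{nor}(Z_{\gfr})$ is smooth, which is a slightly cleaner way to see smoothness on the complement than re-running the face-by-face check you propose, but your argument is also valid.
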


In fact, the smoothness of $\Hbf^{nor}(Z_{\gfr})$ follows from \cite[Proposition 3.6]{ChungHongKiem2012CompactifiedModuli}. In the following paragraphs, we present another proof using spherical geometry.

\begin{proof}
    Let us consider the smoothness of the Hilbert schemes first. In the case when $\gfr = G_{2}$, the smoothness of $\Hbf^{nor}(Z_{G_{2}})$ is well-known. Indeed, the colored cone of $\Hbf^{nor}(Z_{G_{2}})$ appears in \cite[Theorem 4.1]{Ruzzi2011SmoothProjective} and \cite[Theorem 2]{Ruzzi2010GeometricalDescription}, hence they are isomorphic to the \emph{Cayley grassmannian}, as observed in \cite[Section 7]{Manivel2021DoubleCayley}. For $\gfr$ different from $G_{2}$, we apply Ruzzi's smoothness criterion for symmetric varieties which can be found in \cite[Theorem 3.2]{Ruzzi2011SmoothProjective}. Observe that the notation of \cite{Ruzzi2011SmoothProjective} is slightly different from ours, since we use a different definition for the restricted root system. In our setting (Section \ref{section: Luna Vust theorey}, Section \ref{subsection: main theorems}), the criterion can be formulated as follows.
    \begin{theorem}[{\cite[Theorem 3.2]{Ruzzi2011SmoothProjective}}] \label{thm: smoothness criterion}
        Let $X$ be a simple $O_{\gfr}(=G/G^{\sigma})$-embedding, and assume that its unique closed orbit $Y$ is projective. For the standard Levi factor $L$ of the parabolic subgroup $\bigcap_{\Dcal \in \Dcal(O_{\gfr}) \setminus \Fcal(X)} \text{Stab}_{G}(\Dcal)$, let $R_{L,\, \sigma}$ be the sub-root system of $R'_{O_{\gfr}}$ spanned by the roots of $L$. The simple factors of $R_{L, \, \sigma}$ are denoted by $R^{j}_{L,\, \sigma}$ so that $R_{L, \, \sigma} = \prod_{j=1}^{p} R^{j}_{L, \, \sigma}$ for some integer $p$, and their simple roots $\{\lambda^{j}_{i}\}_{i} := S'_{O_{\gfr}} \cap R^{j}_{L,\, \sigma}$ are indexed as in \cite{OnishchikVinberg1990LieGroups}.
        
        Then $X$ is smooth if and only if the following conditions are satisfied:
        \begin{enumerate}
            \item For every $j$, $R^{j}_{L, \, \sigma}$ is of type $A$. Moreover, $\sum_{j=1}^{p} (l_{j}+1)$ is at most the rank of $R'_{O_{\gfr}}$ where $l_{j}$ is the rank of $R^{j}_{L,\, \sigma}$;
            \item The cone $\Ccal(X)$ is spanned by a basis $\Bcal$ of $\frac{1}{2} \cdot \ZZ \langle (R'_{O_{\gfr}})^{\vee} \rangle$, i.e. the half of the coroot lattice of the root system $R'_{O_{\gfr}}$;
            \item In the doubled weight lattice $2 \cdot (\ZZ \langle (R'_{O_{\gfr}})^{\vee} \rangle)^{*}$, we can index the dual basis of $\Bcal$ as $\{y^{1}_{1}, \, \ldots,\, y^{1}_{l_{1}+1}, \, y^{2}_{1}, \, \ldots, \, y^{q}_{l_{q}+1}\}$ for some integers $q (\ge p)$ and $l_{j}$ so that
            \begin{enumerate}
                \item $\langle y^{j}_{i}, \, (2 \lambda^{h}_{k})^{\vee} \rangle$ is $=1$ if $j=h$ and $i=k$, and $=0$ otherwise;
                \item $y^{j}_{i} - \frac{i}{l_{j}+1} y^{j}_{l_{j+1}}$ is the $i$th fundamental weight of $R^{j}_{L,\, \sigma}$ times $2$ for $1 \le j \le p$ and $1 \le i \le l_{j}$.
            \end{enumerate}
        \end{enumerate}
    \end{theorem}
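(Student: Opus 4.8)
The plan is to prove the criterion by reducing, through the local structure theorem, the smoothness of the simple embedding $X$ to that of an affine slice, and then translating smoothness of the slice into the three combinatorial conditions. Write $P := \bigcap_{\Dcal \in \Dcal(O_{\gfr}) \setminus \Fcal(X)} \text{Stab}_{G}(\Dcal)$ for the parabolic appearing in the statement; by Lemma \ref{lem: stabilizer of colors and isotropy group of closed orbit} its opposite governs the isotropy of the closed orbit $Y$, and the colors of $X$ are exactly the $B'$-stable divisors whose stabilizers contain $P$. The local structure theorem for spherical varieties (\cite[Section 15]{Timashev2011HomogeneousSpaces}, \cite{Knop1991LunaVustTheory}) then yields a $P$-stable affine open set $X_{P} \subset X$ meeting $Y$ and an $L$-stable affine slice $\Sigma \subset X_{P}$, with $L$ the standard Levi factor of $P$, such that the action map $R^{u}(P) \times \Sigma \to X_{P}$ is an isomorphism. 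Since $R^{u}(P)$ is an affine space and $X_{P}$ meets every orbit whose closure contains $Y$, the variety $X$ is smooth if and only if $\Sigma$ is smooth. Moreover $\Sigma$ is itself a simple affine embedding of the smaller symmetric space $L/(L \cap G^{\sigma})$, now \emph{colorless} (its colors having been absorbed into $P$), whose restricted root system is precisely $R_{L,\sigma}$ and whose cone is the image of $\Ccal(X)$.

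Because smoothness is open and $G$-invariant while every orbit closure contains $Y$, the variety $X$ is smooth if and only if it is smooth at the unique $B'$-fixed point $y \in Y$, equivalently if and only if $\Sigma$ is smooth at $y$. I would then compute the tangent space $T_{y}\Sigma$ as a $T_1'$-module, using that $\Sigma$ is the colorless affine embedding with cone $\Ccal(X)$. The directions coming from the central torus of $L$ form a toric slice whose cone is $\Ccal(X)$, and an affine toric variety is smooth exactly when its cone is generated by part of a $\ZZ$-basis of the cocharacter lattice, which under Theorem \ref{thm: colored data of symmetric var} is $\frac{1}{2}\ZZ\langle (R'_{O_{\gfr}})^{\vee}\rangle$. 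This unimodularity is condition (2), the integrality (3a) of the dual basis in the doubled weight lattice $2 \cdot (\ZZ\langle (R'_{O_{\gfr}})^{\vee}\rangle)^{*}$ being its companion statement.

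The heart of the argument, and the step I expect to be the main obstacle, is to show that the semisimple part of $L$ forces conditions (1) and (3). The structural input is that the colorless affine symmetric $L$-embedding $\Sigma$ factors, modulo the toric slice above, as a product $\prod_{j=1}^{p}\Sigma_{j}$ over the simple restricted factors $R^{j}_{L,\sigma}$, each $\Sigma_{j}$ being the minimal affine embedding of the rank-$l_{j}$ symmetric space attached to $R^{j}_{L,\sigma}$. The decisive observation is that $\Sigma_{j}$ is smooth only for type $A$: there the symmetric space carries a linear affine model — a space of symmetric, alternating, or rectangular matrices according to the involution, on which $SL_{l_{j}+1}$ acts and which is a vector space — whereas for the other types the minimal affine symmetric embedding is a singular determinantal or quadric cone. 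Reading off the $T_1'$-weights on such a matrix space gives the fundamental weights of $A_{l_{j}}$, doubled by the symmetric-space normalization, and translating this into the dual basis produces exactly the relation $y^{j}_{i} - \frac{i}{l_{j}+1}\,y^{j}_{l_{j}+1} = 2\omega_{i}$ of condition (3b); the rank bound $\sum_{j}(l_{j}+1)\le \text{rank}\,R'_{O_{\gfr}}$ of condition (1) records that these type-$A$ blocks, together with the leftover toric directions indexed by $p<j\le q$, occupy disjoint parts of a single basis of the slice. The delicate point is to make the identification of $T_{y}\Sigma_{j}$ with the matrix model rigorous and to track the factors of two coming from the normalization in Theorem \ref{thm: colored data of symmetric var}, which is why the precise comparison of the restricted coroot lattice against the doubled weight lattice is essential.

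For the converse I would reverse the construction: conditions (2) and (3a) exhibit $\Ccal(X)$ as a unimodular cone, so the toric slice is an affine space; conditions (1) and (3b) exhibit each semisimple block as the linear matrix model for $SL_{l_{j}+1}$, which is smooth; and the product of these with the residual affine space realizes $\Sigma$, hence $X$, as smooth. The explicit smooth models used at each step, and the comparison of colored data, are supplied by De Concini–Procesi's wonderful compactification (\cite{deConciniProcesi2006CompleteSymmetric}) and Vust's description of symmetric embeddings (\cite{Vust1990PlongementsEspaces}).
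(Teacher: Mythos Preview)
The paper does not prove this theorem: it is quoted verbatim as \cite[Theorem 3.2]{Ruzzi2011SmoothProjective}, merely reformulated in the paper's notation for the restricted root system, and then applied as a black box to verify smoothness of $\Hbf^{nor}(Z_{\gfr})$. There is no argument for the criterion itself anywhere in the paper.

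Your sketch is therefore not comparable to anything in the paper, but it is a reasonable outline of how Ruzzi's proof actually proceeds: reduction via the local structure theorem to an $L$-stable affine slice $\Sigma$, identification of $\Sigma$ as a colorless simple embedding of the smaller symmetric space $L/L^{\sigma}$, and then a case analysis on the simple restricted factors showing that only type $A$ yields a smooth (linear) model. The places where your sketch is thinnest are exactly the places where the real work lies in Ruzzi's paper: making precise the product decomposition of $\Sigma$ over the simple factors of $R_{L,\sigma}$ together with the residual toric piece, and carrying out the weight computation that pins down condition (3b) with the correct normalization against the doubled weight lattice of Theorem \ref{thm: colored data of symmetric var}. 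If you intend to include a proof rather than a citation, those two steps would need to be written out in full; as a citation, the statement needs no argument here.
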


    We claim that the simple spherical varieties defined by maximal colored cones in Table \ref{table: sph data of Hilb} satisfy the conditions in Theorem \ref{thm: smoothness criterion}. Indeed, in each case, if $\pi_{i}$ denotes the $i$th fundamental weight of $R'_{O_{\gfr}}$, then $R_{L, \, \sigma}$, $\Bcal$ and its dual basis $\{y^{j}_{i}\}$ can be chosen as follows.
    \begin{enumerate}
        \item $\gfr = B_{r}$ ($r \ge 4$) or $D_{r}$ ($r \ge 5$): For ($\QQ_{\ge 0}\langle -\gamma_{2}, \, -\gamma_{4}, \, \lambda_{2}^{\vee}, \, \lambda_{4}^{\vee} \rangle$, $\{\Dcal_{2}, \, \Dcal_{4}\}$),
            \[
                R_{L,\, \sigma} = {\dynkin[labels={\lambda_{2}}] A1} {\dynkin[labels={\lambda_{4}}] A1} =A_{1} \times A_{1},
            \]
            \[
                \Bcal = \left\{-\frac{1}{2} \lambda_{1}^{\vee} - \lambda_{2}^{\vee} - \lambda_{3}^{\vee} - \frac{1}{2} \lambda_{4}^{\vee}, \, -\frac{1}{2} \lambda_{1}^{\vee} - \lambda_{2}^{\vee} - \frac{3}{2}\lambda_{3}^{\vee} - \lambda_{4}^{\vee}, \,\frac{1}{2}\lambda_{2}^{\vee}, \, \frac{1}{2} \lambda_{4}^{\vee} \right\},
            \]
            \[
                y^{1}_{1} := - 4\pi_{1} + 2 \pi_{2}, \, y^{1}_{2} := -6\pi_{1} + 2 \pi_{3}, \, y^{2}_{1} := 2\pi_{1} - 2 \pi_{3}+2 \pi_{4}, \, y^{2}_{2} := 4 \pi_{1} - 2 \pi_{3}.
            \]
            
            For ($\QQ_{\ge 0}\langle -\gamma_{1}, \, -\gamma_{2}, \, -\gamma_{4}, \, \lambda_{2}^{\vee} \rangle$, $\{\Dcal_{2}\}$),
            \[
                R_{L,\, \sigma} = {\dynkin[labels={\lambda_{2}}] A1} = A_{1},
            \]
            \[
                \Bcal= \left\{-\lambda_{1}^{\vee} - \lambda_{2}^{\vee} - \lambda_{3}^{\vee} - \frac{1}{2} \lambda_{4}^{\vee}, \, -\frac{1}{2} \lambda_{1}^{\vee} - \lambda_{2}^{\vee} - \lambda_{3}^{\vee} - \frac{1}{2} \lambda_{4}^{\vee}, \, -\frac{1}{2} \lambda_{1}^{\vee} - \lambda_{2}^{\vee} - \frac{3}{2} \lambda_{3}^{\vee} - \lambda_{4}^{\vee},\, \frac{1}{2}\lambda_{2}^{\vee}\right\},
            \]
            \[
                y^{1}_{1} := 2\pi_{2} - 4 \pi_{3} + 4 \pi_{4}, \, y^{1}_{2} := 2\pi_{1} - 6 \pi_{3} + 8 \pi_{4}, \, y^{2}_{1} := -2\pi_{1} + 2 \pi_{3}-2 \pi_{4}, \, y^{2}_{2} := 2 \pi_{3} - 4 \pi_{4}.
            \]
        \item $\gfr = B_{3}$: For ($\QQ_{\ge 0}\langle -\gamma_{1}, \, -\gamma_{2}, \, \lambda_{2}^{\vee}\rangle$, $\{\Dcal_{2}\}$),
        \[
            R_{L,\, \sigma} = {\dynkin[labels={\lambda_{2}}] A1} = A_{1},
        \]
        \[
            \Bcal= \left\{-\lambda_{1}^{\vee} - \lambda_{2}^{\vee} -\frac{1}{2} \lambda_{3}^{\vee}, \, -\frac{1}{2} \lambda_{1}^{\vee} - \lambda_{2}^{\vee} - \frac{1}{2} \lambda_{3}^{\vee},\, \frac{1}{2}\lambda_{2}^{\vee}\right\},
        \]
        \[
            y^{1}_{1} := 2\pi_{2} - 4 \pi_{3}, \, y^{1}_{2} := 2\pi_{1} - 4 \pi_{3}, \, y^{2}_{1} := -2\pi_{1} + 2 \pi_{3}.
        \]

        For ($\QQ_{\ge 0}\langle -\gamma_{2}, \, -\gamma_{3}, \, \lambda_{2}^{\vee}\rangle$, $\{\Dcal_{2}\}$),
        \[
            R_{L,\, \sigma} = {\dynkin[labels={\lambda_{2}}] A1} = A_{1},
        \]
        \[
            \Bcal= \left\{- \frac{1}{2}\lambda_{1}^{\vee} - \lambda_{2}^{\vee} -\frac{1}{2} \lambda_{3}^{\vee}, \, -\lambda_{1}^{\vee} - 2\lambda_{2}^{\vee} - \frac{3}{2} \lambda_{3}^{\vee},\, \frac{1}{2}\lambda_{2}^{\vee}\right\},
        \]
        \[
            y^{1}_{1} := -4\pi_{1} +2 \pi_{2}, \, y^{1}_{2} := -6\pi_{1} +4 \pi_{3}, \, y^{2}_{1} := 2\pi_{1} - 2 \pi_{3}.
        \]
        \item $\gfr = D_{4}$: For each $i \in \{1,\,3,\,4\}$, let $j\not= k$ be distinct elements in $\{1,\,3,\,4\} \setminus \{i\}$. For ($\QQ_{\ge 0}\langle -\gamma_{2}, \, -\gamma_{j}, \, -\gamma_{k}, \, \lambda_{2}^{\vee}\rangle$, $\{\Dcal_{2}\}$),
            \[
                R_{L,\, \sigma} = {\dynkin[labels={\lambda_{2}}] A1} = A_{1},
            \]
            \[
                \Bcal= \left\{-\frac{1}{2}\lambda_{1}^{\vee} - \lambda_{2}^{\vee} - \frac{1}{2} \lambda_{3}^{\vee} - \frac{1}{2} \lambda_{4}^{\vee}, -\lambda_{2}^{\vee} - \frac{1}{2} \lambda_{i}^{\vee} - \frac{1}{2}\lambda_{j}^{\vee} - \lambda_{k}^{\vee}, \, -\lambda_{2}^{\vee} - \frac{1}{2} \lambda_{i}^{\vee} -\lambda_{j}^{\vee} - \frac{1}{2}\lambda_{k}^{\vee} ,\, \frac{1}{2}\lambda_{2}^{\vee}\right\},
            \]
            \[
                y^{1}_{1} := 2\pi_{2} - 4 \pi_{i}, \, y^{1}_{2} := 2\pi_{1} + 2 \pi_{3} + 2 \pi_{4} - 8\pi_{i}, \, y^{2}_{1} := 2 \pi_{i} - 2\pi_{j}, \, y^{2}_{2} := 2 \pi_{i} - 2 \pi_{k}.
            \]
        \item $\gfr = E_{r}$ ($r = 6,\,7,\,8$) or $F_{4}$: For ($\QQ_{\ge 0}\langle -\gamma_{1}, \, -\gamma_{4}, \, \lambda_{1}^{\vee}, \, \lambda_{4}^{\vee}\rangle$, $\{\Dcal_{1}, \, \Dcal_{4}\}$),
            \[
                R_{L,\, \sigma} = {\dynkin[labels={\lambda_{1}}] A1} {\dynkin[labels={\lambda_{4}}] A1} = A_{1} \times A_{1},
            \]
            \[
                \Bcal= \left\{-\lambda_{1}^{\vee} - \frac{3}{2}\lambda_{2}^{\vee} - 2 \lambda_{3}^{\vee} - \lambda_{4}^{\vee}, -\frac{1}{2}\lambda_{1}^{\vee} - \lambda_{2}^{\vee} - \frac{3}{2}\lambda_{3}^{\vee} - \lambda_{4}^{\vee}, \, \frac{1}{2}\lambda_{1}^{\vee},\, \frac{1}{2}\lambda_{4}^{\vee}\right\},
            \]
            \[
                y^{1}_{1} := 2\pi_{1} - 4 \pi_{2} + 2 \pi_{3}, \, y^{1}_{2} := -6\pi_{2} + 4 \pi_{3}, \, y^{2}_{1} := 4 \pi_{2} - 4\pi_{3} + 2 \pi_{4}, \, y^{2}_{2} := 8 \pi_{2} - 6 \pi_{3}.
            \]
    \end{enumerate}

    The rest of the statements follow from the well-known criteria for singularities of spherical varieties. Let us briefly explain them together with the necessary data given in Table \ref{table: spherical roots and type of colors}, following \cite{Pasquier2017SurveySingularities}. Recall that the valuation cone $\Vcal$ is a cone in the vector space $\QQ \langle (R'_{O_{\gfr}})^{\vee} \rangle$. By taking its negative dual $- \Vcal^{\vee} := \{ m \in \QQ \langle R'_{O_{\gfr}}\rangle : \langle m , \, \Vcal \rangle \le 0 \}$ and the embedding $\QQ \langle R'_{O_{\gfr}} \rangle = \chi(T'/T'\cap G^{\sigma}) \otimes \QQ \hookrightarrow \chi(T') \otimes \QQ = \QQ \langle R' \rangle$, we obtain a cone in $\QQ \langle R' \rangle$. We call the primitive elements of $-\Vcal^{\vee} \cap \Lambda_{O_{\gfr}} (= -\Vcal^{\vee} \cap \chi(T' / T'\cap G^{\sigma}))$ in $\QQ \langle R' \rangle$ the \emph{spherical roots} of $O_{\gfr}$. Now for each color $\Dcal \in \Dcal(O_{\gfr})$, choose a simple root $\alpha' \in S'$ which is not a root of (the standard Levi part of) $\text{Stab}_{G}(\Dcal)$. That is, $\alpha'$ {\lq moves\rq} the divisor $\Dcal \subset O_{\gfr}$. Then we say that
    \[
        \text{$\Dcal$ is of \emph{type} } \left\{ \begin{array}[pos]{cl}
            \text{(a)} & \text{if $\alpha'$ is a spherical root;} \\
            \text{(2a)} & \text{if $2 \alpha'$ is a spherical root;} \\
            \text{(b)} & \text{otherwise,}
        \end{array} \right.
    \]
    and the type of $\Dcal$ does not depend on the choice of $\alpha'$. If $\Dcal$ is of type (a) or (2a), then we put $a_{\Dcal} := 1$. If $\Dcal$ is of type (b), then $\alpha_{\Dcal}$ is defined as follows. Let $S'' \subset S'$ be the set of simple roots which are not roots of (the standard Levi part of) the stabilizer of the open $B'$-orbit in $O_{\gfr}$, and $R'' \subset (R')^{+}$ the set of positive roots which are not generated by $S' \setminus S''$. Then for $\Dcal$ of type (b) and $\alpha'$ moving $\Dcal$, define
    \[
        a_{\Dcal} := \sum_{\beta' \in R''} \langle  \beta' \,|\, \alpha' \rangle.
    \]
    Now for a simple $O_{\gfr}$-embedding $X$, the anti-canonical divisor of a simple $O_{\gfr}$-embedding $X$ can be written as a Weil divisor
    \[
        -K_{X} = \sum_{\text{$G$-stable divisor }\Dcal \subset X} \Dcal + \sum_{\Dcal_{i} \in \Dcal(O_{\gfr})} a_{\Dcal_{i}} \cdot \overline{\Dcal_{i}}.
    \]
    See \cite[Theorem 2.20]{Pasquier2017SurveySingularities} for details. The spherical roots, types and integers $a_{\Dcal}$ of the colors of $O_{\gfr}$ can be deduced from the Satake diagram (Table \ref{table: Satake diagram and restricted root system}) and Theorem \ref{thm: colored data of symmetric var}, and their list is given in Table \ref{table: spherical roots and type of colors}.

    Now the statement on the singularities of $\Cbf^{nor}(Z_{\gfr})$ can be obtained from the criteria for $\QQ$-factoriality (\cite[Proposition 3.3]{Pasquier2017SurveySingularities}), ($\QQ$-)Cartier divisors (\cite[Proposition 4.2]{Pasquier2017SurveySingularities}) and terminal singularities (\cite[Proposition 5.2]{Pasquier2017SurveySingularities}) Similarly, for the positivity of anti-canonical divisors of $\Hbf^{nor}(Z_{\gfr})$ and $\Cbf^{nor}(Z_{\gfr})$, the criteria for global generatedness and ampleness in \cite[Proposition 2.19]{Pasquier2017SurveySingularities} can be applied. We omit the detailed computation.
\end{proof}

\begin{table}
    \begin{center}
        \begin{tabular}{|c|c|c|}
            \hline
            $\gfr$ & Spherical Roots of $O_{\gfr}$ & Type and Coefficient $a_{\Dcal_{i}}$ of Color $\Dcal_{i}$ \\
            \hline
            \hline
            \multirow{2}{*}{$B_{r}$ ($r \ge 5$)} & $2 \alpha'_{i}$ ($1 \le i \le 3$), & $\Dcal_{i}$ $(1 \le i \le 3)$: (2a), $a_{\Dcal_{i}} = 1$ \\
             & $2 \alpha'_{4} + \cdots + 2 \alpha'_{r}$ & $\Dcal_{4}$: (b), $a_{\Dcal_{4}} = 2r-7$\\
            \hline
            $B_{r}$ ($r=3,\,4$) & $2 \alpha'_{i}$ ($1 \le i \le r$) & $\Dcal_{i}$ $(1 \le i \le r)$: (2a), $a_{\Dcal_{i}} = 1$ \\
            \hline
            \multirow{2}{*}{$D_{r}$ ($r \ge 6$)} & $2 \alpha'_{i}$ ($1 \le i \le 3$), & $\Dcal_{i}$ $(1 \le i \le 3)$: (2a), $a_{\Dcal_{i}}=1$ \\
             & $2 \alpha'_{4} + \cdots + 2 \alpha'_{r-2} + \alpha'_{r-1} + \alpha'_{r}$ & $\Dcal_{4}$: (b), $a_{\Dcal_{4}} = 2(r-4)$ \\
            \hline
            \multirow{2}{*}{$D_{5}$} & $2 \alpha'_{i}$ ($1 \le i \le 3$), & $\Dcal_{i}$ $(1 \le i \le 3)$: (2a), $a_{\Dcal_{i}}=1$ \\
            & $\alpha'_{4} + \alpha'_{5}$ & $\Dcal_{4}$: (b), $a_{\Dcal_{4}} = 2$ \\
            \hline
            $D_{4}$ & $2 \alpha'_{i}$ ($1 \le i \le 4$) & $\Dcal_{i}$ $(1 \le i \le 4)$: (2a), $a_{\Dcal_{i}} = 1$ \\
            \hline
            \multirow{4}{*}{$E_{6}$} & $\alpha'_{1} + \alpha'_{5}$, & $\Dcal_{1}$: (b), $a_{\Dcal_{1}} = 2$ \\
            & $\alpha'_{2} + \alpha'_{4}$, & $\Dcal_{2}$: (b), $a_{\Dcal_{2}} = 2$ \\
            & $2 \alpha'_{3}$, & $\Dcal_{3}$: (2a), $a_{\Dcal_{3}} = 1$ \\
            & $2 \alpha'_{6}$, & $\Dcal_{4}$: (2a), $a_{\Dcal_{4}} = 1$ \\
            \hline
            \multirow{4}{*}{$E_{7}$} & $\alpha'_{1} + 2 \alpha'_{2} + \alpha'_{3}$, & $\Dcal_{1}$: (b), $a_{\Dcal_{1}} = 4$ \\
            & $\alpha_{3} + 2 \alpha'_{4} + \alpha'_{7}$, & $\Dcal_{2}$: (b), $a_{\Dcal_{2}} = 4$ \\
            & $2 \alpha'_{5}$, & $\Dcal_{3}$: (2a), $a_{\Dcal_{3}} = 1$ \\
            & $2 \alpha'_{6}$ & $\Dcal_{4}$: (2a), $a_{\Dcal_{4}} = 1$ \\
            \hline
            \multirow{4}{*}{$E_{8}$} & $2\alpha'_{1}$, & $\Dcal_{1}$: (b), $a_{\Dcal_{1}} = 8$ \\
            & $2\alpha'_{2}$, & $\Dcal_{2}$: (b), $a_{\Dcal_{2}} = 8$ \\
            & $2\alpha'_{3} + 2 \alpha'_{4} + 2 \alpha'_{5} + \alpha'_{6} + \alpha'_{8}$, & $\Dcal_{3}$: (2a), $a_{\Dcal_{3}} = 1$ \\
            & $\alpha'_{4} + 2\alpha'_{5} + 2\alpha'_{6} + 2 \alpha'_{7} + \alpha'_{8}$ & $\Dcal_{4}$: (2a), $a_{\Dcal_{4}} = 1$ \\
            \hline
            $F_{4}$ & $2 \alpha'_{i}$ ($1 \le i \le 4$) & $\Dcal_{i}$ $(1 \le i \le 4)$: (2a), $a_{\Dcal_{i}} = 1$ \\
            \hline
            $G_{2}$ & $2 \alpha'_{i}$ ($1 \le i \le 2$) & $\Dcal_{i}$ $(1 \le i \le 2)$: (2a), $a_{\Dcal_{i}} = 1$ \\
            \hline
        \end{tabular}
        \caption{\label{table: spherical roots and type of colors}Spherical Roots and Type of Colors of $O_{\gfr}$.}
    \end{center}
\end{table}

\section{Tangent Directions of Contact Conics} \label{section: direction of contact conics}

Finally, we study geometry of contact conics, which are not treated in the previous sections in detail. More precisely, we find an equation satisfied by tangent vectors of contact conics, and show that there is no smooth conic in a general direction of $D$, while tangent directions of twistor conics dominate $\PP(T_{o}Z_{\gfr}) \setminus \PP(D_{o})$ by Theorem \ref{thm: exist unique twistor conic in gen direction}. Observe that this phenomenon does not happen in the case of the $C_{n+1}$-adjoint variety which is isomorphic to the projective space $\PP^{2n+1}$.

In this section, every argument is based on Lie theoretic computation, and independent of spherical geometry. The notation introduced in Section \ref{section: counting conjugacy classes of conics} is frequently used, and for the sake of simplicity, we choose root vectors $\{E_{\alpha} \in \gfr_{\alpha}\}$ of $\gfr$ as in \cite[Theorem 5.5, Ch. III]{Helgason1979DifferentialGeometry}. Namely, our root vectors satisfy
\[
    [E_{\alpha}, \, E_{-\alpha}] = H_{\alpha}, \quad \forall \alpha \in R
\]
and
\[
    (N_{\alpha, \, \beta})^{2} = \frac{q(1-p)}{2} \cdot \langle \alpha, \, \alpha \rangle \quad \forall \alpha, \, \beta \in R \text{ satisfying } \alpha + \beta \in R
\]
where
\[
    p:= \min \{m \in \ZZ : \beta + m \alpha \in R \} \quad \text{and} \quad q:= \max \{m \in \ZZ : \beta + m \alpha \in R \}.
\]

\begin{proposition} \label{prop: eqn for direction of contact conics}
    For nonzero $v \in D_{o} \simeq \bigoplus_{m_{j_{0}}(\alpha)=-1} \gfr_{\alpha}$, there is a line or a smooth conic tangent to $\CC \cdot v$ if and only if $v$ satisfies
    \[
        [v, \, [v, \, [v, \, E_{\rho}]]] = 0
    \]
    as an element of $\gfr$.
\end{proposition}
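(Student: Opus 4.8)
The plan is to prove the equivalence in two directions, using the contact grading $\gfr=\bigoplus_{k}\gfr_k$ with $\gfr_k:=\bigoplus_{m_{j_0}(\alpha)=k}\gfr_\alpha$ (and $\tfr\subset\gfr_0$), so that $E_\rho\in\gfr_2$, $D_o\cong\gfr_{-1}$, and $\mathrm{ad}(E_\rho)\colon\gfr_{-1}\to\gfr_1$ is injective. For the ``if'' direction, suppose $[v,[v,[v,E_\rho]]]=0$; then $[v,[v,[v,[v,E_\rho]]]]=0$ as well, so $\mathrm{Ad}(\exp tv)E_\rho=E_\rho+t[v,E_\rho]+\tfrac{t^2}{2}[v,[v,E_\rho]]$ is a polynomial of degree $\le 2$ in $t$, and $t\mapsto\exp(tv)\cdot o=[\mathrm{Ad}(\exp tv)E_\rho]$ is a morphism $\PP^1\to Z_\gfr$ of degree $\le 2$. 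Its three coefficient vectors lie in $\gfr_2$, $\gfr_1$, $\gfr_0$ respectively, hence are linearly independent whenever nonzero (and $[v,E_\rho]\neq0$ because $v\neq0$), so the image is a line if $[v,[v,E_\rho]]=0$ and a smooth conic otherwise; and under $T_oZ_\gfr\cong\gfr/\pfr$, $\bar X\mapsto\overline{[X,E_\rho]}$, its tangent at $o$ is the class of $v\in D_o\cong\gfr_{-1}$. Thus this curve is a line or a smooth conic tangent to $\CC v$.

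For the converse, let $C\subset Z_\gfr$ be a line or a smooth conic through $o$ tangent to $\CC v$. If $C$ is a line, or a planar smooth conic, then $T_oC$ is a line of $Z_\gfr$ through $o$ with tangent direction $\CC v$; by Theorem~\ref{thm: space of lines is homogeneous} $v$ is then conjugate, under the semisimple part of $P=\mathrm{Stab}_G(o)$, to the tangent direction $v_0$ of the $B$-stable line $\Lcal_B=\PP(E_\rho,E_{\rho-\alpha_{j_0}})$, which corresponds under $D_o\cong\gfr_{-1}$ to a nonzero element of $\gfr_{-\alpha_{j_0}}$. Since $\rho$ and $\alpha_{j_0}$ are long roots, $\langle\rho\,|\,\alpha_{j_0}\rangle=\langle\alpha_{j_0}\,|\,\rho\rangle=1$, so the $\alpha_{j_0}$-string through $\rho$ is $\{\rho-\alpha_{j_0},\rho\}$; hence $\rho-2\alpha_{j_0}\notin R$ and $[v_0,[v_0,E_\rho]]=0$. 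Conjugating back (the semisimple part of $P$ preserves $\CC E_\rho$) gives $[v,[v,E_\rho]]=0$, a fortiori $[v,[v,[v,E_\rho]]]=0$.

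It remains to treat the case of a non-planar smooth conic $C$. After a Möbius reparametrization I may write $C$ as $t\mapsto[\hat f(t)]$ with $\hat f(t)=E_\rho+t[v,E_\rho]+t^2w$ for some $w\in\gfr$: the linear coefficient is proportional to $[v,E_\rho]$ modulo $\CC E_\rho$ by the tangency, and can be brought to $[v,E_\rho]$ exactly. Then $\hat f(t)\in\hat Z_\gfr$ for all $t$ and $[w]\in Z_\gfr$, where $\hat Z_\gfr$ is the affine cone over $Z_\gfr$. I would use that whenever $[X]\in Z_\gfr$ one has $(\mathrm{ad}\,X)^2Y=c_\gfr\langle X,Y\rangle X$ for all $Y$, with $c_\gfr=-\langle\rho,\rho\rangle\neq0$ (this follows from $G$-equivariance and the case $X=E_\rho$, and encodes the quadratic generators of the ideal of $Z_\gfr$); substituting $X=\hat f(t)$, comparing the coefficients of $t^0,\dots,t^4$, and testing the resulting identities against vectors adapted to the grading. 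Testing the $t^2$-coefficient against $E_{-\rho}$ and splitting by degree forces $w$ to have vanishing components in $\gfr_{-1}\oplus\gfr_{-2}$ and $\gfr_0$-component equal to $\tfrac12[v,[v,E_\rho]]+\lambda H_\rho$ for a scalar $\lambda$; testing $w\in\hat Z_\gfr$ against $E_{-\rho}$, the $\gfr_{-2}$-component of $(\mathrm{ad}\,w)^2E_{-\rho}=0$ reads $\lambda^2\langle\rho,\rho\rangle^2E_{-\rho}=0$, so $\lambda=0$; and the $\gfr_{-1}$-component of the $t^3$-coefficient tested against $E_{-\rho}$ gives $[v,w_0]=-\langle H_\rho,w_0\rangle v=0$. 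Since $[v,[v,[v,E_\rho]]]=[v,2w_0]$, the proof is complete.

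I expect the non-planar conic case to be the main obstacle: the bookkeeping of graded components in the expansion of the quadratic equations of $\hat Z_\gfr$, and in particular isolating the vanishing of $\lambda$ (equivalently of $\langle H_\rho,w\rangle$), is where the real content lies; the ``if'' direction, the line/planar case, and the vanishing of the negative-degree components of $w$ are all routine.
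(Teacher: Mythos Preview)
Your argument is correct, and the converse direction takes a genuinely different route from the paper.

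For the ``if'' direction you and the paper do exactly the same thing. For lines (and your shortcut for planar conics), the paper instead argues intrinsically: the vanishing of the second fundamental form along a line forces $[v,[v,E_\rho]]\in [\gfr,E_\rho]$, and then grading plus invariance of the Killing form give $[v,[v,E_\rho]]=0$. Your reduction via Theorem~\ref{thm: space of lines is homogeneous} to the explicit direction $E_{-\alpha_{j_0}}$ is shorter but imports that transitivity result.

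For non-planar conics the approaches diverge more substantially. The paper parametrizes $C$ locally through the exponential chart as $\exp(F(t))\cdot o$ with $F(t)=\sum_i t^iF^{(i)}/i!$, expands $\sum_k(\mathrm{ad}_{F(t)})^k E_\rho/k!$, and uses only the constraint that all derivatives lie in the $2$-plane spanned by the first two; from the graded pieces of the $t^3$- and $t^4$-coefficients it extracts $[v,[v,[v,E_\rho]]]=\tfrac{3}{2}\langle\rho,\rho\rangle x^{(2)} v$ and then $x^{(2)}=0$. Your approach instead writes $C$ globally as $[\hat f(t)]$ with $\hat f(t)=E_\rho+t[v,E_\rho]+t^2w$ and exploits the Kostant-type identity $(\mathrm{ad}\,X)^2Y=-\langle\rho,\rho\rangle\langle X,Y\rangle X$ for $X\in\hat Z_\gfr$ to pin down $w$ degree by degree. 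This is more structural and arguably cleaner once the identity is in hand; the paper's method is more elementary in that it uses only that $C$ spans a plane, never invoking the defining equations of $Z_\gfr$. Both routes ultimately isolate the same obstruction---the $\gfr_{-2}$-piece forces the ``transverse'' scalar ($x^{(2)}$ in the paper, your $\lambda$) to vanish, after which the $\gfr_{-1}$-piece gives $[v,[v,[v,E_\rho]]]=0$.
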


\begin{proof}
    Note that $[v, \, E_{\rho}] \not=0$ in $\gfr$ whenever $v \in D_{o}\setminus \{0\}$. If $[v, \, [v, \, [v, \, E_{\rho}]]] = 0$, then
    \[
        \exp(t \cdot v) \cdot o = \left[ E_{\rho} + t \cdot [v,\, E_{\rho}] + \frac{t^{2}}{2} \cdot [v,\, [v,\,E_{\rho}]] \right] \in \PP(\gfr), \quad \forall t \in \CC.
    \]
    It parametrizes a line if $[v,\, [v, \, E_{\rho}]] = 0$. If $[v,\, [v, \, E_{\rho}]] \not= 0$, then since $E_{\rho}$, $[v,\, E_{\rho}]$ and $[v, \, [v, \, E_{\rho}]]$ are linearly independent, it parametrizes a smooth conic.

    Conversely, assume that there is a line or smooth conic $C$ in $Z_{\gfr}$ such that $o \in C$ and $v \in T_{o}C$. If $C$ is a line, then the vanishing of the second fundamental form (see \cite[Proposition 2.3 and 2.12]{LandsbergManivel2003ProjectiveGeometry}) implies that $[v, \, [v,\, E_{\rho}]] \in [\gfr, \, E_{\rho}]$. Since
    \[
        [v, \, [v, \, E_{\rho}]] \in \left(\sum_{m_{j_{0}}(\alpha)=1} \CC \cdot H_{\alpha}\right) \oplus \bigoplus_{m_{j_{0}}(\alpha) = 0} \gfr_{\alpha} \quad \text{and} \quad [\gfr, \, E_{\rho}] = \CC \cdot H_{\rho} \oplus \bigoplus_{m_{j_{0}}(\alpha) = 1, \, 2} \gfr_{\alpha},
    \]
    we have $[v, \, [v, \, E_{\rho}]] = c \cdot H_{\rho}$ for some $c \in \CC$. Note that for every $\alpha \in R$ with $m_{j_{0}}(\alpha)=-1$, $[H_{\rho}, \, E_{\alpha}] = \langle \alpha, \, \rho \rangle \cdot E_{\alpha} = - \frac{\langle \rho, \,\rho \rangle}{2} \cdot E_{\alpha}$, hence
    \begin{equation}\label{eqn: v is eigenvector of Hrho}
        [H_{\rho}, \, v] = -\frac{\langle \rho, \, \rho \rangle}{2} \cdot v.
    \end{equation}
    By the invariance of the Killing form under the adjoint representation,
    \[
        c \cdot \langle H_{\rho}, \, H_{\rho} \rangle = \langle H_{\rho},\, [v, \, [v, \, E_{\rho}]] \rangle = \langle [[H_{\rho}, \, v], \, v], \, E_{\rho} \rangle=0,
    \]
    hence $[v, \, [v, \, E_{\rho}]] = 0$.

    Thus we may assume that $C$ is a conic. Then since the exponential map defines a local isomorphism near the origin
    \[
        T_{o} Z_{\gfr} \simeq \bigoplus_{m_{j_{0}}(\alpha) \le -1} \gfr_{\alpha} \rightarrow Z_{\gfr}, \quad X \mapsto \exp(X) \cdot o,
    \]
    there is a holomorphic map $F: t \mapsto F(t) \in \bigoplus_{m_{j_{0}}(\alpha) \le -1} \gfr_{\alpha}$ such that $F(0) = 0$, $F'(0)=v$ and
    \[
        \exp(F(t)) \cdot o, \quad \forall t \text{ near } 0 \in \CC
    \]
    is a local parametrization of $C$ near $o$. For all $t$ near $0 \in \CC$,
    \[
        \exp(F(t)) \cdot o = \left[E_{\rho} + \sum_{k=1}^{\infty} \frac{1}{k!}(ad_{F(t)})^{k} (E_{\rho})\right].
    \]
    Since $F(t)$ is a sum of root vectors of $\alpha$ with $m_{j_{0}}(\alpha) \le -1$,
    \[
        (ad_{F(t)})^{k} (E_{\rho}) \in \left\{\begin{array}{ll}
            \CC \cdot H_{\rho} \oplus \bigoplus_{m_{j_{0}}(\alpha) = 1} \gfr_{\alpha} & \text{(if $k=1$);} \\
            \sum_{m_{j_{0}}(\alpha) = 1} \CC \cdot H_{\alpha} \oplus \bigoplus_{m_{j_{0}}(\alpha) \le 0} \gfr_{\alpha} & \text{(if $k=2$);} \\
            \bigoplus_{m_{j_{0}}(\alpha) \le -1} \gfr_{\alpha} & \text{(if $k=3$);} \\
            \gfr_{-\rho} & \text{(if $k=4$);} \\
            0 & \text{(if $k\ge 5$).}
        \end{array}\right.
    \]
    Therefore in the affine chart $E_{\rho} + (\tfr \oplus \bigoplus_{\alpha\not= \rho} \gfr_{\alpha}) \simeq \tfr \oplus \bigoplus_{\alpha\not= \rho} \gfr_{\alpha}$ of $\PP (\gfr)$, $\exp(F(t)) \cdot o$ is given by
    \[
        \sum_{k=1}^{4} \frac{1}{k!}(ad_{F(t)})^{k} (E_{\rho}).
    \]
    For sufficiently small $t$, we have the Taylor expansion of $F$
    \[
        F(t) = \sum_{i=1}^{\infty} \frac{t^{i}}{i!} F^{(i)}, \quad F^{(i)} \in \bigoplus_{m_{j_{0}}(\alpha) \le -1} \gfr_{\alpha}, \quad F^{(1)}:=v,
    \]
    and
    \begin{align*}
        &\sum_{k=1}^{4} \frac{1}{k!}(ad_{F(t)})^{k} (E_{\rho}) \\
        &= \sum_{k=1}^{4} \sum_{i_{1}, \,\ldots,\, i_{k} \ge 1} \frac{t^{i_{1} + \cdots + i_{k}}}{k! \cdot i_{1}! \cdots i_{k}!} [F^{(i_{1})}, \, \cdots \, [F^{(i_{k-1})}, \, [F^{(i_{k})}, \, E_{\rho}]] \cdots ] \\
        &= t \cdot [F^{(1)},\, E_{\rho}] \\
        &\quad + t^{2} \cdot \left( \frac{1}{2} [F^{(2)}, \, E_{\rho}] + \frac{1}{2} [F^{(1)}, \, [F^{(1)}, \, E_{\rho}]] \right) \\
        & \quad + t^{3} \cdot \left( \frac{1}{6} [F^{(3)}, \, E_{\rho}] + \frac{1}{2 \cdot 2} ([F^{(1)}, \, [F^{(2)}, \, E_{\rho}]]+[F^{(2)}, \, [F^{(1)}, \, E_{\rho}]]) + \frac{1}{6} [F^{(1)}, \, [F^{(1)}, \, [F^{(1)}, \, E_{\rho}]]] \right) \\
        &\quad + t^{4} \cdot \left( \frac{1}{24} [F^{(4)}, \, E_{\rho}] + \frac{1}{2} (\frac{1}{6} [F^{(1)}, \, [F^{(3)}, \, E_{\rho}]] + \frac{1}{4} [F^{(2)}, \, [F^{(2)}, \, E_{\rho}]] + \frac{1}{6} [F^{(3)}, \, [F^{(1)}, \, E_{\rho}]]) \right. \\
        &\quad \quad \quad + \frac{1}{6 \cdot 2} ([F^{(1)}, \, [F^{(1)}, \, [F^{(2)}, \, E_{\rho}]]]+[F^{(1)}, \, [F^{(2)}, \, [F^{(1)}, \, E_{\rho}]]]+[F^{(2)}, \, [F^{(1)}, \, [F^{(1)}, \, E_{\rho}]]]) \\
        &\quad \quad \quad \left. + \frac{1}{24} [F^{(1)}, \, [F^{(1)}, \, [F^{(1)}, \, [F^{(1)}, \, E_{\rho}]]]] \right) \\
        & \quad + O(t^{5}).
    \end{align*}
    Since the intersection of the plane spanned by $C$ in $\PP(\gfr)$ and the affine open subset $E_{\rho} + (\tfr \oplus \bigoplus_{\alpha\not= \rho}\gfr_{\alpha})$ is $E_{\rho} + V$ for some 2-dimensional subspace $V \le \tfr \oplus \bigoplus_{\alpha\not= \rho}\gfr_{\alpha}$, all derivatives of $\sum_{k=1}^{4} \frac{1}{k!} (ad_{F(t)})^{k}(E_{\rho})$ are elements of $V$. Consider the first and second derivatives
    \[
        [F^{(1)}, \, E_{\rho}], \quad [F^{(2)}, \, E_{\rho}] + [F^{(1)}, \, [F^{(1)}, \, E_{\rho}]].
    \]
    Assume that the three vectors
    \[
        E_{\rho}, \quad [F^{(1)}, \, E_{\rho}], \quad [F^{(2)}, \, E_{\rho}] + [F^{(1)}, \, [F^{(1)}, \, E_{\rho}]]
    \]
    are linearly dependent in $\gfr$. Since
    \[
        [F^{(1)}, \, E_{\rho}] = [v, \, E_{\rho}] \in \bigoplus_{m_{j_{0}}(\alpha) = 1} \gfr_{\alpha} \setminus \{0\},
    \]
    \[
        [F^{(1)}, \, [F^{(1)}, \, E_{\rho}]] \in \sum_{m_{j_{0}}(\alpha)=1} \CC \cdot H_{\alpha} \oplus \bigoplus_{m_{j_{0}}(\alpha) = 0} \gfr_{\alpha},
    \]
    and
    \[
        [F^{(2)}, \, E_{\rho}] \in \CC \cdot H_{\rho} \oplus \bigoplus_{m_{j_{0}}(\alpha) = 1} \gfr_{\alpha},
    \]
    $[F^{(1)}, \, E_{\rho}]$ and $[F^{(2)}, \, E_{\rho}] + [F^{(1)}, \, [F^{(1)}, \, E_{\rho}]]$ are linearly dependent, hence $[v,\, [v, \, E_{\rho}]] =[F^{(1)}, \, [F^{(1)}, \, E_{\rho}]]\in \CC \cdot H_{\rho}$. The computation given in the case of lines shows that $[v, \, [v, \, E_{\rho}]] = 0$.

    Thus we may assume that the three vectors are linearly independent. That is, the plane spanned by $C$ is
    \[
        \PP(E_{\rho} , \, [F^{(1)}, \, E_{\rho}], \, [F^{(2)}, \, E_{\rho}] + [F^{(1)}, \, [F^{(1)}, \, E_{\rho}]])
    \]
    and its intersection with the affine open subset $E_{\rho} + (\tfr \oplus \bigoplus_{\alpha\not= \rho}\gfr_{\alpha})$ is identified with
    \[
        V = \CC \langle [F^{(1)}, \, E_{\rho}], \, [F^{(2)}, \, E_{\rho}] + [F^{(1)}, \, [F^{(1)}, \, E_{\rho}]] \rangle
    \]
    via the isomorphism $E_{\rho} + (\tfr \oplus \bigoplus_{\alpha\not= \rho}\gfr_{\alpha}) \simeq \tfr \oplus \bigoplus_{\alpha\not= \rho}\gfr_{\alpha}$ Now for each $i \ge 1$, write $F^{(i)}$ as
    \[
        F^{(i)} = X^{(i)} + x^{(i)} \cdot E_{-\rho}, \quad X^{(i)} \in \bigoplus_{m_{j_{0}}(\alpha) = -1} \gfr_{\alpha}, \quad x^{(i)} \in \CC.
    \]
    Note that $x^{(1)} = 0$ and $X^{(1)} = v$. Then the coefficient of $t^{3}$ in the above formula, which is proportional to the third derivative at $t=0$, is
    \begin{align*}
        &\frac{1}{6} [F^{(3)}, \, E_{\rho}] + \frac{1}{4} ([F^{(1)}, \, [F^{(2)}, \, E_{\rho}]]+[F^{(2)}, \, [F^{(1)}, \, E_{\rho}]]) + \frac{1}{6} [F^{(1)}, \, [F^{(1)}, \, [F^{(1)}, \, E_{\rho}]]] \\
        &= \underbrace{\frac{1}{6} [X^{(3)}, \, E_{\rho}]}_{\in \bigoplus_{m_{j_{0}}(\alpha)=1} \gfr_{\alpha}} \\
        &\quad + \underbrace{\frac{1}{6} (-x^{(3)}) H_{\rho} + \frac{1}{4} \left([v, \, [X^{(2)}, \, E_{\rho}]] + [X^{(2)}, \, [v, \, E_{\rho}]]\right)}_{\in \tfr \oplus \bigoplus_{m_{j_{0}}(\alpha)=0} \gfr_{\alpha}} \\
        & \quad + \underbrace{\frac{1}{4} x^{(2)} [H_{\rho}, \, v] + \frac{1}{4} x^{(2)}[E_{-\rho}, \, [v, \, E_{\rho}]] + \frac{1}{6} [v, \, [v, \, [v, \, E_{\rho}]]]}_{\in \bigoplus_{m_{j_{0}}(\alpha)=-1} \gfr_{\alpha}}.
    \end{align*}
    Since it is contained in the vector space spanned by $[v, \, E_{\rho}]$ and $-x^{(2)} H_{\rho} + [X^{(2)}, \, E_{\rho}] + [v, \, [v, \, E_{\rho}]]$, the $\left(\bigoplus_{m_{j_{0}}(\alpha)=-1} \gfr_{\alpha}\right)$-component is zero:
    \[
        \frac{1}{4} x^{(2)} [H_{\rho}, \, v] + \frac{1}{4} x^{(2)}[E_{-\rho}, \, [v, \, E_{\rho}]] + \frac{1}{6} [v, \, [v, \, [v, \, E_{\rho}]]] = 0.
    \]
    By the Jacobi identity,
    \begin{align*}
        [E_{-\rho}, \, [v, \, E_{\rho}]] &= [v, \, [E_{-\rho}, \, E_{\rho}]] \\
        &= [H_{\rho}, \, v] \\
        &=- \frac{\langle\rho, \, \rho \rangle}{2} \cdot v, \quad (\because \text{ Equation (\ref{eqn: v is eigenvector of Hrho})})
    \end{align*}
    hence
    \[
        [v, \, [v, \, [v, \, E_{\rho}]]] = \frac{3}{2} \langle\rho, \, \rho \rangle x^{(2)} \cdot v.
    \]
    In particular,
    \[
        [v,\,[v, \, [v, \, [v, \, E_{\rho}]]]] = 0.
    \]
    So the coefficient of $t^{4}$ in the above formula is
    \begin{align*}
        & \underbrace{\frac{1}{24} [X^{(4)}, \, E_{\rho}]}_{\in \bigoplus_{m_{j_{0}}(\alpha)=1} \gfr_{\alpha}} \\
        & \underbrace{ +\frac{1}{24} (-x^{(4)}) H_{\rho} + \frac{1}{12} [v , \, [X^{(3)} , \, E_{\rho}]] + \frac{1}{8} [X^{(2)}, \, [X^{(2)}, \, E_{\rho}]] + \frac{1}{12} [X^{(3)} ,\, [v, \, E_{\rho}]] }_{\in \tfr \oplus \bigoplus_{m_{j_{0}}(\alpha)=0} \gfr_{\alpha}} \\
        & + \frac{1}{12} x^{(3)} [H_{\rho}, \, v] + \frac{1}{8} x^{(2)} ( [X^{(2)}, \, [E_{-\rho}, \, E_{\rho}]] + [E_{-\rho}, \, [X^{(2)}, \, E_{\rho}]]) + \frac{1}{12} x^{(3)} [E_{-\rho}, \, [v, \, E_{\rho}]] \\
        & \underbrace{\quad + \frac{1}{12} \left( [v, \, [v, \, [X^{(2)}, \, E_{\rho}]]] +[v, \, [X^{(2)}, \, [v, \, E_{\rho}]]] + [X^{(2)}, \, [v, \, [v, \, E_{\rho}]]]\right)}_{\in \bigoplus_{m_{j_{0}}(\alpha)=-1} \gfr_{\alpha}} \\
        & \underbrace{ +\frac{1}{8} (x^{(2)})^{2} (-\langle\rho, \, \rho \rangle) E_{-\rho} + \frac{1}{12} x^{(2)} \left([v, \, [v, \, [E_{-\rho}, \, E_{\rho}]]] + [v, \, [E_{-\rho}, \, [v, \, E_{\rho}]]] + [E_{-\rho}, \, [v, \, [v, \, E_{\rho}]]] \right)}_{\in \bigoplus_{m_{j_{0}}(\alpha)=-2} \gfr_{\alpha} = \gfr_{-\rho}} .
    \end{align*}
    It is also a linear combination of $[v, \, E_{\rho}]$ and $-x^{(2)} H_{\rho} + [X^{(2)}, \, E_{\rho}] + [v, \, [v, \, E_{\rho}]]$, hence the $\gfr_{-\rho}$-component is zero:
    \[
        \frac{1}{8} (x^{(2)})^{2} (-\langle\rho, \, \rho \rangle) E_{-\rho} + \frac{1}{12} x^{(2)} \left([v, \, [v, \, [E_{-\rho}, \, E_{\rho}]]] + [v, \, [E_{-\rho}, \, [v, \, E_{\rho}]]] + [E_{-\rho}, \, [v, \, [v, \, E_{\rho}]]] \right) = 0.
    \]
    By the Jacobi identity,
    \begin{align*}
        [E_{-\rho}, \, [v, \, [v, \, E_{\rho}]]] &= [v, \, [E_{-\rho}, \, [v, \, E_{\rho}]]] \\
        &= [v, \, [v, \, [E_{-\rho}, \, E_{\rho}]]] \\
        &= 0, \quad (\because \text{ Equation (\ref{eqn: v is eigenvector of Hrho})}).
    \end{align*}
    Therefore $x^{(2)} = 0$, which means that
    \[
        [v, \, [v, \, [v, \, E_{\rho}]]] = \frac{3}{2} \langle \rho, \, \rho\rangle x^{(2)} \cdot v = 0.
    \]
\end{proof}

\begin{coro}
    There is no smooth conic in a general direction of $D$.
\end{coro}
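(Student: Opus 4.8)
The plan is to read the Corollary off Proposition~\ref{prop: eqn for direction of contact conics} once we know that the cubic polynomial map
\[
    \Phi : D_{o} \longrightarrow \gfr, \qquad \Phi(v) := [v, \, [v, \, [v, \, E_{\rho}]]]
\]
(which in fact takes values in $\bigoplus_{m_{j_{0}}(\alpha) = -1} \gfr_{\alpha} = D_{o}$, by counting $m_{j_{0}}$-degrees) is not identically zero. Indeed, $\Phi$ is homogeneous of degree $3$, so if $\Phi \not\equiv 0$ then $\{v \in D_{o} : \Phi(v) = 0\}$ is a proper closed cone in $D_{o}$, and its projectivization is a proper closed subvariety $Z_{0} \subsetneq \PP(D_{o})$. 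By Proposition~\ref{prop: eqn for direction of contact conics}, a direction $\CC \cdot v \subset D_{o}$ is tangent to a line or to a smooth conic exactly when $[v] \in Z_{0}$, so no direction in $\PP(D_{o}) \setminus Z_{0}$ is tangent to a smooth conic. Since $D \subset T Z_{\gfr}$ is $G$-homogeneous and $o \in Z_{\gfr}$ is arbitrary, the union of the bad loci over all points is a proper $G$-stable subvariety of the projectivized bundle $\PP(D)$, so a general direction of $D$ admits no tangent smooth conic. Thus everything reduces to the claim $\Phi \not\equiv 0$.

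To obtain $\Phi \not\equiv 0$, I would first note that the quadratic map $v \mapsto [v,[v,E_{\rho}]]$ is itself not identically zero: otherwise, by Proposition~\ref{prop: eqn for direction of contact conics}, every direction of $D_{o}$ would be tangent to a line through $o$, so the variety $\Kcal_{o}$ of such directions would be all of $\PP(D_{o})$, contradicting $\dim \Kcal_{o} = n - 1 < 2n - 1 = \dim \PP(D_{o})$ (Theorem~\ref{thm: space of lines is homogeneous}, using $n \ge 2$). Hence $U := \{v \in D_{o} : [v,[v,E_{\rho}]] \ne 0\}$ is a dense open subset of $D_{o}$, and since $\Phi$ is a polynomial map it is enough to exhibit a single $v \in U$ with $\Phi(v) \ne 0$. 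For $\gfr = G_{2}$ this is a one-line check: with the normalization $[E_{\alpha}, E_{-\alpha}] = H_{\alpha}$ and $N_{\alpha,\beta} \ne 0$ whenever $\alpha + \beta \in R$, one computes $\Phi(E_{-\gamma}) = N_{1} N_{2} N_{3} \cdot E_{\rho - 3\gamma} \ne 0$ for a suitable root $\gamma$ with $m_{j_{0}}(\gamma) = 1$ contained in a length-$4$ root string through $\rho$ (alternatively, $\Phi \equiv 0$ would make each $\exp(tv) \cdot o$ with $v \in U$ a smooth contact conic, contradicting Theorem~\ref{thm: full orbit structure}).

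For the remaining $\gfr$ I would take $v = \sum_{k} c_{k} E_{-\gamma_{k}}$ to be a sum of a few root vectors with $m_{j_{0}}(\gamma_{k}) = 1$ (so $E_{-\gamma_{k}} \in D_{o}$) and isolate the component of $\Phi(v)$ in a single root space $\gfr_{\rho - \gamma_{1} - \gamma_{2} - \gamma_{3}}$, where $(\gamma_{1}, \gamma_{2}, \gamma_{3})$ — with repetitions allowed among the chosen $\gamma_{k}$ — is a ``chain'' such that $\rho - \gamma_{1}$, $\rho - \gamma_{1} - \gamma_{2}$, and $\rho - \gamma_{1} - \gamma_{2} - \gamma_{3}$ are all roots; such a chain exists for each Dynkin type by inspection of the root system. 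That component equals $c_{1} c_{2} c_{3}$ times a sum, over the admissible orderings of $(\gamma_{1}, \gamma_{2}, \gamma_{3})$, of products of three structure constants $N_{\alpha, \beta}$ normalized as in \cite[Theorem 5.5, Ch. III]{Helgason1979DifferentialGeometry}. The main obstacle is precisely the verification that this sum does not vanish: a priori the contributions of different orderings could cancel, so one must either compute them explicitly from the normalization of the $N_{\alpha,\beta}$ across the finite list $B_{r}, D_{r}, E_{6}, E_{7}, E_{8}, F_{4}$, or choose the chain so that all but one ordering contributes $0$. Conceptually this is unavoidable, since $\Phi \equiv 0$ is exactly the behaviour of the excluded case $\gfr = C_{n+1}$, where $Z_{\gfr} \cong \PP^{2n+1}$, ``smooth conics'' are the lines of $\PP^{2n+1}$, and every tangent direction is tangent to one; hypothesis (\ref{assumption: pic num 1 not pn}) rules this out, and the content of the computation is that no other simple $\gfr$ is so degenerate.
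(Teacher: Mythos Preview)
Your reduction is exactly the paper's: by Proposition~\ref{prop: eqn for direction of contact conics} the set of contact directions tangent to a line or smooth conic is cut out by the homogeneous cubic $\Phi(v) = [v,[v,[v,E_{\rho}]]]$, so the corollary follows once one exhibits a single $v \in D_{o}$ with $\Phi(v) \ne 0$. Your argument that $v \mapsto [v,[v,E_{\rho}]]$ is not identically zero is correct but not needed for this.

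The difference is in producing the witness. You split off $G_{2}$ (where a single root vector along a long root string suffices) and then, for the remaining types, propose a case-by-case search among sums $\sum c_{k} E_{-\gamma_{k}}$, explicitly flagging as an ``obstacle'' the verification that the relevant sum of products of structure constants does not cancel. As written, this is a plan rather than a proof: you never pin down the chain or check the non-vanishing for $B_{r}$, $D_{r}$, $E_{r}$, $F_{4}$.

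The paper avoids this entirely with a \emph{uniform} two-term witness that works for every $\gfr$ satisfying (\ref{assumption: pic num 1 not pn}). Pick any long root $\alpha$ with $m_{j_{0}}(\alpha) = -1$ (e.g.\ $\alpha = -\alpha_{j_{0}}$); then $-\alpha-\rho$ is also a root with $m_{j_{0}} = -1$, and one sets
\[
    v \;=\; E_{\alpha} + E_{-\alpha-\rho} \;\in\; D_{o}.
\]
Because $\alpha$ and $\alpha+\rho$ are both long, the strings are short: $\rho + 2\alpha \notin R$ and $\rho + 2(-\alpha-\rho) \notin R$, so $[E_{\alpha},E_{\alpha+\rho}] = [E_{-\alpha-\rho},E_{-\alpha}] = 0$. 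With the Helgason normalization one has $(N_{\alpha,\rho})^{2} = (N_{-\alpha-\rho,\rho})^{2} = \tfrac{1}{2}\langle \rho,\rho\rangle$, and a direct expansion of the three nested brackets then gives $\Phi(v) \ne 0$ with no case analysis. This is the missing idea in your sketch: rather than chasing a chain $\rho - \gamma_{1} - \gamma_{2} - \gamma_{3}$ in each root system, take two root vectors whose indices sum to $-\rho$, so that the cross terms land in $\tfr$ at the second bracket and the self terms vanish by the short-string condition; the third bracket is then governed by the Cartan pairings alone.
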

\begin{proof}
    By Proposition \ref{prop: eqn for direction of contact conics}, it is enough to show that there is nonzero $v \in \bigoplus_{m_{j_{0}}(\alpha) = -1} \gfr_{\alpha}$ such that $[v, \, [v, \, [v, \, E_{\rho}]]] \not= 0$. Let $\alpha$ be a long root satisfying $m_{j_{0}}(\alpha) = -1$ (for example, $-\alpha_{j_{0}}$). Then $\rho + \alpha \in R$ but $\rho + 2 \alpha \not\in R$ and $\rho + 2 (-\alpha - \rho) \not\in R$. Also, $\alpha + \rho$ is a long root. Moreover, by our choice of root vectors,
    \[
        (N_{\alpha, \, \rho})^{2} = (N_{-\alpha - \rho, \, \rho})^{2} = \frac{1}{2} \langle \rho, \, \rho\rangle.
    \]
    Now a straightforward computation shows that $E_{\alpha} + E_{-\alpha - \rho}$ does not satisfy the equation.
\end{proof}

Propostion \ref{prop: eqn for direction of contact conics} provides another proof of the fact that every smooth conic in the $G_{2}$-adjoint variety $Z_{G_{2}}$ is a twistor conic, which is already shown in Theorem \ref{thm: full orbit structure}. Indeed, if $\gfr = G_{2}$, then it can be shown that for $v \in D_{o}$,
\[
    [v, \, [v, \, [v, \, E_{\rho}]]] = 0 \quad \text{if and only if} \quad [v, \, [v, \, E_{\rho}]] = 0.
\]
It means that every contact conic is planar. However, $Z_{G_{2}}$ does not contain a plane (\cite[Section 4.3]{LandsbergManivel2003ProjectiveGeometry}), hence every smooth conic in $Z_{G_{2}}$ must be transverse to the contact structure $D$.

\clearpage
\bibliographystyle{plain}

\begin{thebibliography}{10}

\bibitem{BastonEastwood2016PenroseTransform}
R.~J. Baston and M.~G. Eastwood.
\newblock {\em The Penrose Transform: its Interaction with Representation
  Theory}.
\newblock Oxford Mathematical Monographs. Oxford University Press, 1989.

\bibitem{Beauville1998FanoContact}
A.~Beauville.
\newblock Fano {Contact Manifolds and Nilpotent Orbits}.
\newblock {\em Commentarii Mathematici Helvetici}, 73(4):566--583, 1998.

\bibitem{Bourbaki2002LieGroups}
N.~Bourbaki.
\newblock {\em Lie Groups and Lie Algebras, Chapters 4-6}.
\newblock Elements of Mathematics. Springer Berlin, Heidelberg, 2002.

\bibitem{ChungHongKiem2012CompactifiedModuli}
K.~Chung, J.~Hong, and Y.-H. Kiem.
\newblock Compactified {Moduli} {Spaces} of {Rational} {Curves} in {Projective}
  {Homogeneous} {Varieties}.
\newblock {\em Journal of the Mathematical Society of Japan}, 64(4):1211--1248,
  2012.

\bibitem{deConciniProcesi2006CompleteSymmetric}
C.~De Concini and C.~Procesi.
\newblock Complete {Symmetric Varieties}.
\newblock In {\em Invariant Theory: Proceedings of the 1st 1982 Session of the
  Centro Internazionale Matematico Estivo (CIME) Held at Montecatini, Italy,
  June 10--18, 1982}, pages 1--44. Springer Berlin, Heidelberg, 1983.

\bibitem{Dufour2014ConstructionMetriques}
Q.~Dufour.
\newblock {\em Une {Construction de M{\'e}triques
  Quaternion-K{\"a}hl{\'e}riennes {\`a} Partir du Groupe $G_{2}$}}.
\newblock PhD thesis, Universit\'{e} Pierre et Marie Curie - Paris VI, 2014.

\bibitem{Helgason1979DifferentialGeometry}
S.~Helgason.
\newblock {\em Differential Geometry, Lie Groups, and Symmetric Spaces},
  volume~34 of {\em Graduate Studies in Mathematics}.
\newblock American Mathematical Society, 2001.

\bibitem{Humphreys2012IntroductionLie}
J.~E. Humphreys.
\newblock {\em Introduction to Lie Algebras and Representation Theory},
  volume~9 of {\em Graduate Texts in Mathematics}.
\newblock Springer New York, 1972.

\bibitem{Hwang1997RigidityHomogeneous}
J.-M. Hwang.
\newblock Rigidity of {Homogeneous} {Contact} {Manifolds} under {Fano}
  {Deformation}.
\newblock {\em Journal f\"{u}r die reine und angewandte Mathematik},
  486:153--164, 1997.

\bibitem{HwangKebekus2005GeometryChains}
J.-M. Hwang and S.~Kebekus.
\newblock Geometry of {Chains of Minimal Rational Curves}.
\newblock {\em Journal f\"{u}r die reine und angewandte Mathematik},
  584:173--194, 2005.

\bibitem{HwangMok1999HolomorphicMaps}
J.-M. Hwang and N.~Mok.
\newblock Holomorphic {Maps from Rational Homogeneous Spaces of Picard Number 1
  onto Projective Manifolds}.
\newblock {\em Inventiones mathematicae}, 136:209--231, 1999.

\bibitem{Kebekus2001LinesContact}
S.~Kebekus.
\newblock Lines on {Contact Manifolds}.
\newblock {\em Journal f\"{u}r die reine und angewandte Mathematik},
  2001(539):167--177, 2001.

\bibitem{KimPandharipande2001ConnectednessModuli}
B.~Kim and R.~Pandharipande.
\newblock The {Connectedness} of the {Moduli} {Space} of {Maps} to
  {Homogeneous} {Spaces}.
\newblock In {\em Symplectic geometry and mirror symmetry}, pages 187--201.
  World Scientific, 2001.

\bibitem{Knop1991LunaVustTheory}
F.~Knop.
\newblock The {Luna-Vust theory of Spherical Embeddings}.
\newblock In {\em Proceedings of the Hyderabad Conference on Algebraic Groups
  (Hyderabad, 1989)}, volume 225, page 249. Citeseer, 1991.

\bibitem{Kollar1996RationalCurves}
J.~Koll{\'a}r.
\newblock {\em Rational Curves on Algebraic Varieties}, volume~32 of {\em
  Ergebnisse der Mathematik und ihrer Grenzgebiete, 3. Folge}.
\newblock Springer Berlin, Heidelberg, 1999.

\bibitem{LandsbergManivel2003ProjectiveGeometry}
J.~M. Landsberg and L.~Manivel.
\newblock On the {Projective Geometry of Rational Homogeneous Varieties}.
\newblock {\em Commentarii Mathematici Helvetici}, 78:65--100, 2003.

\bibitem{Lebrun1995FanoManifolds}
C.~LeBrun.
\newblock Fano {Manifolds}, {Contact} {Structures}, and {Quaternionic}
  {Geometry}.
\newblock {\em International Journal of Mathematics}, 6(03):419--437, 1995.

\bibitem{LunaVust1983PlongementsEspaces}
D.~Luna and T.~Vust.
\newblock Plongements {d’Espaces Homogenes}.
\newblock {\em Commentarii mathematici Helvetici}, 58(1):186--245, 1983.

\bibitem{Manivel2018CayleyGrassmannian}
L.~Manivel.
\newblock The {Cayley Grassmannian}.
\newblock {\em Journal of Algebra}, 503:277--298, 2018.

\bibitem{Manivel2021DoubleCayley}
L.~Manivel.
\newblock The {Double Cayley Grassmannian}.
\newblock {\em Mathematical Research Letters}, 28(6):1765–1792, 2021.

\bibitem{OnishchikVinberg1990LieGroups}
A.~L. Onishchik and E.~B. Vinberg.
\newblock {\em Lie Groups and Algebraic Groups}.
\newblock Springer Series in Soviet Mathematics. Springer Berlin, Heidelberg,
  1990.

\bibitem{Pasquier2017SurveySingularities}
B.~Pasquier.
\newblock A {Survey on the Singularities of Spherical Varieties}.
\newblock {\em EMS Surveys in Mathematical Sciences}, 4(1):1--19, 2017.

\bibitem{Ruzzi2010GeometricalDescription}
A.~Ruzzi.
\newblock Geometrical {Description of Smooth Projective Symmetric Varieties
  with Picard Number One}.
\newblock {\em Transformation Groups}, 15(1):201--226, 2010.

\bibitem{Ruzzi2011SmoothProjective}
A.~Ruzzi.
\newblock Smooth {Projective Symmetric Varieties with Picard Number One}.
\newblock {\em International Journal of Mathematics}, 22(02):145--177, 2011.

\bibitem{Steinberg1968EndomorphismsLinear}
R.~Steinberg.
\newblock {\em Endomorphisms of Linear Algebraic Groups}.
\newblock Number~80 in Memoirs of the American Mathematical Society. American
  Mathematical Soc., 1968.

\bibitem{Thomsen1996IrreducibilityM0n}
J.~F. Thomsen.
\newblock Irreducibility of $\overline{M}_{0,\, n}({G/P}, \, \beta)$.
\newblock {\em International Journal of Mathematics}, 9(3):367--376, 1998.

\bibitem{Timashev2011HomogeneousSpaces}
D.~A. Timashev.
\newblock {\em Homogeneous Spaces and Equivariant Embeddings}, volume 138 of
  {\em Encyclopaedia of Mathematical Sciences}.
\newblock Springer Berlin, Heidelberg, 2011.

\bibitem{Vust1990PlongementsEspaces}
T.~Vust.
\newblock Plongements {d'Espaces Sym{\'e}triques Alg{\'e}briques: une
  Classification}.
\newblock {\em Annali della Scuola Normale Superiore di Pisa-Classe di
  Scienze}, 17(2):165--195, 1990.

\bibitem{Wolf1965ComplexHomogeneous}
J.~A. Wolf.
\newblock Complex {Homogeneous} {Contact} {Manifolds} and {Quaternionic}
  {Symmetric} {Spaces}.
\newblock {\em Journal of Mathematics and Mechanics}, 14(6):1033--1047, 1965.

\end{thebibliography}

\

\textsc{Department of Mathematical Sciences, Korea Advanced Institute of Science and Technology (KAIST), Daejeon, 34141, Republic of Korea, and}

\textsc{Center for Complex Geometry, Institute for Basic Science (IBS), Daejeon, 34126, Republic of Korea}

\emph{Email address}: \texttt{bnt2080@kaist.ac.kr}

\end{document}